\title{Unimodal measurable pseudo-Anosov maps}
\date{March 2025}
\author{Philip Boyland}
\address{Department of Mathematics\\University of Florida\\372 Little
    Hall\\Gainesville\\ FL 32611-8105, USA}
\email{boyland@ufl.edu}
\author{Andr\'e de Carvalho}
\address{Departamento de Matem\'atica Aplicada\\ IME-USP\\ Rua Do Mat\~ao
    1010\\ Cidade Universit\'aria\\ 05508-090 S\~ao Paulo SP\\ Brazil}
\email{andre@ime.usp.br}
\author{Toby Hall}
\address{Department of Mathematical Sciences\\ University of Liverpool\\
    Liverpool L69 7ZL, UK}
\email{tobyhall@liverpool.ac.uk}
\thanks{The authors gratefully acknowledge the support of S\~ao Paulo Research
Foundation (FAPESP) grant number 2016/25053-8.}
\newtheorem{thm}{Theorem}[section]
\newtheorem{lem}[thm]{Lemma}
\newtheorem*{unnumthm}{Theorem}
\theoremstyle{definition}
\newtheorem{defn}[thm]{Definition}
\newtheorem{defns}[thm]{Definitions}
\newtheorem{notn}[thm]{Notation}
\theoremstyle{remark}
\newtheorem{rem}[thm]{Remark}
\newtheorem{rems}[thm]{Remarks}
\newcommand{\cA}{{{\mathcal{A}}}}
\newcommand{\cB}{{{\mathcal{B}}}}
\newcommand{\cC}{{{\mathcal{C}}}}
\newcommand{\cE}{{{\mathcal{E}}}}
\newcommand{\cF}{{{\mathcal{F}}}}
\newcommand{\cI}{{{\mathcal{I}}}}
\newcommand{\cJ}{{{\mathcal{J}}}}
\newcommand{\cK}{{{\mathcal{K}}}}
\newcommand{\cL}{{{\mathcal{L}}}}
\newcommand{\cS}{{{\mathcal{S}}}}
\newcommand{\cT}{{{\mathcal{T}}}}
\newcommand{\cU}{{{\mathcal{U}}}}
\newcommand{\cV}{{{\mathcal{V}}}}
\newcommand{\cZ}{{{\mathcal{Z}}}}
\newcommand{\fs}{{{\mathfrak{s}}}}
\newcommand{\fu}{{{\mathfrak{u}}}}
\newcommand{\fsp}{{{\mathfrak{s}'}}}
\newcommand{\fup}{{{\mathfrak{u}'}}}
\newcommand{\N}{{{\mathbb{N}}}}
\newcommand{\R}{{{\mathbb{R}}}}
\newcommand{\Z}{{{\mathbb{Z}}}}
\newcommand{\be}{{{\mathbf{e}}}}
\newcommand{\bx}{{{\mathbf{x}}}}
\newcommand{\by}{{{\mathbf{y}}}}
\newcommand{\bz}{{{\mathbf{z}}}}
\newcommand{\bP}{{{\mathbf{P}}}}
\newcommand{\bQ}{{{\mathbf{Q}}}}
\newcommand{\hf}{{{\widehat{f}}}}
\newcommand{\hI}{{{\widehat{I}}}}
\newcommand{\ha}{{{\hat{a}}}}
\newcommand{\hx}{{{\hat{x}}}}
\newcommand{\hz}{{{\hat{z}}}}
\newcommand{\hmu}{{{\hat{\mu}}}}
\newcommand{\tB}{\Theta}
\DeclareMathOperator{\Int}{Int}
\DeclareMathOperator{\orb}{orb}
\DeclareMathOperator{\GO}{GO}
\DeclareMathOperator{\diam}{diam}
\DeclareMathOperator{\pA}{pA}
\DeclareMathOperator{\Homeo}{Homeo}
\newcommand{\PC}{{\text{PC}}}
\newcommand{\thr}[1]{\left<#1\right>}
\newcommand{\thrn}[1]{\thr{#1_0, #1_1, #1_2, \dots}}
\newcommand{\ve}{\varepsilon}
\newcommand{\ssp}{{\{0,1\}^\N}}
\newcommand{\up}[1]{^{(#1)}}
\newcommand{\upi}{\up{i}}
\newcommand{\re}{\ha}
\newcommand{\fib}[1]{\left[#1\right]}
\newcommand{\ingam}{{\mathring\gamma}}
\newcommand{\toplus}{\overset{\scriptscriptstyle{+}}{\to}}
\newcommand{\tominus}{\overset{\scriptscriptstyle{-}}{\to}}
\newcommand{\ec}[1]{\left<#1\right>}
\begin{document}

\begin{abstract} 
    We exhibit a continuously varying family $F_\lambda$ of homeomorphisms of the
    sphere~$S^2$, for which each~$F_\lambda$ is a measurable pseudo-Anosov map.
    Measurable pseudo-Anosov maps are generalizations of Thurston's pseudo-Anosov
    maps, and also of the generalized pseudo-Anosov maps of~\cite{gpa}. They have a
    transverse pair of invariant full measure turbulations, consisting of
    streamlines which are dense injectively immersed lines: these turbulations are
    equipped with measures which are expanded and contracted uniformly by the
    homeomorphism. The turbulations need not have a good product structure
    anywhere, but have some local structure imposed by the existence of tartans:
    bundles of unstable and stable streamline segments which intersect regularly,
    and on whose intersections the product of the measures on the turbulations
    agrees with the ambient measure.

    Each map $F_\lambda$ is semi-conjugate to the inverse limit of the core tent map
    with slope $\lambda$: it is topologically transitive, ergodic with respect to a
    background Oxtoby-Ulam measure, has dense periodic points, and has topological
    entropy \mbox{$h(F_\lambda) = \log \lambda$} (so that no two $F_\lambda$ are
    topologically conjugate). For a full measure, dense $G_\delta$ set of parameters,
    $F_\lambda$ is a measurable pseudo-Anosov map but not a generalized pseudo-Anosov
    map, and its turbulations are nowhere locally regular.

\end{abstract}

\maketitle

\section{Introduction}

Since their introduction by Thurston in the 1970s, pseudo-Anosov maps have played a
central r\^ole in low-dimensional geometry and topology, as well as in
low-dimensional dynamics. They first appeared in Thurston's Classification Theorem
for isotopy classes of surface homeomorphisms, and are also fundamental for the
statement and the proof of Thurston's Hyperbolization Theorem for fibered
3-manifolds. In both of these discussions, there is a finiteness hypothesis: the
surfaces involved are of finite topological type (compact surfaces from which
finitely many points have been removed, or marked).

In surface dynamics, the way that pseudo-Anosov maps most frequently appear is as
follows: given a homeomorphism $f$ of a surface $S$, find a periodic orbit $O$ and
apply the Classification Theorem to $f$ on $S\setminus O$; if this isotopy class is
pseudo-Anosov then one can conclude, amongst other things, that $f$ has infinitely
many other periodic orbits, of infinitely many distinct periods, and has positive
topological entropy. If we adopt the point of view that the periodic orbit $O$
consists of marked points instead of having been removed (in which case only
isotopies relative to $O$ are allowed), we can consider the collection $\pA(S)$ of
all pseudo-Anosov homeomorphisms on a given surface $S$, relative to all possible
finite invariant sets and up to topological changes of coordinates. This set $\pA(S)$
is countable, since there is at most one pseudo-Anosov map in any marked isotopy
class up to change of coordinates.

At this point some questions arise naturally. What kind of set is $\pA(S)$? How does
it sit inside the set~$\Homeo(S)$ of all homeomorphisms of $S$? Can we complete it
somehow, and what sort of maps would be necessary to do this? To the best of our
knowledge the nature of~$\pA(S)$ as a subset of~$\Homeo(S)$ remains a puzzle, and
none of these questions has a satisfactory answer.

Our approach to this problem is to restrict attention to a subset of $\pA(S^2)$ which
naturally lies within an arc in $\Homeo(S^2)$ --- a parametrized family
$\{F_\lambda\}$ of sphere homeomorphisms --- and to understand what sort of
homeomorphisms the~$F_\lambda$ are: this is expected to illuminate the general case.
We show that the~$F_\lambda$ have enough structure to make them worthy of being
called \emph{measurable pseudo-Anosov maps}. These are analogs of pseudo-Anosov maps
in which the transverse pair of invariant measured foliations is replaced with a
transverse pair of invariant measured \emph{turbulations}: full measure subsets of
the surface decomposed into a disjoint union of \emph{streamlines}, which are
injectively immersed lines equipped with measures. As in the pseudo-Anosov case,
there is a dilatation~$\lambda>1$ such that one of the turbulations is uniformly
expanded by a factor~$\lambda$ and the other is uniformly contracted by a
factor~$1/\lambda$. In contrast to the pseudo-Anosov case, they need not have a good
product structure anywhere. Instead, some structure is imposed by the existence of
\emph{tartans}: bundles of arcs of stable and unstable streamlines which intersect
regularly, and in the context of which the measures on the streamlines are holonomy
invariant and have a product agreeing with the ambient measure on the sphere.

By construction the family contains countably many pseudo-Anosov maps. It also
contains uncountably many \emph{generalized pseudo-Anosov maps}, as introduced
in~\cite{gpa}: these are defined in the same way as pseudo-Anosov maps, except that
their invariant foliations are permitted to have a countably infinite number of
singularities, provided that they only accumulate on a finite set
(Definitions~\ref{defn:pAgpA}).

Our main focus here is on defining measurable pseudo-Anosov maps, and showing that
they are abundant in the sense that they form a continuously varying family. In
another paper~\cite{mpa-dynamics}, we show that the definition has dynamical weight
by proving that any measurable pseudo-Anosov map is Devaney chaotic (it has dense
periodic points and is topologically transitive); and, under an additional
hypothesis, is ergodic. (For the particular family discussed in this paper, these
dynamical properties follow more directly from the construction.)

The definition of measurable pseudo-Anosov maps is involved, but has three
fundamental properties: it is analogous to Thurston's definition of pseudo-Anosov
maps, and generalizes it; it is strong enough to have dynamical weight as explained
above; and it is broad enough to contain the maps of the family $F_\lambda$ (which
was originally constructed with different goals in mind).  We were also guided by the
higher-dimensional holomorphic analogs of~\cite{BLS,LM, Su} --- in particular, the
term `turbulation' is borrowed from comments in~\cite{LM}.

The following statement summarizes the main results of the paper (the fourth item depends in part on results of~\cite{BrM}).

\begin{unnumthm} 
    There exists a continuously varying family of sphere homeomorphisms
    $F_\lambda:S^2\to S^2$ for $\lambda\in (\sqrt{2},2]$ such that
    \begin{itemize}

        \item $F_\lambda$ is a measurable pseudo-Anosov map with dilatation
        $\lambda$.

        \item For a {countable discrete} set of parameters,
          $F_\lambda$ is {pseudo-Anosov}.

        \item For an {uncountable dense} set of parameters,
          $F_\lambda$ is generalized pseudo-Anosov.

        \item For a {dense $G_\delta$, full measure} set of
          parameters, $F_\lambda$ is {not generalized pseudo-Anosov},
          its invariant turbulations are nowhere locally regular, and
          the complement of its unstable turbulation contains a
          dense $G_\delta$ set. 

        \item Each $F_\lambda$ is topologically transitive, ergodic
          with respect to a background Oxtoby-Ulam measure, has dense periodic
          points, and has topological entropy $h(F_\lambda) =
          \log \lambda$ (so that no two $F_\lambda$ are topologically
          conjugate).
    \end{itemize}
\end{unnumthm}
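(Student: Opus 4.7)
The plan is to bootstrap from the earlier papers in this program and from the constructions to be set up in the body of this paper. For the first bullet, the continuously varying family~$F_\lambda$ together with the semi-conjugacies~$g_\lambda$ is produced in~\cite{prime}: $F_\lambda$ is the Barge--Martin attractor of $\hf_\lambda$, promoted to a transitive area-preserving homeomorphism of the sphere, and continuity in~$\lambda$ is part of that result. For the second bullet, I would construct the invariant turbulations by push-forward under~$g_\lambda$. The unstable turbulation uses the full-measure family of path components of $\hI_\lambda$ which are dense injectively immersed lines~\cite{typical}, equipped with the pull-back of Lebesgue measure on~$I$ by~$\pi_0$; the stable turbulation uses the $\pi_0$-fibers $\fib{x}$ equipped with the measures~$\alpha_x$ constructed in~\cite{typical}. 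Measurable pseudo-Anosov status is then established by checking that $\hf_\lambda$ scales each unstable streamline by the factor~$\lambda$ and the fiber measures~$\alpha_x$ by~$\lambda^{-1}$; and by producing tartan neighborhoods from pairs of regular path components and regular fibers, on which the product measure agrees with~$\mu$. The countable collection of degenerate fibers and non-conforming path components must be discarded, but its complement is of full measure by~\cite{typical}.

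For the third, fourth and fifth bullets, I would decompose the parameter interval $(\sqrt{2}, 2]$ according to the height of~$f_\lambda$ and the combinatorial type of its critical orbit. The pseudo-Anosov parameters are the single exceptional parameter in each rational height interval singled out in the introduction; since rational heights form a countable set and the exceptional parameter is isolated in its height class, this set is countable and discrete. The generalized pseudo-Anosov parameters comprise the other post-critically finite parameters in each rational height interval together with the irrational-height parameters and the endpoints of rational-height intervals, which is an uncountable dense set because irrational heights are uncountable and every rational height interval still contains further post-critically finite parameters. For these two sets, the sphere homeomorphism~$F_\lambda$ matches the unimodal generalized pseudo-Anosov model of~\cite{gpa}, and the identification of the measurable pseudo-Anosov turbulations with the foliations of that model is the main verification. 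The hard part will be the fifth bullet: I would take the exhibited set to be the post-critically infinite, rational-height parameters with dense critical orbit, verify that this is dense~$G_\delta$ and of full measure in~$(\sqrt{2},2]$, and then invoke the theorems of Bruin and Raines~\cite{bruin,raines} showing that $\hI_\lambda$ is nowhere locally the product of a Cantor set and an interval. Translating this through~$g_\lambda$ using the `kink' carried by the orbit of the critical fiber~$\fib{c}$ precludes any local foliated chart on~$S^2$ and, combined with the countable set of omitted streamlines and the density of the critical orbit, produces a dense~$G_\delta$ set in the complement of the unstable turbulation.

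The last bullet follows by transfer through the semi-conjugacy~$g_\lambda$. Topological transitivity and density of periodic points on $\hI_\lambda$ descend to~$F_\lambda$ because $g_\lambda$ is a continuous surjection. Ergodicity with respect to the background Oxtoby--Ulam measure follows from ergodicity of the extended measure~$\hmu$ on $\hI_\lambda$ together with the identity $(g_\lambda)_*\hmu = \mu$ established in~\cite{prime}. The entropy inequality $h(F_\lambda)\leq h(\hf_\lambda)=\log\lambda$ is automatic from the semi-conjugacy, and the reverse inequality follows from the mildness of~$g_\lambda$ (fibers carry no entropy), or alternatively by exhibiting horseshoes with entropy approaching~$\log\lambda$. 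Since $h(F_\lambda)=\log\lambda$ is a topological invariant and varies injectively with~$\lambda$, no two $F_\lambda$ are topologically conjugate.
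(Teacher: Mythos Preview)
Your outline is largely correct and matches the paper's strategy, but there is a genuine gap in your treatment of the fourth bullet. You propose to verify that $F_\lambda$ is generalized pseudo-Anosov for irrational-height parameters by matching with the models of~\cite{gpa}, but \cite{gpa} only constructs generalized pseudo-Anosov maps associated to tent \emph{periodic} orbits, whereas irrational-height tent maps are post-critically infinite (Theorem~\ref{thm:height-irrational}(b)). There is no pre-existing model to appeal to: the generalized pseudo-Anosov structure in the irrational case is new to this paper and requires the direct construction of Section~\ref{sec:irrat}. One must show that the $\hf$-invariant Cantor set $\cC\subset\hI$ collapses under~$g$ to a single fixed point $\infty\in\Sigma$, prove that $g(\fib{x})$ is an arc for \emph{every} $x\in I$ (including post-critical~$x$, by an inductive argument), and then explicitly build regular and $1$-pronged charts from the $0$-box structure over components of $I\setminus\overline{\PC}$. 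Your turbulation construction for the second bullet does carry over, but promoting turbulations to genuine foliations with local charts at every point of $\Sigma\setminus\{\infty\}$ is substantive new work that your proposal elides.

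A smaller point of attribution: for the post-critically finite rational parameters the paper cites~\cite{prime} rather than~\cite{gpa} directly, since it is~\cite{prime} which establishes that this particular family $F_\lambda$ coincides with the generalized pseudo-Anosov maps of~\cite{gpa}. The remainder of your outline --- the semi-conjugacy and continuous variation from~\cite{prime}, the turbulations built from path components and fiber measures of~\cite{typical}, the appeal to Bruin--Raines for the fifth bullet, and the transfer of dynamical properties through $g_\lambda$ for the sixth --- matches the paper.
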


In the remainder of the introduction we explain these statements in a bit more detail
and put them in a broader context.

\subsection*{Connections with surface dynamics}

A result of Bonatti and Jeandenans (see the appendix of~\cite{BJ}) relates
pseudo-Anosov maps to differentiable dynamics:  there is a natural quotient of a
Smale (Axiom A + strong transversality) surface diffeomorphism without impasses
(bigons bounded by a stable segment and an unstable segment) which is pseudo-Anosov.
The semiconjugacies between the Smale diffeomorphisms and their pseudo-Anosov
quotients are mild in the sense that their fibers carry no entropy. This result has
recently been generalized by Mello~\cite{dM}, who shows that, when impasses are
allowed, the quotient is a generalized pseudo-Anosov map. 

These results indicate that pseudo-Anosov and generalized pseudo-Anosov maps can be
viewed as linear models, with constant expansion and contraction, for Smale surface
diffeomorphisms. This places pseudo-Anosov and generalized pseudo-Anosov maps in a
broader context in the study of surface dynamics. However, the results only apply to
a collection of hyperbolic maps which is countable up to changes of coordinates, so
that this semi-conjugacy from the differentiable world to the pseudo-Anosov world
cannot apply to all maps in a parametrized family, such as the H\'enon family. The
existence of a parametrized family of measurable pseudo-Anosovs makes them candidates
to fill this gap: we conjecture that the $0$-entropy quotient~\cite{zeroent} of any
surface diffeomorphism with a single homoclinic class is a measurable pseudo-Anosov
map. Note also that the \emph{tartans} mentioned above, which are the key part of the
definition of measurable pseudo-Anosov maps, are reminiscent of \emph{Pesin blocks}
from the theory of non-uniformly hyperbolic surface diffeomorphisms.

\subsection*{A path in \texorpdfstring{$\Homeo(S^2)$}{Homeo(S2)}}

The family~$\{F_\lambda\}$ was originally constructed in~\cite{prime}. Each
homeomorphism~$F_\lambda$ is obtained as a quotient of the natural extension of the
core tent map~$f_\lambda$ of slope~$\lambda$: the quotient maps are mild (all but at
most one fiber contains at most~3 points) and are described explicitly. The family
intersects $\pA(S^2)$ at a particular collection of pseudo-Anosov maps which are
isotopic to Smale's horseshoe map relative to single periodic orbits: those which are
\emph{quasi-one-dimensional}, in the sense that their invariant train tracks are
intervals, and the train track maps are core tent maps. These were identified
in~\cite{HS}: they are parametrized by an invariant called \emph{height}, which is a
rational in~$(0,1/2)$.

In~\cite{gpa} it was shown that to \emph{every} horseshoe periodic orbit satisfying
an irreducibility condition there is associated a generalized pseudo-Anosov map of
the sphere whose dynamics mimics that of a core tent map. The family~$\{F_\lambda\}$
is the closure of this countable collection of \emph{unimodal generalized
pseudo-Anosov maps} --- thus our present results greatly extend the main application
of~\cite{origami}, which was to obtain a limit of a \emph{single} sequence of
unimodal generalized pseudo-Anosov maps. One of the main subsidiary results of this
paper (Theorem~\ref{thm:irrat-gpa}) is that there are uncountably many other
parameters~$\lambda$ for which $F_\lambda$ is a generalized pseudo-Anosov map.

\subsection*{Connections with one-dimensional dynamics and inverse limits}

To explain the theorem statement above in more detail, we first provide some
preliminary definitions and notation. These preliminaries are expanded on in
Sections~\ref{sec:tent} (tent maps),~\ref{sec:il} (inverse limits),
and~\ref{sec:measures} (invariant measures).

The \emph{core tent map} $f_\lambda\colon I_\lambda\to I_\lambda$ with
slope $\lambda\in(\sqrt{2}, 2]$ is the restriction of the full tent
map $T_\lambda\colon[0,1]\to[0,1]$ defined by $T_\lambda(x) =
\min(\lambda x, \lambda(1-x))$ to the interval $I_\lambda =
\left[T_\lambda^2(1/2), T_\lambda(1/2)\right]$. It has \emph{turning}
or \emph{critical point} $c=1/2$. For the vast majority of the paper we will only
be concerned with single tent maps, so, except where absolutely necessary, the
dependence of objects on the parameter~$\lambda$ will be suppressed. The natural
extension of the tent map, acting on its inverse limit, is denoted
$\hf\colon\hI\to\hI$. We write $\pi_0\colon\hI\to I$ for the projection of the
inverse limit onto its base using the $0^\text{th}$ coordinate, and $\fib{x} :=
\pi_0^{-1}(x)$ for the $\pi_0$-fiber of~$\hI$ above a point $x\in I$.

As discussed above, it is shown in~\cite{prime} that there is a map $g\colon \hI\to
S^2$ which semi-conjugates $\hf\colon\hI\to\hI$ to a sphere homeomorphism $F\colon
S^2\to S^2$ (and, momentarily restoring the dependence on the parameter, the sphere
homeomorphisms $F_\lambda$ vary continuously with~$\lambda$). The tent map~$f$ has a
unique ergodic invariant measure $\mu$ of maximal entropy, which is absolutely
continuous with respect to Lebesgue measure. This measure gives rise to an ergodic
$\hf$-invariant measure on the inverse limit, and in turn to a fully supported
ergodic $F$-invariant one (non-atomic and positive on non-empty open sets), also
denoted~$\mu$, on $S^2$. When we speak of full measure on~$S^2$, it is always with
respect to this measure (which depends, of course, on the parameter~$\lambda$).

The behavior of the sphere homeomorphism~$F$ depends on two pieces of
information about the tent map~$f$: its \emph{height}, and the nature
of the orbit of its turning point.

\begin{itemize}

    \item The height can be defined dynamically either as the rotation number of the
      outside map, a circle endomorphism associated with~$f$
      (Section~\ref{sec:outside}); or as the prime ends rotation number of the basin
      of infinity when $\hI$ is embedded in the sphere as an attractor of a sphere
      homeomorphism which extends $\hf$~\cite{prime}. It can be determined
      algorithmically from the kneading sequence of~$f$ when it is rational, and
      approximated arbitrarily closely when it is irrational~\cite{HS}. It is a
      decreasing function of the parameter~$\lambda$, with an interval of parameters
      corresponding to each rational height, and a single parameter corresponding to
      each irrational.

    \item As far as the orbit of the turning point is concerned,
      the main distinction is between the post-critically finite
      (where the orbit of~$c$ is periodic or pre-periodic) and the
      post-critically infinite cases.

\end{itemize}

For a typical parameter~$\lambda$, the tent map~$f$ is post-critically infinite
with rational height. In this case, the sphere homeomorphism is measurable
pseudo-Anosov, but is neither pseudo-Anosov nor generalized pseudo-Anosov. It is
important to understand that, even though the turbulations are of full measure and
transverse to each other, their local structure can be very complicated: for
example, when the critical orbit is dense, the streamlines which locally give a
product structure are never locally of full measure, so that there are no local
foliated charts anywhere on the sphere. Informally, the orbit of the critical
fiber~$\fib{c}$ carries `kinks' which become dense and preclude the existence of
local product structures anywhere on the sphere.

The construction of the measurable pseudo-Anosov map proceeds as follows. The
unstable streamlines correspond to the full measure collection of path components in
the inverse limit which are dense injectively immersed lines (see~\cite{typical}).
The measure is locally the pull back of Lebesgue measure on the interval under the
projection~$\pi_0$. The stable turbulation is built using the fibers $\pi_0^{-1}(x)$
and measures~$\alpha_x$ supported on these fibers, which were constructed
in~\cite{typical} using the density of the tent map absolutely continuous invariant
measure of maximal entropy. Each fiber is a Cantor set and, with countably many
exceptions, projects to an arc in the sphere carrying the push forward of $\alpha_x$.
The ends of all but countably many of these arcs are connected in pairs by the
semi-conjugacy~$g$. The countable collection of non-conforming fibers/arcs are
omitted from the stable turbulation.

While we obtain a nice global structure on a set of full measure, the inverse limits
in this case are known to be exquisitely complicated topological objects. For
example, when the parameter $\lambda$ is such that critical orbit is dense (these
parameters are contained in the rational height, post-critically infinite case),
theorems of Bruin and of Raines imply that the inverse limit $\hI_\lambda$ is nowhere
locally the product of a Cantor set and an interval~\cite{bruin,raines}. Even more
striking is that, for a perhaps smaller, but still dense $G_\delta$ set of
parameters, Barge, Brucks and Diamond~\cite{BBD} (see also~\cite{AABC}) show that the
inverse limit has a strong self-similarity: every open subset of~$\hI_\lambda$
contains a homeomorphic copy of $\hI_t$ for every $t\in(\sqrt{2}, 2]$.

In the other cases of the height and critical orbit data, the
construction follows the same general outline, but there is more
regularity and the sphere homeomorphism~$F_\lambda$ is generalized
pseudo-Anosov.

\begin{itemize}
    \item When the height is irrational (which implies that the tent
      map is post-critically infinite); or when~$\lambda$ is an endpoint of a
      rational height interval (which implies that the tent map is post-critically
      finite), there is a single bad point (accumulation of
      singularities)~$\infty$, and a single orbit of one-pronged singularities
      which is homoclinic to it: all other points are regular points of the
      invariant foliations.

    \item When~$\lambda$ is in the interior of a rational height
      interval and the tent map is post-critically finite, then, with
      the exception of a single parameter~$\lambda$ in each rational
      height interval, there is a fixed bad point~$\infty$ and a
      further periodic orbit~$Q$ of bad points having the same period
      as the post-critical periodic orbit. Heteroclinic orbits of
      connected one-pronged and three-pronged singularities emerge
      from~$\infty$ and converge to~$Q$.

    For the exceptional parameter~$\lambda$ in each rational height
    interval, the sphere homeomorphism~$F_\lambda$ is pseudo-Anosov.
\end{itemize}

\subsection*{Open questions}

Our results relate to a specific family of measurable pseudo-Anosov maps, but they
raise a number of questions of a more general nature. In~\cite{gpa} it was shown
that the generalized pseudo-Anosov maps considered here are quasi-conformal with
respect to a complex structure on the sphere, and that the invariant foliations are
trajectories of an integrable quadratic differential. Does any of this structure
exist for the more general measurable pseudo-Anosov maps and, if it does, is the
quasi-conformal distortion of $F_\lambda$ equal to $\lambda$? A related question is
whether or not measurable pseudo-Anosov maps are smoothable: topologically
conjugate to, say, a $C^k$-diffeomorphism, for some $k\ge 1$? Another natural
question concerns their prevalence amongst surface homeomorphisms. In our setting
the measurable pseudo-Anosov maps occur in a continuous one-parameter family
containing a countable family of pseudo-Anosov maps. How common is this: for
example, given two isotopic pseudo-Anosov maps (necessarily relative to finite
invariant sets), is there a continuous generic one-parameter family connecting them
which consists solely of measurable pseudo-Anosov maps? More generally, is the
collection of measurable pseudo-Anosov maps in some sense the completion of the
collection of generalized pseudo-Anosov maps?

\subsection*{Structure of the paper}

In Section~\ref{sec:gmpa-intro} we define measurable pseudo-Anosov maps and prove
some basic properties. Because the invariant turbulations of a measurable
pseudo-Anosov map need not have a local product structure anywhere, their
associated measures are defined on streamlines rather than on transverse arcs. For
consistency we take the same approach with the measures on (generalized)
pseudo-Anosov foliations, and these alternative definitions are also given in
Section~\ref{sec:gmpa-intro}. Section~\ref{sec:background} is a review of some
necessary background material on tent maps, their invariant measures, and their
inverse limits. Section~\ref{sec:extreme-outside} contains the main theoretical
content of the paper: we discuss the structure of $\pi_0$-fibers of inverse limits
of tent maps, identify their \emph{extreme} elements, and model the action of the
natural extension on these extreme elements by means of a circle map called the
\emph{outside} map. In Sections~\ref{sec:rat_PCI} and~\ref{sec:irrat} we apply this
first to the post-critically infinite rational height case and then to the
irrational height case, proving that the associated sphere homeomorphisms are
measurable pseudo-Anosov maps and generalized pseudo-Anosov maps respectively. The
paper ends with a brief summary of how the post-critically finite rational case
could be treated using the same approach: we omit the details because the fact that
the sphere homeomorphisms associated to these tent maps are generalized
pseudo-Anosov maps was proved, using quite different techniques, in~\cite{prime}.

\section{Generalized and measurable pseudo-Anosov maps}
    \label{sec:gmpa-intro}

We will show that the sphere homeomorphisms $F_\lambda$ are either examples of
Thurston's pseudo-Anosov maps~\cite{Thurston}, or one of two successive
generalizations of them: the \emph{generalized pseudo Anosov maps}
of~\cite{gpa}, which are permitted to have infinitely many pronged singularities,
provided that these only accumulate at finitely many points; and \emph{measurable
pseudo-Anosov maps}, which have invariant measured \emph{turbulations},
rather than foliations. A measured turbulation (Definitions~\ref{defn:turbulation})
is a partition of a full measure subset of the sphere into \emph{streamlines}:
immersed lines equipped with measures, which play the r\^ole of the leaves of a
foliation.

Because the invariant turbulations of a measurable pseudo-Anosov map may not have
a local product structure in any open set, it is necessary to define the associated
measures on the streamlines, rather than on arcs transverse to them. For this
reason, and for the sake of consistency, we take the same approach with the
measures for pseudo-Anosov and generalized pseudo-Anosov foliations.

We start by defining local models for regular and singular points of pseudo-Anosov
and generalized pseudo-Anosov maps. These models come equipped with measures not
only on the domain, but also on each stable and unstable leaf segment. Note that
the invariant measured foliations will be defined globally, with the local models
serving only to ensure that they have the desired local structure: thus no
transition conditions are required where the local models overlap.

Because measurable pseudo-Anosov maps cannot be defined using local models for
their invariant turbulations, properties such as holonomy invariance of the
measures and their compatibility with the ambient measure are expressed instead in
terms of \emph{tartans} (Definitions~\ref{defn:tartan}): unions of stable and
unstable arcs which intersect regularly, each stable arc intersecting each unstable
arc exactly once.

\begin{rem}
    When we say that a subset of a topological space is measurable, it will always
    mean with respect to the Borel $\sigma$-algebra.
\end{rem}

\medskip\medskip 

\subsection{Pseudo-Anosov and Generalized pseudo-Anosov maps}

\begin{defn}[regular model] 
    \label{defn:regular_model}
    Let $I$ and $J$ be intervals in~$\R$. We denote by $R_{I, J}$ the rectangle
    $I\times J$ in~$\R^2$, equipped with two-dimensional Lebesgue measure~$M$, and
    refer to it as a \emph{regular model}. Each horizontal $I\times\{\beta\}$ in
    $R_{I, J}$ is called an \emph{unstable leaf segment}, and is equipped with
    one-dimensional Lebesgue measure~$m_u$; similarly, each vertical
    $\{\alpha\}\times J$, with one-dimensional Lebesgue measure~$m_s$, is called a
    \emph{stable leaf segment}.
\end{defn}

\begin{defn}[$p$-pronged model, see Figure~\ref{fig:1_3-prong}]
    \label{defn:pronged_model}
     Given $a,b>0$ and $p\in\Z_+\setminus\{2\}$, we denote by $Q_{p, a, b}$ the
    space obtained from~$p$ copies $R_0,\dots,R_{p-1}$ of $(-a, a) \times [0,b)$ by
    identifying $(\alpha, 0)\in R_i$ with $(-\alpha, 0)\in R_{i+1\bmod{p}}$ for
    each $\alpha\in[0,a)$ and $i\in\{0,\dots,p-1\}$, and refer to it as a
    \emph{$p$-pronged model}. We equip $Q_{p, a, b}$ with the measure~$M$ induced
    by Lebesgue measure on each $R_i$. We
    say that the point obtained by identification of $(0, 0)\in R_i$ for all~$i$ is
    the \emph{singular point} of the model.

    An \emph{unstable leaf segment} in~$Q_{p, a, b}$ is either a segment $(-a,
    a)\times\{\beta\}$ in some~$R_i$ for $\beta\in(0, b)$; or the union of the
    segments $[0, a)\times\{0\}$ in each~$R_i$ after identifications. A
    \emph{stable leaf segment} in~$Q_{p, a, b}$ is either the union (after
    identifications) of $\{\alpha\}\times[0, b)$ in~$R_i$ and
    $\{-\alpha\}\times[0, b)$ in $R_{i+1\bmod{p}}$ for some $\alpha\in(0, a)$;
    or the union of $\{0\}\times [0, b)$ in $R_i$ over all~$i$. Unstable and
    stable leaf segments are equipped with the measures $m_u$ and~$m_s$ 
    induced by disintegration of~$M$.
\end{defn}

\begin{figure}[htbp]
        \begin{center}
            \includegraphics[width=0.75\textwidth]{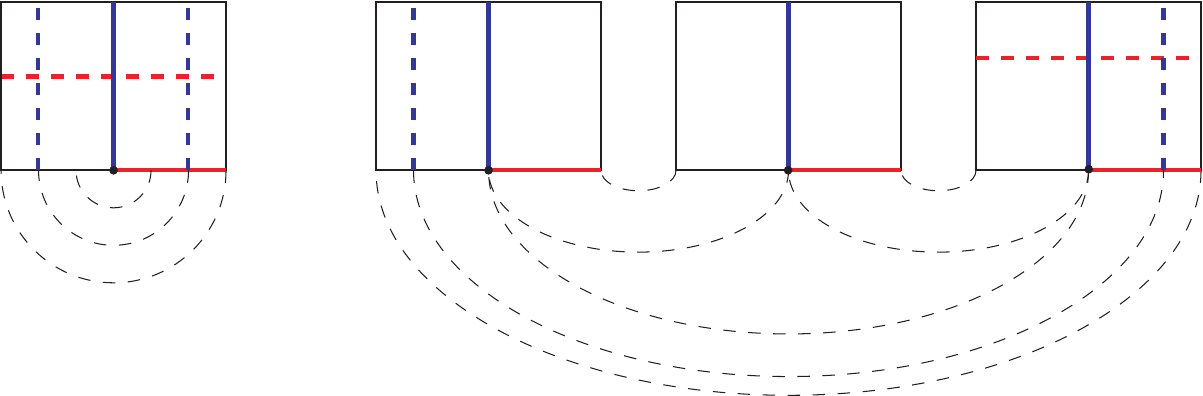}
        \end{center}
    \caption{A 1-pronged model and a 3-pronged model, each with two unstable
    (red) and two stable (blue) leaf segments. The dashed leaf segments are
    arcs, while the solid ones, which pass through the singular point of the
    model, are 1-ods and 3-ods. The curved dashed lines denote identifications}
    \label{fig:1_3-prong}
    \end{figure}

\begin{defn}[OU] \label{defn:OU}
    We say that a measure~$\mu$ on a topological space~$X$ is \emph{OU} (for
    Oxtoby-Ulam) if it is Borel, non-atomic, and positive on open subsets of~$X$.
    If~$X$ is a compact manifold, then it is further required that $\mu(\partial
    X)=0$.
\end{defn}

Throughout the remainder of this section, we let~$\Sigma$ be a surface, and $\mu$
be an OU probability measure on~$\Sigma$.

Note that the following definition of a measured foliation on~$\Sigma$ makes little
sense in isolation (since there is no requirement for measures on distinct leaves
to be compatible), and will only take on a proper meaning when constraints on the
local structure are introduced in Definitions~\ref{defn:pa-gpa-fols}. Note also
that it is not the standard definition: as explained above, we put measures on the
leaves rather than on arcs transverse to them, for consistency with the definition
of measurable pseudo-Anosov turbulations.

\begin{defn}[measured foliation, image foliation] \label{defn:measured_fol}
    A \emph{measured foliation} $(\cF, \nu)$ on~$\Sigma$ is a partition~$\cF$
    of~$\Sigma$ into subsets called \emph{leaves}, together with an OU measure
    $\nu_\ell$ on each non-singleton leaf~$\ell$. If $F\colon
    \Sigma\to\Sigma$ is a homeomorphism, we write $F(\cF, \nu)$ for the measured
    foliation $(\cF', \nu')$ whose leaves are $\{F(\ell)\,:\,\ell\in\cF\}$, with
    measures $\nu'_{F(\ell)} = F_*(\nu_\ell)$ on non-singleton leaves.
\end{defn}

    Note that, while~$\mu$ is a probability measure, there is no requirement
    for the leaf measures~$\nu_\ell$ to be finite, and they will generally not
    be.

\begin{defns}[regular and $p$-pronged charts] \label{defn:charts}
    Let $(\cF^s, \nu^s)$ and $(\cF^u, \nu^u)$ be measured foliations
    on~$\Sigma$, and let $x\in \Sigma$. We say that the foliations have a
    \emph{regular chart} at~$x$ if there is a neighborhood~$N$ of~$x$, a
    regular model~$R=R_{I, J}$, and a measure-preserving homeomorphism
    $\Phi\colon N\to R$, such that for each $\ell\in\cF^s$ (respectively
    $\ell\in\cF^u$), and each path component~$L$ of $\ell\cap N$, $\Phi|_L$ is
    a measure-preserving homeomorphism onto a stable (respectively unstable)
    leaf segment of~$R$. (To be clear, that $\Phi$ is measure-preserving means
    that $\Phi_*(\mu) = M$; and that $\Phi|_L$ is measure-preserving means that
    $(\Phi|_L)_*(\nu_\ell^{s/u}) = m_{s/u}$.)

    We say that the foliations have a \emph{$p$-pronged chart} at~$x$ if there
    is a neighborhood~$N$ of~$x$, a $p$-pronged model~$Q=Q_{p, a, b}$, and a
    measure-preserving homeomorphism \mbox{$\Phi\colon N\to Q$}, with $\Phi(x)$
    the singular point of~$Q$, such that for each $\ell\in\cF^s$ (respectively
    $\ell\in\cF^u$), and each path component~$L$ of $\ell\cap N$, $\Phi|_L$ is
    a measure-preserving homeomorphism onto a stable (respectively unstable)
    leaf segment of~$Q$.
\end{defns}

\begin{defns}[pseudo-Anosov and generalized pseudo-Anosov foliations]
    \label{defn:pa-gpa-fols} Let $(\cF^s, \nu^s)$ and $(\cF^u, \nu^u)$ be measured
    foliations on~$\Sigma$. We say that they are \emph{pseudo-Anosov foliations} if
    they have regular charts at all but finitely many points of~$\Sigma$, and at
    each other point they have a pronged chart. We say that they are
    \emph{generalized pseudo-Anosov foliations} if there is a countable $Y\subset
    \Sigma$ and a finite $Z\subset
    \Sigma\setminus Y$ such that the foliations have regular charts at every
    point of $\Sigma\setminus(Y\cup Z)$; pronged charts at every point of~$Y$;
    and each point of~$Z$ is a singleton leaf of both foliations.
\end{defns}

\begin{defns}[pseudo-Anosov and generalized pseudo-Anosov maps] \label{defn:pAgpA}
    Let $F\colon \Sigma\to \Sigma$ be a homeomorphism. We say that~$F$ is
    \emph{(generalized) pseudo-Anosov} if there are (generalized) pseudo-Anosov
    foliations $(\cF^s, \nu^s)$ and $(\cF^u, \nu^u)$ on~$\Sigma$ and a number
    $\lambda>1$ such that $F(\cF^s, \nu^s) = (\cF^s, \lambda\nu^s)$ and $F(\cF^u,
    \nu^u) = (\cF^u, \lambda^{-1}\nu^u)$. (Note that the $\lambda$ and the
    $\lambda^{-1}$ are the other way round in the standard definitions, in
    which the measures are on arcs transverse to the foliations rather than on
    the leaves themselves.)
\end{defns}

\begin{rem} \label{rem:zero_measure_leaf}
    Let~$\ell$ be a leaf of one of the invariant foliations of a pseudo-Anosov or
    generalized pseudo-Anosov map. Since~$\ell$ is either a singleton, or is a
    union of countably many leaf segments of regular or $p$-pronged charts, it
    follows from Definitions~\ref{defn:charts} that~\mbox{$\mu(\ell)=0$.}
\end{rem}

\subsection{Measured turbulations and tartans}\mbox{}

\begin{defn}[Immersed  line]
    \label{defn:immersed}
    An \emph{immersed  line} in~$\Sigma$ is a continuous injective image
    of~$\R$. 
\end{defn}

\begin{defns}[Measured turbulations, streamlines, dense streamlines, stream measure, transversality]
    \label{defn:turbulation}
    A \emph{measured turbulation $(\cT, \nu)$} on~$\Sigma$ is a partition of a full
    $\mu$-measure Borel subset of~$\Sigma$ into immersed lines called
    \emph{streamlines}, together with an OU measure $\nu_\ell$ on each
    streamline~$\ell$, which assigns finite measure to any closed arc in~$\ell$. We
    refer to the measures $\nu_\ell$ as \emph{stream measures} to distinguish them
    from the ambient measure~$\mu$ on~$\Sigma$. We say that the measured
    turbulation \emph{has dense streamlines} if every streamline is dense
    in~$\Sigma$.

    Two such turbulations are \emph{transverse} if they are topologically
    transverse on a full $\mu$-measure subset of~$\Sigma$ (to be precise, there
    is a full measure subset of~$\Sigma$, every point~$x$ of which is contained
    in streamlines~$\ell$ and~$\ell'$ of the two turbulations, which intersect 
    transversely at~$x$).
\end{defns}

\begin{defns}[Stream arc, measure of stream arc] 
    Let $(\cT, \nu)$ be a measured turbulation on~$\Sigma$. Given distinct points
    $x$ and $y$ of a streamline~$\ell$ of~$\cT$, we write $[x,y]_\ell$, or just
    $[x,y]$ if the streamline is irrelevant or clear from the context, for the
    (unoriented) closed arc in~$\ell$ with endpoints~$x$ and~$y$; and $[x,y)_\ell$
    and $(x,y)_\ell$ for the arcs obtained by omitting one or both endpoints of
    $[x,y]_\ell$. We refer to these as \emph{stream arcs} of the turbulation. The
    \emph{measure} of a stream arc is its stream measure.
\end{defns}

We will impose a regularity condition on turbulations which requires that stream
arcs of small measure are small. Note that the following definition is independent
of the choice of metric on~$\Sigma$: since $\Sigma$ is compact, it is equivalent to
the topological condition that for every neighborhood~$N$ of the diagonal~$\{(x,
x)\,:\,x\in\Sigma\}$ in~$\Sigma\times\Sigma$, there is some $\delta>0$ such that if
$[x, y]$ is a stream arc with $\nu([x, y])<\delta$, then $(x,y)\in N$. However the
metric formulation is usually easier to apply directly.

\begin{defn}[Tame turbulation] \label{defn:tame}
    Let~$d$ be any metric on~$\Sigma$ compatible with its topology. A measured turbulation $(\cT, \nu)$ on~$\Sigma$ is \emph{tame} if for
    every~$\epsilon>0$ there is some $\delta>0$ such that if $[x, y]$ is a stream
    arc with $\nu([x,y])<\delta$, then $d(x,y)<\epsilon$.
\end{defn}

\begin{defn}[Image turbulation]
    If $F\colon \Sigma\to \Sigma$ is a $\mu$-preserving homeomorphism, we write
    $F(\cT,\nu)$ for the measured turbulation whose streamlines are
    $\{F(\ell)\,:\,\ell\in\cT\}$, with measures $\nu_{F(\ell)} =
    F_*(\nu_\ell)$.
\end{defn}

    Let $(\cT^s, \nu^s)$ and $(\cT^u, \nu^u)$ be a transverse pair of
    measured turbulations on~$\Sigma$. With a view to what is to come, we refer
    to them as \emph{stable} and \emph{unstable} turbulations, and similarly
    apply the adjectives stable and unstable to their streamlines, stream
    measures, stream arcs, etc.\ --- although dynamics will not enter the
    picture until Section~\ref{sec:mpa}.

\begin{defns}[Tartan $R$, fibers, $R^\pitchfork$, positive measure tartan]
    \label{defn:tartan} A \emph{tartan} $R=(R^s, R^u)$ consists of Borel
    subsets $R^s$ and $R^u$ of~$\Sigma$, which are disjoint unions of stable
    and unstable stream arcs respectively, having the following properties:
    \begin{enumerate}[(a)]
        \item Every arc of $R^s$ intersects every arc of $R^u$ exactly once,
        either transversely or at an endpoint of one or both arcs.

        \item There is a consistent orientation of the arcs of $R^s$ and $R^u$:
        that is, the arcs can be oriented so that every arc of $R^s$
        (respectively~$R^u$) crosses the arcs of $R^u$ (respectively~$R^s$) in the
        same order. 

        \item The measures of the arcs of $R^s$ and $R^u$ are bounded above.

        \item There is an open topological disk $U\subseteq \Sigma$ which
        contains $R^s\cup R^u$. 
    \end{enumerate}

    \medskip

    We refer to the arcs of $R^s$ and $R^u$ as the stable and unstable
    \emph{fibers} of~$R$, to distinguish them from other stream arcs. We write
    $R^\pitchfork$ for the set of \emph{tartan intersection points}: $R^\pitchfork
    =  R^s \cap R^u$. We say that~$R$ is a \emph{positive measure tartan} if
    $\mu(R^\pitchfork)>0$. 
\end{defns}

While $R^s$ and $R^u$ are defined as subsets of~$\Sigma$,  they decompose
uniquely as disjoint unions of stable/unstable stream arcs, so where necessary
or helpful can be viewed as the collection of their fibers.

\begin{notn}[$\fs(x)$, $\fu(x)$, $\fs\pitchfork\fu$, $E^\fs$, $E^\fu$, $\psi^{\fs,
    \fu}$, $\nu_\fs$ and $\nu_\fu$, product measure~$\nu_{\fs, \fu}$, holonomy maps
    $h_{\fs, \fs'}$, $h_{\fu, \fu'}$] \label{notn:tartan}
    \mbox{}

    Let~$R$ be a tartan; $x,y\in R^\pitchfork$; $\fs$ and $\fs'$ be stable
    fibers of~$R$; and $\fu$ and $\fu'$ be unstable fibers of~$R$. We write (see Figure~\ref{fig:defs}):
    \begin{itemize}
        \item $\nu^s_x$ and $\nu^u_x$ for the stream measures on the stable and
        unstable streamlines through~$x$. 

        \item $\fs(x)$ and $\fu(x)$ for the stable and unstable fibers of~$R$
        containing~$x$. 

        \item $\fs\pitchfork\fu = \fu\pitchfork\fs \in R^\pitchfork$ for the unique intersection point of $\fs$ and $\fu$. 

        \item $E^\fs=\fs\cap R^u = \fs\cap R^\pitchfork$, and $E^\fu=\fu\cap R^s
        = \fu\cap R^\pitchfork$;

        \item $\psi^{\fs, \fu}\colon E^\fs\times E^\fu \to R^\pitchfork$ for the map $(x,y)\mapsto \fu(x) \pitchfork \fs(y)$;

        \item $\nu_\fs$ and $\nu_\fu$ for the (restriction of the) stream measures
        on $E^\fs$ and $E^\fu$;

        \item $\nu_{\fs, \fu}$ for the product measure $\nu_\fs\times\nu_\fu$ on
        $E^\fs\times E^\fu$;

        \item $h_{\fs, \fs'}\colon E^\fs\to E^{\fs'}$ (respectively $h_{\fu,
        \fu'}\colon E^\fu\to E^{\fu'}$) for the holonomy map $x\mapsto \fu(x)
        \pitchfork \fs'$ (respectively $y\mapsto \fs(x) \pitchfork \fu'$).
    \end{itemize}

\end{notn}

\begin{figure}[htbp]
        \begin{center}
            \includegraphics[width=0.55\textwidth]{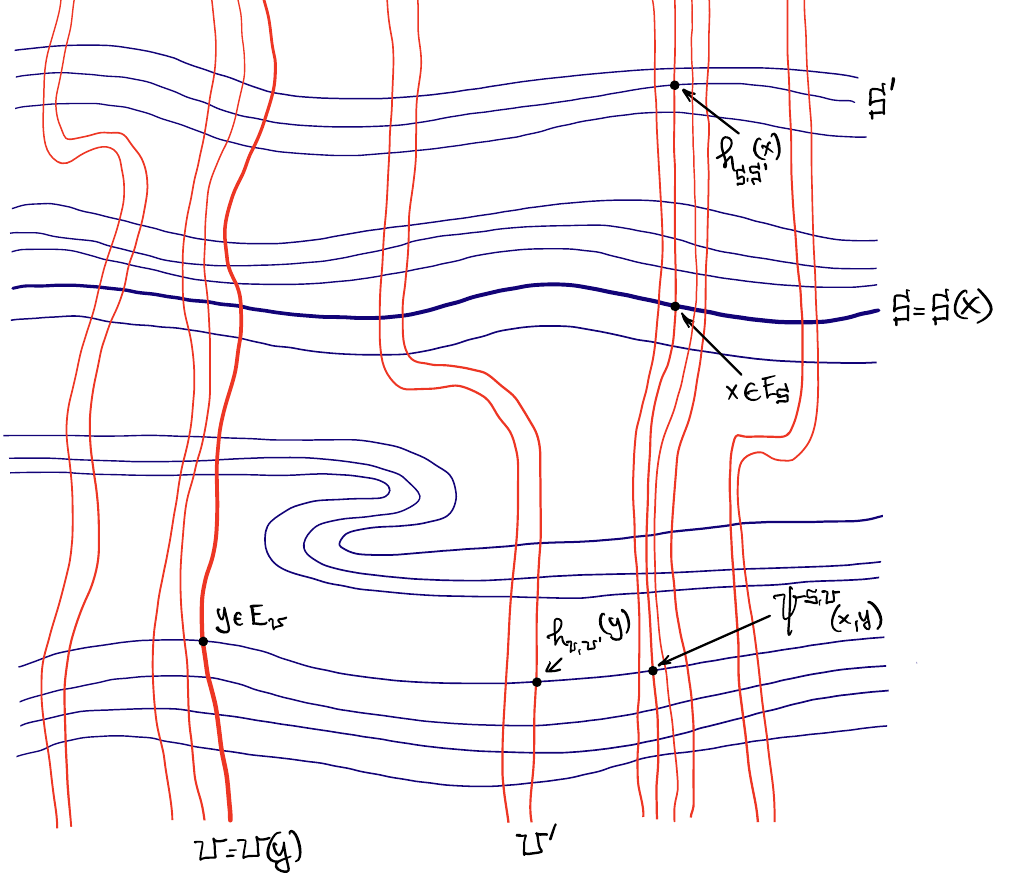}
        \end{center}
    \caption{Illustration of Notation~\ref{notn:tartan}}
    \label{fig:defs}
\end{figure}

The following is the key condition on tartans, which guarantees that the product
measure on the set of tartan intersection points induced by the measures on the
fibers agrees with the ambient measure.

\begin{defn}[Compatible tartan]
    We say that the tartan $R$ is \emph{compatible} (with the
    ambient measure~$\mu$) if, for all stable and unstable fibers~$\fs$
    and~$\fu$, the bijection $\psi^{\fs, \fu}\colon E^\fs\times E^\fu \to
    R^\pitchfork$ is bi-measurable, and $\psi^{\fs, \fu}_* \nu_{\fs, \fu} =
    \mu|_{R^\pitchfork}$.
\end{defn}

Important consequences of compatibility, proved in~\cite{mpa-dynamics}, include:
\begin{enumerate}[(a)]
    \item The stream measures are the disintegration of the ambient measure onto
    fibers: if~$A\subset R^\pitchfork$ is Borel, then
    \[
        \mu(A) = \int_{E^\fs} \nu_{\fu(x)}(A\cap \fu(x))\,d\nu_\fs(x) =
        \int_{E^\fu} \nu_{\fs(y)}(A \cap \fs(y))\,d\nu_\fu(y).
    \]
    \item The stream measures are holonomy invariant in tartans. Let~$\fs$ and
    $\fs'$ be stable fibers of~$R$, and $A \subset E^\fs$ be $\nu_\fs$-measurable.
    Then $A' = h_{\fs, \fs'}(A)$ is $\nu_{\fs'}$-measurable, and $\nu_{\fs'}(A') =
    \nu_\fs(A)$. The analogous statement holds for holonomies~$h_{\fu,
    \fu'}$.
\end{enumerate}

The final condition on the turbulations is that the compatible tartans see a full
measure subset of~$\Sigma$. 

\begin{defn}[Full turbulations]  \label{defn:full-turb}
    We say that the transverse pair $(\cT^s, \nu^s)$ and $(\cT^u, \nu^u)$ is
    \emph{full} if 
    \begin{enumerate}[(a)]
        \item There is a countable collection~$R_i$ of (positive measure) compatible tartans with $\mu\left(\bigcup_i R_i^\pitchfork\right)=1$; and

        \item for every non-empty open subset~$U$ of~$\Sigma$, there is a
            positive measure compatible tartan $R=(R^s, R^u)$ with $R^s\cup
            R^u\subset U$.
    \end{enumerate}
\end{defn}

Part~(b) of the definition is an additional regularity condition, which prevents
the fibers of tartans from behaving too wildly between intersection points.
In~\cite{mpa-dynamics}, a variety of conditions is presented, each of which,
together with part~(a) of the definition, implies part~(b). Here we only state the
condition which we will use later.

\begin{defn}[Regular tartan] \label{defn:regular-tartan}
    We say that a tartan~$R$ is \emph{regular} if for all $x\in R^\pitchfork$ and
    all neighborhoods~$U$ of~$x$, there is some~$\delta>0$ such that if $y\in
    \fs(x)\cap R^\pitchfork$ and $z\in \fu(x)\cap R^\pitchfork$ with
    $\nu_{\fs(x)}([x,y]_s)<\delta$ and $\nu_{\fu(x)}([x,z]_u)<\delta$, then the
    stream arcs $[x,y]_s$, $[y, \fs(z)\pitchfork\fu(y)]_u$,
    $[\fs(z)\pitchfork\fu(y), z]_s$, and $[z,x]_u$ are all contained in~$U$.
\end{defn}

\begin{lem}\label{lem:part_b_from_a}
    Suppose that part~(a) of Definition~\ref{defn:full-turb} is satisfied, and in
    addition each of the tartans~$R_i$ is regular.  
    Then part~(b) of the definition is also satisfied.
\end{lem}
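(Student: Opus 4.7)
The plan is to extract, for any given non-empty open $U$, a positive measure compatible sub-tartan of some $R_i$ whose fibers all lie inside $U$, using regularity to guarantee containment.

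Without loss of generality I would shrink $U$ to an open topological disk, so that the tartan disk condition (d) is automatic. Since $\mu$ is OU, $\mu(U)>0$, and the full measure hypothesis in part~(a) yields an index $i$ with $\mu(U\cap R_i^\pitchfork)>0$. Fixing any reference fibers $\fs,\fu$ of $R_i$, compatibility identifies $\mu|_{R_i^\pitchfork}$ with the product measure $\nu_\fs\times\nu_\fu$ on $E^\fs\times E^\fu$ via $\psi^{\fs,\fu}$. A Fubini argument then shows that $\mu$-almost every $x\in R_i^\pitchfork$ lies in the support of the restricted stream measure on $E^{\fs(x)}$ and in the support of the restricted stream measure on $E^{\fu(x)}$, with tartan intersection points of $R_i$ accumulating at $x$ on both sides along each of $\fs(x)$ and $\fu(x)$. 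I would pick such an $x$ inside $U\cap R_i^\pitchfork$.

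Next I would apply the regularity of $R_i$ at $x$ with neighborhood $U$ to obtain $\delta>0$. The support property provides $y_\pm\in E^{\fs(x)}$ on opposite sides of $x$ with $\nu_{\fs(x)}([x,y_\pm]_s)<\delta$, and $z_\pm\in E^{\fu(x)}$ on opposite sides of $x$ with $\nu_{\fu(x)}([x,z_\pm]_u)<\delta$. The sub-tartan $R=(R^s,R^u)$ is then obtained by restricting the nearby fibers of $R_i$: for each $z\in[z_-,z_+]_u\cap R_i^\pitchfork$, the new stable fiber is the sub-arc of $\fs(z)$ bounded by $\fs(z)\pitchfork\fu(y_-)$ and $\fs(z)\pitchfork\fu(y_+)$; symmetrically, for each $y\in[y_-,y_+]_s\cap R_i^\pitchfork$, the new unstable fiber is the sub-arc of $\fu(y)$ bounded by $\fu(y)\pitchfork\fs(z_-)$ and $\fu(y)\pitchfork\fs(z_+)$.

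Verification splits into three parts. Tartan axioms (a)--(c) are inherited from $R_i$. The inclusion $R^s\cup R^u\subset U$ (which, combined with the first-paragraph reduction, gives~(d)) follows by decomposing each new fiber at its intersection with $\fu(x)$ or $\fs(x)$ into two sub-arcs, each being one of the four stream arcs appearing in the regularity condition applied to a pair $(y,z)$ with one coordinate in $\{y_-,y_+\}$ or $\{z_-,z_+\}$. For compatibility, $R^\pitchfork$ is precisely $\psi^{\fs(x),\fu(x)}\bigl(([y_-,y_+]\cap E^{\fs(x)})\times([z_-,z_+]\cap E^{\fu(x)})\bigr)$; restricted to this product, $\psi^{\fs(x),\fu(x)}$ coincides with the bijection $\psi$ of the new central pair of fibers, and the restricted stream measures coincide with the new stream measures, so compatibility for the central pair follows by restriction of the compatibility of $R_i$, and extends to every pair of new fibers via the holonomy invariance property stated after the definition of compatible tartan. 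Finally, $\mu(R^\pitchfork)>0$ follows from the support property of $x$. The main obstacle I expect is the bookkeeping in the compatibility step, together with the preliminary measure-theoretic fact that the two-sided support condition on $x$ defines a $\mu$-conull subset of $R_i^\pitchfork$ (needed so that $x$ can actually be chosen inside the positive measure set $U\cap R_i^\pitchfork$).
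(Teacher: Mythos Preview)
The paper does not actually prove this lemma: immediately before stating it, the authors write that ``In~\cite{mpa-dynamics}, a variety of conditions is presented, each of which, together with part~(a) of the definition, implies part~(b). Here we only state the condition which we will use later.'' So there is no in-paper proof to compare against; the result is imported from the companion paper.

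That said, your outline is the natural argument and is essentially correct. A couple of small points are worth tightening. First, for the containment step you need regularity applied not just to the four corner pairs $(y_\pm,z_\pm)$ but to pairs $(y_\pm,z')$ and $(y',z_\pm)$ with $z'\in[z_-,z_+]_u\cap R_i^\pitchfork$ and $y'\in[y_-,y_+]_s\cap R_i^\pitchfork$ arbitrary; this works because $[x,z']_u\subset[x,z_\pm]_u$ (and symmetrically) forces the required $\delta$-bound, but your phrasing ``one coordinate in $\{y_-,y_+\}$ or $\{z_-,z_+\}$'' could be read as only covering the four corners. Second, the two-sided accumulation property you need (points of $E^{\fs(x)}$ on \emph{both} sides of $x$ arbitrarily close in stream measure) holds off a countable subset of each $E^\fs$, since one-sided isolated points are endpoints of complementary gaps; combined with the support condition and Fubini via compatibility, this gives the $\mu$-conull set you want. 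Third, your compatibility argument is fine and not circular: holonomy invariance is a consequence of the compatibility of the \emph{ambient} tartan $R_i$, and the holonomies of your sub-tartan are restrictions of those of $R_i$, so you may legitimately use it to propagate compatibility from the central pair to all pairs. The Borel measurability of $R^s$ and $R^u$ deserves a sentence, but follows from the bimeasurability of $\psi^{\fs,\fu}$ for $R_i$.
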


\begin{rem}\label{lem:regular-propagate}
    If a tartan~$R$ is regular, then so is~$F(R)$.
\end{rem}

\subsection{Measurable pseudo-Anosov maps}
\label{sec:mpa}
\begin{defns}[measurable pseudo-Anosov turbulations, measurable pseudo-Anosov map,
dilatation]
    \label{defn:mpa} 
    A pair $(\cT^s, \nu^s), (\cT^u, \nu^u)$ of measured turbulations on~$\Sigma$
    are said to be \emph{measurable pseudo-Anosov turbulations} if they are
    transverse, tame, full, and have dense streamlines.

    A $\mu$-preserving homeomorphism $F\colon \Sigma\to \Sigma$ is \emph{measurable
    pseudo-Anosov} if there is a pair $(\cT^s, \nu^s), (\cT^u, \nu^u)$ of
    measurable pseudo-Anosov turbulations and a number $\lambda>1$, called the
    \emph{dilatation} of~$F$, such that $F(\cT^s, \nu^s) = (\cT^s,
    \lambda\nu^s)$ and $F(\cT^u, \nu^u) = (\cT^u, \lambda^{-1}\nu^u)$.
\end{defns}

In the hypotheses of the following lemma, note that we are not assuming that~$F$ is
a measurable pseudo-Anosov map: rather, the lemma will be used to prove that the
homeomorphisms~$F_\lambda$ are measurable pseudo-Anosov maps, by propagating a
single positive measure compatible tartan over a full measure subset of~$\Sigma$
using the ergodicity of~$F_\lambda$.

\begin{lem}\label{lem:image_tartan}
    Let $(\cT^s, \nu^s), (\cT^u, \nu^u)$ be a transverse pair of measured
    turbulations on~$\Sigma$, and suppose that $F\colon\Sigma\to\Sigma$ is
    $\mu$-preserving and satisfies $F(\cT^s, \nu^s) = (\cT^s, \lambda\nu^s)$
    and $F(\cT^u, \nu^u) = (\cT^u, \lambda^{-1}\nu^u)$ for some $\lambda>1$.

    If $R=(R^s, R^u)$ is a compatible tartan, then so is~$F(R)$ (the tartan whose
    fibers are the $F$-images of the fibers of~$R$).
\end{lem}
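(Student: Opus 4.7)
The plan is to verify the two requirements on $F(R) = (F(R^s), F(R^u))$ directly: first that $F(R)$ satisfies the structural conditions of a tartan, and second the compatibility identity $\psi^{F(\fs), F(\fu)}_* \nu_{F(\fs), F(\fu)} = \mu|_{F(R)^\pitchfork}$. Most of the structural conditions (a)--(d) of Definitions~\ref{defn:tartan} are immediate from the fact that $F$ is a homeomorphism taking stable streamlines to stable streamlines and unstable to unstable: intersections of fibers are preserved (single, transverse or at endpoints), consistent orientations push forward, and $F(U)$ is an open topological disk containing $F(R^s)\cup F(R^u)$. For the bound on fiber measures (condition~(c)), I would use the hypothesis $F(\cT^s,\nu^s) = (\cT^s,\lambda\nu^s)$, which unfolds to the formula $F_*(\nu^s_\ell) = \lambda\,\nu^s_{F(\ell)}$; consequently, for a stable fiber $\fs$ of~$R$ lying in streamline~$\ell$ and for $A\subset\fs$, one gets $\nu^s_{F(\ell)}(F(A)) = \lambda^{-1}\nu^s_\ell(A)$, so stable fiber measures get multiplied by $\lambda^{-1}$ and unstable fiber measures by $\lambda$ — both remain bounded. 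Measurability of $F(R^s)$ and $F(R^u)$ follows since $F$ is a Borel isomorphism.

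For compatibility, the heart of the argument is the naturality identity
\[
F \circ \psi^{\fs, \fu} \;=\; \psi^{F(\fs), F(\fu)} \circ (F\times F),
\]
which follows because $F(\fu(x)\pitchfork\fs(y)) = F(\fu)(F(x))\pitchfork F(\fs)(F(y))$, using that $F$ sends the unstable fiber through~$x$ to the unstable fiber of $F(R)$ through~$F(x)$, and similarly for stable. Next I would track the push-forward of the fiber measures. From the scaling computation above applied to $E^\fs \subset \fs$, one obtains
\[
F_*\nu_\fs \;=\; \lambda\,\nu_{F(\fs)} \quad\text{on } E^{F(\fs)} = F(E^\fs),
\qquad
F_*\nu_\fu \;=\; \lambda^{-1}\,\nu_{F(\fu)} \quad\text{on } E^{F(\fu)}.
\]
The key cancellation is that $\lambda$ and $\lambda^{-1}$ multiply in the product measure, giving $(F\times F)_*\nu_{\fs,\fu} = \nu_{F(\fs),F(\fu)}$.

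Combining this with the naturality identity, compatibility of $R$, and $F$-invariance of $\mu$ (which yields $F_*(\mu|_A) = \mu|_{F(A)}$ for any Borel $A$), I would compute
\[
\psi^{F(\fs), F(\fu)}_* \nu_{F(\fs), F(\fu)}
\;=\; \psi^{F(\fs), F(\fu)}_* (F\times F)_* \nu_{\fs, \fu}
\;=\; F_*\,\psi^{\fs, \fu}_* \nu_{\fs, \fu}
\;=\; F_*(\mu|_{R^\pitchfork})
\;=\; \mu|_{F(R)^\pitchfork},
\]
since $F(R)^\pitchfork = F(R^\pitchfork)$. Bi-measurability of $\psi^{F(\fs), F(\fu)}$ is immediate from bi-measurability of $\psi^{\fs, \fu}$ and the fact that $F\times F$ is a homeomorphism. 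There is no substantial obstacle here; the only point that requires a little care is getting the direction of the $\lambda$ scaling right when translating the hypothesis $F(\cT^s, \nu^s) = (\cT^s, \lambda\nu^s)$ into an equation for the push-forward of stream measures restricted to fibers, so that the two factors cancel cleanly in the product.
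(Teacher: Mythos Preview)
Your proof is correct and follows essentially the same approach as the paper's: the commutative square $F\circ\psi^{\fs,\fu} = \psi^{F(\fs),F(\fu)}\circ(F\times F)$, the cancellation $(F\times F)_*\nu_{\fs,\fu} = \nu_{F(\fs),F(\fu)}$ coming from the $\lambda$ and $\lambda^{-1}$ scalings, and the push-forward of $\mu|_{R^\pitchfork}$ using $F$-invariance. Your treatment is slightly more explicit about verifying the tartan conditions~(a)--(d), which the paper dismisses as immediate.
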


\begin{proof}
    It is immediate from Definitions~\ref{defn:tartan} that $F(R)$ is a tartan, and
    that $F(R)^\pitchfork = F(R^\pitchfork)$.

    For compatibility, let $\fs$ and $\fu$ be any stable and unstable fibers
    of~$R$, and write $\fs'$ and~$\fu'$ for the image fibers of~$F(R)$. Then there
    is a commutative diagram of bijections
    \[
        \begin{CD}
            E^\fs\times E^\fu @>\psi^{\fs, \fu}>> R^\pitchfork \\
            @VVF\times FV @VVFV \\
            E^\fsp\times E^\fup @>\psi^{\fsp, \fup}>> F(R)^\pitchfork.
        \end{CD}
    \]
    The bi-measurability of $\psi^{\fsp, \fup}$ follows from that of $\psi^{\fs,
    \fu}$ since $F$ and $F\times F$ are homeomorphisms. Since $F_*(\nu_\fs)=\lambda
    \nu_\fsp$ and $F_*(\nu_\fu)=\lambda^{-1} \nu_\fup$, we have $(F\times F)_*
    \nu_{\fs, \fu} = \nu_{\fs', \fu'}$, and hence $F_*(\psi_*^{\fs, \fu}\nu_{\fs,
    \fu}) = \psi_*^{\fsp, \fup}\nu_{\fsp, \fup}$. However $\psi_*^{\fs,
    \fu}\nu_{\fs, \fu} = \mu\vert_{R^\pitchfork}$ by compatibility of~$R$, so that
    $\psi_*^{\fsp, \fup}\nu_{\fsp, \fup} = \mu|_{F(R)^\pitchfork}$ by
    $F$-invariance of~$\mu$ as required.
\end{proof}

It is immediate from the definitions that any pseudo-Anosov map is a generalized
pseudo-Anosov map. We now show that any generalized pseudo-Anosov map
$F\colon\Sigma\to\Sigma$ with dense non-singular leaves is a measurable
pseudo-Anosov map.

\begin{lem}
    Let~$F\colon\Sigma\to\Sigma$ be generalized pseudo-Anosov, and suppose that the
    non-singular leaves of its invariant foliations are dense in~$\Sigma$. Then~$F$
    is also measurable pseudo-Anosov, with the streamlines of its (un)stable
    invariant turbulation given by the non-singular leaves~$\ell$ of the (un)stable
    foliation $\cF^{s/u}$, carrying the measures~$\nu_\ell^{s/u}$.
\end{lem}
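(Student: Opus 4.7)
The plan is to take the streamlines of $\cT^{s/u}$ to be the non-singular leaves of $\cF^{s/u}$ equipped with the restricted measures $\nu^{s/u}$, and to verify in turn the four clauses of Definitions~\ref{defn:mpa} together with the $F$-invariance identities.

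For well-definedness and basic properties, observe that the singular locus $Y \cup Z$ in Definitions~\ref{defn:pa-gpa-fols} is countable, so there are at most countably many singular leaves; by Remark~\ref{rem:zero_measure_leaf}, each has $\mu$-measure zero, so the non-singular leaves partition a Borel set of full $\mu$-measure. Each non-singular leaf is locally a horizontal or vertical segment of a regular chart, hence a connected one-manifold; the density hypothesis rules out circle leaves, so each is an injectively immersed line in the sense of Definition~\ref{defn:immersed}. The leaf measures $\nu^{s/u}$ are OU and pull back to Lebesgue in any regular chart; closed stream arcs have finite measure because they are covered by finitely many regular-chart pieces. Dense streamlines is the hypothesis, and transversality on a full-measure subset follows from the regular charts, which exhibit a stable and unstable streamline crossing transversely at every point lying on both a non-singular stable and non-singular unstable leaf.

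The delicate step is tameness. Since $\Sigma$ is compact, cover it by finitely many charts (regular or $p$-pronged), with Lebesgue number $\rho > 0$ for this cover. In each chart, uniform continuity of the chart homeomorphism on a compact domain shows that leaf sub-arcs of small model length have small ambient diameter. The issue is that a stream arc of small total stream measure need not lie in a single chart, and I would handle this by contradiction. If tameness failed, there would exist stream arcs $[x_n, y_n]$ on leaves $\ell_n$ with $\nu_{\ell_n}([x_n, y_n]) \to 0$ and $d(x_n, y_n) \geq \epsilon$. After passing to a convergent subsequence $(x_n, y_n) \to (x^*, y^*)$ with $d(x^*, y^*) \geq \epsilon$, pick a chart at $x^*$ containing $B(x^*, \rho)$, and observe that in model coordinates the initial segment of $[x_n, y_n]$ starting at $x_n$ has leaf length at most $\nu_{\ell_n}([x_n, y_n]) \to 0$; so this initial segment shrinks to a point, forcing the entire arc $[x_n, y_n]$ to lie inside the chart and within a vanishing neighborhood of $x_n$, contradicting $d(x_n, y_n) \geq \epsilon$.

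For fullness and dilatation, a regular chart $\Phi \colon N \to R_{I,J}$ together with a closed sub-rectangle $[a,b] \times [c,d] \subset I \times J$ yields a compatible tartan: set $R^s = \Phi^{-1}([a,b] \times [c,d])$ decomposed into stable fibers $\Phi^{-1}(\{\alpha\} \times [c,d])$, and $R^u$ the same set decomposed into unstable fibers $\Phi^{-1}([a,b] \times \{\beta\})$. The axioms of Definitions~\ref{defn:tartan} are visible in the model, and compatibility $\psi^{\fs, \fu}_* \nu_{\fs, \fu} = \mu|_{R^\pitchfork}$ follows because $\psi^{\fs, \fu}$ is essentially the identity in model coordinates, $\Phi$ is measure-preserving, and one-dimensional Lebesgue times one-dimensional Lebesgue equals two-dimensional Lebesgue. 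Regularity in the sense of Definition~\ref{defn:regular-tartan} is likewise immediate in the model. Countably many such tartans, taken from sub-rectangles of a countable open cover of $\Sigma \setminus (Y \cup Z)$ by regular charts, exhaust a full-measure set and so verify part~(a) of Definition~\ref{defn:full-turb}; part~(b) follows from Lemma~\ref{lem:part_b_from_a}. Finally, $F$ permutes singular points and hence singular leaves (carrying $p$-pronged charts to $p$-pronged charts), so restricts to a bijection of non-singular leaves; the identities $F(\cF^s, \nu^s) = (\cF^s, \lambda \nu^s)$ and $F(\cF^u, \nu^u) = (\cF^u, \lambda^{-1} \nu^u)$ from the generalized pseudo-Anosov setting restrict verbatim to the required identities for $\cT^s$ and $\cT^u$, completing the verification that $F$ is measurable pseudo-Anosov with dilatation $\lambda$.
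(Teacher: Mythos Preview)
Your overall strategy matches the paper's proof closely: strip out the singular leaves, use regular charts to build compatible tartans and verify transversality and fullness, and argue tameness by contradiction with subsequential limits. The fullness argument is fine (the paper verifies part~(b) of Definition~\ref{defn:full-turb} directly rather than via regularity and Lemma~\ref{lem:part_b_from_a}, but your route works equally well).

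There is, however, a genuine gap in your tameness argument. You propose to cover~$\Sigma$ by finitely many regular or pronged charts and then work in a chart at the limit point~$x^*$. But points of the finite set~$Z$ have neither regular nor pronged charts --- they are singleton leaves of both foliations --- so no such finite cover of~$\Sigma$ exists when $Z\neq\emptyset$, and if $x^*\in Z$ there is no chart to work in. Your argument as written therefore only establishes tameness in the pseudo-Anosov case, not the generalized one.

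The paper's proof splits into cases at exactly this point. When $x^*\notin Z$ the argument is essentially yours (made precise by noting that the leaf through~$x^*$ has segments of positive measure~$\eta$ on each side inside the chart domain, so that nearby leaves have at least $\eta/2$ on each side, forcing $[x_i,y_i]$ into the chart once its measure is small enough). When $x^*\in Z$, and hence by symmetry $y^*\in Z$, the paper instead shows that there is a positive lower bound on the stream measure of any arc joining small neighborhoods of two distinct points of~$Z$: one covers $\Sigma$ minus small balls around~$Z$ by finitely many charts of small diameter, and observes that any such arc must traverse an entire leaf segment of at least one chart lying entirely outside those balls.
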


\begin{proof}
    The stable and unstable streamlines are obtained from the stable and unstable
    foliations by omitting the finitely many singleton leaves at points of~$Z$, and
    the countably many leaves emanating from points of~$Y$. These omitted leaves
    have zero measure by Remark~\ref{rem:zero_measure_leaf}, so that $(\cT^s,
    \nu^s)$ and $(\cT^u, \nu^u)$ cover full measure in~$\Sigma$ and hence are
    turbulations. Clearly~$F$ preserves~$\mu$ and satisfies $F(\cT^s, \nu^s) =
    (\cT^s, \lambda\nu^s)$ and $F(\cT^u, \nu^u) = (\cT^u, \lambda^{-1}\nu^u)$. It
    remains therefore to show that the turbulations are measurable pseudo-Anosov
    turbulations (Definitions~\ref{defn:mpa}). The streamlines are dense by
    assumption, so it remains to show that the turbulations are transverse; that
    they are tame; and that they are full.

    By Definition~\ref{defn:charts}, there are regular chart domains $D_x$ about
    each point $x\in\Sigma\setminus(Y\cup Z)$ together with ambient and stream
    measure preserving homeomorphisms $\Phi_x\colon D_x\to M_x$ to regular models.
    In particular, the stable and unstable turbulations are transverse. For
    fullness, the stable and unstable leaf segments of $D_x$ which are not
    contained in singular leaves form a tartan $R_x = (R^s_x, R^u_x)$, whose
    compatibility with~$\mu$ is immediate using~$\Phi_x$. Moreover we have
    $\mu(R_x^\pitchfork) = \mu(D_x)>0$ by Remark~\ref{rem:zero_measure_leaf}. Since
    $\Sigma\setminus (Y\cup Z)$ is separable, it is covered by countably many chart
    domains $D_i=D_{x_i}$: the corresponding tartans~$R_i$ provide a countable
    collection of compatible tartans with $\mu\left(\bigcup_i
    R_i^\pitchfork\right)=1$. The second condition in the Definition of fullness is
    immediate, since every non-empty open subset~$U$ of $\Sigma$ contains a regular
    point~$x$, and the chart domain~$D_x$ can be restricted to a subset of~$U$.

    It remains to show that the turbulations are tame. Let~$d$ be a metric
    on~$\Sigma$ compatible with its topology, and suppose for a contradiction that
    there is some~\mbox{$\epsilon>0$} and a sequence $([x_i, y_i])$ of stream arcs
    --- of either turbulation --- with measures converging to zero, such that
    $d(x_i, y_i)\ge\epsilon$ for all~$i$. By taking a subsequence, we can assume
    that $x_i\to x^*\in\Sigma$ and $y_i\to y^*\in\Sigma$.

    Consider first the case $x^*\not\in Z$, so that there is a chart domain~$D$
    about~$x^*$ and an ambient and stream-measure preserving
    homeomorphism~$\Phi\colon D\to R$ or $\Phi\colon D\to Q$ to a regular or
    pronged model. Let $\eta>0$ be such that the leaf through~$x^*$ has segments of
    measure~$\eta$ on each side of~$x^*$ contained in~$D$ (or on each prong
    emanating from~$x^*$ if $x^*\in Y$). For~$i$ sufficiently large, we have that
    $x_i\in D$, and the streamline through~$x_i$ has measure at least $\eta/2$ on
    each side of~$x_i$ contained in~$D$. Since the measure of $[x_i, y_i]$
    converges to~$0$, we have $[x_i, y_i]\subset D$ for sufficiently large~$i$.
    Then $\Phi([x_i, y_i])$ is a horizontal or vertical segment with Lebesgue
    measure converging to zero, so that $\Phi(y_i)\to \Phi(x^*)$, and hence $y_i\to
    x^*$, contradicting $d(x_i, y_i)\ge\epsilon$.

    Therefore we must have $x^*\in Z$ and, analogously, $y^*\in Z$. We obtain
    the required contradiction by showing that there is a positive lower bound
    on the measure of any stream arc which connects sufficiently small
    neighborhoods of two distinct points of~$Z$.

    Pick $\xi>0$ such that distinct elements $z_1, z_2$ of~$Z$ have $d(z_1,
    z_2)>5\xi$. Let $U=\bigcup_{z\in Z}B(z, \xi)$, a union of $|Z|$ disjoint open
    disks. Cover $\Sigma\setminus U$ with a finite collection $(D_i)$ of (regular
    or pronged) chart domains of diameter less than~$\xi$. Any stream arc with
    endpoints in different components of~$U$ must intersect some $D_i$ which is
    contained entirely in $\Sigma\setminus U$, and therefore must contain an entire
    leaf segment (or perhaps two prongs of an $n$-od leaf segment in the pronged
    case). However, in each chart domain there is a minimum measure of such leaf
    segments (coming from the models of Definitions~\ref{defn:regular_model}
    and~\ref{defn:pronged_model}). The required lower bound on the stream measure
    of any stream arc which connects two different components of~$U$ follows.

\end{proof}

\section{Background}
    \label{sec:background}

In this section we review the material which forms the background of this work, and
state some key results from old and recent papers. Section~\ref{sec:tent} concerns
tent maps and their study using symbolic dynamics. In Section~\ref{sec:il} we
discuss inverse limits of tent maps. Finally, in Section~\ref{sec:measures}, we
summarize key properties of the absolutely continuous invariant measures of tent
maps, and state some results from~\cite{typical} which will play a key r\^ole in
this paper.

\subsection{Tent maps} \label{sec:tent}

\begin{defn}[Tent map, see Figure~\ref{fig:tent}] \label{defn:tent}
    Let $\lambda\in (\sqrt{2}, 2)$. 
    The \emph{(core) tent map
    \mbox{$f_\lambda\colon[a_\lambda, b_\lambda]\to [a_\lambda, b_\lambda]$} of
    slope~$\lambda$}  is the restriction of the map $T_\lambda\colon[0,1]\to
    [0,1]$ defined by $T_\lambda(x)=\min(\lambda x, \lambda(1-x))$ to the
    interval $I_\lambda := [a_\lambda, b_\lambda] := [T_\lambda^2(1/2),
    T_\lambda(1/2)]$. We write $c=1/2$, the turning point of $f_\lambda$.
\end{defn}

Throughout the paper we work with a single tent map of fixed slope and
    drop the subscripts~$\lambda$, writing simply $f\colon I\to I$, where
    $I=[a,b]$.

\begin{figure}[htbp]
        \begin{center}
            \includegraphics[width=0.4\textwidth]{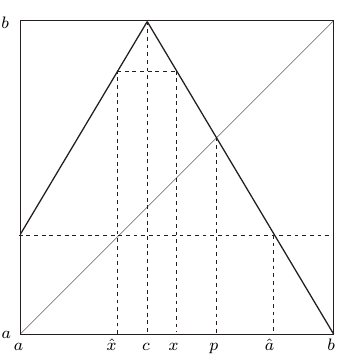}
        \end{center}
    \caption{A tent map $f\colon I\to I$}
    \label{fig:tent}
\end{figure}

\begin{notn}[$\PC$] \label{notn:PC}
    The post-critical set is denoted $\PC := \{f^r(c)\,:\,r\ge 1\}$.
\end{notn}

 Notice that $f(c)=b$ and $f(b)=a$, so that $a, b\in\PC$.

\begin{notn}[$\hx$, see Figure~\ref{fig:tent}]
    Let $\ha\in[c, b]$ satisfy $f(\ha)=f(a)$. For each $x\in [a,
    \ha]$ with $x\not=c$, we write $\hx$ for the unique element of~$I$ with
    $\hx\not=x$ and $f(\hx)=f(x)$.
\end{notn}

The fundamental idea of the symbolic approach to the dynamics of tent maps is to
code orbits of~$f$ with sequences of~$0$s and~$1$s, according as successive points
along the orbit lie to the left or to the right of the turning point. This leaves
open the question of which symbol to use when the orbit passes through the turning
point. We take a hybrid approach, leaving the choice open when the turning point is
not periodic, and making a choice based on the parity of the orbit of the turning
point when it is periodic. This approach has the advantage of giving clean
necessary and sufficient admissibility conditions for a sequence to be an
itinerary~\cite{itin-invlim}.

\begin{defn}[Itinerary $j(x)$, kneading sequence~$\kappa(f)$] \mbox{}

    If~$c$ is a periodic point of~$f$, of period~$n$, write $\ve(f)=0$
    (respectively $\ve(f)=1$) if an even (respectively odd) number of
    the points $\{f^r(c)\,:\, 1\le r < n\}$ lie in $(c, b]$. The
    \emph{itinerary} $j(x)\in\ssp$ of a point $x\in I$ is then defined by
    \[
        j(x)_r = 
        \begin{cases}
         0 & \text{ if }f^r(x)\in [a,c),\\
         1 & \text{ if }f^r(x)\in (c,b],\\
         \ve(f) & \text{ if }f^r(x)=c
        \end{cases}
        \qquad\text{ for each }r\in\N
    \]
    (we adopt the convention that $0\in\N$).

    If~$c$ is not a periodic point of~$f$, we say that a sequence $s\in\ssp$ is
    \emph{an itinerary of $x\in I$} if $f^r(x)\in [a,c]$ whenever $s_r=0$, and
    $f^r(x)\in[c,b]$ whenever $s_r=1$. Therefore each~$x\in I$ has exactly
    two itineraries if~$c\in\orb(x,f):=\{f^r(x)\,:\,r\in\N\}$, and a unique
    itinerary otherwise.

    With an abuse of notation, we write $j(x)=s$ to mean that~$s$ is an itinerary
    of~$x$. With this abusive notation we have that $j(x)=s \implies j(f(x)) =
    \sigma(s)$, where~$\sigma\colon\{0,1\}^\N\to\{0,1\}^\N$ is the shift map.

    Since $f$ is uniformly expanding on each of its branches, any sequence $s\in\{0,1\}^\N$ is the itinerary of at most one point~$x\in I$.

    The \emph{kneading sequence} $\kappa(f)\in\ssp$ of~$f$ is defined to
    be the itinerary of~$b$ (which is well defined, since if $c$ is not a periodic point then $c\not\in\orb(b,f)$).

\end{defn}

\begin{defn}[Unimodal order] \mbox{}

    The \emph{unimodal order} (also known as the \emph{parity-lexicographical
    order}) is a total order $\preceq$ on $\ssp$, defined as follows: if
    $s$ and $t$ are distinct elements of $\ssp$, let $r\in\N$ be least
    such that $s_r\not=t_r$. Then $s\preceq t$ if and only if $\sum_{i=0}^r
    s_i$ is even.
\end{defn}

\begin{rem} \label{rem:uni_order_reflects}
    The unimodal order is defined precisely to reflect the usual order of
    points on the interval: if $s$ and~$t$ are itineraries of $x$ and~$y$, then
    $x<y \implies s\prec t$; and, conversely, if $s\prec t$ then either $x<y$,
    or $s$ and~$t$ are the two itineraries of $x=y$ in the case where~$c$ is
    not periodic.
\end{rem}

\begin{rem} \label{rem:sqrt2_conditions}%
    As in Definition~\ref{defn:tent}, we only consider tent maps~$f$ whose
    slope~$\lambda$ satisfies $\sqrt{2}<\lambda<2$, that is, whose topological
    entropy $h(f) = \log\lambda$ satisfies $\frac{\log 2}{2} < h(f) <
    \log 2$. The condition $\lambda > \sqrt{2}$ is equivalent to each of the
    following:
    \begin{itemize}
        \item $f$ is not renormalizable;

        \item $\kappa(f) \succ 10(11)^\infty$;

        \item $\kappa(f) = 10(11)^k0\dots$ for some $k\ge 0$.
    \end{itemize} 
    It is also equivalent (see Theorem~\ref{thm:height-range})
    to the statement that $q(f)<1/2$, where $q(f)$ is the \emph{height} of~$f$ as given by Definition~\ref{defn:height}.

    The condition $\lambda<2$ is to avoid cluttering statements with exceptions for
    the special case~$\lambda=2$ (in which the natural extension of~$f$ is
    semi-conjugate to a generalized pseudo-Anosov map called the \emph{tight
    horseshoe map}, see for example~\cite{gpa}).
\end{rem}

The following lemma is dependent on the assumption that $\lambda>\sqrt2$.

\begin{lem} \label{lem:order_fa_p_ha}
    $f(a) < p < \ha$, where~$p$ is the fixed point of~$f$ (see
    Figure~\ref{fig:tent}).
\end{lem}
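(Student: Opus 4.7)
The plan is to prove both inequalities by direct computation in terms of the slope~$\lambda$. The four quantities involved all admit explicit closed forms: $b = T_\lambda(c) = \lambda/2$ and $a = T_\lambda(b) = \lambda - \lambda^2/2$; the unique non-trivial fixed point satisfies $p = \lambda(1-p)$ (since $p > c$ for $\lambda > 1$), giving $p = \lambda/(1+\lambda)$; and since $a < c$ (which follows from $(\lambda-1)^2 > 0$), we have $f(a) = \lambda a$, so $\hat{a} \in [c,b]$ with $\lambda(1-\hat{a}) = \lambda a$ yields $\hat{a} = 1 - a = 1 - \lambda + \lambda^2/2$.

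First I would verify $f(a) < p$. Substituting, this reads $\lambda(\lambda - \lambda^2/2) < \lambda/(1+\lambda)$, equivalently $(\lambda - \lambda^2/2)(1+\lambda) < 1$. Clearing and rearranging turns this into
\[
    \lambda^3 - \lambda^2 - 2\lambda + 2 > 0,
\]
which factors as $(\lambda - 1)(\lambda^2 - 2) > 0$. Since $\lambda > 1$, this is equivalent to $\lambda^2 > 2$, i.e.\ to the hypothesis $\lambda > \sqrt 2$.

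Next I would verify $p < \hat{a}$, which reads $\lambda/(1+\lambda) < 1 - \lambda + \lambda^2/2$. Multiplying through by $2(1+\lambda)$ and simplifying gives exactly the same polynomial inequality $\lambda^3 - \lambda^2 - 2\lambda + 2 > 0$, hence the same equivalence with $\lambda > \sqrt 2$. Both inequalities therefore hold simultaneously under the standing hypothesis, completing the proof.

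The whole argument is mechanical once the four formulas are in hand; no step is a genuine obstacle. The only observation worth isolating is the pleasing coincidence that both inequalities reduce to the same cubic condition, which transparently reflects the equivalence stated in Remark~\ref{rem:sqrt2_conditions} (and is in fact another manifestation of the condition $\lambda > \sqrt 2$). As an alternative route, one could use kneading theory: writing $\kappa(f) = 10(11)^k 0\ldots$ with $k \geq 0$, one has $j(f(a)) = \sigma^2\kappa(f) = (11)^k 0\ldots$ and $j(\hat a) = 1(11)^k 0 \ldots$, both of which compare with $j(p) = 1^\infty$ in the unimodal order in the desired direction (first difference at position $2k$ or $2k+1$, respectively, with the prescribed parity). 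This is perhaps more in keeping with the symbolic spirit of the paper, but the direct computation is shorter.
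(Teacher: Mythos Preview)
Your proof is correct. Your primary argument is by direct algebraic computation in the parameter~$\lambda$, whereas the paper's proof takes precisely the symbolic route you sketch at the end: from $\kappa(f) = 10(11)^k0\dots$ it reads off $j(f(a)) = (11)^k0\dots$ and $j(\ha) = 1(11)^k0\dots$, compares each with $j(p) = 1^\infty$ in the unimodal order, and invokes Remark~\ref{rem:uni_order_reflects}. So you have, in effect, given both proofs and chosen to lead with the one the paper doesn't use.

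The algebraic approach has the advantage of being self-contained and of making the equivalence with $\lambda>\sqrt{2}$ completely explicit via the factorization $(\lambda-1)(\lambda^2-2)>0$; the symbolic approach has the advantage of exercising the itinerary machinery that will be used repeatedly later in the paper, and of not requiring any formulas at all. Your observation that both inequalities reduce to the same cubic is a nice bonus that the symbolic proof obscures.
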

\begin{proof}
    By Remark~\ref{rem:sqrt2_conditions} we have $j(a) = 0(11)^k0\dots$ for some~$k\ge 0$, so that $j(\ha)= 1(11)^k0\dots$ and $j(f(a))= (11)^k0\dots$.
    Comparing these itineraries in the unimodal order with $j(p)=1^\infty$ and
    using Remark~\ref{rem:uni_order_reflects} gives the result.
\end{proof}
    
\subsection{Inverse limits} \label{sec:il}

Inverse limits of tent maps have been intensively studied, both for their intrinsic
interest as complicated topological spaces, and in dynamical systems as models for
attractors~\cite{jerana,BBD,Ingram,BM2,BorSt,BB,B2,raines}. 

\begin{defns}[Inverse limit~$\hI$, projections~$\pi_r$, natural
    extension~$\hf$] \mbox{} \label{defns:il}

    The \emph{inverse limit~$\hI = \hI_\lambda$} of the tent map $f\colon I\to I$
    is the space
    \[
        \hI = \{\bx\in I^\N\,:\, f(x_{r+1}) = x_r \text{ for all }r\in\N\}
        \subset I^\N,
    \] 
    endowed with the metric $d(\bx, \by) = \sum_{r=0}^\infty |x_r-y_r|/2^r$,
    which induces its natural topology as a subset of the product space $I^\N$.

    We denote elements of $\hI$ with angle brackets, $\bx = \thrn{x}$.

    For each $r\in\N$, we denote by $\pi_r\colon \hI\to I$ the projection onto
    the $r^\text{th}$ coordinate, $\pi_r(\bx)=x_r$.

    The \emph{natural extension} of~$f$ is the homeomorphism
    $\hf\colon\hI\to\hI$ defined by
    \[
        \hf(\thrn{x}) = \thr{f(x_0), x_0, x_1, x_2, \dots}.
    \]
\end{defns}

    Clearly each $\pi_r$ is a semi-conjugacy from $\hf\colon\hI\to\hI$ to
    $f\colon I\to I$. It is straightforward to see that any semi-conjugacy from
    a homeomorphism~$F\colon X\to X$ to~$f$ factors through each~$\pi_r$.

    Since $\pi_r = \pi_0\circ \hf^{-r}$, we have that, for all $\bx\in \hI$,
    \begin{equation}
    \label{eq:usefulhi}
        \bx = \thr{\pi_0(\bx), \pi_0(\hf^{-1}(\bx)), \pi_0(\hf^{-2}(\bx)),
        \dots}.
    \end{equation}

The next result is an important consequence of the condition that
$\lambda\in(\sqrt2, 2)$.

\begin{lem} \label{lem:many_choices}%
     For every $\bx\in\hI$, there are infinitely many~$r\in\N$ with $x_r\in[f(a), b)$: that is, infinitely many $r\in\N$ such that $x_r$ has two preimages.
\end{lem}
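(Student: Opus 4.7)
The plan is to argue by contradiction. I would suppose, for some $\bx\in\hI$, only finitely many coordinates~$x_r$ lie in $[f(a),b)$, and pick $N\in\N$ such that $x_r \in U := [a, f(a))\cup\{b\} = I\setminus[f(a),b)$ for all $r\geq N$. (Equivalently, each such $x_r$ would have a unique $f$-preimage in~$I$.) The strategy is to show that the backward orbit of $\bx$ cannot remain in $U$ beyond a bounded number of steps.

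The key observation to establish is the following: the left branch $f|_{[a,c]}$ is the strictly increasing map $x\mapsto\lambda x$ with image $[f(a),b]$, so any $y\in[a, f(a))$ has no preimage in $[a,c]$, and its unique preimage in~$I$ is $z=1-y/\lambda\in[c,b]$. Since $y<\lambda a$, this gives $z>1-a=\ha$; Lemma~\ref{lem:order_fa_p_ha} then yields $z>\ha>f(a)$. Thus the $f$-preimage in~$I$ of any point of $[a, f(a))$ lies strictly above $f(a)$. This strict inequality (and hence the whole argument) depends on the hypothesis $\lambda>\sqrt{2}$ via Lemma~\ref{lem:order_fa_p_ha}; at $\lambda=\sqrt 2$ one has $\ha=p=f(a)$ and the argument would break down.

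With the observation in hand, the contradiction follows quickly. If $x_r\in[a, f(a))$ with $r\geq N$, the observation gives $x_{r+1}>f(a)$; combined with $x_{r+1}\in U$, this forces $x_{r+1}=b$, whence $x_r = \lambda(1-b) = f(b) = a$ and $x_{r+2} = c$ (the unique preimage of~$b$). If instead $x_r=b$, then immediately $x_{r+1}=c$. Either way, some coordinate $x_{r'}$ with $r'\geq N$ equals~$c$; since $c<b$, membership in $U$ forces $c\in[a, f(a))$, and then the first case applied to $x_{r'}=c$ would force $c=a$, contradicting $c=1/2>a$ (noting that $a=\lambda(2-\lambda)/2<1/2$ for $\lambda\neq 1$). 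I do not foresee any significant technical obstacle beyond the careful bookkeeping of which points of~$U$ have preimages in~$U$ and which do not; the only genuinely delicate point is the use of the strict inequality from Lemma~\ref{lem:order_fa_p_ha}, which is precisely where the assumption $\lambda>\sqrt 2$ is doing its work.
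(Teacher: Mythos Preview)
Your proof is correct. The argument is sound: assuming the tail of the thread stays in $U=[a,f(a))\cup\{b\}$, you show that the unique preimage of any $y\in[a,f(a))$ lies above $\ha>f(a)$ (using Lemma~\ref{lem:order_fa_p_ha}), which forces the thread through $b$ and then through $c$, and applying the same reasoning at $c$ yields the contradiction $c=a$. The logic is valid as a proof by contradiction even in the range of $\lambda$ where in fact $c\ge f(a)$: there the derived statement $c\in[a,f(a))$ is itself already the contradiction.

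The paper takes a somewhat different route. Rather than tracking preimages of $[a,f(a))$ directly, it uses the fixed point~$p$: if $x_r\le p$ then $x_{r+1}<c$ or $x_{r+1}\ge p$, and since $\lambda<2$ bounds the number of consecutive entries below~$c$, one gets $x_r\ge p$ infinitely often; Lemma~\ref{lem:order_fa_p_ha} then gives $x_r>f(a)$, with a final step handling the possibility $x_r=b$. Both arguments hinge on Lemma~\ref{lem:order_fa_p_ha} (you use $\ha>f(a)$, the paper uses $p>f(a)$), so the dependence on $\lambda>\sqrt{2}$ enters in the same place. Your approach is a direct preimage computation and avoids invoking the fixed point or the bound on runs below~$c$; the paper's approach is a bit more dynamical in flavor but otherwise of comparable length and difficulty.
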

\begin{proof}
     Let~$p>c$ be the fixed point of~$f$. If $x_r \le p$ then $x_{r+1}$, being a
     preimage of~$x_r$, satisfies either $x_{r+1}<c$ or $x_{r+1}\ge p$ (see
     Figure~\ref{fig:tent}). Since $\lambda<2$, there is an upper bound on the
     number of consecutive entries of~$\bx$ which are smaller than~$c$: hence
     $x_r\ge p$ for infinitely many $r\in\N$.

     Since $p>f(a)$ by Lemma~\ref{lem:order_fa_p_ha}, there are infinitely many~$r$
     with $x_r\in (f(a), b]$. However if $x_r=b$ for some $r\ge 2$ then
     $x_{r-2}=f(a)$, and the result follows.
\end{proof}

\begin{defns}[$\pi_0$-fibers $\fib{x}$, cylinder sets $\lbrack x, y_1, \dots,
    y_r\rbrack $] \label{defs:cylinder}
    For each $x\in I$ we define the \emph{$\pi_0$-fiber~$\fib{x}$ of~$\hI$
    above~$x$} by $\fib{x} = \pi_0^{-1}(x)$.

    More generally, if $f(y_1)=x$ and $f(y_i) = y_{i-1}$ for $2\le i\le r$, we
    define the \emph{cylinder set} $\fib{x, y_1, \dots, y_r} \subset \fib{x}$
    by
    \[
        \fib{x, y_1, \dots, y_r} = \{\bx\in \fib{x}\,:\, x_i = y_i \text{ for }
        1\le i
        \le r\}.
    \]

    Note that the diameter of $\fib{x, y_1, \dots, y_r}$ is bounded above by $1/2^r$, so that the cylinder subsets of~$\fib{x}$ form a basis for its topology.
\end{defns}

\begin{defn}[Arc]
    An \emph{(open, half-open, or closed) arc} in~$\hI$ is a subset of~$\hI$
    which is a continuous injective image of an (open, half-open, or closed)
    interval.

    An open arc in~$\hI$ is therefore synonymous with an immersed line in~$\hI$ (Definition~\ref{defn:immersed}).
\end{defn}

We will need the following well-known facts about the topology of~$\hI$ (the former is theorem~2.2 of~\cite{raines}, the latter is folklore).
\begin{lem} \label{lem:basic_top} \mbox{}
    \begin{enumerate}[(a)]
        \item If $\bx\in\hI$ is not in the $\omega$-limit set
        $\omega(\fib{c},\hf)$ of the critical fiber, then it has a neighborhood
        homeomorphic to the product of an open interval and a Cantor set.
        \item Every path component of~$\hI$ is either a point or an arc.
    \end{enumerate}
\end{lem}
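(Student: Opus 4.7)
The plan is to treat the two parts separately.

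For part (a), the key observation is that $\hf^n(\fib{c}) = \{\bz\in\hI : z_n = c\}$, so $\bx\notin\omega(\fib{c},\hf)$ means there exist an integer $N$ and a compact neighborhood $U_0$ of $\bx$ such that for every $\bz\in U_0$ and every $n\geq N$ we have $z_n\neq c$. In particular, for such $\bz$ the point $z_{n-1} = f(z_n)$ differs from $b = f(c)$, so $z_{n-1}$ has exactly two preimages in $I$, one in $[a,c)$ and one in $(c,b]$.

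Exploiting this, I would construct a homeomorphism between an open neighborhood $V\subseteq U_0$ of $\bx$ and a product $W\times\{0,1\}^\N$, where $W$ is a small open interval. Take $W$ a small open interval around $x_0$, and take $V$ to consist of those $\bz\in U_0$ with $z_0\in W$ and $z_k$ lying on the same side of $c$ as $x_k$ for each $1\le k\le N$ (well-defined after possibly increasing $N$ so that $x_k\ne c$ for $1\le k\le N$, and by taking $W$ sufficiently small). Parameterize $\bz\in V$ by its first coordinate $z_0$ and by the sequence $\bl(\bz)\in\{0,1\}^\N$ of preimage-side choices past index $N$, giving a bijection $\Phi\colon V\to W\times\{0,1\}^\N$; the inverse unfolds preimages iteratively on the specified sides, which is well-defined because the $\omega$-limit condition prevents the unfolding from ever forcing a coordinate to equal $c$. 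Continuity of $\Phi$ and $\Phi^{-1}$ follows from the continuity of each branch of $f^{-1}$ away from $b$, together with the matching of the $\hI$-metric and the product metric on $W\times\{0,1\}^\N$ (both are controlled by agreement on a long initial segment of coordinates).

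For part (b), I would invoke the standard topological structure of tent-map inverse limits. The space $\hI$ is chainable, being an inverse limit of intervals with continuous surjective bonding maps, hence is tree-like and contains no simple closed curve. Any path between two points in $\hI$ can be reduced to an arc via classical arc-reduction in Hausdorff continua, and this arc is unique by tree-likeness. The path component of $\bx$ is therefore the union of all arcs through $\bx$; the absence of branch points, which is a consequence of chainability together with the local product structure from part (a) at generic points, forces this union to be either a single point or homeomorphic to an open, half-open, or closed interval, yielding an arc in $\hI$.

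The main obstacle is in part (a): establishing openness of $V$ and bijectivity of $\Phi$ onto $W\times\{0,1\}^\N$. Bijectivity requires that the iterative preimage unfolding for an arbitrary choice sequence $\bl$ never be obstructed by some coordinate equalling $b$ — equivalently, by the next coordinate being $c$ — and this is exactly what the $\omega$-limit condition guarantees once $W$ and $V$ are chosen carefully relative to $U_0$. For part (b), the remaining subtlety is to rule out branch points in path components, which is classical but relies on the chainable/tree-like structure of $\hI$.
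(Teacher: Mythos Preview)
The paper does not prove this lemma: it cites theorem~2.2 of Raines for~(a) and calls~(b) folklore. So your attempt is being compared against an outline of the standard arguments rather than a proof in the paper itself.

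Your sketch for~(b) is essentially the standard argument and is fine once tightened: inverse limits of arcs are chainable, chainable continua are atriodic and contain no simple closed curve, and hence every path component is a point or an arc. You don't need the local product structure from~(a) to rule out branch points---atriodicity of chainable continua does that on its own.

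Your argument for~(a) has a genuine gap. You claim that since $z_{n-1}\neq b$, the point $z_{n-1}$ has exactly two $f$-preimages, one in $[a,c)$ and one in $(c,b]$. This is false: points of $[a,f(a))$ have a unique $f$-preimage (lying in $(\ha,b]$), so the ``left'' preimage simply does not exist there. Consequently your map $\Phi\colon V\to W\times\{0,1\}^\N$ is injective but not surjective: many side-choice sequences~$\bl$ cannot be unfolded. Worse, the set of realizable~$\bl$ depends on~$z_0$ (whether a given partial unfolding lands in $[a,f(a))$ varies with the starting point), so the image of~$\Phi$ is not obviously a product $W\times K$ at all. Your self-identified obstacle---a coordinate being forced to equal~$c$---is not the obstruction to surjectivity; the obstruction is a coordinate landing below~$f(a)$, and the $\omega$-limit hypothesis does not directly prevent that.

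A cleaner route, closer to what the paper uses later (see Lemma~\ref{lem:make_0_box} in the irrational case), is: show that $\bx\notin\omega(\fib{c},\hf)$ implies $x_N\notin\overline{\PC}$ for some~$N$; then for a small interval $K\ni x_N$ disjoint from $\overline{\PC}$, the set $\pi_0^{-1}(K)$ is a disjoint union of $0$-flat arcs over~$K$ (since no preimage chain ever passes through~$c$), giving a genuine product $K\times\fib{x_N}$; finally transport this product neighborhood of $\hf^{-N}(\bx)$ forward by the homeomorphism~$\hf^N$. The Cantor factor is then the fiber $\fib{x_N}$ itself, not~$\{0,1\}^\N$.
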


It is worth noting how badly Lemma~\ref{lem:basic_top}(a) can fail for points
in the $\omega$-limit set of the critical fiber. Barge, Brucks, and
Diamond~\cite{BBD} show that there is a dense~$G_\delta$ set of
parameters~$\lambda$ for which every open subset of $\hI_\lambda$ contains a
homeomorphic copy of $\hI_{\lambda'}$ for every $\lambda'\in(\sqrt 2, 2)$.

Nevertheless, the fibers of tent maps inverse limits are regular (in the
case~$\lambda>\sqrt{2}$).

\begin{lem} \label{lem:cantor_fibre}%
    $\fib{x}$ is a Cantor set for all $x\in I$.
\end{lem}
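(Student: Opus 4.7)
The plan is to verify the four defining properties of a Cantor set for $\fib{x}$: non-empty, compact, metrizable with totally disconnected topology, and perfect. Metrizability and non-emptiness are quick: $\hI$ is a metric space with $\fib{x}$ a subspace, and since $f\colon I\to I$ is surjective we can inductively pick preimages $x_1,x_2,\dots$ of $x_0=x$ to produce at least one element of $\fib{x}$. Compactness follows since $\fib{x}=\pi_0^{-1}(x)$ is a closed subset of $\hI\subset I^\N$, which is compact.

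For total disconnectedness I would use the basis of cylinder sets $\fib{x,y_1,\dots,y_r}$ introduced in Definitions~\ref{defs:cylinder}. Each such cylinder is closed in $\fib{x}$ by continuity of the projections~$\pi_i$. For a fixed $r$, the cylinders of length $r$ inside $\fib{x}$ form a finite partition (there are at most $2^r$ of them because each point in $I$ has at most two $f$-preimages), so each cylinder is also open as the complement of finitely many closed cylinders. Given distinct $\bx,\bx'\in\fib{x}$, pick the least $r\geq 1$ with $x_r\ne x_r'$; then $\fib{x,x_1,\dots,x_r}$ is a clopen neighborhood of $\bx$ in $\fib{x}$ missing $\bx'$, so the connected component of $\bx$ in $\fib{x}$ reduces to $\{\bx\}$.

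The only substantive point is perfectness, and this is where the assumption $\lambda\in(\sqrt2,2)$ does real work through Lemma~\ref{lem:many_choices}. Given $\bx\in\fib{x}$ and $\varepsilon>0$, use that lemma to choose $r$ with $1/2^r<\varepsilon$ and with $x_r\in[f(a),b)$, so that $x_r$ has two $f$-preimages $x_{r+1}$ and $\hx_{r+1}$. Define $\bx'\in\fib{x}$ by $x_i'=x_i$ for $0\leq i\leq r$, $x_{r+1}'=\hx_{r+1}$, and then extending backwards to $i>r+1$ by arbitrary choices of preimages (possible since $f$ is surjective on $I$). By construction $\bx\ne\bx'$ (they disagree at coordinate $r+1$), while $d(\bx,\bx')\le\sum_{k\geq r+1}|x_k-x_k'|/2^k\le 1/2^r<\varepsilon$. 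Hence $\bx$ is not isolated.

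The main (mild) obstacle is precisely the perfectness step: we must be sure that the branching opportunities afforded by two-preimage coordinates occur arbitrarily far out along any backwards orbit. This is exactly the content of Lemma~\ref{lem:many_choices}, which was established earlier using $\lambda<2$ (bounding runs of coordinates below $c$) together with Lemma~\ref{lem:order_fa_p_ha}. Once that lemma is invoked, the three remaining properties are standard consequences of the inverse limit structure together with the fact that each $f^{-k}(x)$ is finite.
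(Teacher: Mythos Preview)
Your proof is correct and follows essentially the same approach as the paper's: the paper simply asserts that $\fib{x}$ is ``certainly compact and totally disconnected'' and then proves perfectness via Lemma~\ref{lem:many_choices} exactly as you do, branching at some coordinate $r$ large enough that the resulting point is within $1/2^r$ of $\bx$. Your version is more explicit about the routine parts (non-emptiness, compactness, and the clopen cylinder argument for total disconnectedness), but the substantive step is identical.
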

\begin{proof}
     $\fib{x}$ is certainly compact and totally disconnected. To see that it is
    perfect, let $\bx\in\fib{x}$ and~$R>0$. By Lemma~\ref{lem:many_choices} there
    is some $r>R$ for which $x_r$ has two preimages $x_{r+1}$ and $\hx_{r+1}$. Let
    $\bx'\in\fib{x}$ have $x'_i = x_i$ for $i\le r$ and $x'_{r+1} = \hx_{r+1}$.
    Then $0<d(\bx, \bx') < 1/2^R$.
\end{proof}

We can introduce symbolic dynamics on $\hI$ just as we did for forwards orbits of
tent maps.

\begin{defn}[Itineraries on~$\hI$]
    Let $\bx\in\hI$. An element $s$ of $\ssp$ is said to be an \emph{itinerary}
    of~$\bx$ if $s_r = 0 \implies x_{r+1}\le c$, and $s_r = 1\implies x_{r+1}
    \ge c$. In an abuse of notation, we write $J(\bx)=s$ to mean that $s$ is an
    itinerary of~$\bx$.
\end{defn}

\begin{rem} \label{rem:itin_determines} \mbox{}
    \begin{enumerate}[(a)]
    \item
        An element~$\bx$ of~$\fib{x}$ is determined by its itinerary $J(\bx)=s$,
        since the $x_r$ are determined inductively by $x_0 = x$; and $x_{r+1}$ is
        the unique preimage of $x_r$ which lies in $[a, c]$ if $s_r=0$, or in $[c,
        b]$ if $s_r=1$.

    \item 
        If $x\not\in\PC$, then every element of $\fib{x}$ has a unique itinerary,
        and if $c$ is not a periodic point of~$f$ then every element of $\hI$ has
        at most two itineraries. More generally, for any $\bx\in\hI$, the set of
        itineraries of $\bx$ is closed in $\ssp$.
    \end{enumerate}
\end{rem}

By Remark~\ref{rem:itin_determines}(b), if $x\not\in\PC$ then the fiber~$\fib{x}$
can be totally ordered by the unimodal order on the itineraries of its elements. In
particular, this makes it possible to define consecutive elements of the fiber.

\begin{defn}[Consecutive elements of~$\fib{x}$] \label{defn:consecutive}
    Let $x\not\in\PC$, so that every $\bx\in\fib{x}$ has a unique itinerary
    $J(\bx)$. We say that distinct points $\bx,
    \bx'\in\fib{x}$ are \emph{consecutive} if there is no point of $\fib{x}$
    whose itinerary lies strictly between $J(\bx)$ and $J(\bx')$ in the
    unimodal order.
\end{defn}

More generally, we can consider all of the itineraries which are realized in a fiber.

\begin{defn}[$\cK_x$, $L_x\colon\cK_x \to\fib{x}$] 
    \label{defn:admiss-itin-fiber}
    For each $x\in I$, write   
    \[
        \cK_x = \{s\in\ssp\,:\, s \text{ is an itinerary of some 
            $\bx\in\fib{x}$}\}.
    \]
    By Remark~\ref{rem:itin_determines}(a), there is a function
    $L_x\colon\cK_x\to\fib{x}$ defined by $L_x(J(\bx))=\bx$ for all
    $\bx\in\fib{x}$.
\end{defn}

\begin{lem} \label{lem:cpctKx} 
    For each $x\in I$, 
        $\cK_x$ is compact and  $L_x\colon \cK_x\to\fib{x}$ is continuous. 

    More generally, if $\cV = \{(x,s)\in I\times\{0,1\}^\N\,:\, s\in \cK_x\}$,
    then the function $L\colon\cV\to\hI$ defined by $L(x,s)=L_x(s)$ is continuous.
\end{lem}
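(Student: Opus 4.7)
The plan is to prove the general statement about $L\colon\cV\to\hI$ first, from which compactness of $\cK_x$ and continuity of $L_x$ follow by restriction (since $\cK_x$ is the preimage of $\{x\}$ under the projection $\cV\to I$, and $L_x = L|_{\{x\}\times\cK_x}$). The key observation is that the defining property ``$s$ is an itinerary of $\bx$'' is a conjunction of coordinate-wise conditions, each of which is closed under limits, combined with the compactness of $\hI$.

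First I would show that $\cV$ is closed in $I\times\{0,1\}^\N$, which establishes compactness of $\cK_x$ as a corollary. Suppose $(x\up{n}, s\up{n})\to (x,s)$ with $(x\up{n}, s\up{n})\in\cV$, and set $\bx\up{n}=L(x\up{n}, s\up{n})\in\hI$. By compactness of~$\hI$ some subsequence converges to a point $\bx\in\hI$. Continuity of $\pi_0$ gives $\pi_0(\bx)=x$, so $\bx\in\fib{x}$. For each fixed $r\in\N$, we have $s\up{n}_r = s_r$ for all sufficiently large~$n$; the condition that $s\up{n}$ is an itinerary of $\bx\up{n}$ says that $s\up{n}_r=0$ forces $x\up{n}_{r+1}\le c$ (and $s\up{n}_r=1$ forces $x\up{n}_{r+1}\ge c$), and passing to the limit these inequalities are preserved. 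Hence $s$ is an itinerary of $\bx$, so $s\in\cK_x$ and $(x,s)\in\cV$.

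Next, for continuity of $L$ at $(x,s)\in\cV$, take $(x\up{n}, s\up{n})\to (x,s)$ in $\cV$ and let $\bx\up{n}=L(x\up{n}, s\up{n})$. By the argument just given, every subsequence of $(\bx\up{n})$ has a further subsequence converging to some $\bx'\in\fib{x}$ of which $s$ is an itinerary. By Remark~\ref{rem:itin_determines}(a), the itinerary uniquely determines such a point, so $\bx' = L(x,s)$. Thus every subsequence has a further subsequence converging to the same limit $L(x,s)$, and compactness of $\hI$ then gives $\bx\up{n}\to L(x,s)$.

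There is no serious obstacle: the only points requiring care are to remember that the definition of ``itinerary'' allows multiple itineraries when the orbit hits $c$ (so one must check the itinerary-is-closed condition coordinate-wise rather than invoking uniqueness), and to use compactness of $\hI$ to reduce continuity to the extraction of convergent subsequences. The compactness of $\cK_x$ and continuity of $L_x$ are then immediate specializations.
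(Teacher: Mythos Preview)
Your proof is correct. For the compactness of $\cK_x$ (and closedness of $\cV$) your argument is essentially identical to the paper's. For the continuity of $L$, however, you take a genuinely different route: you invoke compactness of $\hI$ and the uniqueness statement of Remark~\ref{rem:itin_determines}(a) to run a subsequence argument, whereas the paper argues directly that if $|x_0-y_0|<\epsilon$ and the itineraries of $\bx,\by$ agree for indices $0,\dots,R$, then $|x_r-y_r|<\epsilon/\lambda^r$ for all $r\le R$ (since $x_{r+1},y_{r+1}$ lie in the same branch of~$f$, and~$f$ expands by~$\lambda$). The paper's approach is shorter and yields an explicit modulus of continuity; your approach is softer but perfectly valid, and has the minor advantage of making the deduction of continuity of $L_x$ and compactness of $\cK_x$ from the general statement about~$L$ completely automatic.
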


\begin{proof}
    Compactness of $\cK_x$ is straightforward: if $s\upi\to s$ are itineraries of
    $\bx\upi\in\fib{x}$ with $\bx\upi\to\bx$, then $s$ is an itinerary of~$\bx$.

    For continuity of $L_x$, note that if $\bx,
    \by\in\fib{x}$ and $J(\bx)_r = J(\by)_r$ for all $r\le R$, then $x_r=y_r$
    for all $r\le R$.

    For the final statement, note that if $\bx, \by\in\hI$ with
    $|x_0-y_0|<\epsilon$ and $J(\bx)_r = J(\by)_r$ for all $r\le R$, then
    $|x_r-y_r|<\epsilon/\lambda^r$ for all $r\le R$.
\end{proof}

\begin{rem} \label{rem:cont-itin}
    It follows that if $x\not\in\PC$, so that $J|_{\fib{x}}\colon \fib{x}\to\cK_x$
    is well-defined, then this map is continuous, being the inverse of the
    continuous bijection $L_x$.
\end{rem}

\subsection{Invariant measures} \label{sec:measures}

    Recall (for example~\cite{baladi,henkthesis,DGP,HG,Keller,LY,Ry}) that each
    tent map~$f=f_\lambda$ has a unique invariant Borel probability measure~$\mu =
    \mu_\lambda$ which is absolutely continuous with respect to Lebesgue
    measure~$m$, and $d\mu = \varphi\, dm$, for some $\varphi\in L^1(m)$ defined on
    $[a,b]\setminus \PC$ which is bounded away from zero, of bounded variation, and
    satisfies, for all $x\not\in\PC$ and $r\in\N$,
    \begin{equation}
    \label{eq:pf}
        \varphi(x) = \sum_{f^r(y)=x}\frac{\varphi(y)}{\lambda^r}
    \end{equation}
    Moreover, $\mu$ is ergodic.

    This measure~$\mu$ induces an ergodic $\hf$-invariant OU probability
     measure~$\hmu$ on~$\hI$ characterized by $(\pi_r)_*(\hmu) = \mu$ for
     all~$r\in\N$~\cite{para}.

The following result is from~\cite{typical}.

\begin{lem} \label{lem:alpha-measure}
    For each $x\not\in\PC$, there is a Borel measure $\alpha_x$ on~$\hI$, which is
    supported on~$\fib{x}$ and is given on cylinder subsets of~$\fib{x}$ by
    \[
        \alpha_x(\fib{x, y_1, \dots, y_r}) = \frac{\varphi(y_r)}{\lambda^r}.
    \]
    In particular, $\alpha_x(\fib{x}) = \varphi(x)$.
\end{lem}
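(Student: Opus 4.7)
The plan is to define $\alpha_x$ on the family of cylinder subsets of $\fib{x}$ via the displayed formula, verify that this definition is consistent under refinement, and then extend to a Borel measure using Carathéodory. By Definitions~\ref{defs:cylinder} and Lemma~\ref{lem:cantor_fibre}, the cylinders form a basis of clopen (hence compact) subsets for the Cantor topology on $\fib{x}$, so they generate the Borel $\sigma$-algebra and make the extension step routine.

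The key identity to verify is additivity under refinement: for a level-$r$ cylinder one has
\[
    \fib{x, y_1, \dots, y_r} = \bigsqcup_{f(y_{r+1}) = y_r} \fib{x, y_1, \dots, y_r, y_{r+1}},
\]
so the required consistency is
\[
    \frac{\varphi(y_r)}{\lambda^r} \;=\; \sum_{f(y_{r+1}) = y_r} \frac{\varphi(y_{r+1})}{\lambda^{r+1}},
\]
which is precisely~\eqref{eq:pf} applied at $y_r$ with index~$1$. To invoke~\eqref{eq:pf} at~$y_r$ I need $y_r\not\in\PC$: since $\PC$ is forward-invariant under $f$ and $x\not\in\PC$, every point in the backward tree of $x$ also lies outside $\PC$, so $\varphi$ is defined at all the $y_i$ that appear and the identity may be applied at every node. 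Defining $\alpha_x(\fib{x}):=\sum_{f(y_1)=x}\varphi(y_1)/\lambda$, which equals $\varphi(x)$ by~\eqref{eq:pf} at $x$ with index~$1$, and extending additively to finite disjoint unions of cylinders, yields a well-defined finitely additive set function on the Boolean algebra they generate.

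For $\sigma$-additivity I would use a compactness argument: each cylinder is clopen and compact in $\fib{x}$, so any countable disjoint cover of a cylinder by cylinders must in fact be finite, and finite additivity suffices. Carathéodory's extension theorem then yields a unique Borel measure $\alpha_x$ on $\fib{x}$, which I extend to $\hI$ by setting $\alpha_x(A):=\alpha_x(A\cap\fib{x})$, producing a Borel measure on $\hI$ supported on the fiber with $\alpha_x(\fib{x})=\varphi(x)$ as required. The only delicate point is ensuring that~\eqref{eq:pf} is applicable at every node of the backward tree of~$x$, which is handled by the forward-invariance of~$\PC$; beyond that, the proof is a standard application of measure-theoretic extension.
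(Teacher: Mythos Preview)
Your argument is correct and is the standard construction: consistency under one-step refinement follows from~\eqref{eq:pf} applied at $y_r$ (which lies outside $\PC$ since $\PC$ is forward-invariant and $x\notin\PC$), and the clopen/compact nature of cylinders in $\fib{x}$ reduces $\sigma$-additivity to finite additivity so that Carath\'eodory applies. Note that the paper itself does not prove this lemma but cites it from~\cite{typical}; your sketch is essentially how the result is established there.
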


\begin{rems} \mbox{} \label{rems:alpha-x}
    \begin{enumerate}[(a)]
    \item
        $\alpha_x$ is OU: since the cylinder subsets form a basis of $\fib{x}$,
        $\alpha_x$ is positive on non-empty open subsets of $\fib{x}$; and since
        any point~$\bx$ is contained in $\fib{x_0, \dots, x_r}$ for all~$r$,
        $\alpha_x$ is non-atomic.

    \item
        Since $\hf(\fib{x, y_1, \dots, y_r}) = \fib{f(x), x, y_1, \dots, y_r}$, we
        have $\alpha_{f(x)}(\hf(A)) = \lambda^{-1} \alpha_x(A)$ for every Borel
        subset~$A$ of~$\fib{x}$.

    \item
        If~$c$ is not pre-periodic, then~$\varphi$ can be extended inductively
        over~$\PC$ by
        \[
            \varphi(f^i(c)) = \sum_{f(y)=f^i(c)} \frac{\varphi(y)}{\lambda},
        \] 
        so that it satisfies equation~\eqref{eq:pf} for all~$x\in I$; and therefore
        the measures $\alpha_x$ can be defined for all~$x\in I$ in such a way that
        they satisfy~(a) and~(b).
    \end{enumerate}
\end{rems}

    Remarks~\ref{rems:alpha-x}(b) says that~$\hf$ contracts $\pi_0$-fibers
    uniformly by a factor~$\lambda$, with respect to the measures~$\alpha_x$.

    The images of these fibers under the semi-conjugacy $g\colon\hI\to\Sigma = S^2$
    --- which are almost all arcs --- will form the stable foliation or turbulation
    of the generalized or measurable pseudo-Anosov map semi-conjugate to $\hf$,
    with the measure on leaves/streamlines coming from the~$\alpha_x$. The unstable
    foliation/turbulation will come from the path components of~$\hI$, together
    with Lebesgue measure~$m$ on their $\pi_0$-images, which is expanded by~$\hf$
    by a factor~$\lambda$.

    The remaining definitions and results in this section are from~\cite{typical}:
    Theorem~\ref{thm:+ve-0-box} is lemma~8.3, Theorem~\ref{thm:holonomy_alpha} is
    theorem~7.1, Theorem~\ref{thm:disintegrate} is theorem~8.1,
    Theorem~\ref{thm:typical} is theorem~1.1(a), and Lemma~\ref{lem:decompose-glr}
    is lemma~3.5. The reader may find it helpful to interpret these in light of the
    genesis of the invariant turbulations just described. The $0$-boxes of
    Definition~\ref{defn:0-box} will provide the unstable fibers of a tartan, whose
    stable fibers come from the $\pi_0$-fibers of~$\hI$;
    Theorem~\ref{thm:+ve-0-box} will provide a tartan~$R$ with
    $\mu(R^\pitchfork)>0$, which will enable us to show fullness of the
    turbulations; Theorem~\ref{thm:holonomy_alpha} translates to unstable holonomy
    invariance of these tartans (stable holonomy invariance is straightforward);
    Theorem~\ref{thm:disintegrate} will be used to show that they are compatible
    with~$\mu$; and Theorem~\ref{thm:typical}, together with
    Lemma~\ref{lem:decompose-glr}, ensures that an unstable turbulation can be
    built from well-behaved path components.

\begin{defn}[$0$-flat arc]
    An arc~$\Gamma$ in~$\hI$ is \emph{$0$-flat (over the subinterval $J$ of $
    I$)} if $\pi_0$ is injective on~$\Gamma$ (and maps it onto~$J$).
\end{defn}

\begin{rem} \label{rem:0-flat-itin}
    Alternatively, the arc~$\Gamma$ is $0$-flat if and only if
    $c\not\in\Int(\pi_r(\Gamma))$ for all $r\ge 1$. In particular, the points 
    of a $0$-flat arc all share a common itinerary.
\end{rem}

\begin{defn}[$0$-box] \label{defn:0-box}
    Let~$J$ be a subinterval of~$I$. A \emph{$0$-box~$B$ over~$J$} is a Borel
    disjoint union of $0$-flat arcs over~$J$.
\end{defn}

\begin{thm}
    \label{thm:+ve-0-box}
    There exists a $0$-box~$B$ with $\hmu(B)>0$.
\end{thm}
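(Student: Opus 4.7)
The plan is to build a $0$-box over a small interval $J_0\subset I$ by taking, for each well-behaved $\bx\in\hI$, the unique $0$-flat arc through $\bx$ obtained by propagating its itinerary across $J_0$. Here ``well-behaved'' means that the coordinates $x_r$ stay geometrically far away from the critical point $c$; such $\bx$ form a positive $\hmu$-measure set by a Borel--Cantelli estimate, and the geometric decay automatically forces both uniqueness of itinerary and disjointness of the resulting arcs.

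Concretely, set
$$D_\ve \;:=\; \{\bx\in\hI : |x_r-c| > \ve\lambda^{-r}\text{ for every }r\ge 1\}.$$
Since $\varphi$ is of bounded variation, it is bounded above by some $M<\infty$, so $\mu(\{y:|y-c|<\delta\})\le 2M\delta$. Using $(\pi_r)_*\hmu=\mu$ and summing a geometric series,
$$\hmu(\hI\setminus D_\ve) \;\le\; \sum_{r\ge 1}\mu\bigl(\{|y-c|<\ve\lambda^{-r}\}\bigr) \;\le\; \frac{2M\ve}{\lambda-1},$$
so $\hmu(D_\ve)>0$ for all sufficiently small $\ve$. Fix such an $\ve$, cover $I$ by finitely many closed intervals of length $\ve/2$, and select one $J_0$ with $\hmu(D_\ve\cap\pi_0^{-1}(J_0))>0$.

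Call $s\in\ssp$ \emph{universally admissible over $J_0$} if, for every $y\in J_0$, the preimage chain of $y$ dictated by $s$ (that is, the point $L_y(s)$) is defined and none of its coordinates equals $c$, and let $G^*$ be the set of such $s$. The condition at level $r$ depends only on the prefix $s_0\dots s_{r-1}$, so $G^*$ is a $G_\delta$ subset of $\ssp$. For each $s\in G^*$, the map $y\mapsto L_y(s)$ is continuous by Lemma~\ref{lem:cpctKx} and a section of $\pi_0$, hence injective, so its image $\Gamma_s$ is a $0$-flat arc over $J_0$. Define $B:=\bigcup_{s\in G^*}\Gamma_s$.

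The three required properties of $B$ then fit together cleanly. For \emph{disjointness}: any $\bz\in\Gamma_s$ with $s\in G^*$ has coordinates $z_r\ne c$ for $r\ge 1$ by definition of $G^*$, so by Remark~\ref{rem:itin_determines}(b) its itinerary is unique and equal to $s$; distinct $\Gamma_s,\Gamma_{s'}$ therefore cannot share a point. For \emph{Borel-ness}: writing $U=\{\bz\in\hI : z_r\ne c\text{ for every }r\ge 1\}$, which is Borel of full $\hmu$-measure and on which the itinerary map $J$ is continuous (Remark~\ref{rem:cont-itin}), one has $B=\pi_0^{-1}(J_0)\cap U\cap J^{-1}(G^*)$, an intersection of Borel sets. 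For \emph{positivity}: any $\bx\in D_\ve\cap\pi_0^{-1}(J_0)$ has unique itinerary $s:=J(\bx)$, and the $\lambda^{-r}$-contraction of preimage branches gives
$$|(L_y(s))_r-x_r|\;\le\;|y-x_0|\lambda^{-r}\;\le\;(\ve/2)\lambda^{-r}\;<\;|x_r-c|$$
for every $y\in J_0$ and $r\ge 1$, forcing $(L_y(s))_r$ strictly onto the same side of $c$ as $x_r$; hence $s\in G^*$ and $\bx\in\Gamma_s\subseteq B$, so $\hmu(B)\ge\hmu(D_\ve\cap\pi_0^{-1}(J_0))>0$. The main delicacy of the argument is this matching of scales: the diameter of $J_0$ must be small enough that the $\lambda^{-r}$-scaled perturbation from varying $y\in J_0$ is strictly dominated by the geometric threshold $\ve\lambda^{-r}$ used to define $D_\ve$, and this single inequality simultaneously delivers disjointness of the arcs, uniqueness of itineraries, and a positive-measure supply of $\bx$'s to populate $B$.
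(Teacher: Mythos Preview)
The paper does not give its own proof of this statement: it is imported from~\cite{typical} (as lemma~8.3 there), so there is no argument in the present paper to compare against. Your proof is self-contained and the overall strategy---a Borel--Cantelli estimate producing a positive-measure set $D_\ve$ of threads whose coordinates stay geometrically far from $c$, followed by propagating itineraries across a short base interval $J_0$---is correct and natural.

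There is, however, one step that needs an extra sentence. In verifying $s=J(\bx)\in G^*$ for $\bx\in D_\ve\cap\pi_0^{-1}(J_0)$, you invoke the contraction estimate $|(L_y(s))_r-x_r|\le(\ve/2)\lambda^{-r}$ inductively; but for the inductive step to go through when $s_r=0$ you must first know that $(L_y(s))_r\ge f(a)$, since otherwise the preimage on the left branch $[a,c]$ does not exist in~$I$ and $L_y(s)$ is undefined. Your displayed inequality only keeps $(L_y(s))_r$ on the correct side of~$c$, not inside $[f(a),b]$. The fix is short: since $a=f^2(c)$, the bound $|x_{r+3}-c|>\ve\lambda^{-(r+3)}$ from $D_\ve$ forces $x_{r+1}>a+\ve\lambda^{-(r+1)}$ whenever $x_{r+1}<c$ (for $\ve$ small enough that $a+\ve/\lambda<f(a)$), hence $x_r>f(a)+\ve\lambda^{-r}$ and then $(L_y(s))_r>f(a)$ as needed. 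Alternatively, simply add the clause $|x_r-a|>\ve\lambda^{-r}$ to the definition of $D_\ve$; the same Borel--Cantelli bound applies. A much smaller quibble: Remark~\ref{rem:cont-itin} asserts continuity of~$J$ only on individual fibers $\fib{x}$ with $x\notin\PC$, not on your set~$U$; the continuity you need on~$U$ is just as easy (each symbol $J(\bz)_r$ is locally constant there), but the citation is slightly off.
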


\begin{thm}[Holonomy invariance]
    \label{thm:holonomy_alpha}
    Let~$B$ be a $0$-box over a subinterval~$J$. Then
    \[
        \alpha_x(B)  = \alpha_y(B) \quad \text{ for all $x,y\in
        J\setminus\PC$}.
    \]
\end{thm}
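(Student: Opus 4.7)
The plan is to parametrize $B$ by its Borel set of itineraries, express $\alpha_x(B)$ as a cylinder-sum limit, and then use the disintegration formula (Theorem~\ref{thm:disintegrate}) together with $\hf$-invariance of $\hmu$ to show that the limit is independent of $x$.

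First I would set up the parametrization. By Remark~\ref{rem:0-flat-itin} each $0$-flat arc $\Gamma\subset B$ carries a single itinerary, and by injectivity of $L_x$ on $\cK_x$ (Lemma~\ref{lem:cpctKx}) distinct arcs over $J$ have distinct itineraries. Hence there is a Borel set $\tilde{S}\subset\bigcap_{x\in J}\cK_x$ such that $B\cap\fib{x}=L_x(\tilde{S})$ for every $x\in J\setminus\PC$. For each $r\ge 0$ let $\Sigma_r$ denote the finite set of length-$r$ prefixes of itineraries in $\tilde{S}$. The $0$-flatness of the arcs over $J$ forces, for each $\sigma\in\Sigma_r$, the inverse branch $f^{-r}_\sigma$ to be defined on all of $J$, so by Lemma~\ref{lem:alpha-measure} the cylinder $[x,f^{-1}_{\sigma_0}(x),\dots,f^{-r}_\sigma(x)]$ has $\alpha_x$-measure $\varphi(f^{-r}_\sigma(x))/\lambda^r$. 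These cylinders are disjoint, cover $B\cap\fib{x}$, and their unions decrease to the closure of $B\cap\fib{x}$ in $\fib{x}$; a standard Borel approximation using the non-atomic nature of $\alpha_x$ yields
\[
\alpha_x(B)=\lim_{r\to\infty}\sum_{\sigma\in\Sigma_r}\frac{\varphi(f^{-r}_\sigma(x))}{\lambda^r}.
\]

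Now I would invoke the disintegration identity $\hmu(B\cap\pi_0^{-1}(J'))=\int_{J'}\alpha_x(B)\,dm(x)$, valid for every subinterval $J'\subseteq J$. Then $\alpha_x(B)$ is constant on $J\setminus\PC$ if and only if (via Lebesgue differentiation together with continuity of $x\mapsto\alpha_x(B)$, which I would establish separately from the cylinder formula and the continuity of $\varphi$ on $I\setminus\PC$) there is a constant $c$ with $\hmu(B\cap\pi_0^{-1}(J'))=c\cdot m(J')$ for every subinterval $J'\subset J$.

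The main obstacle is proving this last proportionality. My plan would be to iterate the fiber-contraction identity $\alpha_{f(x)}(\hf A)=\lambda^{-1}\alpha_x(A)$ from Remarks~\ref{rems:alpha-x}(b): backward iteration $\hf^{-n}$ decomposes $B\cap\pi_0^{-1}(J')$ into finitely many disjoint $0$-boxes over preimage subintervals of length $m(J')/\lambda^n$, whose combined $\hmu$-masses are packaged by the Perron-Frobenius identity~\eqref{eq:pf} for $\varphi$; $\hf$-invariance of $\hmu$ then pins down the proportionality, with the constant $c$ depending only on $\tilde{S}$ and not on $J'$. The delicate point is controlling the error in the cylinder approximation uniformly in $x$ as $r\to\infty$, for which the bounded variation of $\varphi$ stated in Section~\ref{sec:measures} should provide the essential regularity input.
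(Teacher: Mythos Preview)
The paper does not prove this theorem: it is quoted from~\cite{typical} (as theorem~7.1 there), so there is no in-paper argument to compare against. I therefore comment on the proposal on its own terms.

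There are two genuine gaps.

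First, your cylinder formula does not compute $\alpha_x(B)$ for a general Borel $0$-box. The nested unions $\bigcup_{\sigma\in\Sigma_r}[x,f^{-1}_{\sigma_0}(x),\dots,f^{-r}_\sigma(x)]$ decrease to the \emph{closure} of $L_x(\tilde S)$ in~$\fib{x}$, not to $L_x(\tilde S)$ itself, and non-atomicity of~$\alpha_x$ does nothing to close that gap (the difference can be an uncountable set of positive $\alpha_x$-measure). So the displayed limit is $\alpha_x(\overline{B\cap\fib{x}})$, which need not equal $\alpha_x(B)$.

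Second, and more seriously, the route to the proportionality $\hmu(B\cap\pi_0^{-1}(J'))=c\,m(J')$ is circular. By Theorem~\ref{thm:disintegrate} this proportionality is exactly the statement that $x\mapsto\alpha_x(B)$ is $m$-a.e.\ constant on~$J'$; you then want to upgrade a.e.\ to everywhere via continuity, and prove the a.e.\ statement by backward iteration. But $\hf^{-n}$ sends a $0$-flat arc over~$J'$ to a $0$-flat arc over one of the branches of $f^{-n}(J')$, so $\hf^{-n}(B\cap\pi_0^{-1}(J'))$ is a disjoint union of $0$-boxes $B_K$ over the various branch intervals~$K$; invariance of~$\hmu$ together with Theorem~\ref{thm:disintegrate} gives $\int_{J'}\alpha_x(B)\,dm = \sum_K\int_K\alpha_y(B_K)\,dm$, and the Perron--Frobenius identity~\eqref{eq:pf} is precisely the consistency relation between these integrals --- it does not pin down the integrand. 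You are left needing exactly the same statement (constancy of $y\mapsto\alpha_y(B_K)$) on the smaller intervals~$K$, so nothing has been gained. Separately, the continuity step is not justified: $\varphi$ is only asserted to be of bounded variation, not continuous on $I\setminus\PC$, and even if it were, uniform convergence of the cylinder sums would still need proof.
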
 

\begin{thm}
    \label{thm:disintegrate}
    For every Borel subset~$E$ of~$\hI$, we have
    \[
        \hmu(E) = \int_I\alpha_x(E) dm(x).
    \]
\end{thm}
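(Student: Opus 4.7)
The plan is to define a candidate measure $\tilde{\mu}$ on $\hI$ by the right-hand side and show that it coincides with $\hmu$ by verifying agreement on a $\pi$-system of generating cylinders and then appealing to uniqueness of measure extension. Concretely, set $\tilde{\mu}(E) := \int_I \alpha_x(E)\, dm(x)$ for Borel $E \subseteq \hI$; this is well-defined on the full-$m$-measure complement of $\PC$, which is all one needs since $m(\PC) = 0$.

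First I would match the two measures on the $\pi$-system $\cC$ consisting of the inverse-limit cylinders $E = \pi_r^{-1}(A)$ for $r \in \N$ and Borel $A \subseteq I$ (closed under finite intersection since $\pi_r^{-1}(A) \cap \pi_s^{-1}(B) = \pi_s^{-1}(f^{-(s-r)}(A) \cap B)$ for $r \le s$). For such $E$ and for $x \notin \PC$, the intersection $E \cap \fib{x}$ is a finite disjoint union of cylinders $\fib{x, f^{r-1}(y), \dots, f(y), y}$ indexed by those $y \in A$ with $f^r(y) = x$; hence Lemma~\ref{lem:alpha-measure} gives
\[
    \alpha_x(E) = \sum_{\substack{f^r(y) = x \\ y \in A}} \frac{\varphi(y)}{\lambda^r}.
\]
Integrating against $m$ and changing variables $x = f^r(y)$ on each monotone branch of $f^r$ (all of which have constant Jacobian $\lambda^r$) makes the $\lambda^r$ factors cancel, yielding $\tilde\mu(E) = \int_A \varphi\, dm = \mu(A) = \hmu(\pi_r^{-1}(A))$, where the last equality is the characterizing property $(\pi_r)_*\hmu = \mu$ of the lifted measure.

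Next I would verify that $\tilde{\mu}$ is genuinely a Borel measure on $\hI$. The decisive ingredient is that $x \mapsto \alpha_x(E)$ is $m$-measurable for every Borel $E$; this is transparent for $E \in \cC$ from the explicit formula above, and extends to arbitrary Borel $E$ by a standard monotone class argument (the collection of Borel sets for which this measurability holds is a $\lambda$-system containing the generating $\pi$-system $\cC$, hence equals the whole Borel $\sigma$-algebra). Countable additivity of $\tilde\mu$ then follows from countable additivity of each $\alpha_x$ combined with monotone convergence. Since $\cC$ generates the Borel $\sigma$-algebra on $\hI$ and both $\tilde\mu$ and $\hmu$ are finite (of total mass $1$), the uniqueness part of Carath\'eodory's extension theorem gives $\tilde\mu = \hmu$.

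The main technical obstacle is establishing measurability of $x \mapsto \alpha_x(E)$ for a general Borel $E$; without it, the integral defining $\tilde\mu$ does not even make sense. Once this is handled via the monotone class argument, the remainder reduces to the bookkeeping calculation on cylinders together with a standard uniqueness theorem. The post-critical set $\PC$ raises no real difficulty, since it is countable and has zero Lebesgue measure and can therefore be safely ignored when evaluating $\int_I \alpha_x(E)\, dm(x)$.
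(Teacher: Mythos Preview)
Your argument is correct and is the standard route to such a disintegration formula: check agreement on the generating $\pi$-system of cylinders $\pi_r^{-1}(A)$ via the explicit cylinder formula for $\alpha_x$ and the change-of-variables computation, extend measurability of $x\mapsto\alpha_x(E)$ by a monotone class argument, and invoke uniqueness of extension. The bookkeeping with the post-critical set is handled appropriately.

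Note, however, that this paper does not itself prove Theorem~\ref{thm:disintegrate}: it is quoted from~\cite{typical} (there as Theorem~8.1), so there is no in-paper proof to compare against. Your proof is exactly the kind of self-contained argument one would expect for this result.
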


\begin{defn}[Globally leaf regular] \label{defn:glob_leaf_reg}
    A point $\bx\in\hI$ is said to be \emph{globally leaf regular} if its path
    component~$\Gamma$ is an open arc, and each component of
    $\Gamma\setminus\{\bx\}$ is dense in~$\hI$.
\end{defn}

\begin{thm}[Typical path components]
    \label{thm:typical}
    $\hmu$-almost every point of~$\hI$ is globally leaf regular.
\end{thm}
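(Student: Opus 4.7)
The plan uses three ingredients: the 0-box $B$ of Theorem~\ref{thm:+ve-0-box} with $\hmu(B)>0$; the uniform $\lambda$-expansion of $\pi_0$-images of 0-flat arcs under $\hf$; and the ergodicity of $\hmu$.

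\emph{Step 1 (long arcs through $\bx$).} After discarding the $\hmu$-null set of endpoints of 0-flat arcs (and shrinking $B$ if needed), assume every $\by\in B$ lies interior to a 0-flat arc $\gamma(\by)\subset B$ over an open interval $J(\by)$, at a uniform distance from the endpoints of $\gamma(\by)$. Let $X\subseteq\hI$ be the set of $\bx$ whose backward $\hf$-orbit visits $B$ infinitely often: by Birkhoff's theorem applied to the indicator of $B$, $\hmu(X)=1$. For $\bx\in X$ pick $n_k\to\infty$ with $\hf^{-n_k}(\bx)\in B$, and set $\delta_k=\hf^{n_k}(\gamma(\hf^{-n_k}(\bx)))$. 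Each $\delta_k$ is an arc containing $\bx$ as an interior point, with $\pi_0(\delta_k)=f^{n_k}(J(\hf^{-n_k}(\bx)))$; since $f$ expands at rate $\lambda>1$, both halves of $\delta_k$ separated by $\bx$ have $\pi_0$-image equal to $I$ for all sufficiently large $k$.

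\emph{Step 2 (path component is an open arc).} By Step~1 and Lemma~\ref{lem:basic_top}(b), the path component $\Gamma(\bx)$ of $\bx$ is an arc containing $\bx$ as an interior point; and since $\bx$ is interior to arcs $\delta_k\subset\Gamma(\bx)$ of arbitrarily large $\pi_0$-extent on \emph{both} sides, $\Gamma(\bx)$ must be an open arc homeomorphic to $\R$. This settles the first condition in the definition of globally leaf regular.

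\emph{Step 3 (both sides are dense).} Fix a countable basis $(V_j)$ of nonempty open subsets of $\hI$, each with $\hmu(V_j)>0$ by full support of $\hmu$. Let $E_j=\{\bx\in X:\text{some component of }\Gamma(\bx)\setminus\{\bx\}\text{ misses }V_j\}$, and $E=\bigcup_j E_j$. Since $\hf$ sends each side of an interior point of a path component bijectively to a side of the image, and since for each $j$ the open set $\hf(V_j)$ contains some $V_{j'}$, the set $E$ is $\hf$-invariant, so $\hmu(E)\in\{0,1\}$ by ergodicity. Ruling out $\hmu(E)=1$ is the crux: one-sided density follows easily from ergodicity applied to the invariant set $\{\bx:\Gamma(\bx)\cap V_j\neq\emptyset\}$, but showing both-sided density requires exhibiting a positive measure set of $\bx$ whose path component enters $V_j$ on both sides of $\bx$. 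The idea is to use the two halves of $\gamma(\hf^{-n_k}(\bx))$ separated by $\hf^{-n_k}(\bx)$, both of which map under $\hf^{n_k}$ to arcs whose $\pi_0$-image is all of $I$, and to show via a careful orientation-tracking argument --- accounting for the folds in $\hf^{n_k}$ at the critical point using the holonomy invariance of Theorem~\ref{thm:holonomy_alpha} --- that for suitably chosen $n_k$ both image halves must enter $V_j$. The principal obstacle is precisely this orientation tracking: because there is no globally consistent labeling of the two components of $\Gamma(\bx)\setminus\{\bx\}$, one cannot apply ergodicity to each side separately, and must instead work with local $\pi_0$-orientations on 0-flat arcs and follow how they propagate through the folds of $\hf$. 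Intersecting the resulting full-measure sets over $j$ gives that $\hmu$-a.e.\ $\bx$ is globally leaf regular.
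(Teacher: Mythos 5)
This theorem is not proved in the paper at all: it is imported from~\cite{typical}, where it is theorem~1.1(a), so there is no internal argument to compare yours against. Judged on its own terms, your proposal assembles the right ingredients (the positive-measure $0$-box, Poincar\'e recurrence of the backward orbit, uniform expansion of $\pi_0$-images of $0$-flat arcs, ergodicity) but it is an outline rather than a proof, and the missing piece is exactly the content of the theorem. In Step~3 you reduce matters to showing that a positive-measure set of points $\bx$ has \emph{both} components of $\Gamma(\bx)\setminus\{\bx\}$ meeting a given basic open set $V_j$, correctly observe that ergodicity cannot be applied to the two sides separately because there is no measurable invariant labelling of them, and then leave the ``orientation-tracking through the folds of $\hf^{n_k}$'' as an idea with an acknowledged ``principal obstacle.'' Two further points make the gap wider than you suggest. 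First, knowing that each half of $\delta_k$ has $\pi_0$-image equal to $I$ says nothing about density in $\hI$: to meet $V_j$ the arc must contain a point whose entire backward thread (all coordinates, not just $x_0$) is prescribed to depth $r$, i.e.\ the image arc must shadow an arbitrary admissible itinerary; establishing that the folded images $\hf^{n_k}(\gamma)$ do this is the real work, and holonomy invariance of the transverse measures $\alpha_x$ (Theorem~\ref{thm:holonomy_alpha}) does not obviously bear on it. Second, even the ``easy'' one-sided statement is not free: the set $\{\bx\,:\,\Gamma(\bx)\cap V_j\neq\emptyset\}$ is not $\hf$-invariant for fixed~$j$ (it is pushed to $\{\by\,:\,\Gamma(\by)\cap \hf(V_j)\neq\emptyset\}$), and while $\{\bx\,:\,\Gamma(\bx)\text{ is dense}\}$ is invariant, showing it has positive measure already requires the shadowing argument above.

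Step~2 is also incomplete as written: the $\pi_0$-extent of any arc is bounded by $|I|$, so ``arbitrarily large $\pi_0$-extent on both sides'' is not a growing quantity and does not rule out $\Gamma(\bx)$ being a closed or half-open arc with $\bx$ in its interior. What your Step~1 actually shows is that $\bx$ is not an endpoint of its component and the component is not a point; openness of the arc would have to come either from a genuinely unbounded complexity count (e.g.\ the number of $0$-flat pieces of $\delta_k$ on each side, via Lemma~\ref{lem:decompose-glr}) or, more cheaply, as a consequence of the density in Step~3 together with indecomposability of~$\hI$. Since Step~3 is the unproved step, this does not rescue the argument.
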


Globally leaf regular path components (or, more generally, open arcs) can be
decomposed into $0$-flat arcs.

\begin{lem}[$0$-flat decomposition of globally leaf regular component]
    \label{lem:decompose-glr}
    Let~$\Gamma\subset\hI$ be an open arc. Then there is a countable (finite,
    infinite, or bi-infinite) sequence $(\Gamma_i)$ of $0$-flat arcs in~$\hI$,
    unique up to re-indexing by an order-preserving or order-reversing
    bijection, such that
    \begin{enumerate}[(a)]
        \item $\Gamma = \bigcup_i \Gamma_i$
        \item Each $\Gamma_i$ is disjoint from all other~$\Gamma_j$, except
        that it intersects $\Gamma_{i-1}$ and $\Gamma_{i+1}$, if they exist, at
        its endpoints.
        \item $\bx\in\Gamma$ is an endpoint of some $\Gamma_i$ if and only
        if $x_r=c$ for some~$r$.
    \end{enumerate}

\end{lem}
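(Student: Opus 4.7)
The plan is to parametrise $\Gamma$ by a continuous bijection $\gamma\colon J\to\Gamma$ from an open interval $J\subseteq\R$, and to identify the candidate endpoints for the $\Gamma_i$ as the set $C=\{\bx\in\Gamma : x_r=c \text{ for some } r\ge 1\}$, with corresponding parameters $S=\gamma^{-1}(C)\subseteq J$. By Remark~\ref{rem:0-flat-itin}, a closed sub-arc $\gamma([t_1,t_2])$ is $0$-flat precisely when its interior $\gamma((t_1,t_2))$ is disjoint from~$C$, so once the decomposition exists its pieces must be the closures of the components of $\Gamma\setminus C$.

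The crux is to show $S$ is discrete in~$J$, so that the components of $\Gamma\setminus C$ form a chain indexable by an interval in $\Z$. Fix $t^*\in S$, and let $r^*\ge 1$ be minimal with $\pi_{r^*}(\gamma(t^*))=c$; then $\pi_{r^*-1}(\gamma(t^*))=b$, which is the global maximum of~$I$, so $\pi_{r^*-1}\circ\gamma$ attains its global maximum at $t^*$. I would argue that $t^*$ is isolated in $S$ by ruling out two kinds of accumulation. For a sequence $t_n\to t^*$ with $\pi_r(\gamma(t_n))=c$ at a fixed level~$r$, the points $\gamma(t_n)$ all lie in the Cantor fibre $\pi_r^{-1}(c)$, and using Lemma~\ref{lem:basic_top}(a) when $\gamma(t^*)\notin\omega(\fib{c},\hf)$ (together with a direct cylinder-set analysis in the $\omega$-limit case) one derives a contradiction with injectivity of~$\gamma$. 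Accumulation at diverging levels $r_n\to\infty$ is ruled out by a diagonal argument: convergence $\gamma(t_n)\to\gamma(t^*)$ in the product metric on $\hI$, combined with $\pi_{r_n-1}(\gamma(t_n))=b$ at very high levels~$r_n$, forces the first several coordinates of $\gamma(t_n)$ to agree with those of $\gamma(t^*)$ and places $\gamma(t_n)$ on finer and finer cylinder subsets of~$\hI$, again contradicting the arc structure of~$\Gamma$ at~$\gamma(t^*)$.

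Granted discreteness, enumerate $S=\{t_i\}_{i\in I}$ in increasing order with $I\subseteq\Z$ an interval, set $\bx_i=\gamma(t_i)$, and define $\Gamma_i=\gamma([t_i,t_{i+1}])$ for consecutive pairs, extending at the extremal indices by the closures in $\Gamma$ of the one-sided end arcs of $\Gamma$ when~$I$ has a minimum or maximum. Each $\Gamma_i$ has no interior parameter in~$S$, so Remark~\ref{rem:0-flat-itin} gives $0$-flatness. Conditions~(a) and~(b) are immediate from the construction, and~(c) follows because the $\bx_i$ are exactly the points of~$C$. Uniqueness up to reindexing is automatic: any decomposition satisfying (a)--(c) must have total endpoint set equal to~$C$, which then determines the~$\Gamma_i$ from the natural order along~$\Gamma$ up to reversal.

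The main obstacle is the discreteness of~$S$. While each individual fibre $\pi_r^{-1}(c)$ is a Cantor set, so in principle an arc could meet it non-discretely, the inverse-limit structure --- combining the local product neighborhoods at generic points with the expansion of $\hf$ along the arc direction --- prevents such oscillation; handling both the fixed-level and the level-diverging accumulation cases carefully is the most delicate step of the argument.
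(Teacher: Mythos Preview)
The paper does not prove this lemma; it is quoted from \cite{typical} (as lemma~3.5 there), so there is no in-paper proof to compare against. Your overall strategy --- parametrise $\Gamma$, set $C=\{\bx\in\Gamma:x_r=c\text{ for some }r\ge1\}$, show $C$ is discrete, and take the $\Gamma_i$ to be the closures of the components of $\Gamma\setminus C$ --- is correct, and your reduction of (a)--(c) and uniqueness to the discreteness of $C$ is sound.

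The gap is in the discreteness argument itself. In case~(A) you invoke Lemma~\ref{lem:basic_top}(a), but that lemma gives a local product structure only at points \emph{outside} $\omega(\fib{c},\hf)$; your parenthetical ``direct cylinder-set analysis in the $\omega$-limit case'' is not an argument, and this case is genuinely delicate. In case~(B), where $r_n\to\infty$, note that $\pi_{r_n}(\gamma(t_n))=c$ means $\gamma(t_n)\in\hf^{r_n}(\fib{c})$, so the limit $\gamma(t^*)$ lies in $\omega(\fib{c},\hf)$ --- exactly where local product structure is unavailable. Your appeal to ``the arc structure of $\Gamma$ at $\gamma(t^*)$'' is too vague to exclude such accumulation; an arc can in principle meet a Cantor set in a non-discrete set, and you have not used any specific property of $\hI$ to rule this out.

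A more robust route is to first prove that every \emph{compact} sub-arc $K\subset\Gamma$ is $N$-flat for some $N$ (equivalently, $\pi_N|_K$ is injective). Granted this, $K\cap C$ is finite: since $\pi_r|_K$ is then injective for all $r\ge N$, the condition $c\notin\Int(\pi_r(K))$ holds for $r>N$ (this is Remark~\ref{rem:0-flat-itin} applied to $\hf^{-N}(K)$), so $x_r=c$ with $r>N$ can occur only at the two endpoints of $K$; while for $1\le r\le N$ the equation $x_r=c$ forces $x_N\in(f^{N-r})^{-1}(c)$, a finite set, and $\pi_N|_K$ injective gives finitely many such points. Exhausting $\Gamma$ by compact sub-arcs then yields discreteness of $C$. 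The $N$-flatness of compact arcs is the substantive fact you are missing; it is where the work lies, and it is what your case analysis is implicitly trying to circumvent.
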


\begin{rem} \label{rem:glr-zero-measure}
    It follows from Lemma~\ref{lem:decompose-glr} and
    Theorem~\ref{thm:disintegrate} that if $\Gamma$ is the path component of a
    globally leaf regular point, then $\hmu(\Gamma)=0$. This is because each
    $\Gamma_i$ of Lemma~\ref{lem:decompose-glr} intersects each~$\pi_0$-fiber in at
    most one point, so that $\alpha_x(\Gamma_i)=0$ for all~$x$.
\end{rem}

\section{Extreme elements of \texorpdfstring{$\pi_0$}{pi-0}-fibers and the
outside map}
    \label{sec:extreme-outside}

If $x\not\in\PC$ then every element of~$\fib{x}$ has a well-defined itinerary, so
that $\fib{x}$ can be totally ordered by the unimodal order and, being compact, has
minimum and maximum elements. When $x\in\PC$ there is no such order on the fiber,
but there are nevertheless minimum and maximum elements of~$\cK_x$, the set of
itineraries realized on~$\fib{x}$ (recall Definition~\ref{defn:admiss-itin-fiber}),
which correspond to unique `minimum' and `maximum' elements of the fiber itself.
When $x=a$ or $x=b$, it turns out that these elements coincide.

These extreme elements of fibers are important because the identifications induced
by the map $g\colon \hI\to\Sigma$ which semi-conjugates $\hf$ to the sphere
homeomorphism $F\colon\Sigma\to\Sigma$ take place along their orbits (the
identifications will be described in Lemma~\ref{lem:rat-equiv-class}). In this
section we study the action of~$\hf$ on extreme elements, which we model with a
circle map called the \emph{outside map} associated to~$f$. The
construction~\cite{prime} of the sphere homeomorphism $F\colon\Sigma\to\Sigma$
depends crucially on the dynamics of this circle map, and in particular on its
rotation number, which is called the \emph{height} $q(f)$ of~$f$.

\subsection{Extreme elements}

Recall (Definition~\ref{defn:admiss-itin-fiber}) that
    \[
        \cK_x = \{s\in\ssp\,:\,s \text{ is an itinerary of some $\bx\in\fib{x}$}\}
    \] 
    for each~$x\in I$, and that $L_x\colon \cK_x\to\fib{x}$ associates to each
    $s\in\cK_x$ the unique~$\bx\in\fib{x}$ with that itinerary. Since $\cK_x$ is
    compact (Lemma~\ref{lem:cpctKx}) and every non-empty subset of~$\ssp$ has an
    infimum and supremum, $\cK_x$ has a maximum element~$\cU_x$ and a minimum
    element~$\cL_x$ with respect to the unimodal order.

\begin{defn}[Upper, lower, extreme elements, $\be(x_u)$, $\be(x_\ell)$]
    \label{defn:upper_lower_extreme}
    For each $x\in I$, we write $\be(x_u) = L_x(\cU_x)$ and $\be(x_\ell) =
    L_x(\cL_x)$ for the \emph{upper} and \emph{lower} elements of~$\fib{x}$, having
    as itineraries $\cU_x$ and $\cL_x$ respectively, and refer to them both as
    \emph{extreme} elements. (The apparently idiosyncratic notation will become
    more natural in Section~\ref{sec:outside}, where a map~$\be$ onto the set of
    extreme elements will be defined.)
\end{defn}

The following lemma describes the action of $\hf$ on extreme elements.
\begin{lem}\label{lem:action_on_extremes}
    Let $x\in I$.
    \begin{enumerate}[(a)]
        \item If $x<f(a)$, so that $x$ has a unique $f$-preimage $x^1>c$, then
        \[
            \be(x_\ell) = \hf(\be(x^1_u)) \quad\text{and}\quad
            \be(x_u) = \hf(\be(x^1_\ell)).
        \]

        \item If $f(a)\le x < b$, so that $x$ has preimages $x^0<c$ and $x^1>c$,
        then
         \[
            \be(x_\ell) = \hf(\be(x^0_\ell)) \quad\text{and}\quad
            \be(x_u) = \hf(\be(x^1_\ell)).
        \]

        \item If $x=b$, so that $x$ has unique preimage~$c$, then
        \[
            \be(b_\ell) = \be(b_u) = \hf(\be(c_\ell)).
        \]
    \end{enumerate}
\end{lem}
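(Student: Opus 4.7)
The plan is to reduce the statement to bookkeeping about itineraries in the unimodal order, using how prepending a single bit affects them. I would start by establishing the following translation: if $\by \in \fib{y}$ has itinerary $s$, then the itineraries of $\hf(\by) \in \fib{f(y)}$ are exactly the sequences $bs$ for $b\in\{0,1\}$ satisfying $b = 0 \implies y \le c$ and $b = 1 \implies y \ge c$. This follows directly from $\hf(\by) = \thr{f(y), y, y_1, y_2, \dots}$ and the definition of itineraries on $\hI$: the constraint at position $0$ becomes a constraint on the coordinate $y$, while the constraints at positions $r\ge 1$ match those on $s$ verbatim. In the language of Definition~\ref{defn:admiss-itin-fiber}, this gives $L_{f(y)}(bs) = \hf(L_y(s))$ whenever the prefix $b$ is admissible.

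The second ingredient is how prepending a single bit interacts with the unimodal order on $\ssp$. Direct inspection of the definition shows that prepending $0$ preserves the order; prepending $1$ reverses it; and every $0$-prefixed sequence is strictly less than every $1$-prefixed sequence. The first two properties follow because the parity of the partial sum up to the first index of disagreement either remains the same (when prepending $0$) or flips (when prepending $1$); the third because $0$- and $1$-prefixed sequences already disagree at position~$0$, where the partial sum is even.

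Combining these, I would decompose $\cK_x$ according to the preimages of $x$ lying in $I$ and on which side of $c$ they sit. In case~(a), only the preimage $x^1 > c$ exists, so $\cK_x = 1\cK_{x^1}$, and the order-reversal of prepending $1$ gives $\cL_x = 1\cU_{x^1}$ and $\cU_x = 1\cL_{x^1}$. In case~(b), both preimages exist with $x^0 < c$ and $x^1 > c$ strictly (since $x < b$), so $\cK_x = 0\cK_{x^0} \cup 1\cK_{x^1}$ with $0$-prefixed sequences below $1$-prefixed ones, yielding $\cL_x = 0\cL_{x^0}$ and $\cU_x = 1\cL_{x^1}$. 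Applying $L_x$ and invoking the identity $L_x(bs) = \hf(L_{x^b}(s))$ gives the stated equalities in both cases.

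The main subtlety, and where I would spend the most care, is case~(c): the single preimage $c$ admits both $0$ and $1$ as prepended bit, so $\cK_b = 0\cK_c \cup 1\cK_c$, and the distinct extreme itineraries $\cL_b = 0\cL_c$ and $\cU_b = 1\cL_c$ must both correspond to the same element of $\fib{b}$. I would verify this directly: the element $\hf(\be(c_\ell)) \in \fib{b}$ has $x_1 = c$, which imposes no constraint on the initial itinerary bit, so both $0\cL_c$ and $1\cL_c$ are legitimate itineraries of $\hf(\be(c_\ell))$. Hence $L_b$ collapses these two inputs onto the single fiber element $\hf(\be(c_\ell))$, giving $\be(b_\ell) = \be(b_u) = \hf(\be(c_\ell))$.
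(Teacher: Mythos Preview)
Your proposal is correct and follows essentially the same approach as the paper's proof: decompose $\fib{x}$ (equivalently $\cK_x$) according to which preimage of~$x$ supplies the first coordinate, and then track how prepending~$0$ or~$1$ to itineraries interacts with the unimodal order. Your version is slightly more explicit in its use of the notation $\cK_x$, $L_x$, and the identity $L_{f(y)}(bs) = \hf(L_y(s))$, but the underlying argument is the same.
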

\begin{proof}
    \begin{enumerate}[(a)]
        \item We have $\fib{x} = \hf(\fib{x^1})$; and if $\bx\in\fib{x^1}$, then
        the itineraries of $\hf(\bx)\in\fib{x}$ are exactly the itineraries of
        $\bx$ with a symbol~$1$ prepended, so that $\hf|_{\fib{x^1}}$ reverses the
        order on itineraries.

        \item We have $\fib{x} = \hf(\fib{x^0}) \sqcup \hf(\fib{x^1})$; if
        $\bx\in\fib{x^0}$ (respectively $\bx\in\fib{x^1}$) then the itineraries
        of $\hf(\bx)\in\fib{x}$ are exactly those of $\bx$ with a symbol~$0$
        (respectively~$1$) prepended. Therefore $\hf|_{\fib{x^0}}$ preserves
        the order on itineraries and $\hf|_{\fib{x^1}}$ reverses it; and all
        elements of $\hf(\fib{x^1})$ have larger itineraries than all elements
        of $\hf(\fib{x^0})$.

        \item We have $\fib{b} = \hf(\fib{c})$; and if $\bx\in\fib{c}$, then for each itinerary of~$\bx$ there are two itineraries of $\hf(\bx)\in\fib{b}$, one with the symbol~$0$ prepended and one with the symbol~$1$ prepended. 
    \end{enumerate}
\end{proof}

Recall from Definition~\ref{defn:consecutive} that, provided $x\not\in\PC$, we can
define the notion of consecutive elements of~$\fib{x}$. These will play an
important r\^ole in the theory, since typically two elements of a fiber are
identified by the semi-conjugacy~$g\colon\hI\to\Sigma$ if and only if they are
consecutive. The proof of Lemma~\ref{lem:action_on_extremes} also elucidates the
action of $\hf$ on pairs of consecutive points, as described by the next lemma.

\begin{lem}\label{lem:preserve_consecutive}
    Let $x\in I\setminus\PC$.
    \begin{enumerate}[(a)]
        \item Let $r>0$ and suppose that $f^r(x)\not\in\PC$. If $\bx$ and
        $\bx'$ are consecutive in~$\fib{x}$, then $\hf^r(\bx)$ and
        $\hf^r(\bx')$ are consecutive in~$\fib{f^r(x)}$.

        \item If $\bx$ and $\bx'$ are consecutive in~$\fib{x}$ and $x_1=x'_1$,
        then $\hf^{-1}(\bx)$ and $\hf^{-1}(\bx')$ are consecutive
        in~$\fib{x_1}$.

        \item If $\bx$ and $\bx'$ are consecutive in~$\fib{x}$ and
        $x_1\not=x'_1$, then $\hf^{-1}(\bx) = \be(x_{1,u})$ and
        $\hf^{-1}(\bx') = \be(x'_{1,u})$.

        \item If $x\in [a, \ha] \setminus\{c\}$, then $\hf(\be(x_u))$ and
        $\hf(\be(\hx_u))$ are consecutive in $\fib{f(x)}$.
    \end{enumerate}
\end{lem}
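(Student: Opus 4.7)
My plan is to reduce every part of the lemma to itinerary computations, using the interaction between $\hf$ and the unimodal order on $\cK_x$. First I observe that $x\notin\PC$ forces $x_r\notin\PC$ for every $r\ge 0$ (since $x_r\in\PC$ would give $x=f^r(x_r)\in\PC$); in particular $x_r\ne c$ for $r\ge 1$, so each $\bx\in\fib{x}$ has a unique itinerary and each fiber $\fib{x_r}$ carries a well-defined notion of consecutiveness. Three facts, implicit in the proof of Lemma~\ref{lem:action_on_extremes}, do all the work: (i) if $x\ne c$ then $J(\hf(\bx))$ is obtained from $J(\bx)$ by prepending $0$ when $x<c$ and $1$ when $x>c$; (ii) in the unimodal order, prepending $0$ preserves the order on the tails while prepending $1$ reverses it; and (iii) when $f(x)$ has both preimages $x^0<c<x^1$, the decomposition $\fib{f(x)}=\hf(\fib{x^0})\sqcup\hf(\fib{x^1})$ separates itineraries so that every element of the first piece has strictly smaller itinerary than every element of the second.

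For (a), the hypothesis $f^r(x)\notin\PC$ yields $f^s(x)\notin\PC$ for every $0\le s\le r$, so induction reduces the claim to $r=1$. If some $\by\in\fib{f(x)}$ had an itinerary strictly between $J(\hf(\bx))$ and $J(\hf(\bx'))$ then, since the latter two share the common first symbol determined by $x$, fact~(iii) forces $J(\by)$ to share that symbol too, so $\by\in\hf(\fib{x})$; stripping the common first symbol (order-preserving if it is $0$, order-reversing if it is $1$) produces an element of $\fib{x}$ whose itinerary is strictly between $J(\bx)$ and $J(\bx')$, a contradiction. Part (b) is the same argument run the other way: the itineraries of $\bx$ and $\bx'$ share the first symbol determined by $y=x_1=x'_1$, and any $\bz\in\fib{y}$ with itinerary strictly between $J(\hf^{-1}(\bx))$ and $J(\hf^{-1}(\bx'))$ produces, by prepending that symbol, an element $\hf(\bz)\in\fib{x}$ strictly between $\bx$ and $\bx'$.

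For (c), $x_1\ne x'_1$ forces $\{x_1,x'_1\}=\{x^0,x^1\}$, so $J(\bx)$ and $J(\bx')$ begin with different symbols; relabelling if necessary, take $J(\bx)$ to begin with $0$ and $J(\bx')$ with $1$. By fact~(iii), consecutiveness forces $\bx$ to be the maximum of $\hf(\fib{x^0})$ and $\bx'$ the minimum of $\hf(\fib{x^1})$ in unimodal order (otherwise an intermediate element of one piece would sit strictly between). Fact~(ii) then identifies $\hf^{-1}(\bx)$ as the maximum of $\fib{x^0}$ in itinerary order, namely $\be(x^0_u)=\be(x_{1,u})$; and $\hf^{-1}(\bx')$ as the maximum of $\fib{x^1}$ (since prepending $1$ reverses order, the minimum of $\hf(\fib{x^1})$ corresponds to the maximum of $\fib{x^1}$), namely $\be(x^1_u)=\be(x'_{1,u})$. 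For (d), $x\in[a,\ha]\setminus\{c\}$ gives $f(x)\ge f(a)$, so $f(x)$ has both preimages and $\{x,\hx\}=\{x^0,x^1\}$; taking $x=x^0<c$ (the other case is symmetric), facts (i) and (ii) show that $\hf(\be(x_u))$ is the maximum of $\hf(\fib{x^0})$ and $\hf(\be(\hx_u))$ the minimum of $\hf(\fib{x^1})$, and fact~(iii) then gives that these are consecutive in $\fib{f(x)}$.

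The main obstacle is just disciplined bookkeeping of which prepending preserves order and which reverses it --- once facts (i)--(iii) are fixed, every part is a short order-chase. A subsidiary point is checking that $f(x)\notin\PC$ wherever consecutiveness in $\fib{f(x)}$ is invoked: this is part of the hypothesis for (a); follows from $x_1\notin\PC$ for (b)--(c); and in (d) should either be verified directly (noting that $f(x)=b$ forces $x=c$, which is excluded) or the conclusion read as the analogous statement for the ordered set $\cK_{f(x)}$.
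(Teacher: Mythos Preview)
Your proposal is correct and takes essentially the same approach as the paper, which does not give a separate proof but simply refers the reader back to the order facts established in the proof of Lemma~\ref{lem:action_on_extremes} --- precisely your facts (i)--(iii). Your observation about needing $f(x)\notin\PC$ in part~(d) is apt: the paper leaves this implicit, but it holds in every application (notably Lemma~\ref{lem:idents_in_fibers}(b), where $f^r(x)\notin\GO(c)$ forces $f^s(x)\notin\PC$ for all $0\le s\le r$).
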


The extreme elements of a fiber~$\fib{x}$ coincide if and only if $x$ is an
endpoint of~$I$:

\begin{lem}
    $\be(a_\ell)=\be(a_u)$, and $\be(b_\ell)=\be(b_u)$. On the other hand, if $x\in
    I\setminus\{a,b\}$ then $\be(x_\ell)\not=\be(x_u)$.
\end{lem}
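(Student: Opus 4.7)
The plan is to derive the entire statement from Lemma~\ref{lem:action_on_extremes} by case analysis on the location of $x$ in $I$.

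First, I would dispatch the endpoint equalities. Case~(c) of Lemma~\ref{lem:action_on_extremes} immediately gives $\be(b_\ell) = \be(b_u) = \hf(\be(c_\ell))$. For $a$, I would verify that $a < f(a)$ (which follows from $a < c$ and $\lambda > 1$) so that case~(a) applies, and note that the unique $f$-preimage of $a$ in $I$ lying above~$c$ is~$b$ (since the other algebraic preimage $a/\lambda$ lies below~$a$). Case~(a) then yields $\be(a_\ell) = \hf(\be(b_u))$ and $\be(a_u) = \hf(\be(b_\ell))$, and the equality just established at~$b$ transfers to an equality at~$a$.

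For $x \in I \setminus \{a, b\}$, I would split according to whether $x \in [f(a), b)$ or $x \in (a, f(a))$. In the former subcase, case~(b) gives $\be(x_\ell) = \hf(\be(x^0_\ell))$ and $\be(x_u) = \hf(\be(x^1_\ell))$ with $x^0 < c < x^1$ distinct, so the two lower extremes belong to the disjoint fibers $\fib{x^0}$ and $\fib{x^1}$, forcing $\be(x_\ell) \ne \be(x_u)$. In the latter subcase, case~(a) reduces the question to showing $\be(x^1_\ell) \ne \be(x^1_u)$, where $x^1 > c$ is the unique preimage of $x$. Since $f|_{[c,b]}$ is an orientation-reversing bijection onto $[a,b]$ sending $\ha \mapsto f(a)$ and $b \mapsto a$, as $x$ ranges over $(a, f(a))$, $x^1$ ranges over $(\ha, b)$; and Lemma~\ref{lem:order_fa_p_ha} places $\ha > f(a)$, so $x^1 \in (f(a), b)$ falls under the already handled subcase.

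The proof is essentially mechanical given Lemma~\ref{lem:action_on_extremes}; the only care needed is to locate the preimage $x^1$ precisely enough to feed back into case~(b), and to observe that the recursion from case~(a) always lands in case~(b) after one step, except when $x = a$, in which case it terminates at the endpoint~$b$ and delivers the equality of extremes rather than their distinctness. I do not anticipate any serious obstacle.
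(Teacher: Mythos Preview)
Your proposal is correct and follows essentially the same strategy as the paper: both proofs handle the endpoints via Lemma~\ref{lem:action_on_extremes}(c) and~(a), and both reduce the case $x\in(a,b)$ to the observation that at a point of $[f(a),b)$ the two extremes lie in images of distinct fibers, hence differ. The only difference is in how you reach such a point: the paper invokes Lemma~\ref{lem:many_choices} to find an abstract preimage $y\in[f(a),b)$ with $f^r(y)=x$, whereas you show explicitly, using $\ha>f(a)$ from Lemma~\ref{lem:order_fa_p_ha}, that a single backward step suffices (the preimage of any $x\in(a,f(a))$ lands in $(\ha,b)\subset(f(a),b)$). Your version is slightly more self-contained, since it avoids the appeal to Lemma~\ref{lem:many_choices}; the paper's version is more hands-off. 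Neither gains anything substantive over the other.
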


\begin{proof}
    $\be(b_\ell)=\be(b_u)$ by Lemma~\ref{lem:action_on_extremes}(c), and it follows
    from Lemma~\ref{lem:action_on_extremes}(a) that $\be(a_\ell)=\be(a_u) =
    \hf(\be(b_u) = \be(b_\ell))$.

    Suppose then that $x\in(a,b)$. By Lemma~\ref{lem:many_choices}, there is some
    $y\in[f(a), b)$ and $r\in\N$ such that $f^r(y)=x$ and $f^i(y)\in(a, f(a))$ for
    $1\le i < r$. Then $\be(y_\ell)\not=\be(y_u)$ by
    Lemma~\ref{lem:action_on_extremes}(b); and $\{\be(x_\ell), \be(x_u)\} =
    \{\hf^r(\be(y_\ell)), \hf^r(\be(y_u))\}$ by
    Lemma~\ref{lem:action_on_extremes}(a), so that
    \mbox{$\be(x_\ell)\not=\be(x_u)$} as required.
\end{proof}
    
\subsection{The outside map} \label{sec:outside}
    
We model the action of $\hf$ on the set of extreme elements, as described
in Lemma~\ref{lem:action_on_extremes}, with a circle map (see
Figure~\ref{fig:outside}).

Let~$S$ be the circle obtained by gluing together two copies of~$I$ at
their endpoints. We denote the points of~$S$ by $x_\ell$ and $x_u$, for
$x\in I$, depending on whether they belong to the `lower' or `upper' copy
of~$I$. Therefore $a_\ell=a_u$ and $b_\ell=b_u$, and we also denote these
points of~$S$ with the symbols~$a$ and~$b$ respectively. We write
$\tau\colon S\to I$ for the projection $\tau(x_\ell)=\tau(x_u)=x$. When we
use interval notation in~$S$, we regard the interval as an arc in~$S$ which
goes in the positive sense from its first endpoint to its second endpoint,
in the model of Figure~\ref{fig:outside}. Thus, for example, $[a, b]$
contains $x_\ell$ for all~$x$, while $[b, a]$ contains~$x_u$ for all~$x$.

\begin{figure}[htbp]
    \begin{center}
        \includegraphics[width=0.6\textwidth]{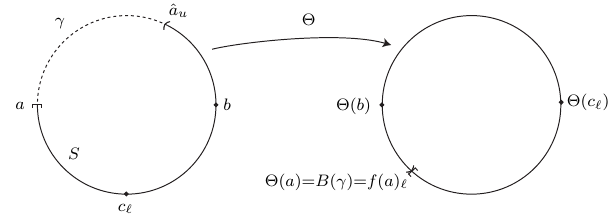}
    \end{center}
\caption{The circle~$S$ and the map $\tB\colon [a, \re_u)\to S$}
\label{fig:outside}
\end{figure}

Definition~\ref{defn:upper_lower_extreme} provides an injective function $\be\colon
S\to\hI$ with image the set of extreme elements of~$\hI$. This map is clearly
discontinuous, since $\hI$ does not contain a circle: its discontinuity set is
discussed in Lemma~\ref{lem:tB-facts}(d) and
Remark~\ref{rem:height-intervals-discont}.

Similarly, we define a symbolic version $\cE\colon S\setminus\{a,b\}\to
\ssp$ by $\cE(x_\ell)=\cL_x$ and $\cE(x_u)=\cU_x$, so that $\cE(y)$ is an itinerary
of $\be(y)$ for each $y\in S\setminus\{a, b\}$ (recall that $\cL_x$ and $\cU_x$ are
the minimum and maximum itineraries in the fiber~$\fib{x}$).

If $\tB\colon[a, \re_u)\to S$ is defined (see Figure~\ref{fig:outside}) by
\begin{equation}
    \label{eq:tB}
    \begin{array}{rcll} 
      \tB(x_\ell) &=& f(x)_\ell & \text{ if }x\in[a,c]\\
      \tB(x_\ell) &=& f(x)_u & \text{ if }x\in[c,b], \quad\text{ and}\\ 
      \tB(x_u) &=& f(x)_\ell & \text{ if }x\in(\re,b],
    \end{array}
\end{equation}
then it follows from Lemma~\ref{lem:action_on_extremes} --- and of course this
is the point of the definition --- that
\begin{equation}
    \label{eq:model_action}
    \hf(\be(y)) = \be(\tB(y)) \qquad \text{for all }y\in [a, \re_u)
\end{equation}
(if $y\in [\re_u, a)$ then $\hf(\be(y))$ is not an extreme point by
 Lemma~\ref{lem:action_on_extremes}). 

 $\tB$ is a continuous bijection, whose inverse has a one-sided discontinuity at
 $f(a)_\ell$ --- it is discontinuous as we approach $f(a)_\ell$ in the positive
 sense --- and is continuous elsewhere.

$\tB$ can be extended to a continuous monotone circle map $B\colon S\to S$
of degree~1 by setting $B(y)=f(a)_\ell$ for all $y\in\gamma := [\re_u, a]$.
$B$ is called the \emph{outside map} associated to the tent map~$f$
(Figure~\ref{fig:outside_2}). We write $\ingam:=(\ha_u, a)$ for the interior
of~$\gamma$.

\begin{figure}[htbp]
        \begin{center}
            \includegraphics[width=0.45\textwidth]{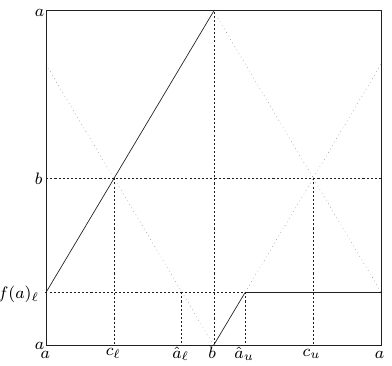}
        \end{center}
    \caption{The outside map $B\colon S\to S$. $\tB$ has the same graph, except
    that it is undefined on the interval $[\ha_u, a)$, on which $B$ takes the
    constant value $f(a)_\ell$.}
    \label{fig:outside_2}
\end{figure}

The next lemma gathers some straightforward facts about $\tB$.
\begin{lem} \label{lem:tB-facts} \mbox{}
    \begin{enumerate}[(a)]
        \item If $\tB^r(y)$ exists for some $r>0$, then $\tau(\tB^r(y)) =
        f^r(\tau(y))$.

        \item $\tB^{-r}(y)$ exists for all $y\in S$ and all $r\in\N$, and
    $\be(\tB^{-r}(y)) = \hf^{-r}(\be(y))$

    \item An explicit formula for $\be(y)$ is given by
        \[
            \be(y) =  \thr{\tau(y), \,\,\tau(\tB^{-1}(y)),
            \,\,\tau(\tB^{-2}(y)),\,\,\dots}.
        \]

    \item $\be$ is discontinuous at $y\in S$ if and only if $y\in\orb(f(a)_\ell,
        \tB)$, and these discontinuities are one-sided (as we approach~$y$ in the
        positive sense).

    \item $\cE\colon S\setminus\{a,b\} \to \ssp$ is locally constant at $y\in
        S\setminus\{a,b\}$ if and only if $y\not\in\overline{\orb(f(a)_\ell,
        \tB)}$.
    \end{enumerate}
\end{lem}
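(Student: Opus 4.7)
My plan is to prove the five parts in order, with parts (a)--(c) being essentially structural consequences of the definitions and parts (d)--(e) containing the real content. For (a), each of the three clauses defining $\tB$ in \eqref{eq:tB} gives $\tau(\tB(y)) = f(\tau(y))$ directly, and induction on~$r$ finishes the argument. For (b), the key step is to verify that $\tB$ is a continuous bijection from $[a, \re_u)$ onto all of~$S$: piece by piece, $\tB$ sends $[a, c_\ell]$ onto $[f(a)_\ell, b]$, $[c_\ell, b]$ onto $[b, a]$ through the upper arc, and $[b, \re_u)$ back onto $[a, f(a)_\ell)$, with matching boundary behavior at~$c_\ell$ and~$b$. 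Bijectivity gives a single-valued $\tB^{-1}\colon S \to [a, \re_u) \subset S$, so all backward iterates exist. Substituting $y \mapsto \tB^{-1}(y)$ in \eqref{eq:model_action} yields $\be(\tB^{-1}(y)) = \hf^{-1}(\be(y))$, and iteration gives the stated identity. Part (c) is then immediate by applying $\pi_0$ coordinatewise in \eqref{eq:usefulhi}, using $\pi_0 \circ \be = \tau$ and~(b).

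For (d), I would use the formula in~(c) to reduce continuity of $\be$ at~$y$ to simultaneous continuity of $\tau \circ \tB^{-r}$ at~$y$ for every~$r$; since $\tau$ is continuous everywhere on~$S$, this is equivalent to continuity of $\tB^{-r}$ at~$y$. The inverse $\tB^{-1}$ is continuous on~$S$ except at the single point $f(a)_\ell$: the positively-adjacent endpoints $a$ and $\re_u$ of the domain $[a, \re_u)$ both map to $f(a)_\ell$ under~$\tB$, so $\lim_{y \to f(a)_\ell^-} \tB^{-1}(y) = a = \tB^{-1}(f(a)_\ell)$ while $\lim_{y \to f(a)_\ell^+} \tB^{-1}(y) = \re_u$, a one-sided discontinuity from the positive direction. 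Therefore $\tB^{-r}$ is discontinuous at~$y$ precisely when some $\tB^{-k}(y) = f(a)_\ell$ for $1 \le k \le r$, i.e., $y \in \{f(a)_\ell, \tB(f(a)_\ell), \dots, \tB^{r-1}(f(a)_\ell)\}$. The two one-sided limits of $\tau \circ \tB^{-k-1}$ at such a~$y$ are $\tau(a) = a$ and $\tau(\re_u) = \re$, which are distinct since $a < c < \re$; this distinction survives further iterates, each of which is continuous away from $f(a)_\ell$. One-sidedness is inherited throughout because $\tB$ is orientation-preserving. Hence $\be$ is discontinuous precisely at the points of $\orb(f(a)_\ell, \tB)$, with one-sided discontinuities from the positive direction.

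For (e), I would observe that $\cE(y)_r$ is determined by which of the partition pieces $P_0 = [a, c_\ell] \cup [c_u, a]$ or $P_1 = [c_\ell, c_u]$ (the arc through~$b$) contains $\tB^{-(r+1)}(y)$, so $\cE_r$ can change value as~$y$ varies only where $\tB^{-(r+1)}(y) \in \{c_\ell, c_u\}$. The point $c_u \in \gamma$ lies outside the domain of~$\tB$, so its forward orbit under $\tB$ is empty and contributes nothing; the only jump points for any $\cE_r$ are therefore $\{\tB^k(c_\ell) : k \ge 1\} = \{a, b\} \cup \orb(f(a)_\ell, \tB)$. If $y \in S \setminus \{a, b\}$ lies outside $\overline{\orb(f(a)_\ell, \tB)}$, a small enough neighborhood of~$y$ in $S \setminus \{a, b\}$ misses every jump point, and $\cE$ is constant there. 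Conversely, if $y \in \overline{\orb(f(a)_\ell, \tB)}$, every neighborhood of~$y$ contains a jump point at which~$\cE$ takes different values on its two sides, so local constancy fails. The main obstacle is the careful tracking, in both (d) and~(e), of the single one-sided discontinuity of $\tB^{-1}$ at $f(a)_\ell$ under iteration and composition with~$\tau$, which is ultimately controlled by the inequality $a < c < \re$ (cf.\ Lemma~\ref{lem:order_fa_p_ha}).
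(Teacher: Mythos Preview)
Your argument takes essentially the same route as the paper --- deriving (c) from (b) and \eqref{eq:usefulhi}, and then (d)--(e) from (c) via the single one-sided discontinuity of $\tB^{-1}$ at $f(a)_\ell$ --- and is correct apart from one small oversight in~(e). Your claim that ``$\cE_r$ can change value only where $\tB^{-(r+1)}(y) \in \{c_\ell, c_u\}$'' tacitly assumes that $\tB^{-(r+1)}$ is continuous at~$y$: at a discontinuity point $y = \tB^j(f(a)_\ell)$ with $0 \le j \le r$, the one-sided limits of $\tau\circ\tB^{-(j+1)}$ are $a$ and $\re$, so $\cE_j$ jumps there even though $\tB^{-(j+1)}(y) = a \notin \{c_\ell, c_u\}$. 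This does not affect your conclusion, since each such~$y$ also satisfies $\tB^{-(j+3)}(y) = c_\ell$ and hence already lies in your jump set $\{\tB^k(c_\ell):k\ge 1\}$; but the step needs this remark (or, as the paper does implicitly, a direct appeal to part~(d)) to be complete.
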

\begin{proof}
    (a) and (b) are immediate from~\eqref{eq:tB} and~\eqref{eq:model_action}
    respectively. 
    \begin{enumerate}[(a)]
        \addtocounter{enumi}{2}
        \item We have $\be(y) = \thr{\pi_0(\be(y)),\,\,
            \pi_0(\hf^{-1}(\be(y))) ,\,\, \pi_0(\hf^{-2}(\be(y))), \,\,\dots}$
            by~\eqref{eq:usefulhi}, and~(b) gives $\hf^{-r}(\be(y)) =
            \be(\tB^{-r}(y))\in\fib{\tau(\tB^{-r}(y))}$, so that
            $\pi_0(\hf^{-r}(\be(y))) = \tau(\tB^{-r}(y))$.
        \item follows from~(c) because $\tB^{-1}$ has a one-sided discontinuity at
            $f(a)_\ell$ and is continuous elsewhere.
        \item Since $c_u$ is not in the domain of $\tB$, it follows
            from~(c) that $\be(y)_r = c$ for some $r\ge 1$ if and only if
            $y\in\orb(\tB(c_\ell), \tB) = \orb(b, \tB) = \{a, b\}
            \cup \orb(f(a)_\ell, \tB)$.
    \end{enumerate}
\end{proof}

From parts~(d) and~(e) of this Lemma, we see that the $\tB$-orbit of $f(a)_\ell$
plays an important r\^ole in understanding the structure of the extreme elements of
$\hI$. In particular, there is a strong distinction between the cases where this
orbit is finite and those where it is infinite. Note carefully that the orbit can
be finite not only when it is periodic or preperiodic, but also, more commonly,
when $\tB^r(f(a)_\ell)\in [\ha_u, a)$ is outside the domain of $\tB$ for some~$r$
(see Theorem~\ref{thm:height-intervals-rational}(c)).

\subsection{Dynamics of the outside map}

The outside map was defined in~\cite{gpa}, and a closely related map was introduced
in~\cite{henkthesis}. In this section we state those of its dynamical properties
which we will use and make some related definitions.
Theorem~\ref{thm:height-range}, parts~(a) and~(b) of
Theorem~\ref{thm:height-intervals-rational}, and part~(a) of
Theorem~\ref{thm:height-irrational} can be found in~\cite{HS}; while the remaining
parts of Theorems~\ref{thm:height-intervals-rational}
and~\ref{thm:height-irrational} are proved in~\cite{prime}.

\begin{defn}[Height]\label{defn:height}\mbox{}
    The \emph{height} $q_\lambda =q(f_\lambda)$ of $f$ is the Poincar\'e rotation number of the outside map $B\colon S\to S$.
\end{defn}

\begin{thm} \label{thm:height-range}%
    $q_\lambda$ is decreasing in~$\lambda$, and $q_\lambda\in(0,1/2)$ for
    all~$\lambda$. (In fact, for general tent maps, $q_\lambda = 0$ if and only
    if $\lambda=2$, and $q_\lambda=1/2$ if and only if $\lambda\in(1,
    \sqrt{2}]$.)
\end{thm}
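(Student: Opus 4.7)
The outside map $B=B_\lambda$ is a continuous, monotone degree-$1$ circle map, so its Poincar\'e rotation number $q_\lambda$ is well defined. The plan is to establish the three claims in turn: (a) continuity and monotonicity of $\lambda\mapsto q_\lambda$; (b) $q_\lambda>0$ for $\lambda<2$; and (c) $q_\lambda<1/2$ for $\lambda>\sqrt 2$.

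For~(a), I would first identify each circle $S_\lambda$ with the standard circle $\R/\Z$ via the orientation-preserving map $x_\ell\mapsto (x-a_\lambda)/(2(b_\lambda-a_\lambda))$, $x_u\mapsto 1-(x-a_\lambda)/(2(b_\lambda-a_\lambda))$, pulling the $B_\lambda$ back to a family of monotone degree-$1$ continuous circle maps depending continuously on~$\lambda$. Continuity of~$q_\lambda$ is then the classical continuity of the Poincar\'e rotation number in the $C^0$ topology. For monotonicity, choose a continuous family of lifts $\widetilde B_\lambda\colon\R\to\R$ and verify pointwise that $\widetilde B_\lambda(\tilde y)$ is non-increasing in~$\lambda$ for each fixed~$\tilde y$; Poincar\'e's monotonicity theorem then gives that $q_\lambda$ is non-increasing. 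The required pointwise inequality on lifts can be checked directly from the explicit formulas $a_\lambda=\lambda(1-\lambda/2)$, $b_\lambda=\lambda/2$, and the piecewise definition of~$\tB$ in~\eqref{eq:tB}.

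For~(b), it suffices to show that $B$ is fixed-point-free when $\lambda<2$. On the gap $\gamma=[\ha_u,a]$ the map $B$ is constantly equal to $f(a)_\ell$; since $\lambda>1$ gives $f(a)>a$, the point $f(a)_\ell$ lies in the relative interior of the lower copy of~$I$ and hence is not in~$\gamma$ (which meets the lower copy only at its endpoint~$a$). On $S\setminus\gamma$ a short inspection of the three cases in~\eqref{eq:tB} gives no fixed points either: Lemma~\ref{lem:order_fa_p_ha} places the unique fixed point~$p$ of~$f$ in $(c,\ha)$, so $B$ has no fixed point on $[a,c]_\ell$; and on both $[c,b]_\ell$ and $(\ha,b]_u$ the map $B$ sends each point to the opposite copy of~$I$, so no fixed point is possible except at the identification points~$a$ and~$b$, both of which are easily excluded. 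Hence $q_\lambda>0$.

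For~(c), I would first observe that at the boundary value $\lambda=\sqrt 2$ one computes $f(a)=p=\ha$, so the orbit $p_\ell\mapsto p_u=\ha_u\mapsto p_\ell$ is a period-$2$ orbit whose lift rotates by~$1$ in two iterates, giving $q_{\sqrt 2}=1/2$. For $\lambda>\sqrt 2$, Lemma~\ref{lem:order_fa_p_ha} strengthens to $f(a)<p<\ha$, so $p_u$ lies strictly inside $\gamma$ and the $B$-orbit of~$p_\ell$ now reads $p_\ell\mapsto p_u\mapsto f(a)_\ell\ne p_\ell$: this period-$2$ orbit is destroyed and two iterates rotate by strictly less than~$1$. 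Combined with the monotonicity in~(a) and the fixed-point freeness in~(b), the standard structure theorem for monotone circle maps with rational rotation number~$1/2$ yields $q_\lambda<1/2$. The hardest steps will be the pointwise monotonicity of $\widetilde B_\lambda$ in~(a), which requires careful bookkeeping both for the $\lambda$-dependence of the normalization and of the tent map itself; and the strict inequality in~(c), where one must rule out the possibility that $q_\lambda=1/2$ is realized by a period-$2$ orbit distinct from the one present at $\lambda=\sqrt 2$, which ultimately reduces to the non-renormalizability characterization in Remark~\ref{rem:sqrt2_conditions}.
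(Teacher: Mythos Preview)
The paper does not prove this theorem: as stated in the paragraph preceding Theorem~\ref{thm:height-range}, the result is quoted from~\cite{HS}, so there is no proof here to compare against.

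Your outline is a plausible direct approach via rotation theory, and your argument for~(b) (fixed-point freeness of~$B$, giving $q_\lambda>0$) is essentially complete and correct. However, parts~(a) and~(c) are, as you yourself acknowledge, only plans. For~(a), the normalization identifying $S_\lambda$ with $\R/\Z$ is itself $\lambda$-dependent, so the pointwise monotonicity of $\widetilde B_\lambda$ you need is not a routine check: both the rescaling and the tent map move with~$\lambda$, and it is not clear without computation that these effects combine monotonically at every point of the circle. For~(c), you reduce correctly to showing that $B_\lambda$ has no period-$2$ orbit when $\lambda>\sqrt 2$, but you do not carry out the case analysis across the branches of~$B$; the destruction of the particular period-$2$ orbit present at $\lambda=\sqrt 2$ does not by itself preclude others, and your appeal to the ``standard structure theorem'' together with monotonicity only gives $q_\lambda\le 1/2$, not the strict inequality. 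The treatment in~\cite{HS} takes a different route, working combinatorially with the kneading sequence (so that monotonicity in~$\lambda$ comes from monotonicity of kneading sequences) rather than through a lift comparison, which sidesteps the normalization bookkeeping entirely.
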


\begin{thm}[The rational height case]  \label{thm:height-intervals-rational}%
    Let $m/n\in(0,1/2)$ be rational.
    \begin{enumerate}[(a)]
        \item There is a non-trivial closed interval $\Lambda_{m/n} =
        [\lambda_{m/n}^-, \lambda_{m/n}^+]
        \subset(\sqrt{2}, 2)$ of parameters such that $q_\lambda = m/n$ if and
        only if $\lambda\in\Lambda_{m/n}$.

        \item The critical orbit of~$f_\lambda$ is periodic (with period~$n$)
        if $\lambda = \lambda_{m/n}^-$, and pre-periodic (to period~$n$) if
        $\lambda=\lambda_{m/n}^+$.

        \item For each $\lambda\in\Lambda_{m/n}$, the smallest positive
        integer~$r$ with $\tB^r(a)\in\gamma= [\ha_u, a]$ is $r=n$. We have
        $\tB^n(a) = a$ if $\lambda =
        \lambda_{m/n}^-$; $\tB^n(a) = \ha_u$ if $\lambda = \lambda_{m/n}^+$;
        and $\tB^n(a)\in (\ha_u, a)$ if $\lambda\in\Int(\Lambda_{m/n})$.
        Moreover, there is a unique $\lambda =
        \lambda_{m/n}^\ast\in\Lambda_{m/n}$ for which $\tB^n(a) = c_u$.

        \item For each $\lambda\in\Lambda_{m/n}$, the set
        $S\setminus\bigcup_{r\ge 0}B^{-r}(\ingam)$ of points whose $B$-orbits
        never enter $\ingam = (\ha_u, a)$ is a period~$n$ orbit of $B$, which
        attracts the $\tB^{-1}$-orbit of every point of~$S$.
    \end{enumerate}
\end{thm}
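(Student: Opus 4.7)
The plan is to work throughout with the continuously varying family of outside maps $B_\lambda\colon S\to S$: each $B_\lambda$ is a continuous, weakly monotone, degree-one circle map whose only non-injectivity is on the plateau $\gamma_\lambda = [\ha_u, a]$, on which it is constant with value $f(a)_\ell$. Its Poincar\'e rotation number is by definition the height $q_\lambda$, which by Theorem~\ref{thm:height-range} is weakly decreasing in $\lambda$. The general strategy is to apply classical results for degree-one monotone circle maps with a plateau, and then to translate the resulting dynamical data back through the projection $\tau\colon S\to I$ to statements about the critical orbit of $f_\lambda$, using the projection formula $\tau(\tB^r(y)) = f^r(\tau(y))$ (Lemma~\ref{lem:tB-facts}(a)) together with the identities $b = f(c)$ and $a = f^2(c)$.

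For part (a), I would invoke the standard mode-locking theorem for continuous degree-one monotone circle maps with a non-trivial plateau: the rotation number remains equal to a prescribed rational $m/n$ on a non-trivial closed parameter interval, namely the interval on which the iterates of the plateau endpoint $a$ stay within the closed plateau after $n$ iterations. Combined with continuity of $q_\lambda$ in $\lambda$ and its strict monotonicity off the rational levels, this produces the required closed non-degenerate interval $\Lambda_{m/n}\subset(\sqrt 2, 2)$. For part (b), I would characterise the endpoints as the two parameters at which the orbit of $a$ just reaches the opposite boundary of the plateau. At $\lambda_{m/n}^-$, $\tB^n(a)=a$ translates via the projection formula to $f^{n+2}(c) = f^2(c)$, i.e.\ the critical orbit closes up; minimality arguments, coming from $m/n$ being in lowest terms and the combinatorics of a monotone degree-one circle map of rotation number $m/n$, force $c$ itself to belong to the resulting cycle, of period exactly $n$. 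At $\lambda_{m/n}^+$, $\tB^n(a)=\ha_u$ together with $f(\ha)=f(a)$ makes the critical orbit pre-periodic to a period-$n$ cycle that does not contain $c$.

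For part (c), the iterates $\tB(a),\dots,\tB^{n-1}(a)$ lie outside $\gamma$ throughout $\Lambda_{m/n}$ by the same combinatorial argument (any earlier return would contradict rotation number $m/n$ in lowest terms), so $\tB^n(a)$ depends continuously and monotonically on $\lambda$ in a neighbourhood of $a$. Part (b) pins down its boundary values on $\Lambda_{m/n}$ as $a$ and $\ha_u$, so monotonicity and the intermediate value theorem give a unique $\lambda_{m/n}^*\in\Int(\Lambda_{m/n})$ with $\tB^n(a)=c_u$, and $\tB^n(a)\in(\ha_u,a)$ on the interior. For part (d), set $\Omega = S\setminus\bigcup_{r\ge 0}B^{-r}(\ingam)$. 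Since $B$ is injective off $\gamma$ and $\Omega$ avoids $\ingam$, the restriction $B|_\Omega$ is a bijection; rotation number $m/n$ then forces the induced cyclic action on $\Omega$ to be a single combinatorial rotation, whence $\Omega$ is one periodic orbit of period $n$. The attracting property for $\tB^{-1}$ follows because $B^{-1}$ collapses the plateau's constant image $\{f(a)_\ell\}$ to a unique preimage in $\Omega$, so every backwards orbit is eventually trapped on this cycle.

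The principal obstacle will be part (b): cleanly identifying the two endpoint parameters with periodicity, respectively pre-periodicity, of the critical orbit with period exactly $n$, rather than a proper divisor. This requires translating carefully between the symbolic data of $f$ (kneading-sequence admissibility and itineraries of critical-orbit points) and the combinatorial dynamics of $B$ (rotation-number structure of monotone degree-one circle maps), making essential use of the explicit piecewise formula defining $\tB$ on the three arcs of its domain.
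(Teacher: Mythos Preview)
The paper does not prove this theorem: in the paragraph immediately preceding the statement, parts~(a) and~(b) are attributed to~\cite{HS} and parts~(c) and~(d) to~\cite{prime}, and no argument is given in the present paper beyond these citations. So there is no ``paper's own proof'' to compare against; your proposal is an independent attempt at a proof which the authors here simply import from elsewhere.

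That said, your plan is broadly sound and is in the same spirit as the cited references, which also exploit the outside map as a monotone degree-one circle map with a plateau. A few points deserve more care than your sketch gives them. First, the circle~$S$ itself depends on~$\lambda$ (it is built from two copies of~$I_\lambda$), so invoking ``continuity of $B_\lambda$ in~$\lambda$'' requires first conjugating to a fixed circle; the references handle this by working symbolically with kneading sequences rather than with the geometric family directly. Second, in part~(b) your reduction from $\tB^n(a)=a$ to periodicity of~$c$ is correct but the reason is more concrete than ``minimality arguments'': since~$a$ has unique $f$-preimage~$b$ and~$b$ has unique $f$-preimage~$c$, the equalities $\tau(\tB^{n-1}(a))=b$ and $\tau(\tB^{n-2}(a))=c$ are forced, so~$c$ lies on the cycle. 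Third, your claim in~(c) that $\tB^n(a)$ depends \emph{monotonically} on~$\lambda$ across~$\Lambda_{m/n}$ is the one genuinely delicate step: it is not a formal consequence of rotation-number theory, and in the cited work it is established via explicit kneading-sequence computations (this is what the paper alludes to when it says the endpoint parameters ``can be specified in terms of the kneading sequences''). You should expect to need that ingredient, or an equivalent one, to pin down uniqueness of~$\lambda_{m/n}^\ast$.
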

    
\noindent Note that, by~\eqref{eq:model_action}, we have $\tB^n(a) =
f^n(a)_u$ whenever $\lambda\in\Lambda_{m/n}$.

\begin{thm}[The irrational height case] \label{thm:height-irrational}%
    Let $q\in(0,1/2)$ be irrational.
    \begin{enumerate}[(a)]
        \item There is a parameter
        $\lambda_q\in(\sqrt{2}, 2)$ such that $q_\lambda = q$ if and only if
        $\lambda = \lambda_q$.

        \item If $\lambda=\lambda_q$ then $f$ is post-critically infinite.
        Moreover $\orb(f(a)_\ell, \tB)$ is disjoint from \mbox{$\gamma=[\ha_u,
        a]$}, and the closure of this orbit is a Cantor set
        $\cC_S=S\setminus\bigcup_{r\ge 0}\tB^{-r}(\ingam)$, the set of points
        whose~$\tB$-orbit never enters~$\ingam$.        
        In particular~$\cC_S$ contains $a$ and $\ha_u$, since
        $\tB(a)=f(a)_\ell$ has an orbit which never enters~$\gamma$; and $\tB$
        is not defined at~$\ha_u$.
    \end{enumerate}
\end{thm}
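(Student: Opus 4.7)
My approach is to combine the classical Poincar\'e theory of rotation numbers of degree-one monotone circle maps with the explicit structure of the outside map $B = B_\lambda$ provided by~\eqref{eq:tB}.

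For part~(a), the defining formulas~\eqref{eq:tB} show that $B_\lambda$ depends continuously on $\lambda$ in the $C^0$-topology, so its Poincar\'e rotation number $q_\lambda$ is continuous in $\lambda$. Combined with the monotonicity of $\lambda \mapsto q_\lambda$ (Theorem~\ref{thm:height-range}) and the fact that each rational value $m/n \in (0,1/2)$ is attained on the nontrivial closed interval $\Lambda_{m/n}$ (Theorem~\ref{thm:height-intervals-rational}(a)), the preimage of each irrational $q \in (0,1/2)$ is forced to be a single parameter $\lambda_q$.

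For part~(b), the two assertions that $f_\lambda$ is post-critically infinite and that $\orb(f(a)_\ell,\tB)$ is disjoint from $\gamma$ both reduce to the classical fact that a degree-one monotone circle map has rational rotation number if and only if it has a periodic orbit. If $f_\lambda$ were post-critically finite, then $\orb(a,f)$ would be finite, so by Lemma~\ref{lem:tB-facts}(a) the forward $B$-orbit of $a$ would lie in the finite set $\tau^{-1}(\orb(a,f))$, hence be eventually periodic, forcing $q$ to be rational. Similarly, if $\tB^r(f(a)_\ell) \in \gamma$ for some $r \ge 0$, then since $B$ collapses all of $\gamma$ to $f(a)_\ell$ we have $B^{r+1}(f(a)_\ell) = f(a)_\ell$, again producing a periodic orbit.

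For the Cantor set structure of $\cC_S = S \setminus \bigcup_{r \ge 0} \tB^{-r}(\ingam)$, I would apply the Poincar\'e--Denjoy analysis to $B$. The set $\cC_S$ is closed, $B$-invariant, and contains $\overline{\orb(f(a)_\ell,\tB)}$ by the disjointness just established. Irrationality of $q$ implies that $B$ is semi-conjugate to the rotation $R_q$ via a continuous surjection collapsing each connected component of the grand orbit of $\ingam$ to a point, so the unique minimal $B$-invariant set equals $\cC_S$; it is perfect (no isolated points, because the fibers of the semi-conjugacy are connected and $R_q$ has no periodic points) and totally disconnected (since every nondegenerate arc in $S$ has a forward $B$-iterate meeting $\ingam$, by minimality of $R_q$). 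Minimality then forces $\cC_S = \overline{\orb(f(a)_\ell,\tB)}$. Finally, $a$ and $\ha_u$ both have $B$-image $f(a)_\ell$ and neither lies in the open plateau $\ingam$, so both belong to $\cC_S$. The principal obstacle is carrying out the Poincar\'e--Denjoy analysis for $B$, which is monotone but not a homeomorphism: one must verify that the semi-conjugacy to $R_q$ collapses exactly the grand orbit of $\ingam$ and nothing else, so that the complement is indeed a Cantor set; everything else reduces to symbolic bookkeeping with $\tau$ and $\tB$.
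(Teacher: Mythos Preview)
The paper does not actually prove this theorem: it is stated as background, with part~(a) attributed to~\cite{HS} and part~(b) to~\cite{prime} (see the sentence immediately preceding Definition~\ref{defn:height}). So there is no in-paper proof to compare against; your sketch is an attempt to supply what the paper outsources.

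Your argument for part~(b) is sound. The post-critical finiteness and the disjointness of $\orb(f(a)_\ell,\tB)$ from~$\gamma$ both reduce correctly to ``irrational rotation number $\Rightarrow$ no periodic orbit'', and your reductions via Lemma~\ref{lem:tB-facts}(a) and the plateau of~$B$ are valid. For the Cantor set structure you correctly identify the main issue: the Poincar\'e--Denjoy semiconjugacy must be set up for a monotone degree-one map with a plateau rather than a homeomorphism, and one must check that the fibers of the semiconjugacy are exactly the closures of the components of $\bigcup_{r\ge 0}B^{-r}(\ingam)$. This is standard for maps with a single plateau (the ``heavy'' or ``one-sided'' circle map theory), but it does need to be said.

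There is, however, a genuine gap in your argument for part~(a). Continuity and weak monotonicity of $\lambda\mapsto q_\lambda$, together with the fact that each rational value is attained on a nontrivial interval, do \emph{not} by themselves force irrational values to be attained at single points: nothing you have cited rules out an irrational plateau. What is needed is the additional (standard but non-trivial) fact that for a monotone one-parameter family of monotone degree-one circle maps, the rotation number is \emph{strictly} increasing at parameters where it is irrational --- essentially because absence of periodic orbits means the dynamics, being semiconjugate to $R_q$, cannot absorb a perturbation without the rotation number changing. You should either invoke this explicitly or give the short argument (e.g.\ show that the lifts $\tilde B_\lambda$ are pointwise monotone in~$\lambda$ and that at an irrational rotation number the orbit of the plateau endpoint is dense in the minimal set, so any increase of the lift increases $\lim \tilde B_\lambda^n(x)/n$).
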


The values of $\lambda_{m/n}^\pm$, $\lambda_{m/n}^\ast$, and $\lambda_q$ can be
specified in terms of the kneading sequences of the corresponding tent maps (see
for example~\cite{prime}), but this will not be necessary here.

\begin{rem} \label{rem:height-intervals-discont} 
    Using Lemma~\ref{lem:tB-facts}(d) and~(e), it follows from
    Theorem~\ref{thm:height-intervals-rational}(c) that if $q_\lambda = m/n$ then
    $\be\colon S\to \hI$ has exactly~$n$ discontinuities, and $\cE\colon
    S\setminus\{a,b\}\to \ssp$ is constant on each component of the complement of
    these discontinuities. (If $\lambda =
    \lambda_{m/n}^-$ then the $\tB$-orbit of $f(a)_\ell$ is periodic, while if
    $\lambda>\lambda_{m/n}^-$ then it is a finite orbit segment.)
\end{rem}

We can use Theorems~\ref{thm:height-intervals-rational}
and~\ref{thm:height-irrational} to classify tent maps: first, according as their
height is rational or irrational; and second, in the rational case, according as
the parameter~$\lambda$ is an endpoint of the rational height interval, the special
interior value for which $\tB^n(a)=c_u$, or a general interior value.

\begin{defns}[Type of tent map: irrational, rational, endpoint, NBT, general]
    \label{defn:tent_type}
    We say that~$f_\lambda$ is of \emph{irrational} or \emph{rational} type
    according as $q_\lambda$ is irrational or rational. In the rational case
    with $q_\lambda = m/n$, we say that~$f_\lambda$ is of \emph{endpoint} type
    if $\lambda = \lambda_{m/n}^{\pm}$; of \emph{NBT} type if
    $\lambda=\lambda_{m/n}^\ast$; and of \emph{general} type otherwise.
\end{defns}

\section{The rational post-critically infinite case}
    \label{sec:rat_PCI}

Throughout this section we assume that~$f$ is of rational type with height $q=m/n$,
and that the critical orbit is not periodic or pre-periodic. We will show that the
corresponding sphere homeomorphism $F\colon\Sigma\to\Sigma$ is a measurable
pseudo-Anosov map. (In the case where~$f$ is post-critically finite, $F$ is a
generalized pseudo-Anosov map, as proved in~\cite{gpa}: see
Section~\ref{sec:rat-PCF}.)

The path that we take is as follows:
\begin{itemize}
    \item The map $g\colon\hI\to\Sigma$ which semi-conjugates $\hf$ to $F$ sends
    most of the $\pi_0$-fibers $\fib{x}$ in~$\hI$ --- those for which~$x$ is not in
    the grand orbit of~$c$ --- to arcs in~$\Sigma$. In Section~\ref{sec:semi-conj}
    we describe~$g$ (recalling results from~\cite{prime}) and prove this result
    (Theorem~\ref{thm:fiber_arc}).

    \item The streamlines of the stable turbulation are unions of these arcs,
    joined at their endpoints. The stable measure comes from the
    measures~$\alpha_x$ of Lemma~\ref{lem:alpha-measure} supported on the
    $\pi_0$-fibers. In Section~\ref{sec:stab-fol} we define $(\cT^s,
    \nu^s)$ (Definition~\ref{defn:stab_fol}) and show that~$F$ contracts $\nu^s$ by
    a factor~$\lambda$ (Lemma~\ref{lem:stable-action}) and that the turbulation is
    tame (Lemma~\ref{lem:stable-tame}).

    \item The streamlines of the unstable turbulation are the $g$-images of the
    globally leaf regular path components of~$\hI$
    (Definition~\ref{defn:glob_leaf_reg}): recall from Theorem~\ref{thm:typical}
    that the union of these has full measure --- we omit countably many of them, on
    which the identifications induced by~$g$ are non-trivial. The unstable measure
    comes from Lebesgue measure on the components of the $0$-flat decompositions
    (Lemma~\ref{lem:decompose-glr}) of these path components. In
    Section~\ref{sec:unstab-fol} we define $(\cT^u,\nu^u)$
    (Definition~\ref{defn:unstab-fol}) and show that $F$ expands $\nu^u$ by a
    factor~$\lambda$ (Lemma~\ref{lem:unstab-action}) and that the turbulation is
    tame (Lemma~\ref{lem:unstable-tame}).

    \item In Section~\ref{sec:mpa-complete} we complete the proof that~$F$ is
    a measurable pseudo-Anosov map, by proving that the two measured turbulations
    are transverse and full.
\end{itemize}

\subsection{The semi-conjugacy \texorpdfstring{$g\colon\hI\to\Sigma$}{g}}
\label{sec:semi-conj}

We now describe the identifications on $\hI$ realized by the semiconjugacy
$g\colon\hI\to\Sigma$, and show that, under these identifications, the
$\pi_0$-fibers of points $y\in I$ which are not on the grand orbit
\[
    \GO(c) = \{x\in I\,:\,f^m(x)=f^n(c) \text{ for some }m, n\ge 0\}     
 \] 
of~$c$ become arcs in~$\Sigma$.

\begin{notn}[$\Pi$, $\bP$]
    We write $\Pi = S\setminus \bigcup_{r\ge 0}B^{-r}(\ingam)$, the set of points
    whose $B$-orbits never enter $\ingam = (\ha_u, a)$, and $\bP=\be(\Pi)\subset
    \hI$.
\end{notn}
\begin{lem} \label{lem:rat-don't-enter-gamma}
    $\Pi$ is a period~$n$ orbit of~$B$ which is disjoint from~$\gamma$, and $\bP$
    is a period~$n$ orbit of~$\hf$ contained in the set of extreme elements
    of~$\hI$.
\end{lem}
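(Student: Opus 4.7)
My plan is to address the two assertions of the lemma in turn. The first claim, that $\Pi$ is a period~$n$ orbit of~$B$, is essentially contained in Theorem~\ref{thm:height-intervals-rational}(d); the only piece of new content is to strengthen ``disjoint from~$\ingam$'' (which is automatic from the definition of $\Pi$) to ``disjoint from~$\gamma$'', that is, to rule out the endpoints $a$ and $\ha_u$ of~$\gamma$. Once disjointness from~$\gamma$ is in hand, the second claim will follow by transporting the $B$-dynamics on~$\Pi$ to $\hf$-dynamics on~$\bP$ via the semi-conjugacy relation~\eqref{eq:model_action}.

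For disjointness from~$\gamma$, I would first note that the standing hypothesis on the critical orbit forces $\lambda\in\Int(\Lambda_{m/n})$: by Theorem~\ref{thm:height-intervals-rational}(b), the critical orbit is periodic at $\lambda_{m/n}^-$ and pre-periodic at $\lambda_{m/n}^+$, so neither endpoint of $\Lambda_{m/n}$ is admissible. Theorem~\ref{thm:height-intervals-rational}(c) then yields $\tB^n(a)\in\ingam$, and because the iterates $a,\tB(a),\dots,\tB^{n-1}(a)$ all lie outside $\gamma$ (again by that theorem), $B$ and $\tB$ agree along this initial segment, giving $B^n(a)\in\ingam$; hence $a\notin\Pi$. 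For the remaining endpoint $\ha_u$, I would use the fact that $\ha_u\in\gamma$ implies $B(\ha_u)=f(a)_\ell=B(a)$, so the $B$-orbit of $\ha_u$ merges with that of $a$ at step~$1$ and therefore also enters $\ingam$ at step~$n$. Combined with $\Pi\cap\ingam=\emptyset$, this yields $\Pi\cap\gamma=\emptyset$.

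For the second claim, disjointness from~$\gamma$ places $\Pi$ entirely in the domain $[a,\ha_u)$ of~$\tB$, where $\tB$ and $B$ coincide. Applying~\eqref{eq:model_action} pointwise on $\Pi$ gives $\hf(\be(y))=\be(B(y))$ for every $y\in\Pi$, so $\be$ conjugates $B|_\Pi$ to $\hf|_{\bP}$. Since $\be$ is injective, $\bP$ has exactly $|\Pi|=n$ elements and forms a single $\hf$-orbit whose period divides~$n$; a strictly shorter period for~$\bP$ would pull back through $\be$ to a strictly shorter period for~$\Pi$, so the period is exactly~$n$. Finally, $\bP=\be(\Pi)\subset\be(S)$ consists of extreme elements by the very definition of~$\be$ (Definition~\ref{defn:upper_lower_extreme}).

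The main (indeed only) obstacle is the disjointness from the endpoints of~$\gamma$: the rest of the argument is a formal consequence of the dictionary $\hf\circ\be=\be\circ\tB$ together with injectivity of~$\be$. The hypothesis on the critical orbit enters precisely at the step where Theorem~\ref{thm:height-intervals-rational}(b) is used to exclude the boundary parameters of $\Lambda_{m/n}$.
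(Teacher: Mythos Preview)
Your proof is correct and follows essentially the same route as the paper's: use Theorem~\ref{thm:height-intervals-rational}(d) for the periodic orbit, rule out the endpoints $a$ and $\ha_u$ via parts~(b) and~(c) of that theorem together with $B(a)=B(\ha_u)$, and then transport to~$\hf$ using~\eqref{eq:model_action} and the injectivity of~$\be$. One tiny quibble: the iterate $a=\tB^0(a)$ does lie in~$\gamma$ (it is an endpoint), so your phrase ``all lie outside~$\gamma$'' is not literally right for the first term; but since $a$ is nonetheless in the domain $[a,\ha_u)$ of~$\tB$ and $B(a)=\tB(a)=f(a)_\ell$, the conclusion $B^n(a)=\tB^n(a)$ still holds.
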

\begin{proof}
    Theorem~\ref{thm:height-intervals-rational}(d) states that~$\Pi$ is a
    period~$n$ orbit of~$B$. By definition it is disjoint from $\ingam$, so we need
    only show that the endpoints~$a$ and $\ha_u$ of $\gamma$ are not in~$\Pi$.
    Since~$f$ is not post-critically finite we have $\lambda\in\Int(\Lambda_{m/n})$
    by Theorem~\ref{thm:height-intervals-rational}(b), and hence $B^n(a) =
    \tB^n(a)\in\ingam$ by Theorem~\ref{thm:height-intervals-rational}(c), so that
    $a\not\in\Pi$. Since $B(a)=B(\ha_u)=f(a)_\ell$, we have $\ha_u\not\in\Pi$ also.

    It follows from~\eqref{eq:model_action} that $\bP=\be(\Pi)$ is a period~$n$
    orbit of $\hf$, which is clearly contained in the image of $\be$, the set of
    extreme elements of~$\hI$.
\end{proof}

\begin{notn}[$z$, $\cZ$]
    We write $z=f^n(a)$, so that $\tB^n(a)=z_u$, and $\cZ = \{z, \hz\}$.
\end{notn}

The following key lemma is a translation into the language of this paper of
remark~5.17(e) of~\cite{prime}.

\begin{lem} \label{lem:rat-equiv-class}
    The semi-conjugacy $g\colon\hI\to\Sigma$ is the quotient map of the equivalence
    relation on~$\hI$ with the following non-trivial equivalence classes:
    \begin{enumerate}[(EI)]
        \item $\{
            \hf^r\left(\be(x_u)\right), \hf^r\left(\be(\hx_u)\right)
            \}$,
            for each $x\in(a,c)\setminus \cZ$ and each $r\in\Z$.

        \item $\{
            \hf^r\left(\be(\hz_u)\right),
            \hf^{r+n}\left(\be(a)\right),
            \hf^{r+n}\left(\be(\ha_u)\right)
        \}$, 
        for each $r\in\Z$.

        \item The period~$n$ orbit~$\bP$.
    \end{enumerate}
\end{lem}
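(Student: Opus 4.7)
The proof is essentially a translation of remark~5.17(e) of~\cite{prime}, where the equivalence relation induced by~$g$ is described in terms of accessible prime ends of the Barge--Martin attractor and the priming collapse of the basin of infinity. My plan is to rewrite that description in the notation of Section~\ref{sec:extreme-outside}, using the map $\be\colon S\to\hI$ and the outside map $B\colon S\to S$, and then verify each of the three families (EI)--(EIII) in turn.

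The starting point is that, by construction of~$g$ in~\cite{prime}, the non-trivial identifications realized by~$g$ take place entirely along the set of accessible points on the boundary of the basin of infinity, which coincides with the image of~$\be$. The combinatorics of which accessible points are identified is read off from~$B$: two points of~$S$ produce identified extreme elements in~$\hI$ iff they are in the same fiber of the quotient map of~$S$ which collapses $\gamma = [\ha_u, a]$ together with all its iterated $B$-preimages (away from the period-$n$ orbit~$\Pi$, which is treated separately). Since $B$ is constant on~$\gamma$ and its restriction elsewhere is injective except at the one-sided discontinuity of~$\tB^{-1}$ recorded in Lemma~\ref{lem:tB-facts}(d), this collapsing propagates along $B$-orbits in a controlled way. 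The dictionary between the $B$-action on~$S$ and the $\hf$-action on $\be(S)$ is supplied by~\eqref{eq:model_action}.

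With this dictionary in hand I would verify the three families. For (EI), when $x\in(a,c)\setminus\cZ$, the two points $x_u$ and $\hx_u$ of~$\ingam$ arise as the endpoints of a component of $B^{-r}(\ingam)$ for some $r\ge 1$ depending on~$x$; the collapse therefore identifies $\be(x_u)$ with $\be(\hx_u)$, and~\eqref{eq:model_action} propagates the identification through all powers $\hf^r$, $r\in\Z$. For (EII), the distinguished case $x\in\cZ$ interacts directly with the endpoints of $\gamma$ itself: Theorem~\ref{thm:height-intervals-rational}(c) gives $\tB^n(a) = z_u\in\ingam$, so the collapse of~$\gamma$ identifies $\be(a)$ with $\be(\ha_u)$, and after~$n$ further forward iterations of~$B$ both of these endpoints land on the point of the quotient carrying $\be(\hz_u)$, yielding the triple identification~(EII) once shifted by~$\hf^r$. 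For (EIII), $\bP=\be(\Pi)$ is by Lemma~\ref{lem:rat-don't-enter-gamma} a $B$-periodic orbit of period~$n$ which avoids every iterated preimage of~$\gamma$; the priming collapse sends this orbit to the single fixed bad point of~$F$ described in the Introduction.

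The main obstacle is bookkeeping, on two counts. First, I would need to track orientations carefully to verify that it is $\hz_u$ rather than $z_u$ which appears in (EII), which stems from the fact that $\tB$ reverses the unimodal order on the upper copy of~$I$ near~$z_u$, so the extreme-element identification picks up the partner $\hz$ of~$z$. Second, the shift between the two indices $r$ and $r+n$ in~(EII) must be verified to reflect precisely the fact that the endpoints of~$\gamma$ require $n$ forward iterations of~$\hf$ to interact with the component of $B^{-n}(\ingam)$ containing~$z_u$. Once these are in place, exhaustiveness follows: points outside $\be(S)$ project injectively under~$g$ since only accessible points are affected by the collapse, and among extreme elements the combinatorics of~$B$ above accounts for every identification.
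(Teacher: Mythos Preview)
The paper does not give a proof of this lemma at all: it is introduced with the sentence ``The following key lemma is a translation into the language of this paper of remark~5.17(e) of~\cite{prime}'' and then simply stated. Your proposal correctly identifies this, and your plan to carry out the translation via the dictionary $\be\colon S\to\hI$, \eqref{eq:model_action}, and Theorem~\ref{thm:height-intervals-rational}(c) is exactly the intended route; there is nothing further in the paper to compare against.

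A couple of minor inaccuracies in your sketch are worth flagging, should you actually write this out. First, your description of the mechanism as ``collapse $\gamma$ together with all its iterated $B$-preimages'' and identify points in the same fiber is not quite right: that would collapse all of $\{\be(x_u):x\in(a,\ha)\}$ to a single point, not pairwise. The pairwise identification of $\be(x_u)$ with $\be(\hx_u)$ comes instead from the fact that these two extreme elements bound a complementary arc in the prime-end picture of~\cite{prime} (equivalently, they are the two accessible points on opposite sides of the same ``fold''). Second, the claim that $x_u$ and $\hx_u$ ``arise as the endpoints of a component of $B^{-r}(\ingam)$ for some $r\ge 1$'' is not correct: both $x_u$ and $\hx_u$ simply lie in $\ingam$ itself. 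The correct bookkeeping is that $\tB^{-r}(x_u)$ and $\tB^{-r}(\hx_u)$ are the endpoints of a complementary gap for each $r\ge 0$, which is what propagates (EI) backwards; forward propagation uses Lemma~\ref{lem:preserve_consecutive}(d) and~(a). Your handling of the $n$-shift and the appearance of $\hz$ in (EII) is on the right track.
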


We first use this result to understand the identifications induced by~$g$ on
individual fibers of~$\hI$. Note that the condition $y\in I\setminus\GO(c)$ implies
that $y\not\in\PC$, so that the fiber~$\fib{y}$ is totally ordered, and in
particular consecutive elements are well defined.

\begin{lem}[Identifications within fibers] \label{lem:idents_in_fibers}
    Let $y\in I \setminus\GO(c)$. 
    \begin{enumerate}[(a)]
        \item Non-extreme elements of~$\fib{y}$ are only identified with other
        (non-extreme) elements of~$\fib{y}$.

        \item Distinct non-extreme elements $\bx$ and $\bx'$ of~$\fib{y}$ are
        identified if and only if they are consecutive.

        \item The extreme elements $\be(y_u)$ and $\be(y_\ell)$ of~$\fib{y}$ are
        not identified with each other: that is, $g(\be(y_u))\not=g(\be(y_\ell))$.
    \end{enumerate}
\end{lem}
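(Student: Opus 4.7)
The plan is to examine, for each of the three claims, which of the equivalence classes (EI), (EII), (EIII) from Lemma~\ref{lem:rat-equiv-class} can contain an element of $\fib{y}$ and what shape the resulting identifications take. (In line with the paragraph preceding the lemma, I read the hypothesis as $y\in I\setminus\GO(c)$, which is the condition that makes $\fib{y}$ totally ordered and ``consecutive'' well-defined.) Two preliminary observations carry most of the weight. First, every first coordinate appearing in an (EII) class lies in $\GO(c)$: for $r\ge 0$ the three $\hf$-iterates project via $\pi_0$ into $\{\hz\}\cup\PC$ (with $\hz\in\GO(c)$ since $f(\hz)=f(z)\in\PC$, and $f(\ha)=f(a)\in\PC$), and for $r<0$ Lemma~\ref{lem:tB-facts}(b) writes them as $\be(\tB^{-k}(\cdot))$ whose first coordinates are $f$-preimages of $\hz,a,\ha$ and hence still in $\GO(c)$. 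So no (EII) class meets $\fib{y}$. Second, $\bP$ is an $\hf$-orbit of extreme elements and meets each $\pi_0$-fiber in at most one point, so (EIII) never touches a non-extreme element of $\fib{y}$ and never relates $\be(y_u)$ to $\be(y_\ell)$.

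For (a) and the forward direction of (b), it then suffices to analyse (EI) classes $\{\hf^r(\be(x_u)),\hf^r(\be(\hx_u))\}$ with $x\in(a,c)\setminus\cZ$. For $r\le 0$ both members are extreme (by definition for $r=0$, by Lemma~\ref{lem:tB-facts}(b) for $r<0$), so a non-extreme element of $\fib{y}$ can appear only when $r\ge 1$; and for such $r$ both members lie in $\fib{f^r(x)}$, so they lie in the same fiber, giving (a). For the accompanying consecutivity, Lemma~\ref{lem:preserve_consecutive}(d) handles the base case $r=1$ (the pair is consecutive in $\fib{f(x)}$), and for $r\ge 2$ the intermediate iterates $f^i(x)$ with $1\le i<r$ all avoid $\PC$ (else $y=f^r(x)\in\PC$, contradicting $y\notin\GO(c)$), so Lemma~\ref{lem:preserve_consecutive}(a) propagates consecutivity all the way to $\fib{y}$.

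For the converse of (b), given distinct consecutive $\bx,\bx'\in\fib{y}$ let $r\ge 1$ be least with $x_r\ne x'_r$. Iterated application of Lemma~\ref{lem:preserve_consecutive}(b) shows that $\hf^{-(r-1)}(\bx)$ and $\hf^{-(r-1)}(\bx')$ are consecutive in $\fib{x_{r-1}}$; Lemma~\ref{lem:preserve_consecutive}(c) then gives $\hf^{-r}(\bx)=\be(x_{r,u})$ and $\hf^{-r}(\bx')=\be(x'_{r,u})$, where $x'_r$ is the other $f$-preimage of $x_{r-1}$ (i.e.\ $\widehat{x_r}$). It remains to verify $x_r\in(a,c)\setminus\cZ$: each alternative $x_r\in\{a,c\}\cup\cZ$ forces $y=f^r(x_r)\in\PC$ (for instance $x_r=c$ gives $y=f^r(c)$, and $x_r\in\cZ$ gives $y=f^{r+n}(a)$), contradicting $y\notin\GO(c)$. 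Hence $\bx$ and $\bx'$ comprise the (EI) class with index $r$ and base point $x_r$, so they are identified.

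For (c), any equivalence class containing both $\be(y_u)$ and $\be(y_\ell)$ must be an (EI) class with $r\le 0$, since (EII) and (EIII) have been excluded above. An $r=0$ class $\{\be(x_u),\be(\hx_u)\}$ contains only $u$-subscripted $S$-points, but $y_u\ne y_\ell$ (as $y\ne a,b$, using $a,b\in\PC$), so the class cannot also contain $\be(y_\ell)$. For $r=-k<0$, simultaneously requiring $\tB^{-k}(x_u)=y_u$ and $\tB^{-k}(\hx_u)=y_\ell$ and then applying $\tB^k$ together with Lemma~\ref{lem:tB-facts}(a) would force $x=f^k(y)=\hx$, contradicting $x\ne\hx$. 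The main obstacle throughout is the case analysis in the converse of (b) verifying that $x_r\in(a,c)\setminus\cZ$; this is precisely where the hypothesis on $y$ is essential, and the same hypothesis is what eliminates (EII) and controls (EIII) for the other parts.
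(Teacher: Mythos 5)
Your treatment of parts (a) and (b) is correct and follows essentially the paper's route: exclude (EII) using $y\notin\GO(c)$, observe that the (EI) classes with $r\le 0$ and the orbit $\bP$ consist only of extreme elements, and run both directions of (b) through Lemma~\ref{lem:preserve_consecutive} exactly as the paper does (you make explicit the checks that the intermediate iterates avoid $\PC$, which the paper leaves implicit). In part (c), your disposal of the (EI) classes with $r=-k<0$ --- applying $\tB^{k}$ and Lemma~\ref{lem:tB-facts}(a) to force $x=f^{k}(y)=\hx$ --- is cleaner than the paper's argument, which instead compares the first entry times of $y_u$ and $y_\ell$ into $\ingam$.

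Part (c) does, however, contain a genuine gap: you dismiss (EIII) by asserting that $\bP$ ``meets each $\pi_0$-fiber in at most one point,'' but this is neither proved nor obvious. We have $\bP=\be(\Pi)$ with $\Pi=S\setminus\bigcup_{r\ge 0}B^{-r}(\ingam)$ a period-$n$ orbit of the circle map $B$, and nothing in the general structure of such an orbit prevents it from containing both $x_u$ and $x_\ell$ for some $x\in I$; in that case $\be(x_u)$ and $\be(x_\ell)$ would be distinct points of $\bP$ lying in the same fiber, and (EIII) would identify them. The paper devotes the first half of its proof of (c) to exactly this point: for every admissible $y$, at least one of $y_u,y_\ell$ has $B$-orbit entering $\ingam$ within one step --- if $y<\ha$ then $y_u\in\ingam$ directly, while if $y>\ha$ then $f(y)<f(a)<\ha$ by Lemma~\ref{lem:order_fa_p_ha}, so that $B(y_\ell)=f(y)_u\in\ingam$ --- and hence at least one of $\be(y_u),\be(y_\ell)$ lies outside $\bP$. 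Some such argument is required; as written, the (EIII) case of part (c) is unsupported.
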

\begin{proof}
    \begin{enumerate}[(a)]
        \item Since $y\not\in\GO(c)$, equivalence classes of type~(EII) are
        disjoint from~$\fib{y}$. Moreover, if $r\ge 0$ then for any
        $x\in(a,c)\setminus\cZ$ we have $\hf^{-r}(\be(x_u)) = \be(\tB^{-r}(x_u))$
        and $\hf^{-r}(\be(\hx_u)) = \be(\tB^{-r}(\hx_u))$ by
        Lemma~\ref{lem:tB-facts}(b), so these are extreme elements, as are the
        points of~$\bP$. Therefore the non-trivial
        equivalence classes which contain non-extreme elements of $\fib{y}$ are
        precisely $\{\hf^r\left(\be(x_u)\right), \hf^r\left(\be(\hx_u)\right) \}$
        for those $x\in(a,c)\setminus \cZ$ and $r > 0$ which satisfy $f^r(x)=y$;
        and, since $f^r(x)=f^r(\hx)$, such equivalence classes consist of two
        points of~$\fib{y}$.

        \item Suppose that $\bx$ and $\bx'$ are consecutive. Let $r>0$ be least such that
        $x_r\not=x_r'$. By Lemma~\ref{lem:preserve_consecutive}(b) and~(c), we have
        $\hf^{-r}(\bx)=\be(x_{r,u})$ and $\hf^{-r}(\bx') = \be(x'_{r,u})$. Since
        $x'_r = \widehat{x_r}$ (both have image $x_{r-1}$), and these are not equal
        to $a$, $c$, $z$, or $\hz$ (since $y\not\in\GO(c)$), it follows
        from~(EI) that $g(\bx)=g(\bx')$ as required.

        For the converse, if $g(\bx)=g(\bx')$ then by the proof of~(a) there are
        $x\in(a,c)\setminus \cZ$ and $r>0$ such that $\{\bx, \bx'\} =
        \{\hf^r(\be(x_u)), \hf^r(\be(\hx_u))\}$. These are consecutive by
        Lemma~\ref{lem:preserve_consecutive}(d) and~(a).

        \item The two extreme elements cannot both belong to the periodic
        orbit~$\bP$, since if $y<\ha$ then $y_u\in\ingam$ (so that $y_u\not\in\Pi$,
        and hence $\be(y_u)\not\in\bP$); while if $y>\ha$ then $f(y)<f(a)<\ha$ by
        Lemma~\ref{lem:order_fa_p_ha}, and hence $B(y_\ell)=f(y)_u\in\ingam$ (so
        that $y_\ell\not\in\Pi$, and hence $\be(y_\ell)\not\in\bP$).

        It follows from Lemma~\ref{lem:rat-equiv-class} and $y\not\in\GO(c)$ that
        we could only have $g(\be(y_u)) = g(\be(y_\ell))$ if there were some
        $x\in(a,c)$ and $r\in\Z$ with $\{\be(y_u), \be(y_\ell)\}$ equal to
        $\{\hf^r\left(\be(x_u)\right), \hf^r\left(\be(\hx_u)\right)\}$. However if
        $r>0$ then $\hf^r\left(\be(x_u)\right)$ and $\hf^r\left(\be(\hx_u)\right)$
        are non-extreme elements of $\fib{f^r(x)}$. If $r = -s\le 0$, on the other
        hand, then we would have $\{y_u, y_\ell\} = \{\Theta^{-s}(x_u),
        \Theta^{-s}(\hx_u)\}$ as in the proof of~(a), so that~$s$ would
        be both least with $B^s(y_u)\in\ingam$, and least with
        $B^s(y_\ell)\in\ingam$. This is impossible, since, as above, if $y<\ha$
        then $y_u\in\ingam$ but $y_\ell\not\in\ingam$; while if $y>\ha$ then
        $B(y_\ell)\in\ingam$ but $B(y_u) = f(y)_\ell\not\in\ingam$.
    \end{enumerate}
\end{proof}

\begin{thm}
    \label{thm:fiber_arc}
    Let $y\in I\setminus\GO(c)$. Then $g(\fib{y})$ is an arc in~$\Sigma$.
\end{thm}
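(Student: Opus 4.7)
The plan is to exploit the unimodal order on~$\fib{y}$, determine the equivalence relation on~$\fib{y}$ induced by~$g$ using Lemma~\ref{lem:idents_in_fibers}, and recognize the quotient as an arc via the classical gap-collapsing construction on a linearly ordered Cantor set.

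First I would observe that, since $y\in I\setminus\GO(c)$ implies $y\notin\PC$, every element of~$\fib{y}$ has a unique itinerary (Remark~\ref{rem:itin_determines}), so $L_y\colon\cK_y\to\fib{y}$ is a continuous bijection (Lemma~\ref{lem:cpctKx}) between compact metrizable spaces, and hence a homeomorphism. Pushing the unimodal order forward via~$L_y$ gives a total order on~$\fib{y}$; because open rays in the unimodal order on~$\ssp$ are unions of cylinder sets, the order topology on~$\fib{y}$ coincides with its subspace topology from~$\hI$. By Lemma~\ref{lem:cantor_fibre} $\fib{y}$ is a Cantor set, and by the preceding lemma its extreme elements $\be(y_\ell)$ and~$\be(y_u)$ are distinct, so $\fib{y}$ is a linearly ordered Cantor set with distinct endpoints. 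In particular, neither endpoint admits a consecutive partner, since such a partner would make the endpoint isolated, contradicting perfectness.

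Next, I would consider the equivalence relation $\sim$ on~$\fib{y}$ with $\bx\sim\bx'$ iff $g(\bx)=g(\bx')$. Lemma~\ref{lem:idents_in_fibers} identifies the non-trivial classes of~$\sim$ as exactly the pairs of consecutive non-extreme points of~$\fib{y}$, which by the previous paragraph are simply all pairs of consecutive points of~$\fib{y}$. Moreover, $g|_{\fib{y}}$ is continuous from the compact space~$\fib{y}$ to the Hausdorff space~$\Sigma$, hence is a closed quotient map onto its image, so $g(\fib{y})$ is homeomorphic to~$\fib{y}/{\sim}$.

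To realize this quotient as an arc, I would invoke the OU measure~$\alpha_y$ on~$\fib{y}$ (Lemma~\ref{lem:alpha-measure} and Remark~\ref{rems:alpha-x}(a)), which is defined since $y\notin\PC$, and set $h\colon\fib{y}\to[0,\alpha_y(\fib{y})]$ by $h(\bx)=\alpha_y(\{\bx'\in\fib{y}\,:\,\bx'\le\bx\})$. Monotonicity and surjectivity are immediate; $h$ is continuous because $\alpha_y$ is non-atomic and order-rays are open in~$\fib{y}$, so each $\bx^*$ admits order-neighborhoods of arbitrarily small $\alpha_y$-measure; and since any non-empty open subset of~$\fib{y}$ has positive $\alpha_y$-measure, $h(\bx)=h(\bx')$ with $\bx<\bx'$ holds iff the order interval $(\bx,\bx')$ in~$\fib{y}$ is empty, i.e.\ iff $\bx$ and~$\bx'$ are consecutive. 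Thus $h$ is a closed continuous surjection realizing exactly~$\sim$; comparing with $g|_{\fib{y}}$, which realizes the same~$\sim$, gives $g(\fib{y})\cong[0,\alpha_y(\fib{y})]$, an arc. The main technical hurdle is precisely this interplay between the three structures on~$\fib{y}$ --- topology, order, and OU measure --- especially matching the order topology with the subspace topology and using non-atomicness of~$\alpha_y$ to extract continuity of~$h$ together with the correct identifications.
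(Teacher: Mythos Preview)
Your proof is correct and follows the same overall strategy as the paper: use the unimodal order on~$\fib{y}$ together with Lemma~\ref{lem:idents_in_fibers} to identify the fibers of~$g|_{\fib{y}}$ as exactly the consecutive pairs, and then recognize the resulting quotient as an arc.

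The difference lies only in the explicit map to the interval. The paper composes the itinerary map~$J$ with the standard order-preserving homeomorphism $H\colon\ssp\to[0,1]$ onto the middle-thirds Cantor set, obtaining an order-preserving embedding $H\circ J\colon\fib{y}\hookrightarrow[0,1]$, and then invokes the fact that collapsing the complementary intervals of a Cantor subset of~$[0,1]$ yields an arc. You instead use the cumulative distribution function of~$\alpha_y$, which already has the gaps collapsed and so realizes the quotient directly. Your route requires a little more verification (that the order topology agrees with the subspace topology, continuity of the CDF from non-atomicity, surjectivity), but it has the pleasant feature of anticipating Definition~\ref{defn:def-stab-meas}: your map~$h$ is, up to the push-forward by~$g$, precisely the arc-length parameterization of~$\cA_y$ by the stable stream measure~$\nu^s$. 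The paper's route is shorter because the embedding~$H\circ J$ comes for free and the gap-collapsing fact is classical, but both arguments are equally valid.
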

\begin{proof}
    Let $H\colon\ssp\to [0,1]$ be the order-preserving homeomorphism onto the
    middle thirds Cantor set defined by
    \[
        H(s) = \sum_{r=0}^\infty \frac{2\varepsilon_r}{3^r}, \quad \text{ where
        }
        \varepsilon_r = \left(\sum_{i=0}^r s_i\right) \bmod{2}.
    \]
    Then $H\circ J\colon\fib{y}\to[0,1]$ is an order-preserving embedding of the
    Cantor set~$\fib{y}$ into the interval. Therefore, by
    Lemma~\ref{lem:idents_in_fibers}(b) and~(c), $g(\fib{y})$ is homeomorphic to
    the space obtained by collapsing the complementary intervals of a Cantor subset
    of the interval: that is, to an arc.
\end{proof}

\begin{notn}[$\cA_y$] \label{notn:cay}
    For each $y\in I\setminus\GO(c)$, we write $\cA_y$ for the arc
    $g(\fib{y})\subset\Sigma$.
\end{notn}

\begin{lem} \label{lem:cont-inverse}
    For $y\in I \setminus\GO(c)$, let $\cB_y$ be the subset of $\cA_y$ consisting
    of points with a unique $g$-preimage. Then $g^{-1}\colon\cB_y\to\fib{y}$ is
    continuous.
\end{lem}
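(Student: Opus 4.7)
\medskip

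\noindent\textbf{Proof plan.} The statement is a standard compactness fact, and the plan is to prove it by a sequential subsequence argument, exploiting that $\fib{y}$ is compact (Lemma~\ref{lem:cantor_fibre}) and that $g$ is continuous.

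First I would make the following simple observation about points of $\cB_y$: if $b\in \cB_y$, then the unique $g$-preimage of~$b$ actually lies in~$\fib{y}$. Indeed, since $b\in\cA_y=g(\fib{y})$, there is some $\bx_0\in\fib{y}$ with $g(\bx_0)=b$, and by uniqueness $g^{-1}(b)=\{\bx_0\}\subset\fib{y}$. Hence $g^{-1}\colon\cB_y\to\fib{y}$ is well defined as a map into $\fib{y}$.

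Now suppose $b_n\to b$ in $\cB_y$, and write $\bx_n=g^{-1}(b_n)\in\fib{y}$ and $\bx=g^{-1}(b)\in\fib{y}$. I claim $\bx_n\to \bx$. Suppose for a contradiction that this fails; then, by passing to a subsequence, we may assume $d(\bx_n,\bx)\ge\epsilon$ for all~$n$ and some~$\epsilon>0$. Since $\fib{y}$ is compact, we may pass to a further subsequence and assume $\bx_n\to\bx^*\in\fib{y}$ with $d(\bx^*,\bx)\ge\epsilon$, in particular $\bx^*\ne \bx$. Continuity of~$g$ gives $g(\bx^*)=\lim g(\bx_n)=\lim b_n=b$, so $\bx^*\in g^{-1}(b)$. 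But $b\in\cB_y$ means $g^{-1}(b)=\{\bx\}$, contradicting $\bx^*\ne\bx$.

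I expect no real obstacle here: the key ingredients (compactness of $\fib{y}$, continuity of $g$, and the characterisation of $\cB_y$ as points with a single $g$-preimage forcing that preimage into $\fib{y}$) are all either immediate or already established earlier in the section. The only subtlety worth flagging is to be careful that ``unique $g$-preimage'' in Lemma~\ref{lem:cont-inverse} means unique in all of~$\hI$, not merely in $\fib{y}$ — but as noted above, for $b\in\cA_y$ these two notions coincide, so no further input from Lemma~\ref{lem:idents_in_fibers} is needed beyond what is already in the definition.
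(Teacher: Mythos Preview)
Your proof is correct and is essentially identical to the paper's own argument: a sequential contradiction using compactness of $\fib{y}$, continuity of $g$, and the defining property of $\cB_y$. The paper's version is slightly terser (it omits the explicit check that the unique preimage lies in $\fib{y}$ and combines the two subsequence passes into one), but the substance is the same.
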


\begin{proof} 
    Suppose for a contradiction that there is a convergent sequence $x_n\to x$
    in~$\cB_y$ with $g^{-1}(x_n)\not\to g^{-1}(x)$. Since~$\fib{y}$ is compact, we
    can assume by taking a subsequence that $g^{-1}(x_n)\to\bx \not=g^{-1}(x)$.
    Since~$g$ is continuous, it follows that $x_n\to g(\bx)$, so that $x=g(\bx)$.
    The distinct elements $\bx$ and $g^{-1}(x)$ of~$\fib{y}$ are therefore both
    mapped to~$x$ by~$g$, contradicting the assumption that $x\in\cB_y$.
\end{proof}

\subsection{The stable turbulation}
\label{sec:stab-fol}

The streamlines of the stable turbulation will be formed from unions of the arcs
$\cA_y$, which are joined at their endpoints by the identifications of
Lemma~\ref{lem:rat-equiv-class}. In order to ensure that the endpoints are
identified in pairs, we need to exclude the identifications of types~(EII) and
(EIII) in Lemma~\ref{lem:rat-equiv-class}. Type~(EII) identifications are excluded
in any case, since $y\not\in\GO(c)$; to deal with type~(EIII), we will also require
that $y$ is not in the grand orbit of $\pi_0(\bP)$.

\begin{notn}[$P$, $V$, $\mu$] \label{notn:PV}
    Write $P=\pi_0(\bP)$ and let $V= I \setminus (\GO(c) \cup \GO(P))$, a totally
    $f$-invariant subset of~$I$ with countable complement.

    Denote by~$\mu$ the OU ergodic $F$-invariant probability measure
    $g_*(\hmu)$ on~$\Sigma$, where $\hmu$ is the measure on~$\hI$ defined in
    Section~\ref{sec:measures}. No confusion will arise from the same symbol's
    having been used for the absolutely continuous $f$-invariant measure on~$I$.
\end{notn}

If $y\in V$ then, by definition of~$\bP$, there is some $r\ge 0$ with $B^r(y_u)\in
(\ha_u, a)$: say \mbox{$B^r(y_u)=x_u$}. Then $x\not\in\cZ$, since $y\not\in\GO(c)$,
and hence, by Lemma~\ref{lem:rat-equiv-class}, $\be(y_u) =
\be(B^{-r}(x_u))$ is identified with $\be(B^{-r}(\hx_u))$, and only with this
point.

That is, for every $y\in V$ there is a unique $x\in V\setminus\{y\}$ such that
$g(\be(y_u))$ is an endpoint of $\cA_x$; and, by an identical argument, there is a
unique $x'\in V\setminus\{y\}$ such that $g(\be(y_\ell))$ is an endpoint of
$\cA_{x'}$.

It follows that for every $y\in V$, there is a bi-infinite sequence
$(y_i)_{i\in\Z}$ in $V$ with $y_0=y$ and $y_i\not=y_{i+1}$ for each~$i$, unique up
to reversal, such that each $\cA_{y_i}$ shares one endpoint with $\cA_{y_{i-1}}$
and the other with~$\cA_{y_{i+1}}$.

\begin{notn}[$\ell_y$]
    For each $y\in V$ we write $\ell_y = \bigcup_{i\in\Z} \cA_{y_i}$, which is
    either an immersed line, or, if the sequence~$(y_i)$ repeats, a simple closed
    curve. We shall see in Lemma~\ref{lem:stable-dense} that the latter is
    impossible.
\end{notn}

The $\ell_y$ will be the streamlines of the stable turbulation, and we now define
the stream measures.

\begin{defn}[$\nu^s_{\ell_y}$] \label{defn:def-stab-meas}
    We define a Borel measure $\nu^s_{\ell_y}$ on $\ell_y$ by 
    \[
        \nu^s_{\ell_y} = \sum_{i\in K} g_*(\alpha_{y_i}),
    \]
    where the $\alpha_{y_i}$ are the measures of Lemma~\ref{lem:alpha-measure},
    supported on $\fib{y_i}$; and where $K=\Z$ if the $y_i$ are all distinct,
    and otherwise $K=\{0, \dots, r-1\}$ if $r>0$ is least with $y_r=y_0$.
    Notice from Lemma~\ref{lem:alpha-measure} that $\nu^s_{\ell_y}(\cA_{y_i}) =
    \varphi(y_i)$.
\end{defn}

The next lemma ensures that the streamlines and stream measures satisfy the
conditions of Definitions~\ref{defn:turbulation}. 

\begin{lem} \mbox{}
    \begin{enumerate}[(a)]
        \item $\mu\left(\bigcup_{y\in V}\ell_y\right)=1$.

        \item The measures $\nu^s_{\ell_y}$ are OU and assign finite measure to
        closed arcs.
    \end{enumerate}
\end{lem}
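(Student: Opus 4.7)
The plan is to reduce part~(a) to a disintegration argument via $\mu = g_*\hmu$, and to verify part~(b) by elementary properties of the measures $\alpha_{y_i}$ on each arc $\cA_{y_i}$.

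For~(a), I first observe that $\bigcup_{y\in V}\ell_y \supseteq \bigcup_{y\in V}\cA_y = g(\pi_0^{-1}(V))$, so the complement in~$\Sigma$ is contained in the $\sigma$-compact (hence Borel) set $B := \bigcup_{y\in I\setminus V} g(\fib{y})$, where $I\setminus V = \GO(c)\cup\GO(P)$ is countable. Since $\mu = g_*\hmu$, it suffices to show $\hmu(g^{-1}(g(\fib{y}))) = 0$ for each $y\in I\setminus V$. The saturation $g^{-1}(g(\fib{y}))$ consists of $\fib{y}$ together with the extra points in $\hI\setminus\fib{y}$ belonging to equivalence classes that meet $\fib{y}$. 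I will argue these extra points form a countable set by going through the three identification types in Lemma~\ref{lem:rat-equiv-class}: (EIII) contributes the finite orbit $\bP$; (EII) is indexed by $r\in\Z$ with three points per class, hence countable; and for (EI), when $r\ge 0$ the two points in a class already share a fiber (contributing nothing extra), whereas for $r=-s<0$ the condition $\tau(\tB^{-s}(x_u))=y$ or $\tau(\tB^{-s}(\hx_u))=y$ restricts $x_u$ or $\hx_u$ to the preimage $(\tB^{-s})^{-1}(\{y_\ell, y_u\})$, which has at most two points by injectivity of $\tB^{-s}$ on~$S$ (Lemma~\ref{lem:tB-facts}(b)); this gives at most four values of $x\in(a,c)\setminus\cZ$ per $s$, and hence countably many total. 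Since $\hmu(\fib{y}) = \int_{\{y\}}\varphi\,dm = 0$ by Theorem~\ref{thm:disintegrate} and $\hmu$ is OU (hence non-atomic, so countable sets are null), I conclude that $\hmu(g^{-1}(g(\fib{y})))=0$, and countable subadditivity over $I\setminus V$ finishes~(a).

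For~(b), I check the defining properties of $\nu^s_{\ell_y}$ individually, working on each $\cA_{y_i}$. Borelness is immediate as a locally finite sum of pushforwards of Borel measures. For non-atomicity, each $\alpha_{y_i}$ is non-atomic by Remarks~\ref{rems:alpha-x}(a) (extended via Remark~\ref{rems:alpha-x}(c) should $y_i\in\PC$ arise), and Lemma~\ref{lem:idents_in_fibers}(b) says $g|_{\fib{y_i}}$ identifies only countably many pairs of consecutive points, so $g_*\alpha_{y_i}$ is non-atomic on $\cA_{y_i}$. For positivity on open sets, any non-empty open $U\subseteq\ell_y$ contains an open subarc meeting the relative interior of some $\cA_{y_i}$ in a non-empty relatively open set; its preimage under the continuous surjection $g|_{\fib{y_i}}$ is a non-empty relatively open subset of the Cantor set $\fib{y_i}$, which carries positive $\alpha_{y_i}$-mass. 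Finally, since any cross-fiber identification in Lemma~\ref{lem:rat-equiv-class} involves extreme (hence endpoint) elements, the arcs $\cA_{y_i}$ meet pairwise only at shared endpoints, so any compact arc in $\ell_y$ is contained in a finite union of consecutive $\cA_{y_i}$; each such arc has mass $\varphi(y_i)\le \sup_I\varphi <\infty$, the uniform bound holding because $\varphi$ has bounded variation on the compact interval~$I$.

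The main technical obstacle is the bookkeeping in~(a): verifying that the type~(EI) identifications with $r<0$ contribute only countably many extra points per fiber, which genuinely requires exploiting the injectivity of $\tB^{-s}$ on~$S$ together with $\tau\colon S\to I$ being at most $2$-to-$1$. A minor secondary concern in~(b) is confirming that distinct arcs $\cA_{y_i}$ intersect only at the prescribed shared endpoints, which again follows directly from the equivalence-class structure in Lemma~\ref{lem:rat-equiv-class}.
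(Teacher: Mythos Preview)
Your proof is essentially correct, but your route through part~(a) is considerably more elaborate than necessary. The paper observes that $\bigcup_{y\in V}\ell_y = g(\pi_0^{-1}(V))$ (not just $\supseteq$: each $\ell_y$ is a union of arcs $\cA_{y_i}$ with $y_i\in V$) and then simply computes, via Theorem~\ref{thm:disintegrate},
\[
\hmu(\pi_0^{-1}(V)) = \int_V \alpha_x(\fib{x})\,dm(x) = \int_I \alpha_x(\fib{x})\,dm(x) = \hmu(\hI) = 1,
\]
the second equality holding because $I\setminus V$ is countable and hence Lebesgue-null. Since $\pi_0^{-1}(V)\subseteq g^{-1}(g(\pi_0^{-1}(V)))$ and $\mu=g_*\hmu$ is a probability measure, this gives $\mu\left(\bigcup_{y\in V}\ell_y\right)=1$ immediately. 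Your detour through the equivalence-class bookkeeping works but does nothing that this direct computation doesn't do more cheaply.

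There is also a small slip in that bookkeeping: for type~(EI) with $r=0$, the two points $\be(x_u)\in\fib{x}$ and $\be(\hx_u)\in\fib{\hx}$ lie in \emph{different} fibers, so ``when $r\ge 0$ the two points share a fiber'' should read $r\ge 1$. The $r=0$ case contributes at most one extra point per fiber and so does not disturb your countability conclusion.

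Your treatment of part~(b) is more careful than the paper's, which simply asserts that OU passes from $\alpha_{y_i}$ to $g_*\alpha_{y_i}$; your explicit check that the at-most-two-to-one map~$g|_{\fib{y_i}}$ preserves non-atomicity and positivity is welcome. Note, though, that your appeal to Remarks~\ref{rems:alpha-x}(c) is unnecessary: each $y_i$ lies in $V\subseteq I\setminus\GO(c)\subseteq I\setminus\PC$, so the extension of $\alpha$ to post-critical fibers never arises here.
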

\begin{proof}
    Write $X=\bigcup_{y\in V}\ell_y$, and note that $X=g(\pi_0^{-1}(V))$. Since~$V$
    has countable complement, Theorem~\ref{thm:disintegrate} gives
    \[
        \mu(X^s) = \hmu(\pi_0^{-1}(V)) 
            = \int_V\alpha_x(\fib{x})dm(x) 
            = \int_I\alpha_x(\fib{x})dm(x)
            = \hmu(\hI)
            = \mu(\Sigma).
    \]
    $\nu^s_{\ell_y}$ is OU since the same is true of each $\alpha_{y_i}$
    (Remarks~\ref{rems:alpha-x}(a)). Every closed arc in~$\ell_y$ is contained in a
    finite union of the $\cA_{y_i}$, and so has finite measure.
\end{proof}

Until we show in Lemma~\ref{lem:stable-dense} that all of the~$\ell_y$ are immersed
lines, we temporarily relax the definition of measured turbulation to allow
streamlines which are simple closed curves.

\begin{defn}[$\cT^s, \nu^s$] \label{defn:stab_fol}
        We define the stable turbulation $(\cT^s, \nu^s)$ on~$\Sigma$ to be the
        measured turbulation whose streamlines are the distinct
        immersed lines and simple closed curves~$\ell_y$, for $y\in V$, with
        measure $\nu^s_{\ell_y}$ on the streamline~$\ell_y$.
\end{defn}

\begin{lem} \label{lem:stable-action}
    $F(\cT^s, \nu^s) = (\cT^s, \lambda\nu^s)$.
\end{lem}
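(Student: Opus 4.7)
First I would establish the key fiberwise identity, which is the mechanical part. By Remark~\ref{rems:alpha-x}(b), the relation $\alpha_{f(x)}(\hf(B)) = \lambda^{-1}\alpha_x(B)$ for Borel $B\subseteq\fib{x}$ rewrites, using that $\hf$ restricts to a bijection $\fib{y_i}\to\hf(\fib{y_i})\subseteq\fib{f(y_i)}$, as $\hf_*\alpha_{y_i} = \lambda\,\alpha_{f(y_i)}|_{\hf(\fib{y_i})}$. Since $F\circ g = g\circ\hf$, this gives $F_*(g_*\alpha_{y_i}) = \lambda\,g_*(\alpha_{f(y_i)}|_{\hf(\fib{y_i})})$, and summing over $i$ expresses $F_*\nu^s_{\ell_y}$ as $\lambda$ times a sum of restrictions of the measures $\alpha_{f(y_i)}$.

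Next I would show that $F(\ell_y)=\ell_{y'}$ for some $y'\in V$. Since $\ell_y$ is connected and $F$ is a homeomorphism, $F(\ell_y) = g(\bigcup_i\hf(\fib{y_i}))$ is connected and contained in $\bigcup_i\cA_{f(y_i)}\subseteq\bigcup_{y\in V}\ell_y$. Two distinct streamlines must be disjoint, since otherwise they would share an entire arc $\cA_{y''}$ and therefore have the same defining sequence; hence $F(\ell_y)\subseteq\ell_{y'}$, where $y'$ is any element whose arc contains a point of $F(\cA_{y_0})$. Running the symmetric argument with $F^{-1}$ and $\hf^{-1}$ in place of $F$ and $\hf$ yields $F^{-1}(\ell_{y'})\subseteq\ell_y$, and combining the two inclusions gives equality.

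The hard part will be the combinatorial claim that every $f$-preimage of each $y'_j$ in the sequence defining $\ell_{y'}$ also appears in the sequence $(y_i)$ defining $\ell_y$. I would argue in two cases. If $y'_j\in(f(a),b)\cap V$ has two $f$-preimages $u<c<v$, then $u,v\in V$ by total $f$-invariance of $V$, and they are hat-partners in $(a,c)\cup(c,\re)$; hence Lemma~\ref{lem:rat-equiv-class}(EI) with $r=0$ identifies $\be(u_u)$ with $\be(v_u)$, so $\cA_u$ and $\cA_v$ are joined at their upper extremes in any streamline containing either of them, forcing both to lie in $(y_i)$ as soon as one does. If instead $y'_j\in[a,f(a))\cap V$ has the unique preimage $v\in(\re,b]$, then lifting any point of $\cB_{y'_j}$ (which is dense in $\cA_{y'_j}$ by Lemma~\ref{lem:cont-inverse}) through $g$ and applying $\hf^{-1}$ produces an element of $\fib{v}$ whose $g$-image lies in some $\cA_{y_i}$, forcing $v\in(y_i)$.

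Granting the claim, the disjoint decomposition $\fib{y'_j} = \bigsqcup_{x\in f^{-1}(y'_j)}\hf(\fib{x})$ yields $\sum_{i:\,f(y_i)=y'_j}\alpha_{f(y_i)}|_{\hf(\fib{y_i})} = \alpha_{y'_j}$, which telescopes the formula from the first step to $F_*\nu^s_{\ell_y} = \lambda\sum_j g_*\alpha_{y'_j} = \lambda\,\nu^s_{\ell_{y'}} = \lambda\,\nu^s_{F(\ell_y)}$, as required.
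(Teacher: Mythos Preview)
Your proposal is correct, and follows the same underlying idea as the paper's proof, but you have made the argument substantially harder than it needs to be.

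The paper avoids your ``hard part'' (step~3) entirely. Instead of matching the defining sequence $(y'_j)$ for $\ell_{f(y)}$ against the full preimage structure $f^{-1}(\{y'_j\})$, the paper simply decomposes $A$ along the \emph{source} streamline: write $A=\bigcup_i(A\cap\cA_{y_i})$ (overlaps have measure zero), observe that $F(A\cap\cA_{y_i})\subset\cA_{f(y_i)}$, and use that on $\cA_{f(y_i)}$ the measure $\nu^s_{\ell_{f(y)}}$ agrees with $g_*\alpha_{f(y_i)}$. This gives
\[
\nu^s_{\ell_{f(y)}}(F(A))=\sum_i\alpha_{f(y_i)}\bigl(\hf(g^{-1}(A)\cap\fib{y_i})\bigr)
=\lambda^{-1}\sum_i\alpha_{y_i}(g^{-1}(A))=\lambda^{-1}\nu^s_{\ell_y}(A)
\]
directly from Remarks~\ref{rems:alpha-x}(b), with no need to know how many $i$ land on a given $y'_j$. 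The paper's write-up suppresses the restriction to $\hf(\fib{y_i})$ in the intermediate terms, which is what makes it look as though repetitions among the $f(y_i)$ might cause double-counting; but the restriction is implicit in the appeal to $F(\cA_{y_i})\subset\cA_{f(y_i)}$ and to Remarks~\ref{rems:alpha-x}(b).

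Separately, your step~3 is itself overworked: once step~2 gives $F^{-1}(\ell_{y'})=\ell_y$, the combinatorial claim is immediate from $F^{-1}(\cA_{y'_j})=g(\hf^{-1}(\fib{y'_j}))=\bigcup_{x\in f^{-1}(y'_j)}\cA_x\subset\ell_y$, without any case analysis on whether $y'_j$ has one or two preimages.
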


\begin{proof}
    $F$ sends streamlines to streamlines since $\hf$ sends $\pi_0$-fibers into
    $\pi_0$-fibers and respects the identifications
    of Lemma~\ref{lem:rat-equiv-class}.

    If~$A$ is a Borel subset of a streamline~$\ell_y$ then $\nu^s_{\ell_y}(A) =
    \sum_{i\in K} \alpha_{y_i}(g^{-1}(A))$, where $K=\Z$ or $K=\{0,\dots,
    r-1\}$ as in Definition~\ref{defn:def-stab-meas}. On the other hand
    \begin{eqnarray*}
        \nu^s_{\ell_{f(y)}}(F(A)) &=& \sum_{i\in K} \alpha_{f(y_i)}(g^{-1}(F(A))) \\
        &=& \sum_{i\in K} \alpha_{f(y_i)}(\hf(g^{-1}(A))) \\ &=& \lambda^{-1}
        \sum_{i\in K} \alpha_{y_i}(g^{-1}(A)) = \lambda^{-1}\nu^s_{\ell_y}(A),
    \end{eqnarray*}
    where the first equality comes from $F(\cA_{y_i}) \subset \cA_{f(y_i)}$;
    the second is because~$g$ is almost everywhere invertible; and the third
    comes from Remarks~\ref{rems:alpha-x}(b). 

    Thus $F_*(\nu^s_{\ell_y}) = \lambda\nu^s_{\ell_{f(y)}}$ as required.
\end{proof}

\begin{lem} \label{lem:stable-dense}
    Every streamline of $(\cT^s, \nu^s)$ is dense in~$\Sigma$. In particular, no
    streamlines are simple closed curves, so that $(\cT^s, \nu^s)$ is a measured
    turbulation.
\end{lem}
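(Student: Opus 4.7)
The goal is to show that each streamline $\ell_y$ is dense in $\Sigma$. My plan reduces this to showing the index set $\{y_i : i \in \Z\}$ is dense in $I$. Since $\pi_0\colon\hI\to I$ is an open map (being a projection in the inverse limit topology) and $g\colon\hI\to\Sigma$ is a continuous surjection, for any nonempty open $U\subset\Sigma$ the set $\pi_0(g^{-1}(U))$ is nonempty and open in $I$. A dense $\{y_i\}$ would therefore intersect it, giving some $\bx\in\fib{y_i}\cap g^{-1}(U)$ and hence $\cA_{y_i}\cap U\neq\emptyset$, so $\ell_y\cap U\neq\emptyset$.

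To establish density of $\{y_i\}$ in $I$, I unpack the transition rule. With $r\geq 0$ the least integer with $B^r((y_i)_u)\in\ingam$, writing $B^r((y_i)_u)=x_u$, one has $y_{i+1}=\tau(\tB^{-r}(\hat x_u))$; the symmetric rule applies for the lower extreme. A direct computation using Lemma~\ref{lem:tB-facts}(a) gives $f^r(y_i)=\tau(x_u)=x$ and $f^r(y_{i+1})=\tau(\hat x_u)=\hat x$, so $f^{r+1}(y_i)=f(x)=f(\hat x)=f^{r+1}(y_{i+1})$: consecutive terms are preimage siblings at some level. By induction, the full sequence lies in
\[
    \GO^+(y_0) := \{y'\in I : \exists k\geq 0,\ f^k(y')=f^k(y_0)\},
\]
and this set is dense in $I$ by topological exactness of the tent map $f$ (which holds since $\lambda>\sqrt 2$): for any nonempty open $V\subset I$, some $k$ gives $f^k(V)=I\ni f^k(y_0)$, yielding a point of $V\cap f^{-k}(f^k(y_0))\subset V\cap\GO^+(y_0)$.

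The main obstacle is to promote ``the sequence is contained in $\GO^+(y_0)$'' to ``the sequence is dense in $\GO^+(y_0)$.'' The plan is to exploit the rational-height hypothesis: by Theorem~\ref{thm:height-intervals-rational}(d) every backward $\tB$-orbit accumulates on the period-$n$ orbit $\Pi$, and the entry times $r$ into $\ingam$ realize every residue class modulo $n$ along the sequence. Combined with the bi-infinite character of $(y_i)_{i\in\Z}$ and the alternation between upper- and lower-extreme transitions, this forces the walk on the preimage tree to visit every branch; equivalently, for each nonempty open $V\subset I$ and each sufficiently large $k$, some $y_i\in f^{-k}(f^k(y_0))\cap V$ appears in the sequence. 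An equivalent reformulation (useful if the above tree-exploration argument proves delicate) is that density of $\ell_y$ is preserved by the homeomorphism $F$, since $F^k(\ell_y)=\ell_{f^k(y)}$ by Lemma~\ref{lem:stable-action}, so it suffices to prove density for any one representative in the $F$-orbit $\{\ell_{f^k(y)}\}_{k\in\Z}$.

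The ``in particular'' claim is immediate: any simple closed curve in $S^2$ is a compact $1$-manifold and hence nowhere dense, so the density of each $\ell_y$ rules out the possibility that some $\ell_y$ is a simple closed curve, confirming that $(\cT^s,\nu^s)$ is a genuine measured turbulation in the sense of Definitions~\ref{defn:turbulation}.
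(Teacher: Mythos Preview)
Your reduction is sound: it suffices to show that $\{y_i:i\in\Z\}$ is dense in $I$, and you correctly observe that each $y_i$ lies in $\GO^+(y_0)=\{y'\in I:\exists k\ge 0,\ f^k(y')=f^k(y_0)\}$ and that $\GO^+(y_0)$ is dense in~$I$ by topological exactness. But you have not closed the gap you yourself flag as ``the main obstacle'': the argument that the sequence actually visits all of $\GO^+(y_0)$ (or a dense subset of it) is only a sketch. The appeal to residue classes of entry times into~$\ingam$ and to the ``tree-exploration'' picture is suggestive but is not a proof; and your alternative reformulation via $F^k(\ell_y)=\ell_{f^k(y)}$ does not help, since proving density for $\ell_{f^k(y)}$ is the same problem.

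The missing idea is much simpler than the combinatorics you are attempting, and it bypasses the explicit $y_i$ recursion entirely. Given $x\in V$ with $f^n(x)=f^n(y)$ for some $n$, observe that $\hf^{-n}(\fib{f^n(y)})$ contains both $\fib{x}$ and $\fib{y}$; applying~$g$, the set $F^{-n}(\cA_{f^n(y)})$ is a single arc (being a homeomorphic image of the arc $\cA_{f^n(y)}$) which contains both $\cA_x$ and $\cA_y$. Since it is a stream arc containing $\cA_y$, it lies in the streamline $\ell_y$, and hence $\cA_x\subset\ell_y$. This immediately gives $\GO^+(y_0)\cap V\subset\{y_i\}$, which is what you needed (note $V$ is totally $f$-invariant, so $\GO^+(y_0)\subset V$). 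This is exactly the paper's argument: rather than tracing the endpoint-matching rules forward through the outside map, it pulls back under~$F$ and uses that preimages of a fiber arc are connected.

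Your ``in particular'' paragraph is fine.
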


\begin{proof}
    Let $U\subset\Sigma$ be non-empty and open, and $\ell_y$ be a streamline: we
    shall show that they intersect.

    $g^{-1}(U)$ is open in~$\hI$, so $\pi_0(g^{-1}(U))$ contains an interval
    $J\subset I$ (one way to see this is from Theorem~\ref{thm:typical} and
    Lemma~\ref{lem:decompose-glr}: $g^{-1}(U)$ contains an open arc in~$\hI$, which
    has a $0$-flat decomposition). Tent maps with slope $\lambda>\sqrt{2}$ are
    locally eventually onto, so that there is some~$n>0$ with $f^n(J) = I$. In
    particular, there is some~$x\in J$ with $f^n(x)=f^n(y)$: thus
    $\hf^n(\fib{x})\subset
    \fib{f^n(y)}$ and $\hf^n(\fib{y})\subset
    \fib{f^n(y)}$, so that both $\fib{x}$ and $\fib{y}$ are contained in
    $\hf^{-n}(\fib{f^n(y)})$. Applying~$g$, both $\cA_x$ and $\cA_y$ are contained
    in $F^{-n}(\cA_{f^n(y)})$. Since $F^{-n}(\cA_{f^n(y)})$ is a stream arc
    containing $\cA_y$, the streamline which contains it is~$\ell_y$. Thus
    $\cA_x\subset\ell_y$: but since $x\in J \subset\pi_0(g^{-1}(U))$ we have
    \mbox{$\cA_x\cap U\not=\emptyset$}.
\end{proof}

Therefore $(\cT^s, \nu^s)$ is a measured turbulation with dense streamlines, which
satisfies $F(\cT^s, \nu^s) = (\cT^s, \lambda\nu^s)$. It remains to show that it is
tame (Definition~\ref{defn:tame}).

\begin{lem} \label{lem:stable-tame}
    $(\cT^s, \nu^s)$ is tame.
\end{lem}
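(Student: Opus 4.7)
The plan is to combine uniform continuity of the semiconjugacy
$g\colon\hI\to\Sigma$ with the cylinder structure of $\pi_0$-fibers, exploiting
the fact that $g$ collapses the pair of ``gap endpoints'' of two adjacent
depth-$r$ cylinders in $\fib{z}$ to a single point of $\Sigma$.

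Since $\hI$ is compact, $g$ is uniformly continuous with some modulus
$\omega$ satisfying $\omega(t)\to 0$ as $t\to 0^+$. Let $\varphi_0>0$ be a
uniform lower bound for $\varphi$ on $I\setminus\PC$
(Section~\ref{sec:measures}). Since $V$ is totally $f$-invariant and disjoint
from $\PC$, for any $z\in V$ and any cylinder
$\fib{z,z_1,\dots,z_r}\subset\fib{z}$ we have
$\alpha_z(\fib{z,z_1,\dots,z_r})=\varphi(z_r)/\lambda^r\ge\varphi_0/\lambda^r$,
and each arc $\cA_{y_i}$ of a streamline has $\nu^s$-measure
$\varphi(y_i)\ge\varphi_0$. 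Given $\epsilon>0$, I will choose $r\in\N$ with
$2\omega(2^{-r})<\epsilon/2$ and set $\delta=\varphi_0/\lambda^r$.

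Consider a stream arc $[x,y]$ with $\nu([x,y])<\delta\le\varphi_0$. Since no
full $\cA_{y_i}$ fits inside $[x,y]$, the arc is contained in a single
$\cA_z$ or spans exactly two adjacent arcs; the second case reduces to the
first by subdividing at the common endpoint, at the cost of a factor of $2$
in the final diameter bound. Assume then $[x,y]\subset\cA_z$ with $z\in V$,
and let $[\bx,\by]$ denote the unimodal-order interval in $\fib{z}$ with
$g([\bx,\by])=[x,y]$ and $\alpha_z([\bx,\by])=\nu([x,y])<\varphi_0/\lambda^r$.
The key combinatorial fact is that the depth-$r$ cylinders of $\fib{z}$ are
linearly ordered by the unimodal order (their order depending only on the
initial $r+1$ coordinates, which are shared within each cylinder), so
$[\bx,\by]$ meets a contiguous range of them and, being too small to contain
any of them in full, lies in at most two adjacent cylinders $C$ and $C'$. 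If
$\bx,\by\in C$, then $d_{\hI}(\bx,\by)\le\diam(C)\le 2^{-r}$, giving
$d_\Sigma(x,y)\le\omega(2^{-r})<\epsilon/2$. Otherwise $\bx\in C$ and
$\by\in C'$, and setting $p:=\sup C$, $q:=\inf C'$ in the unimodal order one
obtains $g(p)=g(q)$, whence
\[
  d_\Sigma(x,y)\le d_\Sigma(g(\bx),g(p))+d_\Sigma(g(q),g(\by))
  \le 2\omega(2^{-r})<\epsilon/2.
\]

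The main obstacle lies in justifying $g(p)=g(q)$. I will argue that $p,q$ form
a non-extreme consecutive pair in $\fib{z}$: \emph{consecutive} because the
gap between the adjacent cylinders $C$ and $C'$ contains no fiber points, and
\emph{non-extreme} because the existence of the adjacent cylinder $C'$ above
$C$ forces $p\ne\be(z_u)$, and symmetrically $q\ne\be(z_\ell)$.
Lemma~\ref{lem:idents_in_fibers}(b), which applies because $z\in V$, then
delivers the identification. This collapsing at cylinder boundaries is what
allows two cylinders that may be far apart in $\hI$ to have touching
$g$-images, and is precisely what turns each Cantor fiber $\fib{z}$ into an
arc $\cA_z$.
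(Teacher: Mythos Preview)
Your proof is correct and follows essentially the same approach as the paper's: both use the lower bound on $\varphi$ to show that level-$r$ cylinders have $\alpha$-measure at least $\varphi_0/\lambda^r$, so a short stream arc touches at most two of them, and then bound distances via the $2^{-r}$ diameter of cylinders in $\hI$ and uniform continuity of $g$. The only difference is packaging: the paper defines \emph{cylinder arcs} $\cA_{x,y_1,\dots,y_r}:=g(\fib{x,y_1,\dots,y_r})$ directly in $\Sigma$, bounds their $\rho$-diameter by $\epsilon/2$, and observes that a connected stream arc of small measure is covered by at most two such arcs (possibly in different fibers), which automatically share a point; this absorbs your $g(p)=g(q)$ step and your ``two adjacent $\cA_z$'' subdivision into a single uniform statement, so Lemma~\ref{lem:idents_in_fibers}(b) need not be invoked explicitly.
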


\begin{proof}
    Let~$\rho$ be a metric on~$\Sigma$ inducing its topology, and let~$\epsilon>0$.
    Let~$\xi>0$ be small enough that $d(\bx, \by)<\xi \implies \rho(g(\bx), g(\by)) < \epsilon/2$, where~$d$ is the standard metric of Definitions~\ref{defns:il}.

    Let~$k>0$ be a lower bound for the density $\varphi$ of
    Section~\ref{sec:measures} on $V\subset I\setminus\PC$. Let~$r>0$ be an integer
    large enough that $1/2^r < \xi$, and set $\delta = k/\lambda^r$. We shall show
    that any stream arc $[u,v]$ with measure less than~$\delta$ satisfies
    $\rho(u,v)<\epsilon$, establishing tameness of the turbulation.

    Recall from Definitions~\ref{defs:cylinder} that, given our fixed integer
    $r>0$, each fiber~$\fib{x}$ of~$\hI$ is partitioned into a finite number of
    level~$r$ cylinder sets
    \[
        \fib{x, y_1, \dots, y_r} = \{\bx\in \fib{x}\,:\, x_i = y_i \text{ for }
            1\le i \le r\},
    \]
    where $f(y_1)=x$ and $f(y_i)=y_{i-1}$ for $2\le i\le r$. If $x\in V$, so that
    $x\not\in\PC$ and itineraries are well-defined on~$\fib{x},$ then each of these
    cylinder sets corresponds to a choice of the first~$r$ symbols of the itinerary
    of~$\bx$: in particular, given two distinct level~$r$ cylinder subsets $C_1$
    and $C_2$ of $\fib{x}$, either $J(\bx)\prec J(\bx')$ for every $\bx\in C_1$ and
    $\bx'\in C_2$, or $J(\bx')\prec J(\bx)$ for every $\bx\in C_1$ and $\bx'\in
    C_2$. It follows that the image $g(\fib{x, y_1, \dots, y_r})$ of a cylinder set
    is a stream arc, which we denote $\cA_{x, y_1, \dots, y_r}$ and call a
    \emph{level~$r$ cylinder arc}. The stream arc $\cA_x$ is a union of these
    level~$r$ cylinder arcs, which intersect only at their endpoints.

    By Lemma~\ref{lem:alpha-measure} the arc $\cA_{x, y_1, \dots, y_r}$ has measure
    $\varphi(y_r)/\lambda^r \ge k/\lambda^r=\delta$, so that any stream arc of
    measure less than~$\delta$ is contained in the union of at most two level~$r$
    cylinder arcs (possibly in different fibers of~$\hI$). Now the metric diameter
    of a level~$r$ cylinder set is bounded above by $\sum_{i=r+1}^\infty |b-a|/2^i
    < 1/2^r < \xi$, so that the metric diameter of a level~$r$ cylinder arc is
    bounded above by $\epsilon/2$. Therefore the endpoints $[u,v]$ of a stream arc
    contained in the union of at most two level~$r$ cylinder arcs satisfy $\rho(u,
    v)<\epsilon$ as required.

\end{proof}

\subsection{The unstable turbulation}
\label{sec:unstab-fol}

The streamlines of the unstable turbulation will be $g$-images of the globally leaf
regular path components (Definition~\ref{defn:glob_leaf_reg}) of~$\hI$. We will
omit the countably many path components which are involved in identifications ---
that is, which contain points~$\bx$ for which $g^{-1}(\{g(\bx)\}) \not=
\{\bx\}$.

Consider the identifications of type (EI) from Lemma~\ref{lem:rat-equiv-class}. By
Lemma~\ref{lem:tB-facts}(d), $\be\colon S\to\hI$ is discontinuous at~$y\in S$ if
and only if $y\in\orb(f(a)_\ell, \tB)$. Since there is exactly one point of this
orbit in $(\ha_u, a)$, namely $\tB^n(a)=z_u$, the set of points
$\{\be(x_u)\,:\,x\in(a, \ha)\}$ is contained in at most two path components
of~$\hI$. The union~$\Gamma_1$ of the bi-infinite $\hf$-orbits of these path
components is therefore an $\hf$-invariant set which contains all of the points
of~$\hI$ involved in identifications of type~(EI).

Likewise, the union $\Gamma_2$ of the bi-infinite $\hf$-orbits of the path
components of $\be(\hz_u)$, $\be(a)$, and $\be(\ha_u)$ is an $\hf$-invariant set
which contains all points involved in identifications of type~(EII); and the
union~$\Gamma_3$ of the path components of the points of~$\bP$ is $\hf$-invariant
and contains all points involved in identifications of type~(EIII).

Let
\begin{equation} \label{eq:Ku}
    K^u = \{\bx\in \hI\,:\,\bx \text{ is globally leaf regular}\} \setminus
    (\Gamma_1\cup\Gamma_2\cup\Gamma_3).
\end{equation}
Then $K^u$ is an $\hf$-invariant union of globally leaf regular path components
of~$\hI$ with $g^{-1}(\{g(\bx)\}) = \{\bx\}$ for all~$\bx\in K^u$.

Since $\Gamma_1\cup\Gamma_2\cup\Gamma_3$ contains only countably many of the path
components of~$\hI$, each of which has $\hmu$-measure zero by
Remark~\ref{rem:glr-zero-measure}, it follows from Theorem~\ref{thm:typical} that
$\hmu(K^u)=\hmu(\hI)$.

Let~$\Gamma$ be a path component of~$K^u$. Since~$\Gamma$ is an immersed line and
$g$ is injective on~$K^u$, its image $\ell=g(\Gamma)$ is also an immersed line.
Moreover, $\ell$ is dense in~$\Sigma$, since~$\Gamma$ is dense in~$\hI$ and~$g$ is
continuous and surjective. These dense lines~$\ell$ will be the streamlines of the
unstable turbulation.

We define an OU measure $\nu^u_\ell$ on~$\ell$ using the $0$-flat
decomposition $\Gamma=\bigcup_{i\in\Z} \Gamma_i$ of Lemma~\ref{lem:decompose-glr}:
if~$A$ is a Borel subset of~$\ell$, then
\begin{equation} \label{eq:unstab-meas}
    \nu^u_\ell(A) = \sum_{i\in\Z} m(\pi_0(g^{-1}(A) \cap \Gamma_i)),
\end{equation}
where~$m$ is Lebesgue measure on~$I$.

Every closed stream arc in~$\ell$ is contained in a finite union of the arcs
    $g(\Gamma_i)$ coming from the $0$-flat decomposition of~$\Gamma$, and so has
    finite measure by~\eqref{eq:unstab-meas}.

\begin{defn}[$\cT^u, \nu^u$] \label{defn:unstab-fol}
    We define the unstable turbulation $(\cT^u, \nu^u)$ to be the measured
    turbulation on~$\Sigma$ whose streamlines are the $g$-images of the path
    components of $K^u$, with measure $\nu^u_\ell$ on the streamline~$\ell$.
\end{defn}

\begin{lem}\label{lem:unstab-action}
    $F(\cT^u, \nu^u) = (\cT^u, \lambda^{-1}\nu^u)$.
\end{lem}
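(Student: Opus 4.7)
The strategy parallels the proof of Lemma~\ref{lem:stable-action}, but since the unstable measure is built from the $0$-flat decomposition of path components rather than from the fiber measures~$\alpha_x$, the calculation must be routed through that decomposition. The idea is simple: on each $0$-flat piece of a path component, $\pi_0$ is a homeomorphism onto an interval on which $f$ is linear with slope $\pm\lambda$, so $\hf$ stretches the $\pi_0$-image of the piece by a factor of~$\lambda$ in Lebesgue measure, and summing over pieces gives the desired expansion.

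First I would verify that $F$ permutes streamlines. By construction, $K^u$ is $\hf$-invariant (it is defined as a difference of $\hf$-invariant sets in~\eqref{eq:Ku}), so $\hf$ permutes the path components of~$K^u$. Using the semiconjugacy relation $F\circ g = g\circ \hf$, this yields $F(\ell)=g(\hf(\Gamma))$ for $\ell=g(\Gamma)$, which is another streamline. Note also that, since $F$ is a homeomorphism and $g$ is injective on~$K^u$, one has the set-theoretic identity
\[
    g^{-1}(F(A))\cap \hf(\Gamma) \;=\; \hf\bigl(g^{-1}(A)\cap\Gamma\bigr)
\]
for any Borel $A\subset \ell$; this is what allows one to transfer the computation between $\ell$ and $F(\ell)$.

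Next, let $\Gamma=\bigcup_i\Gamma_i$ be the $0$-flat decomposition of~$\Gamma$ given by Lemma~\ref{lem:decompose-glr}, with $\pi_0(\Gamma_i)=J_i$. By Lemma~\ref{lem:decompose-glr}(c), interior points of~$\Gamma_i$ have no coordinate equal to~$c$, so $c\notin\Int(J_i)$; hence $f$ is injective on~$J_i$ and linear with $|f'|=\lambda$, and it follows that $\pi_0\circ\hf=f\circ\pi_0$ is injective on~$\Gamma_i$, i.e.\ $\hf(\Gamma_i)$ is itself $0$-flat with $\pi_0$-image $f(J_i)$. For any Borel $A\subset\ell$, combining the preimage identity above with this linearity gives
\[
    \pi_0\bigl(g^{-1}(F(A))\cap \hf(\Gamma_i)\bigr) \;=\; f\bigl(\pi_0(g^{-1}(A)\cap\Gamma_i)\bigr),
\]
and hence $m$ of the left side equals $\lambda\cdot m(\pi_0(g^{-1}(A)\cap\Gamma_i))$. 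Summing over~$i$ then yields $\nu^u_{F(\ell)}(F(A))=\lambda\,\nu^u_\ell(A)$, which is equivalent to $F_*\nu^u_\ell = \lambda^{-1}\nu^u_{F(\ell)}$, i.e.\ $F(\cT^u,\nu^u)=(\cT^u,\lambda^{-1}\nu^u)$.

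The one subtle point is that $\{\hf(\Gamma_i)\}_i$ is not a priori the canonical $0$-flat decomposition of $\hf(\Gamma)$ used to \emph{define} $\nu^u_{F(\ell)}$ in~\eqref{eq:unstab-meas}: extra breakpoints can appear when $(\hf(\bx))_0=c$, and conversely two adjacent pieces $\hf(\Gamma_i)$, $\hf(\Gamma_{i+1})$ might fuse into a single $0$-flat arc. I would handle this by observing that the canonical decomposition $\hf(\Gamma)=\bigcup_j\Gamma'_j$ refines (or is refined by) the decomposition $\{\hf(\Gamma_i)\}_i$ only up to endpoint overlaps, which have $m$-measure zero; since $\pi_0$ is injective on every $\hf(\Gamma_i)$, the sum $\sum_i m(\pi_0(B\cap\hf(\Gamma_i)))$ equals $\sum_j m(\pi_0(B\cap\Gamma'_j))$ for any Borel $B\subset\hf(\Gamma)$. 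So the computation above, carried out with the $\hf(\Gamma_i)$ partition, indeed evaluates $\nu^u_{F(\ell)}$. This bookkeeping with the two $0$-flat decompositions is the only potentially delicate step; the rest is the short linear-expansion calculation.
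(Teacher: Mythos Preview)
Your argument breaks at the claim that $c\notin\Int(J_i)$. The $0$-flatness of~$\Gamma_i$ only says that $c\notin\Int(\pi_r(\Gamma_i))$ for $r\ge 1$ (Remark~\ref{rem:0-flat-itin}); it imposes no constraint on $\pi_0(\Gamma_i)=J_i$. In the post-critically infinite setting of this section a point $\bx$ with $x_0=c$ automatically has $x_r\ne c$ for all $r\ge 1$ (since $c\notin\PC$), so such a point lies in the \emph{interior} of its maximal $0$-flat piece, and hence $c\in\Int(J_i)$ whenever the streamline meets~$\fib{c}$. (Lemma~\ref{lem:decompose-glr}(c) should be read as $r\ge 1$: the endpoints of the maximal $0$-flat pieces project under $\pi_0$ into~$\PC$, never to~$c$ itself.)

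This error propagates. When $c\in\Int(J_i)$ the map $f$ is not injective on~$J_i$, so $\hf(\Gamma_i)$ is \emph{not} $0$-flat (its $\pi_1$-image is $J_i$, which contains~$c$ in its interior), and your displayed identity $m\bigl(f(\pi_0(g^{-1}(A)\cap\Gamma_i))\bigr)=\lambda\,m\bigl(\pi_0(g^{-1}(A)\cap\Gamma_i)\bigr)$ fails for sets straddling~$c$, because the fold can cause overlap. Your final-paragraph bookkeeping then also collapses, since it assumes $\pi_0$ is injective on each~$\hf(\Gamma_i)$. (Two smaller slips there: the extra breakpoints of the canonical decomposition of $\hf(\Gamma)$ occur where $(\hf(\bx))_1=x_0=c$, not where $(\hf(\bx))_0=c$; and no fusing ever happens, since the canonical decomposition of $\hf(\Gamma)$ is a strict refinement of $\{\hf(\Gamma_i)\}$.)

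The repair is exactly the paper's move: split each $\Gamma_i$ at the fiber~$\fib{c}$ into the part with $\pi_0<c$ and the part with $\pi_0>c$. On each half, $f\circ\pi_0$ is injective and expands Lebesgue measure by~$\lambda$, and the $\hf$-image of each half is a $0$-flat arc which therefore lies inside a single piece of the canonical $0$-flat decomposition of~$\hf(\Gamma)$; your computation then goes through on each half separately. Apart from this missing split, your outline is the same as the paper's.
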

\begin{proof}
    Let~$\ell = g(\Gamma)$ be a streamline of $\cT^u$. Then $F(\ell) =
    g(\hf(\Gamma))$ is also a streamline.

    Let~$A$ be a Borel subset of~$\ell$. Write~$\Gamma=\bigcup_{i\in\Z}\Gamma_i$
    for the $0$-flat decomposition of Lemma~\ref{lem:decompose-glr}, and for
    each~$i\in\Z$ define measurable subsets of~$A$ by
    \begin{eqnarray*}
        A_i^L &=& \{x\in A\,:\, g^{-1}(x)\in\Gamma_i \text{ and
        }\pi_0(g^{-1}(x))<c\}    \text{ and}\\
         A_i^R &=& \{x\in A\,:\, g^{-1}(x)\in\Gamma_i \text{ and
        }\pi_0(g^{-1}(x))>c\}.
    \end{eqnarray*}
    It is enough to show that $\nu^u_{F(\ell)}(F(A_i^Q)) = \lambda
    \nu^u_\ell(A_i^Q)$ for each~$i$ and each $Q\in\{L,R\}$.

    Now $\nu^u_\ell(A_i^Q) = m(\pi_0(g^{-1}(A_i^Q)))$. Since $g^{-1}(A_i^Q)$ is
    contained in a $0$-flat arc disjoint from $\fib{c}$, its image
    $\hf(g^{-1}(A_i^Q)) = g^{-1}(F(A_i^Q))$ is contained in a $0$-flat arc.
    Therefore, if $\bigcup_{i\in\Z}\delta_i$ is the $0$-flat decomposition of
    $g^{-1}(F(\ell))$, there is some~$j$ for which
    $g^{-1}(F(A_i^Q))\subset~\delta_j$. It follows that
    \begin{eqnarray*}
        \nu^u_{F(\ell)}\big(F\big(A_i^Q\big)\big) &=&
            m\big(\pi_0\big(g^{-1}\big(F\big(A_i^Q\big)\big)\big)\big)
            \\ &=&
            m\big(\pi_0\big(\hf\big(g^{-1}\big(A_i^Q\big)\big)\big)\big)
            \\ &=&
            m\big(f\big(\pi_0\big(g^{-1}\big(A_i^Q\big)\big)\big)\big)
            \\ &=& \lambda m\big(\pi_0\big(g^{-1}\big(A_i^Q\big)\big)\big)
            \\ &=& \lambda
            \nu^u_\ell\big(A_i^Q\big) \quad\text{ as required.}
    \end{eqnarray*}
    Here the fourth equality follows from the fact that $\pi_0(g^{-1}(A_i^Q))$
    is contained in $[a,c]$ or in $[c,b]$, according as $Q=L$ or $Q=R$, on
    which~$f$ expands uniformly by a factor~$\lambda$.
\end{proof}

Therefore $(\cT^u, \nu^u)$ is a measured turbulation with dense streamlines, which
satisfies $F(\cT^u, \nu^u) = (\cT^u, \lambda^{-1}\nu^u)$. It remains to show that
it is tame (Definition~\ref{defn:tame}).

\begin{lem} \label{lem:unstable-tame}
    $(\cT^u, \nu^u)$ is tame.
\end{lem}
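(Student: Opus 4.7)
The strategy mirrors the stable case (Lemma~\ref{lem:stable-tame}), but instead of bounding diameters via cylinder arcs, I will bound them directly by lifting to $\hI$, exploiting the fact that $\hf^{-1}$ uniformly contracts the $\pi_0$-projections of $0$-flat arcs by the factor~$\lambda$ (since on such arcs the itinerary is locked and~$f$ expands uniformly by~$\lambda$ on each branch). Fix~$\epsilon>0$. Since $\hI$ is compact and $g\colon\hI\to\Sigma$ is continuous, $g$ is uniformly continuous, so there is $\eta>0$ such that $d(\bx,\by)<\eta$ implies $\rho(g(\bx),g(\by))<\epsilon$, where~$\rho$ is a metric on~$\Sigma$ and~$d$ is the metric of Definitions~\ref{defns:il}.

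Let $[u,v]$ be a stream arc of~$\cT^u$, contained in a streamline~$\ell=g(\Gamma)$ with $\Gamma\subset K^u$. Because $g$ is injective on~$K^u$, there are uniquely defined $\bx=g^{-1}(u),\,\by=g^{-1}(v)\in\Gamma$, and $\Gamma':=[\bx,\by]_\Gamma$ is a sub-arc of~$\Gamma$. By Lemma~\ref{lem:decompose-glr}, $\Gamma'$ inherits from~$\Gamma$ a $0$-flat decomposition into pieces $\Gamma'\cap\Gamma_i$, each $0$-flat over some interval $J_i\subseteq I$, and by the definition~\eqref{eq:unstab-meas} we have $\nu^u([u,v]) = \sum_i m(J_i)$. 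Moreover, since on each $0$-flat piece the first symbol of the itinerary is fixed and~$f$ restricted to the relevant branch is an affine expansion of slope~$\lambda$, the arc $\hf^{-1}(\Gamma'\cap\Gamma_i)$ is again $0$-flat, over an interval of length $m(J_i)/\lambda$. Iterating, $\hf^{-r}(\Gamma'\cap\Gamma_i)$ is $0$-flat over an interval of length $m(J_i)/\lambda^r$, so $\pi_0$ on $\hf^{-r}(\Gamma')$ (equivalently $\pi_r$ on~$\Gamma'$) has total variation at most $\sum_i m(J_i)/\lambda^r = \nu^u([u,v])/\lambda^r$.

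Since $|x_r-y_r|$ is bounded by this total variation, summing gives
\[
    d(\bx,\by) \;=\; \sum_{r=0}^\infty \frac{|x_r-y_r|}{2^r}
    \;\le\; \nu^u([u,v]) \sum_{r=0}^\infty \frac{1}{(2\lambda)^r}
    \;=\; \nu^u([u,v]) \cdot \frac{2\lambda}{2\lambda - 1},
\]
where the geometric series converges because $\lambda>\sqrt{2}>1/2$. Setting $\delta = \eta(2\lambda-1)/(2\lambda)$ then forces $d(\bx,\by)<\eta$ whenever $\nu^u([u,v])<\delta$, and hence $\rho(u,v)<\epsilon$, as required. The only point that needs care is the propagation of $0$-flatness and lengths under $\hf^{-1}$: although the $\hf^{-r}$-images of the pieces $\Gamma'\cap\Gamma_i$ may merge into larger $0$-flat pieces in the decomposition of $\hf^{-r}(\Gamma')$ (when their common endpoint, originally in $\fib{c}$, is pushed off $\fib{c}$), this only helps, since $\pi_0$ remains injective on such merged pieces so the total variation estimate still holds.
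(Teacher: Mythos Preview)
Your proof is correct and follows essentially the same approach as the paper. Both arguments lift the stream arc to a compact sub-arc $\Gamma'\subset\hI$, decompose it into its $0$-flat pieces over intervals~$J_i$ with $\nu^u([u,v])=\sum_i m(J_i)$, and bound $d(\bx,\by)$ by $\frac{2\lambda}{2\lambda-1}\sum_i m(J_i)$; the paper simply quotes the diameter bound $\operatorname{diam}(\Gamma_i)=\frac{2\lambda}{2\lambda-1}|J_i|$ for a $0$-flat arc from~\cite{typical} and sums, while you rederive the same constant by summing the geometric series $\sum_r(2\lambda)^{-r}$ coming from your total-variation estimate $|x_r-y_r|\le\nu^u([u,v])/\lambda^r$.
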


\begin{proof}
    If~$\Gamma$ is a $0$-flat arc in~$\hI$ over the interval~$J$, then
    $m(\pi_0(\Gamma)) = |J|$, while the metric diameter of~$\Gamma$ is
    $\frac{2\lambda}{2\lambda - 1}|J| < 2|J|$ (see for example section~4.1
    of~\cite{typical}).

    Let~$\rho$ be a metric on~$\Sigma$ inducing its topology and let $\epsilon>0$.
    Pick $\xi>0$ small enough that $d(\bx, \by)<\xi \implies \rho(g(\bx),
    g(\by))<\epsilon$.

    Let $\delta = \xi/2$, and consider a stream arc $[x,y]\subset\ell$ with
    $\nu_\ell^u([x,y])<\delta$. Then $g^{-1}([x,y])$ is a union of finitely many
    $0$-flat arcs which intersect only at their endpoints, arising from the
    $0$-flat decomposition of $g^{-1}(\ell)$. By~\eqref{eq:unstab-meas} and the
    remarks above, the metric diameter of~$g^{-1}([x,y])$ is less than $2\delta =
    \xi$, so that $\rho(x,y)<\epsilon$ as required.

\end{proof}

\subsection{\texorpdfstring{$F$}{F} is a measurable pseudo-Anosov map}
\label{sec:mpa-complete}

We have defined tame measured turbulations $(\cT^s, \nu^s)$ and $(\cT^u,
\nu^u)$ on~$\Sigma$, having dense streamlines, with the properties that $F(\cT^s,
\nu^s) = (\cT^s, \lambda\nu^s)$ and $F(\cT^u, \nu^u) = (\cT^u,
\lambda^{-1}\nu^u)$. In this section we show that these turbulations
satisfy the remaining conditions of Definition~\ref{defn:mpa}, thereby establishing
that~$F\colon\Sigma\to\Sigma$ is measurable pseudo-Anosov. That is:
\begin{itemize}
    \item the two turbulations are transverse (Lemma~\ref{lem:transverse}); and
    \item the turbulations are full: there is a countable collection of
    compatible tartans whose intersections cover a full measure subset of~$\Sigma$,
    and every non-empty open subset of~$\Sigma$ contains a positive measure
    compatible tartan (Lemma~\ref{lem:tartan_exists}).
\end{itemize}

\begin{lem} \label{lem:transverse}
    Every intersection of $\cT^s$ and $\cT^u$ is topologically transverse.
\end{lem}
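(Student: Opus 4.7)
The plan is to analyze each intersection point $z\in\ell\cap\ell'$ directly, where $\ell=g(\Gamma)$ is an unstable streamline (with $\Gamma$ a path component contained in~$K^u$) and $\ell'=\ell_{y_0}$ is a stable streamline (with $y_0\in V$). First I would identify a unique preimage $\bx = g^{-1}(z)$: since $\Gamma\subset K^u$, $g$ is injective on~$\Gamma$, and since $z\in\ell'$ the point $\bx$ must lie in $\fib{w}$ for some $w\in V$ appearing in the sequence defining $\ell_{y_0}$, so that the component arc of~$\ell'$ through~$z$ is precisely $\cA_w$.

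Next I would show that $\bx$ lies in the interior of some $0$-flat arc $\Gamma_j$ in the decomposition of~$\Gamma$ provided by Lemma~\ref{lem:decompose-glr}: the endpoints of the $\Gamma_j$ are characterised (part~(c) of that lemma) by having some coordinate equal to~$c$, so $\pi_0$ of any such endpoint lies in $\GO(c)$; but $\pi_0(\bx)=w\in V$ is disjoint from $\GO(c)$ by the definition of~$V$. Since $\pi_0$ is injective on $\Gamma_j$, it gives a strictly monotone parametrisation, so points of $\Gamma_j$ close to~$\bx$ have $\pi_0$-value slightly greater or slightly less than~$w$ according to which side of~$\bx$ they occupy.

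With this setup, the key geometric claim is that in any sufficiently small disk $D\subset\Sigma$ around~$z$, the arc $\cA_w\cap D$ separates $D$ into two components, one containing the $g$-images of nearby $\by\in\Gamma_j$ with $\pi_0(\by)>w$ and the other containing those with $\pi_0(\by)<w$. Tameness of $(\cT^s,\nu^s)$ and $(\cT^u,\nu^u)$ (Lemmas~\ref{lem:stable-tame} and~\ref{lem:unstable-tame}), together with the injectivity of $\pi_0|_{\Gamma_j}$, will guarantee that both sorts of points exist in $\ell\cap D$ arbitrarily close to~$z$, so that the unstable arc crosses $\cA_w$ rather than remaining on one side.

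The main obstacle is verifying this ``monotone side'' property. When $\bx\notin\omega(\fib{c},\hf)$ I would invoke Lemma~\ref{lem:basic_top}(a) to get a local product neighbourhood $N\cong J\times C$ of~$\bx$ in~$\hI$ in which the path component restricts to $J\times\{\ast\}$ and local fibre pieces to $\{\ast\}\times C$; applying $g$ and using the identifications of Lemma~\ref{lem:rat-equiv-class} (all of whose non-trivial classes miss $K^u$) ``fills in'' the Cantor factor to an arc, so that $g(N)$ is a topological disk in which $\cA_w$ and $g(\Gamma_j)$ are transverse coordinate arcs through~$z$. When $\bx\in\omega(\fib{c},\hf)$ no such product structure exists, and here the argument must appeal to the planar Barge--Martin model for~$F$ from~\cite{prime}: in that model the $\pi_0$-fibres sit as disjoint transverse arcs in an ambient disk, indexed monotonically by their $\pi_0$-value, so that crossing from one side of $\cA_w$ to the other corresponds exactly to the sign of $\pi_0(\by)-w$. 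This second case is the delicate part, and is where I expect the bulk of the technical work to go.
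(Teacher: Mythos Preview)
Your approach is correct and follows essentially the same outline as the paper's: identify the unique preimage $\bx\in\Gamma\cap\fib{w}$ with $w\in V$, show that $\bx$ lies in the interior of some $0$-flat piece~$\Gamma_j$ (via Lemma~\ref{lem:decompose-glr}(c), since $w\notin\GO(c)$), so that $\pi_0|_\Gamma$ is locally injective at~$\bx$, and then use that $\cA_w$ separates any small disk about $g(\bx)$ according to the sign of $\pi_0(\by)-w$.

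The only difference is one of emphasis. You flag the separation property as the main obstacle and plan a two-case argument (product neighbourhood when $\bx\notin\omega(\fib{c},\hf)$, Barge--Martin model otherwise), together with an appeal to tameness. The paper's proof simply asserts this separation in one sentence --- it is a direct consequence of the planar construction of~$\Sigma$ in~\cite{prime}, where the $\pi_0$-fibres are embedded as parallel vertical arcs --- and does not split into cases or invoke tameness. So your ``bulk of the technical work'' is absorbed into that construction, and the paper's proof is correspondingly just a few lines.
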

\begin{proof}
    Let $g(\bx)$ be a point of intersection of a streamline $\ell=g(\Gamma)$
    of~$\cT^u$, and a streamline~$\ell'$ of $\cT^s$. The streamline~$\ell'$ is
    locally contained in the arc $\cA_{x_0}$ for some $x_0\in V$ ($g(\bx)$ cannot
    be an endpoint of~$\cA_{x_0}$, since points of~$\ell$ have a unique
    $g$-preimage). The arc $\cA_{x_0}$ separates any sufficiently small disk
    neighborhood of $g(\bx)$ into points $g(\by)$ with $y_0 < x_0$, and points
    $g(\by)$ with $y_0 > x_0$. It is therefore enough to show that $\pi_0|_\Gamma$
    is locally injective at~$\bx$. This is a consequence of
    Lemma~\ref{lem:decompose-glr}: since~$x_0\not\in\GO(c)$, we have $x_r\not=c$
    for all~$r$, and hence $\bx$ is in the interior of one of the
    components~$\Gamma_i$ of the $0$-flat decomposition of~$\Gamma$ by
    Remark~\ref{rem:0-flat-itin}.
\end{proof}

\begin{lem} \label{lem:tartan_exists}
    The pair $(\cT^s, \nu^s)$, $(\cT^u, \nu^u)$ is full.
\end{lem}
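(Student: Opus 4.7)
The plan is to construct a single compatible, regular, positive-measure tartan $R_0$ lying in an open topological disk of~$\Sigma$, and then propagate it by iterates of~$F$. By Lemma~\ref{lem:image_tartan} each $F^n(R_0)$ is compatible, and by Remark~\ref{lem:regular-propagate} each is regular. The set $\bigcup_{n\in\Z}F^n(R_0^\pitchfork)$ is $F$-invariant and has positive $\mu$-measure, so by ergodicity of $F$ it has full measure, verifying part~(a) of Definition~\ref{defn:full-turb}. Since all tartans in this countable collection are regular, Lemma~\ref{lem:part_b_from_a} then yields part~(b).

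To construct $R_0$ I would start from the 0-box~$B$ over some interval~$J$ with $\hmu(B)>0$ given by Theorem~\ref{thm:+ve-0-box}, and shrink it in two directions so that it lies in a small region of~$\hI$. First, keep only those 0-flat arcs of~$B$ whose (necessarily constant, by Remark~\ref{rem:0-flat-itin}) itinerary starts with some well-chosen prefix $s_0,\dots,s_R$: this forces each slice $B\cap\fib{y}$ to lie in a single level-$R$ cylinder of $\fib{y}$ and hence to have diameter at most $1/2^R$. Second, restrict the base to a short sub-interval $J^*\subset J$. The resulting 0-box still carries positive $\hmu$-mass by Theorems~\ref{thm:holonomy_alpha} and~\ref{thm:disintegrate}; intersecting with the full-measure Borel set $K^u\cap\pi_0^{-1}(V)$ then produces a 0-box $B^*$ on which $g$ is injective and whose points project into stable streamlines. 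The unstable fibers of $R_0$ are the $g$-images of the 0-flat arcs of $B^*$, and for each $y\in J^*\cap V$ the stable fiber $\fs_y$ is a closed subarc of~$\cA_y$ containing the Cantor-type set $g(B^*\cap\fib{y})$ and avoiding the endpoints of~$\cA_y$ (which can be arranged by choosing the initial prefix to avoid the extreme itineraries $\cU_y,\cL_y$). The axioms of Definitions~\ref{defn:tartan} are then routine: unique intersections come from 0-flatness of the unstable fibers; orientations are consistent by $\pi_0$ on the unstable side and by the unimodal order on the stable side; measures are uniformly bounded by $|J^*|$ and $\|\varphi\|_\infty/\lambda^R$; and the small-diameter choices place $R_0^s\cup R_0^u$ inside a small ball in~$\Sigma$, hence in a topological disk.

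Compatibility with~$\mu$ is essentially a Fubini computation. The 0-flat holonomy supplies a bi-measurable parametrization $(B^*\cap\fib{y_0})\times J^*\to B^*\cap\pi_0^{-1}(J^*)$ which, after applying~$g$, coincides with~$\psi^{\fs_{y_0},\fu_0}$; Theorem~\ref{thm:holonomy_alpha} asserts that this holonomy is $\alpha$-preserving, and then Theorem~\ref{thm:disintegrate} identifies the push-forward of $\alpha_{y_0}\times m|_{J^*\cap V}$ with $\mu|_{R_0^\pitchfork}$. Positivity $\mu(R_0^\pitchfork)>0$ is inherited from $\hmu(B^*)>0$ since $g$ is injective on $B^*\subset K^u$.

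The step I expect to require the most care is verifying regularity of~$R_0$. Given a putative rectangle with corners $x,y,z,w=\fs(z)\pitchfork\fu(y)$ whose sides $[x,y]_s$ and $[x,z]_u$ have stream measure less than~$\delta$, the key observation is that 0-flat arcs have constant itinerary: the unimodal-order interval between the itineraries of $g^{-1}(x)$ and $g^{-1}(y)$ picks out $g^{-1}([x,y]_s)$ in $\fib{y_x}$ and, applied in $\fib{y_z}$, picks out $g^{-1}([z,w]_s)$. Decomposing this interval into a countable disjoint union of itinerary cylinders and applying Theorem~\ref{thm:holonomy_alpha} to each corresponding cylinder 0-box gives $\nu_{\fs(z)}([z,w]_s)=\nu_{\fs(x)}([x,y]_s)<\delta$, while $\pi_0$ on the relevant 0-flat arcs similarly gives $\nu_{\fu(y)}([y,w]_u)=\nu_{\fu(x)}([x,z]_u)<\delta$. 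All four sides then have small stream measure, and Lemmas~\ref{lem:stable-tame} and~\ref{lem:unstable-tame} bound their diameters, placing the whole rectangle in any prescribed neighborhood of~$x$.
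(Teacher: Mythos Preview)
Your overall strategy matches the paper's: build one positive-measure compatible regular tartan, then propagate by ergodicity and invoke Lemma~\ref{lem:part_b_from_a}. The compatibility argument you sketch (holonomy parametrization plus Theorems~\ref{thm:holonomy_alpha} and~\ref{thm:disintegrate}) is essentially the paper's. Your extra shrinking is unnecessary: since $\Sigma=S^2$, the paper simply takes the open disk in Definitions~\ref{defn:tartan}(d) to be $\Sigma\setminus\{g(\be(c_u))\}$, and uses the full arcs~$\cA_y$ as stable fibers.

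There is, however, a genuine gap in your regularity argument. You assert that the itinerary interval $[J_1,J_2]$ can be decomposed into cylinders, and that Theorem~\ref{thm:holonomy_alpha} applied to ``each corresponding cylinder $0$-box'' yields $\nu_{\fs(z)}([z,w]_s)=\nu_{\fs(x)}([x,y]_s)$. But the set $\{\bw\in\pi_0^{-1}(J^*): J(\bw)\text{ starts with a given prefix}\}$ is a $0$-box over~$J^*$ only when every thread in it lies on a $0$-flat arc spanning all of~$J^*$; this fails as soon as some post-critical point $f^r(c)$ lies in the relevant pulled-back interval, forcing folds. In the post-critically infinite rational case $\overline{\PC}$ can be dense, so you cannot shrink~$J^*$ to avoid it. Thus Theorem~\ref{thm:holonomy_alpha} does not apply to these sets, and the exact holonomy equality you claim is not available. (Indeed, already $\alpha_y(\fib{y})=\varphi(y)$ varies with~$y$, so interval measures in~$\fib{y}$ are not generally constant in~$y$.)

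The paper sidesteps this entirely. Its regularity argument works in the single fibre $\fib{x_0}$: because $\alpha_{x_0}$ is OU and cylinders are order-convex, a small $\alpha_{x_0}$-interval between $g^{-1}(x)$ and $g^{-1}(y)$ forces their itineraries to agree on a long prefix. One then uses continuity of the map~$L$ of Lemma~\ref{lem:cpctKx} directly: any $\bw$ with $|\pi_0(\bw)-x_0|<\epsilon$ and itinerary agreeing with $J(g^{-1}(x))$ on that prefix lies in $g^{-1}(U)$. Since all four sides of the putative rectangle consist of such points (the two unstable sides have constant itinerary $J_1$ or $J_2$; the two stable sides have itineraries pinned between $J_1$ and~$J_2$, hence sharing the same prefix), they all land in~$U$. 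No comparison of $\alpha$-measures across different fibres is needed.
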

\begin{proof} 
    We will prove the existence of a positive measure regular compatible
    tartan~$R$. It follows from Lemma~\ref{lem:image_tartan} and
    Remark~\ref{lem:regular-propagate} that $F^i(R)$ is a regular compatible tartan
    for each $i\ge 0$. Since~$F$ is ergodic this countable collection of tartans
    has $\mu(\bigcup_{i\ge 0}F^i(R)^\pitchfork) = 1$, and fullness follows from
    Lemma~\ref{lem:part_b_from_a}.

    By Theorem~\ref{thm:+ve-0-box} there is a $0$-box~$B$ with $\hmu(B)>0$: recall
    that~$B$ is a disjoint union of $0$-flat arcs over some interval $J\subseteq
    I$. Without loss of generality we can assume that the arcs of~$B$ are all
    contained in the full measure subset $K^u$ of~\eqref{eq:Ku}, so that their
    images under~$g$ are contained in streamlines of~$\cT^u$, and~$g$ is injective
    on~$B$.

    Recall (Notation~\ref{notn:PV}) that $V=I\setminus(\GO(c)\cup\GO(P))$, and
    write $J'=J\cap V$, which has countable complement in~$J$. Let $R^u = g(B)$, a
    disjoint union of unstable stream arcs, each with finite stream measure~$m(J)$;
    and let
    $
        R^s = \bigcup_{y\in J'} \cA_y
    $
    (recall Notation~\ref{notn:cay}), a disjoint union of stable stream arcs with
    bounded stream measures $\nu_{\ell_y}^s(\cA_y) = \varphi(y)$.

    By definition, $\pi_0$ is a bijection from each arc of~$B$ onto~$J$, so
    that each of these arcs intersects each fiber~$\fib{y}$ (with $y\in J$)
    exactly once. Since $g^{-1}(x)$ is a point for each $x\in R^u$, it follows
    that every arc of $R^s$ intersects every arc of $R^u$ exactly once, and
    these intersections are either transverse or at endpoints
    by Lemma~\ref{lem:transverse}. Each unstable fiber~$\fu$ can be oriented in the
    direction of increasing $\pi_0\circ g^{-1}\colon \fu\to J$, and with this
    orientation crosses the stable fibers $\cA_y$ in order of increasing~$y$.
    Likewise, each stable fiber can be oriented in the direction of increasing
    itinerary, and with this orientation crosses the unstable fibers in order
    of increasing itinerary (see Remark~\ref{rem:0-flat-itin}).

    The open topological disk $\Sigma\setminus\{g(\be(c^u))\}$ contains~$R^s\cup
    R^u$, so $R=(R^s, R^u)$ is a tartan (Definition~\ref{defn:tartan}). It has positive measure by Theorem~\ref{thm:disintegrate}:
    \[
        \mu(R^\pitchfork) = \hmu(B\cap \pi_0^{-1}(J')) = \int_{J'}
                \alpha_x(B)\,dm(x) = \int_J \alpha_x(B)\, dm(x) = \hmu(B) > 0.
    \] 
    It thus remains to show that~$R$ is compatible and regular. 

    Given $y, z\in J'$, there is a holonomy map $h_{y,z}\colon \fib{y}\cap B \to
    \fib{z}\cap B$, which takes the point of $\fib{y}\cap\Gamma$ to the point of
    $\fib{z}\cap\Gamma$ for each $0$-flat arc~$\Gamma$ of~$B$. That is, since each
    $0$-flat arc has constant itinerary (Remark~\ref{rem:0-flat-itin}),
    $h_{y,z}(\bx) = L(z, J(\bx))$, where $L\colon\cV\to\hI$ is the continuous
    function of Lemma~\ref{lem:cpctKx}. In particular, $h_{y,z}$ is a homeomorphism
    (with inverse $h_{z, y}$). Moreover, if $X\subseteq \fib{y}\cap B$ is Borel,
    then
    \begin{equation} \label{eq:hol-inv}
        \alpha_y(X) = \alpha_z(h_{y,z}(X)),
    \end{equation}
    by applying Theorem~\ref{thm:holonomy_alpha} to the $0$-box consisting of those
    $0$-flat arcs of~$B$ which pass through~$X$.

    Now pick arbitrary stable and unstable fibers $\fs = \cA_y = g(\fib{y})$ and
    $\fu=g(\Gamma)$ of~$R$, where $y\in J'$ and $\Gamma$ is one of the $0$-flat
    arcs of~$B$. Define 
        \[
            \psi\colon (\fib{y}\cap B)\times (\Gamma\cap
                \pi_0^{-1}(J')) \to B \cap \pi_0^{-1}(J)
        \] 
        by $\psi(\bx, \bx')=h_{y, \pi_0(\bx')}(\bx) = L(\pi_0(\bx'), J(\bx))$,
        which is continuous by Lemma~\ref{lem:cpctKx} and
        Remark~\ref{rem:cont-itin}, and so a homeomorphism with inverse
        $\psi^{-1}(\bx)=(L(y, J(\bx)), L(\pi_0(\bx), s))$, where~$s$ is the
        constant itinerary of~$\Gamma$. The definition is made in order that
        \[
            \begin{CD}
                (\fib{y}\cap B)\times (\Gamma\cap \pi_0^{-1}(J')) @>\psi>>
                            B \cap \pi_0^{-1}(J) \\
                @VVG\times GV @ VVGV \\
                E^\fs \times E^\fu @>\psi^{\fs, \fu}>> R^\pitchfork
            \end{CD}
        \]
        commutes, where we have written $G = g|_B$, a homeomorphism by
        Lemma~\ref{lem:cont-inverse}. Therefore $\psi^{\fs, \fu}$ is a
        homeomorphism, and in particular bi-measurable. Since $G_*\hmu = \mu$ and
        $(G\times G)_* (\alpha_y\times M) = \nu_{\fs, \fu}$ by
        Definition~\ref{defn:def-stab-meas} and~\eqref{eq:unstab-meas} (where we
        have written~$M=(\pi_0|_\Gamma^{-1})_*m$), it suffices for compatibility to
        show that $\psi_*(\alpha_y\times M) = \hmu$.

        Let~$E$ be a Borel subset of $B\cap\pi_0^{-1}(J')$. Then
        \begin{eqnarray*}
            (\alpha_y\times M)(\psi^{-1}(E)) &=& \int_{\Gamma\cap \pi_0^{-1}(J)}
              \alpha_y\left(\{
                    \bx\in\fib{y}\cap B\,:\, \psi(\bx,z) \in E
                \}\right)dM(z) 
            \\
            &=& \int_{J'} \alpha_y \left(\{
                        \bx\in \fib{y}\cap B\,:\, h_{y,z}(\bx)\in E
                    \}\right) dm(z)
            \\
            &=& \int_{J'} \alpha_y(h_{z,y}(\fib{z}\cap E))\, dm(z)
            \\
            &=& \int_{J'} \alpha_z(E)\, dm(z) = \hmu(E)
        \end{eqnarray*}
        as required, where we have used, in turn, Fubini's theorem, the definition
        of~$\psi$, $h_{z,y}=h_{y,z}^{-1}$,~\eqref{eq:hol-inv}, and
        Theorem~\ref{thm:disintegrate}.

        Finally, for regularity, let $x = g(\bx)\in R^\pitchfork$ and~$U$ be a
        neighborhood of~$x$. Write $V=g^{-1}(U)$, a neighborhood of~$\bx$. By
        Lemma~\ref{lem:cpctKx}, $W=L^{-1}(V)$ is a neighborhood of $(x_0, J(\bx))$
        in $\{(x,s)\in I\times\{0,1\}^\N\,:\,s\in\cK_x\}$: that is, there is
        some~$\epsilon>0$ and $r\in\N$ such that if $\bx'\in\hI$ has
        $|x'_0-x_0|<\epsilon$, and $J(\bx)_i = J(\bx')_i$ for $0\le i\le r$, then
        $\bx'\in V$.

        Since~$J$ is continuous on~$\fib{x_0}$ by Remark~\ref{rem:cont-itin}, there
        is some~$\eta>0$ such that if $\by\in\fib{x_0}$ and
        $\alpha_{x_0}(\{\bz\in\fib{x_0}\,:\,J(\bz)\text{ is between }J(\bx)\text{
        and }J(\by)\})<\eta$, then $J(\by)_i = J(\bx)_i$ for $0\le i\le r$.

        Taking $\delta = \min(\epsilon, \eta)$, it follows that if $y\in
        \fs(x)\cap R^\pitchfork$ and $z\in \fu(x)\cap R^\pitchfork$ with
        $\nu_{\fs(x)}([x,y]_s)<\delta$ and $\nu_{\fu(x)}([x,z]_u)<\delta$, then the
        stream arcs $[x,y]_s$, $[y, \fs(z)\pitchfork\fu(y)]_u$,
        $[\fs(z)\pitchfork\fu(y), z]_s$, and $[z,x]_u$ are all contained in~$U$.
        This establishes regularity of~$R$.
\end{proof}

This completes the proof that $F\colon\Sigma\to\Sigma$ is a measurable
pseudo-Anosov map whenever $f$ has rational height and is not post-critically
finite.

\section{The irrational case} 
    \label{sec:irrat}

In this section we assume that~$f$ is of irrational type. We will show that the
corresponding sphere homeomorphism $F\colon\Sigma\to\Sigma$ is a generalized
pseudo-Anosov map, having a single bi-infinite orbit of $1$-pronged singularities
which is homoclinic to a point~$\infty = g(\cC)$ of~$\Sigma$, where $\cC$ is a
Cantor set in~$\hI$.

\subsection{The semi-conjugacy \texorpdfstring{$g\colon\hI\to\Sigma$}{g}}
\label{sec:smi-conj-irrat}
Here we describe the identifications on~$\hI$ induced by the semi-conjugacy
$g\colon\hI\to\Sigma$.

Recall from Theorem~\ref{thm:height-irrational} that $\orb(f(a)_\ell, \tB)$ is
disjoint from~$\gamma$, and that
\[
    \cC_S=\overline{\orb(f(a)_\ell, \tB)} = S\setminus \bigcup_{r\ge
    0}\tB^{-r}(\ingam)   
\]
is a Cantor set which contains $a$ and $\ha_u$. The following lemma is a
translation of remark~5.17(a) of~\cite{prime} into the language of this paper.

\begin{lem}\label{lem:irrat-equiv}
    The semi-conjugacy $g\colon\hI\to\Sigma$ is the quotient map of the equivalence
    relation on~$\hI$ with the following non-trivial equivalence classes:
    \begin{enumerate}[(EI)]
        \item $\{\hf^r(\be(x_u)), \hf^r(\be(\hx_u))\}$ for each $x\in(a,c)$ and
        each $r\in\Z$.

        \item The subset $\cC = \overline{\be(\cC_S)}$ of $\hI$, which is a
        Cantor set.
    \end{enumerate}
\end{lem}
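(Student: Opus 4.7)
The plan is to treat this lemma as a translation exercise together with one genuinely topological verification. Since the statement is, by the authors' own admission, the irrational-height analogue of Lemma~\ref{lem:rat-equiv-class} and is asserted to come from remark~5.17(a) of~\cite{prime}, the proof splits into two halves: first, match the identifications described in~\cite{prime} with (EI) and (EII) under our outside-map/extreme-element dictionary; second, prove that $\cC$ is a Cantor set.

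For the first half, I would parallel the rational case. The type~(EI) identifications are identical in form to those of Lemma~\ref{lem:rat-equiv-class}(EI), and their presence is forced by the same mechanism: two non-extreme consecutive elements of a fiber above some $f(x)$ must pull back under $\hf^{-r}$ along the two branches at $c$ to extreme elements $\be(x_u)$ and $\be(\hx_u)$ (Lemma~\ref{lem:preserve_consecutive}(c)), and these are precisely the points $g$ is forced to identify by the Barge--Martin construction of~\cite{prime}. The substantive difference in the irrational case is that the role played in the rational case by the periodic orbit~$\bP$ together with the finite $\tB$-orbit segment of $z_u$ is now played by a single infinite orbit whose closure is the Cantor set $\cC_S\subset S$; in $\Sigma$ this entire closure is collapsed to the unique bad point $\infty=g(\cC)$. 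Translating from~\cite{prime} therefore amounts to checking that the preimage under $g$ of~$\infty$ is exactly $\overline{\be(\cC_S)}$, using the description of $\cC_S$ in Theorem~\ref{thm:height-irrational}(b) and Lemma~\ref{lem:tB-facts}(c) to identify points of $\hI$ with the tails of their $\tB$-codings.

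The main obstacle will be the Cantor set claim. Compactness is immediate since $\cC$ is defined as a closure in the compact space~$\hI$. For total disconnectedness I would argue via projection: $\pi_0(\be(\cC_S)) = \tau(\cC_S)$, and $\tau(\cC_S)$ is a Cantor set in $I$ (being the two-to-one image of the Cantor set $\cC_S$ away from the identified endpoints $a,b$); combining this with Lemma~\ref{lem:cantor_fibre}, which says every fiber is itself a Cantor set, and showing that $\cC$ meets each fiber only in countably many extreme elements and limits thereof, rules out any arc in~$\cC$ (path components of $\hI$ are either points or arcs, by Lemma~\ref{lem:basic_top}(b)). The delicate part is perfectness: $\be$ is injective, $\cC_S$ is perfect, but Lemma~\ref{lem:tB-facts}(d) says $\be$ is discontinuous on the forward $\tB$-orbit of $f(a)_\ell$, and this orbit is \emph{dense} in $\cC_S$ in the irrational case. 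So $\be(\cC_S)$ need not be closed, and one must carefully identify the extra limit points appearing in $\cC$ and check that none of them is isolated. The expected picture is that at each one-sided discontinuity $y\in\orb(f(a)_\ell,\tB)$, the closure operation adds the missing one-sided limit $\lim_{y'\to y^+}\be(y')$, and these added points are themselves limits of $\be$-images from the other side of nearby points of $\cC_S$; making this precise, using that the complement $\bigcup_{r\ge 0}\tB^{-r}(\ingam)$ is open and dense in $S\setminus\cC_S$, is the main technical step.
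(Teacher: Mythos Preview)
Your proposal is essentially correct, and you have correctly identified that this lemma is not proved in the paper: it is stated as a direct translation of remark~5.17(a) of~\cite{prime}, with no further argument given for the (EI)/(EII) identifications themselves.

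Where your approach and the paper's diverge is on the Cantor set claim. You propose to verify compactness, total disconnectedness, and perfectness directly, with the ``delicate part'' being the identification of the extra limit points created by the one-sided discontinuities of~$\be$ along $\orb(f(a)_\ell,\tB)$. The paper does not do this inside Lemma~\ref{lem:irrat-equiv}; instead, the structure you anticipate is the content of the \emph{next} result, Theorem~\ref{thm:C-is-Cantor}, which computes those one-sided limits explicitly and shows that
\[
\cC \;=\; \be(\cC_S)\;\cup\;\{\hf^r(\be(\ha_u))\,:\,r\ge 1\},
\]
together with the fact that $\orb(\be(f(a)_\ell),\hf)$ is dense in~$\cC$. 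Your ``expected picture'' (the closure adds the missing limit $\lim_{y'\to y^+}\be(y')$ at each $y\in\orb(f(a)_\ell,\tB)$) is exactly what that proof establishes, via Lemma~\ref{lem:sequences-in-cS} and the explicit thread computation. So your perfectness argument and the paper's are the same idea, just packaged in a different place.

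One small caution: your total disconnectedness sketch leans on the assertion that $\tau(\cC_S)$ is a Cantor set in~$I$. Continuous images of Cantor sets need not be perfect, so this would need a word of justification (or you can bypass it entirely: once you have the explicit description above, $\cC$ meets each fiber~$\fib{x}$ in at most two extreme elements plus the countably many $\hf^r(\be(\ha_u))$ landing there, so no arc can lie in~$\cC$ by Lemma~\ref{lem:basic_top}(b) and Lemma~\ref{lem:cantor_fibre}).
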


We write $\infty = g(\cC) \in\Sigma$.

\subsection{The equivalence class \texorpdfstring{$\cC$}{C}}
We start by investigating the infinite equivalence class
$\cC=\overline{\be(\cC_S)}$ of (EII). The key observation is that, although~$a$ is
clearly an endpoint of the $\tB$-invariant Cantor set~$\cC_S$, its image $\tB(a) =
f(a)_\ell$ is not (Lemma~\ref{lem:sequences-in-cS}): this is because $\tB$
collapses the gap $\ingam$ adjacent to~$a$. Since $\be$ is discontinuous along the
orbit of $f(a)_\ell$ by Lemma~\ref{lem:tB-facts}(d), the image $\be(\cC_S)$ is not
compact: taking its closure can be seen (Theorem~\ref{thm:C-is-Cantor}) as adding a
gap adjacent to each $\tB^r(f(a)_\ell)$, whose other endpoint corresponds to the
non-extreme element $\hf^{r+1}(\be(\ha_u))$ of~$\hI$.

\begin{notn}[$\toplus$, $\tominus$]
    If $(y_i)$ is a sequence in~$S$, we will write $y_i\toplus y$ (respectively
    $y_i\tominus y$) if $y_i\to y$ in the positive (respectively negative)
    direction: that is, if for every $z\in S$ distinct from~$y$ we have
    $y_i\in[z,y]$ (respectively $y_i\in[y,z]$) for sufficiently large~$i$.
\end{notn}

\begin{lem} \label{lem:sequences-in-cS}
    For every $r\ge 0$, there are sequences $(y_i)$ and $(y_i')$ in~$\cC_S$
    with \mbox{$y_i\toplus\tB^r(f(a)_\ell)$} and
    $y_i'\tominus\tB^r(f(a)_\ell)$.
\end{lem}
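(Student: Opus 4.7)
The plan is to prove the slightly stronger statement that each $\tB^r(f(a)_\ell)$ is a two-sided accumulation point of $\cC_S$, from which the desired sequences $(y_i)$ and $(y'_i)$ follow at once. Because $\cC_S$ is a Cantor set (Theorem~\ref{thm:height-irrational}(b)) it is perfect, so $\tB^r(f(a)_\ell)$ is in any case a one-sided accumulation point; what I need to rule out is that it is an endpoint of a gap, i.e.\ of a connected component of the open set
\[
    S\setminus\cC_S = \bigcup_{s\ge 0}\tB^{-s}(\ingam).
\]
I will argue this by contradiction.

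First I would observe that the sets $\tB^{-s}(\ingam)$ for $s\ge 0$ are pairwise disjoint: if $y\in\tB^{-s}(\ingam)\cap\tB^{-t}(\ingam)$ with $s<t$, then $\tB^s(y)\in\ingam$, but $\ingam$ is disjoint from the domain $[a,\ha_u)$ of $\tB$, so $\tB^{s+1}(y)$ does not exist, contradicting $\tB^t(y)\in\ingam$. It follows that every connected component $W$ of $S\setminus\cC_S$, being a connected subset of a disjoint union of open sets, is contained in a single $\tB^{-s}(\ingam)$. Suppose for contradiction that $\tB^r(f(a)_\ell)$ is an endpoint of such a component $W\subset\tB^{-s}(\ingam)$.

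If $s=0$, then $W$ is a component of $\ingam$ and hence equal to $\ingam$, whose endpoints are $\ha_u$ and $a$. If instead $s\ge 1$, then Theorem~\ref{thm:height-irrational}(b) gives $\orb(f(a)_\ell,\tB)\cap\gamma=\emptyset$, so in particular each intermediate iterate $\tB^{r+i}(f(a)_\ell)$ for $0\le i<s$ lies in $(a,\ha_u)$, the interior of the domain of $\tB$. Consequently $\tB^s$ is defined and continuous on a neighborhood of $\tB^r(f(a)_\ell)$ that meets $W$; picking $w_j\in W$ with $w_j\to\tB^r(f(a)_\ell)$ and using $\tB^s(w_j)\in\ingam$, I conclude that $\tB^{r+s}(f(a)_\ell)\in\overline{\ingam}=\gamma$. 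In either case this contradicts Theorem~\ref{thm:height-irrational}(b).

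The main obstacle is really bookkeeping: since $\tB$ is only a partial map on $S$, care is needed to verify both that the preimages partition $S\setminus\cC_S$ as claimed, and that $\tB^s$ extends continuously across $\tB^r(f(a)_\ell)$ in the second case. Once these technical points are handled, the contradiction drops out directly from the disjointness of $\orb(f(a)_\ell,\tB)$ from $\gamma$ provided by Theorem~\ref{thm:height-irrational}(b).
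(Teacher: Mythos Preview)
Your argument is correct, though you should say a word about why each $\tB^{-s}(\ingam)$ is \emph{open} in~$S$, not merely that the sets are disjoint: this follows by induction, since $\tB$ is continuous on its domain $[a,\ha_u)$ and $\tB(a)=f(a)_\ell\notin\tB^{-(s-1)}(\ingam)$ by Theorem~\ref{thm:height-irrational}(b), so the preimage avoids the boundary point~$a$ and is genuinely open in~$S$. With that in place your contradiction goes through cleanly.

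The paper takes a different, more direct route. Rather than analysing the gaps of~$\cC_S$, it observes that $a$ and $\ha_u$ lie in $\cC_S=\overline{\orb(f(a)_\ell,\tB)}$ while the orbit itself avoids~$\gamma$; hence there are sequences in the orbit with $y_i\tominus a$ and $y_i'\toplus\ha_u$. Pushing these forward by $\tB^{r+1}$ (orientation-preserving and continuous along the relevant orbit segment) produces the required one-sided sequences at $\tB^r(f(a)_\ell)$. The paper's argument is constructive and slightly shorter; yours has the advantage of explaining structurally why the orbit points are two-sided accumulation points --- namely, they are never gap endpoints --- which is close in spirit to how the fact is exploited in the subsequent proof of Theorem~\ref{thm:C-is-Cantor}.
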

\begin{proof}
    Since $a\in\cC_S=\overline{\orb(f(a)_\ell, \tB)}$ and $\orb(f(a)_\ell, \tB)
    \cap \gamma=\emptyset$ by Theorem~\ref{thm:height-irrational}, there is a
    sequence $(y_i)$ in $\orb(f(a)_\ell, \tB)\subset\cC_S$ with $y_i\tominus a$.
    Applying $\tB^{r+1}$, and using the fact that $\tB$ is defined and continuous
    in a neighborhood of each point of $\orb(f(a)_\ell, \tB)$ gives the required
    sequences converging negatively to points of this orbit. By an identical
    argument, there is a sequence $(y_i')$ in $\cC_S$ with $y_i'\toplus \ha_u$,
    which gives the required sequences converging positively to points of
    $\orb(f(a)_\ell, \tB)$.
\end{proof}

\begin{thm} \label{thm:C-is-Cantor}
    The Cantor set $\cC$ is $\hf$-invariant, and is given by
    \begin{equation} \label{eq:C-expression}
        \cC = \be(\cC_S) \cup \{\hf^r(\be(\ha_u))\,:\,r\ge 1\}.
    \end{equation} 
    Moreover $\orb(\be(f(a)_\ell), \hf)$ is dense in~$\cC$.
\end{thm}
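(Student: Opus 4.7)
The plan is to first establish the explicit expression~\eqref{eq:C-expression} by computing one-sided limits of $\be$ along orbit points of $f(a)_\ell$, and then to deduce $\hf$-invariance and density as corollaries. The central technical identity is
\[
    \lim_{y \toplus \tB^r(f(a)_\ell)} \be(y) = \hf^{r+1}(\be(\ha_u))
    \qquad \text{for every } r\ge 0.
\]
To prove this I use the formula $\be(y)=\thr{\tau(y), \tau(\tB^{-1}(y)), \tau(\tB^{-2}(y)), \dots}$ from Lemma~\ref{lem:tB-facts}(c). For $r=0$ the key observation is that points $y\toplus f(a)_\ell$ lie in the arc $[a, f(a)_\ell)$, which is the image of the third branch $[b, \ha_u)\to[a, f(a)_\ell)$ of $\tB$; consequently $\tB^{-1}(y)$ lies in $[b, \ha_u)$ and approaches $\ha_u$ positively, reflecting the one-sided jump of $\tB^{-1}$ at $f(a)_\ell$ caused by the collapse of the adjacent gap $\ingam$. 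Since $\ha_u\notin\orb(f(a)_\ell, \tB)$ by Theorem~\ref{thm:height-irrational}(b), $\tB^{-1}$ is continuous at every backward image of $\ha_u$, so $\tau(\tB^{-i}(y))\to\tau(\tB^{-(i-1)}(\ha_u))$ for all $i\ge 1$. Combining with the definition of $\hf$ and the identity $f(\ha)=f(a)$ yields the identity for $r=0$, and continuity of $\tB^{-1}$ at each $\tB^s(f(a)_\ell)$ with $s\ge 1$ (the orbit being aperiodic) extends it to all $r\ge 0$ by induction using $\be\circ \tB = \hf\circ\be$ on $[a, \ha_u)$.

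For the expression~\eqref{eq:C-expression}, the inclusion $\cC\subseteq\be(\cC_S)\cup\{\hf^r(\be(\ha_u))\,:\,r\ge 1\}$ follows by extracting from any convergent sequence $(\be(y_i))$ with $y_i\in\cC_S$ a subsequence along which $y_i\to y\in\cC_S$ with one-sided approach: continuity of $\be$ from the $\tominus$ side, and at points $y\not\in\orb(f(a)_\ell, \tB)$, gives the limit $\be(y)\in\be(\cC_S)$, while the remaining case $y_i\toplus y=\tB^r(f(a)_\ell)$ yields $\hf^{r+1}(\be(\ha_u))$ by the central identity. The reverse inclusion is immediate for $\be(\cC_S)$ and follows for each $\hf^r(\be(\ha_u))$ from Lemma~\ref{lem:sequences-in-cS}, which supplies sequences in $\cC_S$ approaching $\tB^{r-1}(f(a)_\ell)$ positively.

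For $\hf$-invariance of $\cC$ I first verify that $\tB(\cC_S\setminus\{\ha_u\})=\cC_S$. Forward $B$-invariance of $\cC_S$ is immediate from its definition, and surjectivity of $B\colon\cC_S\to\cC_S$ follows because any $B$-preimage of $z\in\cC_S$ either lies in $\cC_S$ already, or lies in $\gamma$, in which case $z=f(a)_\ell$ and the preimage $a\in\cC_S\setminus\{\ha_u\}$ suffices. Since $B$ and $\tB$ agree off $\gamma$ and $B(a)=B(\ha_u)=f(a)_\ell$, this yields $\tB(\cC_S\setminus\{\ha_u\})=\cC_S$. Combined with $\be\circ \tB = \hf\circ \be$ on $[a, \ha_u)$, I obtain
\[
    \hf(\cC) = \be(\tB(\cC_S\setminus\{\ha_u\}))\cup\{\hf(\be(\ha_u))\}\cup\{\hf^{r+1}(\be(\ha_u))\,:\,r\ge 1\} = \cC.
\]

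Finally, density of $\orb(\be(f(a)_\ell), \hf) = \be(\orb(f(a)_\ell, \tB))$ in $\cC$: points $\be(y)\in\be(\cC_S)$ with $y\not\in\orb(f(a)_\ell, \tB)$ are continuity points of $\be$, so density of $\orb(f(a)_\ell, \tB)$ in $\cC_S$ transfers to density in this part of $\be(\cC_S)$; points $\be(y)$ with $y$ in the orbit are already in $\orb(\be(f(a)_\ell), \hf)$; and each $\hf^r(\be(\ha_u))$ is accumulated by $\be$-images of orbit points approaching $\tB^{r-1}(f(a)_\ell)$ positively (existing by density of the orbit in $\cC_S$ together with Lemma~\ref{lem:sequences-in-cS}). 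The principal obstacle throughout is the one-sided limit identity and the careful orientation bookkeeping it demands; once established, the remaining arguments reduce to symbolic manipulations of the explicit expression.
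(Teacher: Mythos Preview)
Your proof is correct and follows essentially the same approach as the paper: both hinge on the one-sided limit computation (your ``central identity'') via Lemma~\ref{lem:tB-facts}(c), using that $\tB^{-1}$ jumps to the $\ha_u$ side at $f(a)_\ell$, to identify $\cC\setminus\be(\cC_S)$ with the forward $\hf$-orbit of $\be(\ha_u)$. The only organizational difference is that the paper deduces $\hf$-invariance as a corollary of density of $\orb(\be(f(a)_\ell),\hf)$ in~$\cC$, whereas you verify $\hf(\cC)=\cC$ by a direct computation from~\eqref{eq:C-expression} using $\tB(\cC_S\setminus\{\ha_u\})=\cC_S$; both routes are short once~\eqref{eq:C-expression} is in hand.
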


\begin{proof}
    $\be(\cC_S)\subset\cC$, so for~\eqref{eq:C-expression} we need to show that
    $\cC\setminus\be(\cC_s) = \{\hf^r(\be(\ha_u))\,:\,r\ge 1\}$.

    Suppose that~$\bx\in\cC$, so that there is a sequence $(y_i)$ in~$\cC_S$ with
    $\be(y_i)\to\bx$. Taking a subsequence if necessary, assume $y_i\to y\in\cC_S$.
    By Lemma~\ref{lem:tB-facts}(d), $\be$ is discontinuous at~$y$ if and only if
    $y\in\orb(f(a)_\ell, \tB)$; and even at points of this orbit, $\be$ is
    continuous as we approach~$y$ in the negative direction. Therefore, if either
    $y\not\in\orb(f(a)_\ell, \tB)$, or there is a subsequence $y_{r_i}\tominus y$,
    then $\bx=\be(y)\in\be(\cC_S)$. On the other hand,
    by~\ref{lem:sequences-in-cS}, there is for each~$r\ge 1$ a sequence $(y_i)$
    in~$\cC_S$ with $y_i\toplus \tB^{r-1}(f(a)_\ell)$. Hence
    $\cC\setminus\be(\cC_s)$ is exactly the set of limits of $(\be(y_i))$ for such
    sequences~$(y_i)$.

    Suppose then that $y_i\toplus\tB^{r-1}(f(a)_\ell)$. Now
    $\be(y_i)=\thr{\tau(y_i), \tau(\tB^{-1}(y_i)),\dots}$ by
    Lemma~\ref{lem:tB-facts}(c). Therefore
    \begin{eqnarray*}
        \be(y_i) 
        &\to& 
            \thr{\tau(\tB^{r-1}(f(a)_\ell)),
            \tau(\tB^{r-2}(f(a)_\ell)), \dots, \tau(f(a)_\ell), \tau(\ha_u),
            \tau(\tB^{-1}(\ha_u)), \dots} \\
        &=&
            \thr{f^r(a), f^{r-1}(a), \dots, f(a), \tau(\ha_u),
            \tau(\tB^{-1}(\ha_u)), \dots } \\
        &=&
            \hf^r\left(\thr{\tau(\ha_u), \tau(\tB^{-1}(\ha_u)), \dots}\right)
        \\ 
        &=& 
            \hf^r(\be(\ha_u)),
    \end{eqnarray*}
    where the first equality uses Lemma~\ref{lem:tB-facts}(a), the third uses
    Lemma~\ref{lem:tB-facts}(c), and the appearance of~$\ha_u$ rather than~$a$ in
    the first line is because $y_i$ converges in the positive direction. This
    establishes~\eqref{eq:C-expression}.

    Since $\hf^r(\be(f(a)_\ell)) = \be(\tB^r(f(a)_\ell))\in\be(\cC_S)$
    by~\eqref{eq:model_action}, we have $\orb(\be(f(a)_\ell), \hf)\subset\cC$.
    We finish by showing that this orbit is dense in~$\cC$ so that, in
    particular, $\cC$ is $\hf$-invariant.

    It is enough to show denseness in $\be(\cC_S)$. Since $\cC_S =
    \overline{\orb(f(a)_\ell, \tB)}$, every $y\in\cC_S$ either lies on
    $\orb(f(a)_\ell, \tB)$, or is the limit of a subsequence
    $\tB^{r_i}(f(a)_\ell)\to y$ of this orbit. In the former case we have
    $\be(y)=\hf^r(\be(f(a)_\ell))$ for some~$r$ by~\eqref{eq:model_action}; and
    in the latter case, $\hf^{r_i}(\be(f(a)_\ell))=\be(\tB^{r_i}(f(a)_\ell))
    \to \be(y)$ by~\eqref{eq:model_action} and Lemma~\ref{lem:tB-facts}(d).
\end{proof}

We finish the section with some key facts about the dynamics of~$\hf$ as it
pertains to~$\cC$, and the identifications on the extreme points of~$\hI$.

\begin{lem} \label{lem:irrat-key}\mbox{}
    \begin{enumerate}[(a)]
        \item The $\omega$-limit set of the critical fiber~$\fib{c}$ is equal to
        $\cC$.

        \item The $\alpha$-limit set of the set of extreme elements of~$\hI$ has
            $\alpha(\be(S), \hf) \subset \cC$.

        \item The $\hf$-orbit of $\be(c_u)$ is homoclinic to~$\cC$, and the
            $F$-orbit of $g(\be(c_u))$ is homoclinic to~$\infty$.

        \item There is no~$x\in(a,b)$ for which both $\be(x_u)$ and $\be(x_\ell)$
            lie in~$\cC$.

        \item If $x\in(a,c)$ then $g(\be(x_\ell))\not=g(\be(\hx_\ell))$.
    \end{enumerate}
\end{lem}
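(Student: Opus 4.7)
The five parts are interconnected; I plan to prove them in order, relying throughout on three tools: the identity $\hf^{-r}(\be(y))=\be(\tB^{-r}(y))$ from Lemma~\ref{lem:tB-facts}(b); the structural description $\cC=\be(\cC_S)\cup\{\hf^r(\be(\ha_u)):r\ge 1\}$ from Theorem~\ref{thm:C-is-Cantor}; and the fact that, since the rotation number is irrational, $\cC_S$ is minimal under $B$, so that the forward orbit of any point in $\cC_S$ is dense in $\cC_S$ and accumulates at every point of $\cC_S$ from each side.

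For (a), I will first observe that all elements of $\fib{c}$ agree in the $0^\text{th}$ coordinate, so in the metric of $\hI$ the iterates $\hf^r(\fib{c})$ have diameter $O(1/2^r)$ and therefore $\omega(\fib{c},\hf)=\omega(\be(c_\ell),\hf)$. Applying Lemma~\ref{lem:action_on_extremes}(c) and~\eqref{eq:model_action} I get $\hf^r(\be(c_\ell))=\be(\tB^{r-3}(f(a)_\ell))$ for $r\ge 3$, an orbit dense in $\cC$ by Theorem~\ref{thm:C-is-Cantor}. The main technical obstacle of the whole lemma lies in upgrading density to an $\omega$-limit equality, because $\be$ has one-sided discontinuities precisely along the orbit of $f(a)_\ell$. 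I will handle this by using minimality of $\cC_S$ to choose the approach direction: for $\be(z)$ with $z\in\cC_S$, pick $s_i\to\infty$ with $\tB^{s_i}(f(a)_\ell)\tominus z$ and use one-sided continuity of $\be$ (Lemma~\ref{lem:tB-facts}(d)); for the remaining points $\hf^r(\be(\ha_u))$, pick $\tB^{s_i}(f(a)_\ell)\toplus\tB^{r-1}(f(a)_\ell)$ and invoke the limit identity at the end of the proof of Theorem~\ref{thm:C-is-Cantor}.

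For (b), the image of $\tB^{-1}$ is $[a,\ha_u)\subset S\setminus\ingam$. If $\be(\tB^{-r_i}(y_i))\to\bx$ with $r_i\to\infty$, I extract a subsequence so that $\tB^{-r_i}(y_i)\to z$; were $z\in\tB^{-k}(\ingam)$ for some $k\ge 0$, continuity of $\tB^k$ near $z$ would place $\tB^k(\tB^{-r_i}(y_i))=\tB^{-(r_i-k)}(y_i)\in\ingam$ for large $i$, contradicting $\tB^{-(r_i-k)}(y_i)\in[a,\ha_u)$. Hence $z\in\cC_S$, and the one-sided continuity argument of (a) gives $\bx\in\cC$. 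Part (c) then follows: the forward inclusion reuses the diameter estimate of (a) with base point $\be(c_u)\in\fib{c}$; the backward inclusion combines (b) with the same lifting argument as in (a), now using density of the backward $\tB$-orbit in $\cC_S$ (which follows from minimality via the semi-conjugacy to irrational rotation). Applying $g$ and using $g(\cC)=\{\infty\}$ yields the $F$-statement.

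For (d), I first reduce $\be(x_u)\in\cC$ to $x_u\in\cC_S$: if $\be(y)=\hf^r(\be(\ha_u))$ for some $y\in S$ and $r\ge 1$, then $\be(\tB^{-r}(y))=\be(\ha_u)$, forcing $\tB^{-r}(y)=\ha_u$ and contradicting $\tB^{-r}(y)\in[a,\ha_u)$. Next, the geometry of the upper arc gives $\ingam=\{x_u:x\in(a,\ha)\}$, so $x_u\notin\cC_S$ whenever $x\in(a,\ha)$; for $x=\ha$, $\tB(\ha_\ell)=f(a)_u\in\ingam$ shows $\ha_\ell\notin\cC_S$; and for $x\in(\ha,b)$, the formulas $\tB(x_u)=f(x)_\ell$ and $\tB(x_\ell)=f(x)_u$ show that $\{x_u,x_\ell\}\subset\cC_S$ would force $\{f(x)_u,f(x)_\ell\}\subset\cC_S$ with $f(x)\in(a,f(a))\subset(a,\ha)$ by Lemma~\ref{lem:order_fa_p_ha}, contradicting the case just handled. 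For (e), every identification of type (EI) in Lemma~\ref{lem:irrat-equiv} lies in a single $\pi_0$-fiber, but $\be(x_\ell)\in\fib{x}$ and $\be(\hx_\ell)\in\fib{\hx}$ with $x\ne\hx$; so only an (EII) identification could merge them, requiring $x_\ell,\hx_\ell\in\cC_S$ and hence $\{f(x)_\ell,f(x)_u\}\subset\cC_S$ with $f(x)\in(f(a),b)\subset(a,b)$, contradicting (d).
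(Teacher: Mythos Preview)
Your arguments for (a)--(d) are essentially sound and close in spirit to the paper's, with two minor caveats. In (a), the claim that the forward orbit ``accumulates at every point of $\cC_S$ from each side'' is false at gap endpoints of $\cC_S$ (for instance $\ha_u$ is not approached from the $\tominus$ side within $\cC_S$, since $\ingam$ lies immediately after it); this is patchable, since the left gap endpoints $\tB^{-r}(\ha_u)$ are not on $\orb(f(a)_\ell,\tB)$ and so $\be$ is fully continuous there, but the paper sidesteps the whole issue by observing that a forward orbit dense in a compact perfect set already has that set as its $\omega$-limit set. In (c), homoclinicity means only $\omega,\alpha\subset\cC$, so it is literally immediate from (a) and (b); no further lifting or density argument is needed. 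Your (d) is in fact more careful than the paper's, since you explicitly reduce $\be(y)\in\cC$ to $y\in\cC_S$ by ruling out $\be(y)=\hf^r(\be(\ha_u))$.

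The substantive gap is in (e). Your claim that ``every identification of type (EI) lies in a single $\pi_0$-fiber'' is false: by Remark~\ref{rem:id_summary}(a), for $r\le 0$ the class $\{\be(\tB^r(z_u)),\be(\tB^r(\hz_u))\}$ consists of extreme elements of \emph{distinct} fibers. You have therefore not excluded the possibility $\{x_\ell,\hx_\ell\}=\{\tB^{-s}(z_u),\tB^{-s}(\hz_u)\}$ for some $s\ge 0$ and $z\in(a,c)$. The paper rules this out by computing $\tB(x_\ell)=f(x)_\ell$ and $\tB(\hx_\ell)=f(x)_u$ and then doing a short case analysis on whether $f(x)<\ha$, $f(x)=\ha$, or $f(x)>\ha$: the upshot is that while both iterates remain defined the pair $\{\tB^k(x_\ell),\tB^k(\hx_\ell)\}=\{f^k(x)_\ell,f^k(x)_u\}$ always contains one lower and one upper point, so it can never coincide with $\{z_u,\hz_u\}$, and the two points never first enter $\ingam$ simultaneously. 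Your (EII) argument via (d) is correct, but it does not cover this missing (EI) case.
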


\begin{proof} \mbox{}
    \begin{enumerate}[(a)]
        \item Note first that $\omega(\fib{c}, \hf) = \omega(\be(c_\ell),
            \hf)$, as $\diam(\hf^r(\fib{c})) = \diam(\fib{c})/2^r$. Now the first
            points of the $\tB$-orbit of $c_\ell$ are $c_\ell\mapsto b\mapsto
            a\mapsto f(a)_\ell$, which are contained in $[a, \ha_u)$, so that
            $\hf^3(\be(c_\ell)) = \be(f(a)_\ell)$ by~\eqref{eq:model_action}. The
            result follows since the $\hf$-orbit of $\be(f(a)_\ell)$ is a dense
            subset of the Cantor set~$\cC$ by Theorem~\ref{thm:C-is-Cantor}.

        \item Suppose that
            $\bx\in\alpha(\be(S), \hf)$, so that there is some $y\in S$ and a
            sequence $r_i\to\infty$ with $\hf^{-r_i}(\be(y)) =
            \be(\tB^{-r_i}(y))\to \bx$. Assume, taking a subsequence if necessary,
            that $\tB^{-r_i}(y)\to y^\ast\in S$. Then $y^\ast\in\cC_S$, since
            otherwise we would have $\tB^r(y^\ast)\in\ingam$ for some~$r\ge 0$, so
            that $\tB^r(\tB^{-r_i}(y))\in\ingam$ for sufficiently large~$i$, a
            contradiction as $\ingam$ is disjoint from the range of~$\tB^{-1}$.

            If $y^\ast\not\in\orb(f(a)_\ell, \tB)$ then $\bx=\be(y^\ast)$ by
            Lemma~\ref{lem:tB-facts}(d); while if \mbox{$y^\ast=\tB^r(f(a)_\ell)$},
            then taking a subsequence we can assume that either
            $\tB^{-r_i}(y)\toplus y^\ast$, so that \mbox{$\bx =
            \hf^{r+1}(\be(\ha_u))$}; or $\tB^{-r_i}(y)\tominus y^\ast$, so that
            $\bx=\be(y^\ast)$. In each case we have $\bx\in\cC$
            by~\eqref{eq:C-expression}.

        \item Immediate from parts~(a) and~(b).

        \item if \mbox{$x<\ha$} then $x_u\in\ingam$, so that
            $\be(x_u)\not\in\cC$; while if $x\ge \ha$ then $f(x)\le f(\ha) = f(a) <
            \ha$ (since \mbox{$j(f(a))=(11)^k0\dots$} and $j(\ha)= 1(11)^k0\dots$
            for some $k\ge 0$ by Remark~\ref{rem:sqrt2_conditions}), so that
            \mbox{$\tB(x_l) = f(x)_u\in\ingam$}, and $\be(x_\ell)\not\in\cC$.

        \item The argument is similar to that of~(d). We have $\tB(x_\ell) =
            f(x)_\ell$ and $\tB(\hx_\ell) = f(x)_u$ by~\eqref{eq:tB}, and
            \begin{itemize}
                \item if $f(x)<\ha$ then $f(x)_u\in\ingam$ while
                $f(x)_\ell\not\in\ingam$: therefore $\hx_\ell\not\in\cC$, and
                $\{x_\ell, \hx_\ell\} \not= \{\hf^{-r}(z_u), \hf^{-r}(\hz_u)\}$ for
                any $z\in(a,c)$;

                \item if $f(x)=\ha$ then $\hx_\ell\in\cC$, but $\tB^2(x_\ell) =
                f(a)_u\in\ingam$, so $x_\ell\not\in\cC$; and

                \item if $f(x)>\ha$ then $f^2(x)<\ha$, and
                $\tB^2(x_\ell)=f^2(x)_u\in\ingam$ while $\tB^2(\hx_\ell) =
                f^2(x)_\ell\not\in\ingam$, so that $g(x_\ell)\not=g(\hx_\ell)$ as
                in the first case.
            \end{itemize}
    \end{enumerate}
\end{proof}

\subsection{The invariant foliations}
The generalized pseudo-Anosov $F$-invariant foliations are constructed in a very
similar way to the measurable pseudo-Anosov turbulations in the rational case:
leaves of the stable foliation are locally $g$-images of the fibers~$\fib{x}$, with
measure coming from $g_*(\alpha_x)$; while leaves of the unstable foliation are
$g$-images of connected components of~$\hI\setminus\cC$, with measure induced by
Lebesgue measure on $0$-flat arcs.

These foliations have a single bad singularity at~$\infty$, where uncountably many
stable and unstable leaves meet: in the language of
Definitions~\ref{defn:pa-gpa-fols}, we have $Z=\{\infty\}$. They also have
$1$-pronged singularities along the bi-infinite orbit of $g(\be(c_u))$: the leaves
emanating from these singularities are finite, terminating at~$\infty$. All other
points are regular.

In Section~\ref{sec:irrat_stab} we construct the stable measured foliation $(\cF^s,
\nu^s)$, and show that $F(\cF^s, \nu^s) = (\cF^s, \lambda\nu^s)$; we then deal with
the unstable foliation in Section~\ref{sec:irrat_unstab}. Finally, in
Section~\ref{sec:irrat_charts}, we exhibit the charts required by
Definitions~\ref{defn:pa-gpa-fols} to establish that these are generalized
pseudo-Anosov foliations, completing the proof that $F\colon\Sigma\to\Sigma$ is a
generalized pseudo-Anosov map.

\begin{rem} \label{rem:id_summary}
    It will be helpful to summarize the essential features of the
    identifications described in Lemma~\ref{lem:irrat-equiv}.
    \begin{enumerate}[(a)]
        \item For $r\le 0$, (EI) identifies the two
        extreme points $\be(\tB^r(x_u))$ and $\be(\tB^r(\hx_u))$.

        \item For $r>0$, (EI) identifies two non-extreme
        points of the same fiber~$\fib{f^r(x)}$.

        \item (EII) collapses to~$\infty$ an uncountable
        collection of extreme points of distinct (Lemma~\ref{lem:irrat-key}(d))
         fibers together with non-extreme points $\hf^r(\be(\ha_u))$ of the
         post-critical fibers $\fib{f^r(a)}$ for $r\ge 1$.
    \end{enumerate}
\end{rem}

\subsubsection{The stable foliation} \label{sec:irrat_stab}

In the rational case, the images in~$\Sigma$ of fibers above the orbit of~$c$ are
in general not arcs, because of the 3-element identifications in
Lemma~\ref{lem:rat-equiv-class}. By contrast, in the irrational case, where there
are no such identifications, \emph{every} $\pi_0$-fiber yields an arc $\cA_x =
g(\fib{x})$. Fibers above the postcritical set still need to be treated by a
separate argument, since the itinerary, and hence the notion of consecutive points,
is not well defined on such fibers, but this argument is a straightforward
induction. The reader should note carefully in
Theorem~\ref{thm:irrational-fiber-arc} that, although $\be(\tB^r(f(a)_\ell))$ is by
definition an extreme element of $\fib{f^{r+1}(a)}$ for $r\ge 0$, the point
\mbox{$\infty = g(\be(\tB^r(f(a)_\ell)))$} is not an endpoint of the arc
$g(\fib{f^{r+1}(a)})$. This situation arises because there is no natural order
on $\fib{f^{r+1}(a)}$, due to the lack of well-defined itineraries.

\begin{thm} \label{thm:irrational-fiber-arc}
    Let~$x\in I$. Then $\cA_x := g(\fib{x})$ is an arc in~$\Sigma$. Moreover
    \begin{enumerate}[(a)]
        \item The endpoints of~$\cA_x$ are $g(\be(x_\ell))$ and $g(\be(x_u))$,
        unless $x\in\orb(b,f)$, in which case, writing $x=f^r(c)$ for $r\ge 1$,
        one endpoint of~$\cA_x$ is $g(\hf^r(\be(c_u)))$ and the other is either
        $g(\be(x_\ell))$ or $g(\be(x_u))$.

        \item $\infty=g(\cC)$ is an interior point of~$\cA_x$ if and only if
        $x\in\orb(f(a), f)$.
    \end{enumerate}
\end{thm}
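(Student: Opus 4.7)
The plan is to split on whether $x\in\PC$: for $x\notin\PC$ I adapt the order-theoretic argument of Theorem~\ref{thm:fiber_arc}, and for $x\in\PC$ I induct on $r$ in $x=f^r(c)$, using that $F$ is a homeomorphism to propagate the arc structure from $\cA_c$ along the critical orbit.

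When $x\notin\PC$, Remark~\ref{rem:itin_determines}(b) gives each element of $\fib{x}$ a unique itinerary, so $\fib{x}$ is a totally ordered Cantor set, which I embed in $[0,1]$ via the Cantor staircase $H\circ J$ of Theorem~\ref{thm:fiber_arc}. The identifications on $\fib{x}$ are read from Lemma~\ref{lem:irrat-equiv}: the (EI) identifications with $s>0$ produce exactly the pairs of consecutive non-extreme points (by an argument analogous to Lemma~\ref{lem:idents_in_fibers}(b), using Lemma~\ref{lem:preserve_consecutive}), while (EII) contributes $\cC\cap\fib{x}$, which by Theorem~\ref{thm:C-is-Cantor} can only contain extreme elements (the non-extreme elements of $\cC$ sitting above $\PC$) and by Lemma~\ref{lem:irrat-key}(d) contains at most one of them when $x\in(a,b)$. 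Collapsing the consecutive pairs turns the Cantor set into an arc with endpoints $g(\be(x_\ell))$ and $g(\be(x_u))$, and the further collapse of at most one endpoint to $\infty$ still yields an arc. This establishes~(a) in this case; since $\infty$ is at worst an endpoint here and $\orb(f(a),f)\subset\PC$, part~(b) also holds.

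For $x=f^r(c)\in\PC$ with $r\ge 1$, I induct on $r$. The cases $r=1,2$ use that $b$ and $a$ each have a unique preimage, so $\cA_b=F(\cA_c)$ and $\cA_a=F^2(\cA_c)$ are arcs because $F$ is a homeomorphism and $\cA_c$ is an arc by the previous step; combining $F(\infty)=\infty$ (as $\cC$ is $\hf$-invariant) with Lemma~\ref{lem:action_on_extremes}(c), the endpoints match the stated form with $\infty$ as an endpoint. For $r\ge 3$, set $y=f^{r-1}(c)$ and use the decomposition
\[
\fib{f^r(c)} \;=\; \hf(\fib{y})\;\sqcup\;\hf(\fib{\widehat{y}})
\]
with $\widehat{y}\notin\PC$: the first piece pushes forward to the arc $F(\cA_{y})$ by induction, the second to the arc $F(\cA_{\widehat{y}})$ by the first step. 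Lemma~\ref{lem:action_on_extremes}(b) together with (EI) at $s=1$ identifies the pair $\{\hf(\be(y_u)),\hf(\be(\widehat{y}_u))\}$ as the single interface point of these two arcs; this shared point equals $\infty$ when $r=3$ (since $\hf(\be(\ha_u))\in\cC$ by Theorem~\ref{thm:C-is-Cantor}) and is otherwise a non-extreme interior point. In either case the two arcs glue into a single arc with endpoints $g(\hf^r(\be(c_u)))$ (the inherited ``free'' endpoint) and one of $g(\be(x_\ell)),g(\be(x_u))$ (the free endpoint of the non-post-critical piece), giving~(a); and since by Theorem~\ref{thm:C-is-Cantor} the only non-extreme element of $\cC$ in $\fib{f^r(c)}$ is $\hf^{r-2}(\be(\ha_u))$, which exists exactly when $r\ge 3$, part~(b) follows.

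The main obstacle will be the combinatorial bookkeeping in the inductive step: verifying that the interface between the two pieces consists of exactly the single identified pair predicted by Lemma~\ref{lem:action_on_extremes}(b), tracking which endpoint of the inductive arc is the inherited $g(\hf^r(\be(c_u)))$ rather than being absorbed into the gluing, and ruling out any additional extremes of $\fib{f^r(c)}$ landing in $\cC_S$ in a way that would conflict an endpoint with the interior point $\infty$; this last point reduces to the parity-of-orbit argument of Lemma~\ref{lem:irrat-key}(d) combined with the explicit enumeration of non-extreme elements of $\cC$ in Theorem~\ref{thm:C-is-Cantor}.
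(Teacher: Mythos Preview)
Your approach is essentially the same as the paper's: treat $x\notin\PC$ by the order/collapse argument, then induct along the critical orbit using that $F$ is a homeomorphism and gluing two image-arcs at their shared endpoint. The indexing differs (you write $x=f^r(c)$, the paper writes $x=f^r(a)$), but the logic matches.

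There is one genuine omission in your inductive step. Your decomposition $\fib{f^r(c)}=\hf(\fib{y})\sqcup\hf(\fib{\widehat y})$ presupposes that $f^r(c)\ge f(a)$, so that $y=f^{r-1}(c)\in[a,\ha]$ and $\widehat y$ exists. When $f^r(c)<f(a)$ the point $y$ lies in $(\ha,b]$, there is no $\widehat y$, and $\fib{f^r(c)}=\hf(\fib{y})$ alone; in that case $\cA_{f^r(c)}=F(\cA_{f^{r-1}(c)})$ is immediately an arc by induction, with the same endpoint description after applying Lemma~\ref{lem:action_on_extremes}(a). The paper splits explicitly on $f^r(a)<f(a)$ versus $f(a)<f^r(a)<b$; you should do the same.

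Two smaller points in the gluing case. First, showing that the two arcs $F(\cA_y)$ and $F(\cA_{\widehat y})$ meet \emph{only} at the claimed interface needs more than Lemma~\ref{lem:irrat-key}(d): you must also rule out an identification of the other pair of endpoints, which is exactly Lemma~\ref{lem:irrat-key}(e), and check that $\infty\notin F(\cA_{\widehat y})$ (so the interior $\infty$ of the inductive piece doesn't coincide with an endpoint of the non-post-critical piece). Second, at $r=3$ you have $y=a$, which is not in $(a,c)$, so the identification of the interface pair is via~(EII) (both $\hf(\be(a))$ and $\hf(\be(\ha_u))$ lie in~$\cC$), not via~(EI); your parenthetical remark about $\hf(\be(\ha_u))\in\cC$ catches this, but the sentence invoking ``(EI) at $s=1$'' should be reserved for $r\ge 4$.
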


\begin{proof}
    Suppose first that $x\not\in\orb(b,f)$. If $\bx$ and $\bx'$ are distinct
    elements of~$\fib{x}$, then $g(\bx)=g(\bx')$ if and only if $\bx$ and $\bx'$
    are consecutive. The proof of this is essentially identical to that of
    Lemma~\ref{lem:idents_in_fibers}(b), the only additional point to note being
    that if $g(\bx)=g(\bx')$ then, by Remark~\ref{rem:id_summary}(c), it is not possible
    that $\bx, \bx'\in\cC$.

    That $\cA_x$ is an arc for $x\not\in\orb(b,f)$, with endpoints $g(\be(x_u))$
    and $g(\be(x_\ell))$, follows exactly as in the proof of
    Theorem~\ref{thm:fiber_arc}. $\infty$ is not an interior point of~$\cA_x$
    by Remark~\ref{rem:id_summary}(c).

    Since $\cA_b = F(\cA_c)$ is a homeomorphic image of $\cA_c$, it is also an
    arc, with endpoints $F(g(\be(c_\ell))) = g(\hf(\be(c_\ell))) = g(\be(b)) =
    \infty$ and $F(g(\be(c_u))) = g(\hf(\be(c_u)))$. (That $g(\be(b))=\infty$
    follows because $\tB^2(b)=f(a)_\ell$, so $\orb(b, \tB)$ is disjoint from
    $\ingam$: that is, $b\in\cC_S$, so $\be(b)\in\cC$.) Likewise $\cA_a =
    F(\cA_b)$ is an arc with endpoints $\infty$ and $g(\hf^2(\be(c_u)))$.

    The result for $x\in\orb(f(a), f) = \{f^r(a)\,:\,r\ge 1\}$ follows by induction
    on~$r$. For the base case $r=1$, we have that $\cA_{f(a)} = F(\cA_a) \cup
    F(\cA_{\ha}) = F(\cA_a\cup\cA_{\ha})$ is the union of the $F$-images of the two
    arcs $\cA_a$ and $\cA_{\ha}$ (the latter is an arc as $\ha\not\in\orb(b,f)$,
    since $f(\ha)=f(a)$ and $f$ is injective on $\orb(b,f)$). Now $\cA_a$ and
    $\cA_{\ha}$ intersect at their endpoints $g(\be(a))=g(\be(\ha_u)) = \infty$.
    They have no other points of intersection by Remark~\ref{rem:id_summary}
    (noting that~$\fib{a}$ has only one extreme point~$\be(a)$). Therefore
    $\cA_{f(a)}$ is an arc with~$\infty$ in its interior, with endpoints
    $g(\hf^3(\be(c_u)))$ and $g(\hf(\be(\ha_\ell))) = g(\be(f(a)_u))$ as required.

    Suppose then that~$r>1$. 

    If $f^r(a) < f(a)$ then, by the inductive hypothesis,
    $\cA_{f^r(a)}=F(\cA_{f^{r-1}(a)})$ is an arc with~$\infty$ in its interior.
    Since $f^r(a)<f(a)$, both extremes of $\fib{f^{r-1}(a)}$ map to extremes of
    $\fib{f^r(a)}$, so that the endpoints of $\cA_{f^r(a)}$ are
    $F(g(\be(f^{r-1}(a)_{u/\ell}))) = g(\be(f^r(a)_{\ell/u}))$ and
    $F(g(\hf^{r-1}(\be(c_u)))) = g(\hf^r(\be(c_u)))$ as required.

    On the other hand, if $f(a) < f^r(a) < b$ then $\cA_{f^r(a)}$ is the union of
    $F(\cA_{f^{r-1}(a)})$ and $F(\cA_{\widehat{f^{r-1}(a)}})$. The former is an arc
    with~$\infty$ in its interior by the inductive hypothesis, and the latter is
    also an arc, since $\widehat{f^{r-1}(a)}\not\in\orb(b,f)$ as~$f$ is injective
    on $\orb(b,f)$. Now $\cA_{f^{r-1}(a)}$ and $\cA_{\widehat{f^{r-1}(a)}}$
    intersect at their endpoints $g(\be({f^{r-1}(a)}_u)) =
    g(\be(\widehat{f^{r-1}(a)}_u))$ by~(EI) of Lemma~\ref{lem:irrat-equiv}. This is
    their only intersection: their other endpoints are not identified
    by Lemma~\ref{lem:irrat-key}(e); the identifications~(EI) only
    identify extremes of fibers and points of the same fiber; and
    $\infty\not\in\cA_{\widehat{f^{r-1}(a)}}$ (if we had
    $\widehat{f^{r-1}(a)}_\ell\in\cC$ as well as $f^{r-1}(a)_\ell\in\cC$, then both
    extremes of $\fib{f^r(a)}$ would be in~$\cC$,
    contradicting Lemma~\ref{lem:irrat-key}(d)). Therefore
    $\cA_{f^r(a)}$ is an arc with~$\infty$ in its interior, and with endpoints
    $F(g(\be(\widehat{f^{r-1}(a)}_\ell))) = g(\be(f^r(a)_{u/\ell}))$ and
    $F(g(\hf^{r-1}(\be(c_u)))) = g(\hf^r(\be(c_u)))$ as required.
\end{proof}

\begin{defn}[$\cI_x$, $\cJ_x$] \label{defn:ix_jx}
    Let $x\in I$.

    If $x=f^r(a)$ for some $r\ge 1$, we define $\cJ_x$ to be the component of
    $\cA_x\setminus\{\infty\}$ which contains $g(\hf^r(\be(c_u)))$, and $\cI_x$
    to be the other component.

    Otherwise, we define $\cI_x = \cA_x\setminus\{\infty\}$.

    Each $\cI_x$ and $\cJ_x$ is therefore homeomorphic to a half-open or closed
    interval, according as $\infty\in\cA_x$ or $\infty\not\in\cA_x$.
\end{defn}

There are three types of extreme elements of~$\hI$:
\begin{itemize}
    \item points of~$\cC$, which are identified with all other points of~$\cC$; 
    \item points of the form $\hf^r(\be(c_u))$ for $r\le 0$, which are not
    identified with any other points; and
    \item points of the form $\hf^r(\be(x_u))$ for $r\le 0$ and $x\in(a,
    \ha) \setminus\{c\}$, which are identified with $\hf^r(\be(\hx_u))$ and
    no other points.
\end{itemize}
Therefore each endpoint of each $\cI_x$ is either $\infty$, or a point of the form
$\hf^r(\be(c_u))$, or is an endpoint of exactly one other $\cI_{x'}$.

We define an equivalence relation $\sim$ on $I$ by declaring that $x\sim y$ if and
only if there is a finite sequence $x=x_1,\dots,x_k=y$ such that $\cI_{x_i}$ and
$\cI_{x_{i+1}}$ share an endpoint for $1\le i < k$. We denote by~$\ec{x}$ the
equivalence class containing~$x$.

\begin{defn}[$(\cF^s, \nu^s)$] \label{defn:stabfol_irrat}
    The stable measured foliation $(\cF^s, \nu^s)$ on~$\Sigma$ has
    leaves~$\ell$ of the following forms:
    \begin{itemize}
        \item $\ell=\bigcup_{y\in\ec{x}} \cI_y$ for $x\in I$, with measure
        $\displaystyle{
            \nu^s_\ell = \sum_{y\in \ec{x}}g_*(\alpha_y),
            }
        $
        where $\alpha_{y}$ is the measure of Remarks~\ref{rems:alpha-x}(c)
            on~$\fib{y}$ (restricted to $\cI_y$);
        \item $\ell = \cJ_{f^r(a)}$ for $r\ge 1$, with measure $\nu^s_\ell = g_*(\alpha_{f^r(a)})$; and
        \item A singleton leaf $\ell = {\infty}$.
    \end{itemize}

\end{defn}

\begin{lem}
    $F(\cF^s, \nu^s) = (\cF^s, \lambda\nu^s)$.
\end{lem}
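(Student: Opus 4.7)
The plan is to verify the two conditions from Definition~\ref{defn:measured_fol} required for $F(\cF^s, \nu^s) = (\cF^s, \lambda\nu^s)$: (I)~$F$ permutes the leaves of $\cF^s$, and (II)~$F_*(\nu^s_\ell) = \lambda\,\nu^s_{F(\ell)}$ for every leaf $\ell$.

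For~(I), the singleton leaf $\{\infty\}$ is $F$-invariant because $F\circ g = g\circ\hf$ and $\cC$ is $\hf$-invariant by Theorem~\ref{thm:C-is-Cantor}. The key geometric fact for the remaining leaves is that $F(\cA_y) = g(\hf(\fib{y}))$, and that when $f(y)$ has two $f$-preimages $y$ and $\hat{y}$, the sets $\hf(\fib{y})$ and $\hf(\fib{\hat{y}})$ partition $\fib{f(y)}$ up to the single midpoint $\hf(\be(y_u)) = \hf(\be(\hat{y}_u))$, which is identified in $\Sigma$ by~(EI) at index $r=1$; thus $F(\cA_y) \cup F(\cA_{\hat{y}}) = \cA_{f(y)}$. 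Moreover, $\hf$ sends the (EI)-pair $\{\hf^r(\be(x_u)), \hf^r(\be(\hat{x}_u))\}$ at index~$r$ to the pair at index $r+1$, so $\hf$ preserves the entire graph of endpoint identifications on the extremes of the $\fib{y}$'s. Chasing these facts through the definition of~$\sim$ on~$I$, a type-1 leaf $\ell = \bigcup_{y \in \ec{x}}\cI_y$ is sent by~$F$ to a chain in which each pair $\{\cI_y, \cI_{\hat{y}}\}$ adjacent via its $r=0$ shared upper endpoint fuses into the single arc $\cA_{f(y)} \setminus \{\infty\}$, while the remaining endpoint identifications inside $\ec{x}$ at indices $r<0$ are translated to indices $r+1 \le 0$: the result is a union of arcs indexed by an equivalence class $\ec{x'}$, and so again a leaf. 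If some $f(y) \in \orb(f(a), f)$ then $\cA_{f(y)} \setminus \{\infty\}$ splits as $\cI_{f(y)}\cup \cJ_{f(y)}$, contributing $\cJ_{f(y)}$ as a type-2 leaf in the image. A parallel argument handles type-2 leaves using $F(\cJ_{f^r(a)})\subseteq\cA_{f^{r+1}(a)}$.

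For~(II), combining $F\circ g = g\circ\hf$ with Remarks~\ref{rems:alpha-x}(b) yields
\[
    F_*(g_*\alpha_y) \;=\; g_*(\hf_*\alpha_y) \;=\; \lambda\,g_*(\alpha_{f(y)}|_{\hf(\fib{y})}),
\]
so that, for a type-1 leaf $\ell = \bigcup_{y \in \ec{x}}\cI_y$,
\[
    F_*(\nu^s_\ell) \;=\; \lambda\sum_{y\in\ec{x}} g_*(\alpha_{f(y)}|_{\hf(\fib{y})}).
\]
Pairing $\{y,\hat{y}\}\subseteq\ec{x}$, and using that $\hf(\fib{y})\cup\hf(\fib{\hat{y}}) = \fib{f(y)}$ with overlap a single point of $\alpha_{f(y)}$-measure zero (Remarks~\ref{rems:alpha-x}(a)), each pair contributes $\lambda\,g_*(\alpha_{f(y)})$. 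By Step~(I), the collection of pair-representatives $f(y)$ indexes the equivalence class $\ec{f(x)}$ of~$F(\ell)$, so the total equals $\lambda\,\nu^s_{F(\ell)}$. The type-2 case is analogous, applying the same computation to $\alpha_{f^r(a)}$ together with $F(\cJ_{f^r(a)}) = \cJ_{f^{r+1}(a)}$.

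The main obstacle is Step~(I): the combinatorial bookkeeping required to track how chains of arcs are reshuffled by~$F$, particularly in the exceptional instances where an image arc $\cA_{f(y)}$ contains~$\infty$ in its interior, so that a pair $\{\cI_y, \cI_{\hat{y}}\}$ in the source leaf gets mapped to a union split across a type-1 and a type-2 leaf of the image foliation.
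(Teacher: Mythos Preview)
Your approach is considerably more elaborate than the paper's, which dispenses with Step~(I) in essentially one line: it observes that $F(\cI_x)\subset\cI_{f(x)}$ for all $x\in I$ and $F(\cJ_{f^r(a)})=\cJ_{f^{r+1}(a)}$ for $r\ge 1$. (There is one small exception which the paper glosses over: $F(\cI_a)=\cJ_{f(a)}$, since the lone non-$\infty$ endpoint $g(\hf^2(\be(c_u)))$ of $\cI_a$ maps to the defining endpoint of $\cJ_{f(a)}$; but this is harmless, as $\cJ_{f(a)}$ is itself a leaf.) Since $\cF^s$ and its $F$-image are both partitions of $\Sigma$, containment of each $F(\ell)$ in a leaf forces equality. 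For Step~(II) the paper uses $\ec{f(x)}=f(\ec{x})$ together with Remarks~\ref{rems:alpha-x}(b), summing over $y\in\ec{x}$ directly, without pairing terms.

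Your fusing-and-shifting picture is right in spirit, but as written it has gaps --- which you partly acknowledge in your final paragraph. The assertion that the $r=0$ pair $\{\cI_y,\cI_{\hat y}\}$ fuses under $F$ into all of $\cA_{f(y)}\setminus\{\infty\}$ fails whenever $y$ or $\hat y$ already lies in $\orb(f(a),f)$: then $\cI_y$ is only one component of $\cA_y\setminus\{\infty\}$, so the fused image is strictly smaller. Your exception clause then mislocates the missing piece: the $\cJ_{f(y)}$ in the target does not arise from $F(\ell)$ at all (it is $F(\cJ_y)$, the image of a different leaf), so $F(\ell)$ never splits across a type-1 and a type-2 leaf. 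In Step~(II), the pairing ``$\{y,\hat y\}\subseteq\ec{x}$'' also omits the case $y>\ha$, where $\hat y$ is undefined but $\hf(\fib{y})=\fib{f(y)}$ directly. All of this can be patched, but the patches amount to rediscovering the containment $F(\cI_x)\subset\cI_{f(x)}$, which makes the bookkeeping unnecessary.
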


\begin{proof}
    We have $F(\cI_x)\subset \cI_{f(x)}$ for all~$x\in I$, and $F(\cJ_{f^r(a)}) =
    \cJ_{f^{r+1}(a)}$ for all $r\ge 1$, so that $F(\ell)$ is a leaf whenever~$\ell$
    is. The proof that~$F$ contracts the stable measure by a factor~$\lambda$ is
    similar to that of Lemma~\ref{lem:stable-action}: if $A$ is a Borel subset of
    $\ell=\bigcup_{y\in\ec{x}}\cI_y$, then, since $\ec{f(x)} = f(\ec{x})$, we have
    \begin{eqnarray*}
        \nu_{F(\ell)}^s(F(A)) &=& \sum_{y\in\ec{f(x)}}\alpha_y(g^{-1}(F(A))) \\
            &=& \sum_{y\in f(\ec{x})}\alpha_y(g^{-1}(F(A))) \\
            &=& \sum_{y\in\ec{x}}\alpha_{f(y)}(g^{-1}(F(A))) \\
            &=& \sum_{y\in\ec{x}}\alpha_{f(y)}(\hf(g^{-1}(A))) \\
            &=& \lambda^{-1} \sum_{y\in\ec{x}} \alpha_y(g^{-1}(A)) 
                \qquad\text{by Remarks~\ref{rems:alpha-x}(b)} \\
            &=& \lambda^{-1} \nu_\ell^s(A).
    \end{eqnarray*}
    An analogous but more straightforward argument applies to the leaves
    $\cJ_{f^r(a)}$.
\end{proof}
    
\subsubsection{The unstable foliation} \label{sec:irrat_unstab}

Each path component of~$\hI$ is either a point or an arc by
Lemma~\ref{lem:basic_top}(b). Any path component of $\hI\setminus\cC$ is therefore
a path connected subset of a point or an arc, and so is itself a point or an arc.
Since $\cC=\omega(\fib{c}, \hf)$ by Lemma~\ref{lem:irrat-key}(a), it follows from
Lemma~\ref{lem:basic_top}(a) that in fact each path component of $\hI\setminus\cC$
is an open arc.

$\be$ is continuous on~$\gamma$ by Lemma~\ref{lem:tB-facts}(d) and
Theorem~\ref{thm:height-irrational}, so that $\{\be(x_u)\,:\,x\in[a, \ha]\}$ is an
arc in~$\hI$ which intersects~$\cC$ exactly at its endpoints $\be(a)$ and
$\be(\ha_u)$. It follows that \mbox{$S_0:=\{\be(x_u)\,:\,x\in (a,\ha)\}$} is a path
component of $\hI\setminus\cC$.

Since $g^{-1}(\{g(\be(x_u))\}) =
\{\be(x_u), \be(\hx_u)\}$ for $x\in(a, \ha)$, the image $g(S_0)$ of $S_0$ in~$\Sigma$ is the
half-open interval $\cS_0:=\{g(\be(x_u))\,:\,x\in(a, c]\}$ with endpoints
$g(\be(c_u))$ and~$\infty$.

\begin{defn}[$S_r$, the spikes~$\cS_r$]
    For each $r\in\Z$, we write $S_r = \hf^r(S_0)$, a path component of
    $\hI\setminus\cC$; and $\cS_r = F^r(\cS_0) = g(S_r)$, a half-open interval
    in~$\Sigma$ with one endpoint on the orbit of $g(\be(c_u))$ and the other
    at~$\infty$. We refer to the $\cS_r$ as \emph{spikes}.
\end{defn}

Every $\bx\in\hI\setminus(\cC\cup\bigcup_{r\in\Z}S_r)$ has $g^{-1}(\{g(\bx)\}) =
\{\bx\}$. Therefore, each path component $\Gamma$ of $\hI\setminus\cC$ other than
the $S_r$ has image $g(\Gamma)$ an immersed line; and these images, together with
$\infty$ and the spikes~$\cS_r$, partition~$\Sigma$.

\begin{defn}[$(\cF^u, \nu^u)$] \label{defn:unstab_fol_irrat}
    The unstable measured foliation~$(\cF^u, \nu^u)$ on~$\Sigma$ has
    leaves~$\ell=\{\infty\}$ and $\ell = g(\Gamma)$ for each path
    component~$\Gamma$ of $\hI\setminus\cC$.

    For $\ell=g(\Gamma)$ and~$A$ a Borel subset of~$\ell$,
    \begin{equation} \label{eq:spike_or_not}
        \nu^u_\ell(A) = 
        \begin{cases}
            \frac{1}{2} \sum_i m (\pi_0(g^{-1}(A) \cap \Gamma_i)) & \text{ if ~$\ell$ is a spike,} \\
            \sum_i m (\pi_0(g^{-1}(A) \cap \Gamma_i)) & \text{ otherwise,}
        \end{cases}
    \end{equation}
    where $\Gamma=\bigcup_i\Gamma_i$ is the $0$-flat decomposition of
        Lemma~\ref{lem:decompose-glr}.   
\end{defn}

\begin{lem}
    $F(\cF^u, \nu^u) = (\cF^u, \lambda^{-1}\nu^u)$.
\end{lem}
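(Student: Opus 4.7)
The plan is to follow the argument of Lemma~\ref{lem:unstab-action} with minor adjustments for the singular leaf $\{\infty\}$ and for the factor $1/2$ that distinguishes the spike leaves.

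First I would show that $F$ permutes the leaves of~$\cF^u$. The singleton leaf $\{\infty\}$ is preserved: by Theorem~\ref{thm:C-is-Cantor}, $\cC$ is $\hf$-invariant, so $F(\infty) = g(\hf(\cC)) = g(\cC) = \infty$. Every other leaf has the form $\ell = g(\Gamma)$ for a path component $\Gamma$ of $\hI\setminus\cC$; since $\hf$ is a homeomorphism preserving~$\cC$, the image $\hf(\Gamma)$ is again such a path component, and $F(\ell) = g(\hf(\Gamma))$. The relation $\cS_r = F^r(\cS_0)$ shows that $F$ sends each spike leaf to a spike leaf, and hence sends non-spike leaves to non-spike leaves. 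For the singleton leaf there is no stream measure to compare, so it remains to verify the measure relation on non-singleton leaves.

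Second, for a non-singleton leaf $\ell = g(\Gamma)$ and a Borel subset $A\subseteq\ell$, I would prove $\nu^u_{F(\ell)}(F(A)) = \lambda\,\nu^u_\ell(A)$, equivalently $F_*\nu^u_\ell = \lambda^{-1}\nu^u_{F(\ell)}$. Write $\Gamma = \bigcup_i\Gamma_i$ for the $0$-flat decomposition of Lemma~\ref{lem:decompose-glr} and decompose $A$ into measurable pieces
\[
A_i^L = \{x\in A : g^{-1}(x)\in\Gamma_i,\ \pi_0(g^{-1}(x)) < c\}, \quad A_i^R = \{x\in A : g^{-1}(x)\in\Gamma_i,\ \pi_0(g^{-1}(x)) > c\}.
\]
Each $g^{-1}(A_i^Q)$ lies in a $0$-flat arc disjoint from $\fib{c}$, so its image $g^{-1}(F(A_i^Q)) = \hf(g^{-1}(A_i^Q))$ is contained in a single $0$-flat arc of the decomposition of $\hf(\Gamma)$. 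Since $f$ expands Lebesgue measure uniformly by $\lambda$ on each of $[a,c]$ and $[c,b]$,
\[
m\bigl(\pi_0(g^{-1}(F(A_i^Q)))\bigr) \;=\; m\bigl(f(\pi_0(g^{-1}(A_i^Q)))\bigr) \;=\; \lambda\cdot m\bigl(\pi_0(g^{-1}(A_i^Q))\bigr).
\]
Summing over $i$ and $Q\in\{L,R\}$ and comparing with~\eqref{eq:spike_or_not}, the prefactor $1/2$ either appears on both sides (when $\ell$ is a spike, in which case $F(\ell)$ is also a spike) or on neither side, so it cancels.

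The step requiring the most care is checking that $\hf$ maps the $0$-flat decomposition of $\Gamma$ compatibly with the $0$-flat decomposition of $\hf(\Gamma)$. In general $\hf(\Gamma_i)$ need not itself be $0$-flat: if $\pi_0(\Gamma_i)$ crosses~$c$ then $\hf(\Gamma_i)$ splits into two $0$-flat subarcs at the point where $\pi_0\circ\hf = c$. The partition of $A$ into pieces $A_i^L$ and $A_i^R$ is designed precisely to respect this subdivision, so each $g^{-1}(F(A_i^Q))$ lands inside a single $0$-flat arc of the decomposition of $\hf(\Gamma)$ and the expansion constant $\lambda$ can be applied piece-by-piece.
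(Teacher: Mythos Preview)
Your proposal is correct and follows exactly the paper's approach, which simply refers back to Lemma~\ref{lem:unstab-action} together with the observation that $F$ sends spikes to spikes, so the factor $1/2$ in~\eqref{eq:spike_or_not} appears on both sides and cancels. One small caveat: on a spike leaf $g^{-1}(x)$ is a two-point set rather than a single point, so your definitions of $A_i^L$ and $A_i^R$ should be read as partitioning $g^{-1}(A)\cap\Gamma_i$ according to whether $\pi_0<c$ or $\pi_0>c$; with that reading the computation goes through unchanged.
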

\begin{proof}
    Since $\hf$ preserves~$\cC$ and permutes the path components of
    $\hI\setminus\cC$, mapping $S_r$ to $S_{r+1}$, the leaves of $F(\cF^u,
    \nu^u)$ coincide with the leaves of $(\cF^u, \nu^u)$.

    The proof that $F_*(\nu^u_\ell) = \lambda^{-1}\nu^u_{F(\ell)}$ for each~$\ell$
    is formally identical to that of Lemma~\ref{lem:unstab-action}, using that
    $F(\ell)$ is a spike if and only if~$\ell$ is.
\end{proof}

\subsubsection{Charts for the foliations} \label{sec:irrat_charts}

We have defined a pair $(\cF^s, \nu^s)$, $(\cF^u, \nu^u)$ of $F$-invariant measured
foliations whose leaves are respectively contracted and expanded by a
factor~$\lambda$ under the action of~$F$. It remains to show that these are
generalized pseudo-Anosov foliations, by constructing the charts of
Definitions~\ref{defn:charts}. We do this first around points $g(\bx)$ for which
$x_0$ is not in the closure of the post-critical set $\PC = \orb(b, f)$, and then
use the action of~$F$ to transfer these charts to general points
of~$\Sigma\setminus\{\infty\}$.

\begin{notn}[$Y$]
    We write $Y=I\setminus\overline{\PC}$.
\end{notn}

\begin{lem} \label{lem:make_0_box}
    Let~$K$ be an interval contained in~$Y$. Then $\pi_0^{-1}(K)$ is a disjoint
    union of $0$-flat arcs $\gamma\colon K\to\hI$ over~$K$.
\end{lem}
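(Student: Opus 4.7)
The plan is to show, for each $\bx \in \pi_0^{-1}(K)$, that $\bx$ lies on a uniquely determined $0$-flat arc over $K$, and that any two such arcs are either equal or disjoint.

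The key structural fact I would prove first is that for every $r \ge 1$, each connected component $J$ of $f^{-r}(K)$ is mapped homeomorphically onto $K$ by $f^r$. To see this, note that $J$ cannot contain any $x$ with $f^k(x) = c$ for some $0 \le k < r$, since this would force $f^r(x) = f^{r-k}(c) \in \PC$, contradicting $f^r(x) \in K \subseteq Y$. Hence $J$ lies inside a single maximal monotonicity interval $M$ of $f^r$. The endpoints of $M$ are either the endpoints $a, b$ of $I$ (both in $\PC$) or points $x$ with $f^k(x) = c$ for some $0 \le k < r$; in the latter case $f^r(x) = f^{r-k}(c) \in \PC$, so in either case the endpoints of the closed interval $f^r(M)$ lie in $\PC$. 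If $K \not\subseteq f^r(M)$, then since $K \cap f^r(M) \neq \emptyset$ and $K$ is a connected interval having points on both sides of the boundary of $f^r(M)$, some endpoint of $f^r(M)$ would be trapped inside $K$; but this endpoint is in $\PC$, contradicting $K \subseteq Y$. Therefore $K \subseteq f^r(M)$, and since $J = M \cap f^{-r}(K)$ is the unique component of $f^{-r}(K)$ in $M$, the restriction $f^r|_J \colon J \to K$ is a homeomorphism.

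With this established, given $\bx = \thrn{x} \in \pi_0^{-1}(K)$, I would let $J_r(\bx)$ be the component of $f^{-r}(K)$ containing $x_r$ (with $J_0(\bx) = K$), and define $\gamma_\bx \colon K \to \hI$ by $\gamma_\bx(y)_r = (f^r|_{J_r(\bx)})^{-1}(y)$. Since $f(J_{r+1}(\bx))$ is connected, contained in $f^{-r}(K)$, and contains $x_r$, it is contained in $J_r(\bx)$; uniqueness of the preimage branch then yields $f(\gamma_\bx(y)_{r+1}) = \gamma_\bx(y)_r$, so $\gamma_\bx(y) \in \hI$. Continuity of $\gamma_\bx$ is immediate coordinatewise, and $\pi_0 \circ \gamma_\bx$ is the identity on $K$, so $\gamma_\bx(K)$ is a $0$-flat arc over $K$ containing $\bx = \gamma_\bx(x_0)$. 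If $\gamma_\bx(y) = \gamma_{\bx'}(y)$ for some $y \in K$, then the common value has $r$-th coordinate in $J_r(\bx) \cap J_r(\bx')$ for every $r$; these components must therefore coincide, forcing $\gamma_\bx = \gamma_{\bx'}$. This gives the desired disjoint union decomposition.

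The main obstacle is the structural fact $K \subseteq f^r(M)$: this is really the assertion that $K$ cannot straddle a post-critical value, which is precisely the content of the hypothesis $K \subseteq Y$. Once this is in hand, the rest of the argument is routine bookkeeping with inverse branches of $f^r$.
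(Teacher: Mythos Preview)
Your proof is correct and follows essentially the same approach as the paper: both construct the $0$-flat arc through each $\bx$ by selecting, for each $r$, the component of $f^{-r}(K)$ containing $x_r$ and using the inverse branch of $f^r$ on that component. The only organizational difference is that you establish the key fact (each such component maps homeomorphically onto $K$) directly via maximal monotonicity intervals of $f^r$, whereas the paper obtains the same conclusion inductively, one preimage step at a time, using that $f^{-1}(K')\subset Y$ consists of one or two intervals mapped homeomorphically onto $K'$ whenever $K'\subset Y$.
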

\begin{proof}
    Since $K\subset Y$ we have in particular that $b$, $a$, and $f(a)$ are not
    in~$K$, so that either $K\subset(a, f(a))$, or $K\subset(f(a), b)$. In the
    former case $f^{-1}(K)\subset Y$ is a single interval mapped homeomorphically
    by~$f$ onto~$K$; and in the latter case $f^{-1}(K)\subset Y$ is a union of two
    disjoint intervals each mapped homeomorphically onto~$K$.

    Given $\bx\in\pi_0^{-1}(K)$, we can define a sequence~$(K_r)$ of intervals
    in~$Y$ inductively by taking $K_0=K$, and $K_r$ to be the component of
    $f^{-1}(K_{r-1})$ which contains~$x_r$. By the previous paragraph, $f$ maps
    each $K_r$ homeomorphically onto $K_{r-1}$.

    The function $\gamma\colon K\to \hI$ given by $\gamma(y) = \thr{y, y_1, y_2,
    \dots}$, where $y_r\in K_r$ for each~$r$, therefore defines a $0$-flat arc
    over~$K$, passing through~$\bx$, on which every point has the same itinerary
    as~$\bx$. It follows that $\pi_0^{-1}(K)$ is a disjoint union of such $0$-flat
    arcs over~$K$, that is, a $0$-box over~$K$.
\end{proof}

If $\bx\in\hI$ satisfies $x_r\in\overline{\PC}$ for all~$r$, then
$\bx\in\omega(\fib{c}, \hf)$, so that $\bx\in\cC$ by Lemma~\ref{lem:irrat-key}(a).
For any other elements of~$\hI$, since $\overline{\PC}$ is forward $f$-invariant,
there is a unique transition point in the thread between elements of
$\overline{\PC}$ and elements of $Y$.

\begin{defn}[$r(\bx)$]
    If $\bx\not\in\cC$, let $r(\bx)\ge 0$ be least such that $x_{r(\bx)}
    \not\in\overline{\PC}$. Then $x_r\not\in\overline{\PC}$ for all
    $r\ge r(\bx)$.
\end{defn}

Suppose that $r(\bx)=0$. Let~$K$ be the component of~$Y$ containing~$x_0$. Define
$\psi\colon \pi_0^{-1}(K)\to\R$ by
\begin{equation} \label{eq:psi}
    \psi(\by) = 
        \alpha_{y_0}\left( 
            \{ \bz\in\fib{y_0}\,:\, J(\bz)\preceq J(\by) \} 
        \right).
\end{equation}
Then $\psi$ is constant on each $0$-flat arc of $\pi_0^{-1}(K)$ by
Theorem~\ref{thm:holonomy_alpha}. Moreover, for each $x\in K$ and each $\by,
\by'\in\fib{x}$, we have $\psi(\by)=\psi(\by')$ if and only if $\by$ and
$\by'$ are consecutive in~$\fib{x}$, since $\alpha_x$ is OU by
Remarks~\ref{rems:alpha-x}(a).

Suppose in addition that~$\bx$ is not an extreme element of its $\pi_0$-fiber. Let
$\psi_\ast = \max_{\by\in\pi_0^{-1}(K)}\psi(\by)$, and $U_\bx =
\psi^{-1}(0, \psi_{\ast})$, a neighborhood of~$\bx$. Let $R_\bx = K\times(0,
\psi_\ast)$, and define $\Psi_\bx\colon U_\bx \to R_\bx$ by $\Psi_\bx(\by) = (y_0,
\psi(\by))$. Then $\Psi_\bx(\by) = \Psi_\bx(\by')$ if and only if $g(\by)=g(\by')$.
Writing $N_\bx = g(U_\bx)$, a neighborhood of $g(\bx)$, $\Psi_\bx$ therefore
induces a homeomorphism $\Phi_\bx\colon N_\bx \to R_\bx$ with $\Psi_\bx = \Phi_\bx
\circ g$.

\begin{lem}
    \label{lem:interior_regular}
    Suppose that $r(\bx)=0$ and $\bx$ is not an extreme element of its
    $\pi_0$-fiber. Then $\Phi_\bx\colon N_\bx\to R_\bx$ is a regular chart for the
    invariant foliations of~$F$ at $g(\bx)$.
\end{lem}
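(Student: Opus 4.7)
The plan is to verify the defining conditions of a regular chart (Definitions~\ref{defn:charts}): that $\Phi_\bx$ is a measure-preserving homeomorphism $N_\bx\to R_\bx$, and that for each stable (resp.\ unstable) leaf $\ell$, each path component $L$ of $\ell\cap N_\bx$ is mapped by $\Phi_\bx$ homeomorphically and in a stream-measure-preserving way onto a vertical (resp.\ horizontal) segment of $R_\bx=K\times(0,\psi_*)$.

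First, I would identify the leaf structure. For each $t\in(0,\psi_*)$, by Lemma~\ref{lem:make_0_box} together with the constancy of $\psi$ on $0$-flat arcs (Theorem~\ref{thm:holonomy_alpha}), the set $\Psi_\bx^{-1}(K\times\{t\})$ is either a single $0$-flat arc over $K$ or a pair of (EI)-identified $0$-flat arcs sharing a common $\psi$-value; in either case, its $g$-image is a single path component of $\ell^u\cap N_\bx$ for a unique (possibly spike) unstable leaf $\ell^u$. The hypothesis $r(\bx)=0$ gives $K\subset Y$, and $\pi_0(\cC)\subset\overline{\PC}$ (from Lemma~\ref{lem:irrat-key}(a)) forces $U_\bx\cap\cC=\emptyset$, so no (EII) identifications intrude. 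Analogously, for each $x\in K$, $\Psi_\bx^{-1}(\{x\}\times(0,\psi_*))=\fib{x}\cap U_\bx$ maps under $g$ to a path component of the stable leaf through $g(\bx)$, which is a segment of $\cA_x$ by Theorem~\ref{thm:irrational-fiber-arc} and Definition~\ref{defn:stabfol_irrat}.

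Second, I verify measure preservation of the ambient measure. For Borel $A\subset N_\bx$, $\mu(A)=\hmu(g^{-1}(A))$. The set $g^{-1}(A)\setminus U_\bx$ is contained in the $\hf$-orbits of the spikes $S_r$ together with their pair-partners and with $\cC$; these together form a countable union of $0$-flat arcs and a measure-zero Cantor set, all of $\hmu$-measure zero by Theorem~\ref{thm:disintegrate} and non-atomicity of the $\alpha_x$ (Remarks~\ref{rems:alpha-x}(a)). Hence $\mu(A)=\hmu(\Psi_\bx^{-1}(\Phi_\bx(A)))$. Applying Theorem~\ref{thm:disintegrate} fiberwise and Fubini, the required identity $\hmu(\Psi_\bx^{-1}(B))=M(B)$ reduces to the claim that, for each $x\in K$, the pushforward $(\psi|_{\fib{x}})_*\alpha_x$ equals 1D Lebesgue on $(0,\psi_*)$. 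By definition~\eqref{eq:psi}, $\psi|_{\fib{x}}$ is monotone with respect to the unimodal order on $J(\fib{x})$ --- continuous by Remark~\ref{rem:cont-itin} --- and is precisely the cumulative distribution function of the non-atomic measure $\alpha_x$, so its pushforward is 1D Lebesgue on its image; holonomy invariance (Theorem~\ref{thm:holonomy_alpha}) ensures that this image is $(0,\psi_*)$ up to a null set, independent of $x\in K$. The same CDF argument handles the stable stream-measure identity $(\Phi_\bx)_*(g_*\alpha_x)|_{\{x\}\times(0,\psi_*)}=m$; for the unstable stream measures, Definition~\ref{defn:unstab_fol_irrat} gives $\nu^u_\ell(A)=m(\pi_0(g^{-1}(A)\cap\Gamma_t))$ on a non-spike leaf segment, matching 1D Lebesgue along $K$, and the factor $\tfrac12$ for spikes is exactly compensated by the fact that both (EI) pair-partners (for $r\ge 1$, consecutive in a common fiber by Lemma~\ref{lem:preserve_consecutive}(d)) lie simultaneously in $U_\bx$ and contribute the same $\pi_0$ image.

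The main obstacle is the pushforward identity $(\psi|_{\fib{x}})_*\alpha_x=m|_{(0,\psi_*)}$: this requires not just the monotonicity and non-atomicity above, but also that $\psi_*$ is achieved as a common maximal value on every fiber over $K$, which rests on holonomy invariance together with the equality of the itinerary sets $\cK_x$ for $x\in K$ coming from Lemma~\ref{lem:make_0_box}. A secondary technical point is the careful bookkeeping of (EI) identifications within $U_\bx$ --- showing that $\Psi_\bx$ factors through $g|_{U_\bx}$ exactly (so that $\Phi_\bx$ is a genuine homeomorphism) and that the spike-versus-non-spike distinction does not disturb the measure-preserving property.
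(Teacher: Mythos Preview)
Your plan is correct and follows essentially the same route as the paper: disintegrate $\hmu$ over fibers via Theorem~\ref{thm:disintegrate}, use that $\psi|_{\fib{x}}$ is the cumulative distribution function of the non-atomic measure~$\alpha_x$ to get $(\psi|_{\fib{x}})_*\alpha_x = m|_{[0,\psi_*]}$, identify unstable leaf components with $0$-flat arcs of Lemma~\ref{lem:make_0_box} and stable ones with fibers minus extremes, and handle the spike factor~$\tfrac12$ via the doubled preimage. One small simplification: your step bounding $g^{-1}(A)\setminus U_\bx$ is unnecessary, since for $\by\in U_\bx$ every (EI)-partner of~$\by$ is a consecutive non-extreme point in the same fiber with the same $\psi$-value, hence also in~$U_\bx$, and (EII) does not meet~$U_\bx$; thus $g^{-1}(N_\bx)=U_\bx$ exactly.
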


\begin{proof}
    According to Definitions~\ref{defn:charts} we need to show that $\Phi_\bx$ is
    measure-preserving; and that for each unstable (respectively stable)
    leaf~$\ell$, and each path component~$L$ of $\ell\cap N_\bx$, $\Phi_\bx|_{L}$
    is a measure-preserving homeomorphism onto $K\times\{y\}$ for some $y\in (0,
    \phi_\ast)$ (respectively $\{x\} \times (0, \phi_\ast)$ for some $x\in K$).
    \begin{enumerate}[(a)]
        \item Let~$A$ be a Borel subset of~$N_\bx$. Then
            \begin{eqnarray*}
                \mu(A) = \hmu(g^{-1}(A)) &=& \int_K \alpha_x(g^{-1}(A)) \,dm(x) \\
                &=& \int_K \alpha_x(g^{-1}(A) \cap \fib{x}) \,dm(x) \\ &=&
                \int_K m \left(\psi(g^{-1}(A) \cap \fib{x})\right) \,dm(x) \\
                &=& \int_K m\left(
                        \{y\in(0, \psi_\ast)\,:\,(x,y)\in\Psi_\bx(g^{-1}(A))\}
                    \right) \,dm(x) \\
                &=& \int_K m\left(
                        \{y\in(0, \psi_\ast)\,:\,(x,y)\in\Phi_\bx(A)\}
                    \right) \,dm(x) \\
                &=& M(\Phi_\bx(A)),
            \end{eqnarray*}
            where~$M$ is two-dimensional Lebesgue measure. Here the equality in
            the first line is given by Theorem~\ref{thm:disintegrate}. 

        \item 
            Let~$\ell=g(\Gamma)$ be a non-trivial leaf of~$\cF^u$, where
            $\Gamma$ is a path component of $\hI\setminus\cC$. Then $\Gamma
            \cap U_\bx$ is a countable union of $0$-flat arcs over~$K$, so that
            each path component~$L$ of $\ell\cap N_\bx$ is of the form
            $L=g(\gamma(K))$, where $\gamma\colon K\to \hI$ is one of the
            $0$-flat arcs of Lemma~\ref{lem:make_0_box}. Then
            $\Phi_\bx(g(\gamma(x))) = (x, C)$ for some constant~$C$, so
            that~$\Phi_\bx|_L$ is a homeomorphism onto the unstable leaf
            segment $K\times \{C\}$ of~$R_\bx$.

            If $A=g(\gamma(B))$ is a Borel subset of~$L$, then $g^{-1}(A) =
            \gamma(B)$ if~$\ell$ is not a spike; while $g^{-1}(A) = \gamma(B) \cup
            \gamma'(B)$ if~$\ell$ is a spike, where $\gamma'\colon K\to \hI$ is the
            $0$-flat arc consecutive to~$\gamma$. In either case,
            $\nu^u_\ell(A) = m(B)$ by~\eqref{eq:spike_or_not}, so that
            $\nu^u_\ell(A) = m(\Phi_\bx|_L(A))$ as required.

        \item 
            Let~$\ell$ be a non-trivial leaf of~$\cF^s$. 

            If~$\ell=\cJ_{f^r(a)}$ for some~$r\ge 1$, then $\ell\subset
            g(\fib{f^r(a)}) \subset g(\pi_0^{-1}(\PC))$, so $\ell\cap N_\bx =
            \emptyset$. We can therefore assume that $\ell =
            \bigcup_{z\in\ec{y}}\cI_z$ for some~$y\in I$, a countable union of
            sets of the form $\cI_z$. Since any such set which intersects
            $N_\bx$ must have $\cI_z = \cA_z$ (as $N_\bx$ is disjoint from
            $g(\pi_0^{-1}(\PC))$), each component~$L$ of $\ell\cap N_\bx$ is of
            the form $\cA_x \cap N_\bx$ for some $x\in K$: that is, it is the
            $g$-image of a fiber~$\fib{x}$ less its extremes, so that
            $\Phi_\bx|_L$ is a homeomorphism onto the stable leaf segment
            $\{x\} \times (0, \phi_\ast)$ of~$R_{\bx}$.

            For each $y\in(0, \phi_\ast)$ we have
            $\nu^s_\ell(\Phi_\bx|_L^{-1}(\{x\} \times(0, y))) = y$
            by~\eqref{eq:psi} and Definition~\ref{defn:stabfol_irrat}, so that
            $\Phi_\bx|_L$ is measure-preserving as required.

    \end{enumerate}
\end{proof}

Suppose, on the other hand, that~$r(\bx)=0$ but that~$\bx$ is an extreme element of
its fiber, so that $\bx = \hf^{-r}(\be(x_u))$ for some $r\ge 0$ and $x\in(a, \ha)$.

Consider first the case $x\not=c$. Then $g(\bx)=g(\bx')$, where $\bx' =
\hf^{-r}(\be(\hx_u))$. Write $z=\pi_0(\hf^{-r}(\be(c_u)))$, and choose $\epsilon>0$
so that both $K=(x_0-\epsilon, x_0+\epsilon)$ and $K'=(x'_0-\epsilon,
x'_0+\epsilon)$ are contained in~$Y$ and don't contain~$z$ (so, in particular, are
disjoint). Suppose that $\bx$ and $\bx'$ are lower elements of their
$\pi_0$-fibers; only minor modifications are needed to treat the case where they
are upper elements.

Define maps $\psi\colon \pi_0^{-1}(K)\to\R$ and $\psi'\colon
\pi_0^{-1}(K')\to\R$ by~\eqref{eq:psi}; let $U =
\psi^{-1}([0,\psi_\ast))$ and $U' = \psi'^{-1}([0, \psi'_\ast))$, where
$\psi_\ast$ and $\psi'_\ast$ are the maximum values of $\psi$ and $\psi'$; and
define $U_\bx = U\cup U'$, and $\Psi_\bx\colon U_\bx \to R_\bx =
K\times(-\psi'_\ast, \psi_\ast)$ by
\[
    \Psi_\bx(\by) = 
    \begin{cases}
        (y_0, \psi(\by)) & \text{ if }\by \in U, \\ (2z - y_0, -\psi'(\by)) &
        \text{ if }\by \in U'.
    \end{cases}
\]
Then $\Psi_\bx(\by) = \Psi_\bx(\by')$ if and only if $g(\by)=g(\by')$, and
$\Psi_\bx$ induces a homeomorphism $\Phi_\bx\colon N_\bx\to R_\bx$, where $N_\bx =
g(U_\bx)$, a neighborhood of $g(\bx)$. The proof of the following is essentially
identical to that of Lemma~\ref{lem:interior_regular}.

\begin{lem} \label{lem:extreme_chart}
    Suppose that~$r(\bx)=0$, and that $\bx = \hf^{-r}(\be(x_u))$ for some $r\ge 0$
    and $x\in(a, \ha)$ with $x\not =c$. Then $\Phi_\bx\colon N_\bx\to R_\bx$ is a
    regular chart for the invariant foliations of~$F$ at $g(\bx)$.
\end{lem}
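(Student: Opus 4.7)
The plan is to mirror the three-part proof of Lemma~\ref{lem:interior_regular} on each of $U$ and $U'$ separately, treating $N_\bx$ as two half-neighborhoods glued along the image $\{y=0\}$ of the identified lower extremes. First I would verify that $\Phi_\bx$ is a homeomorphism: on each of $U$ and $U'$ the maps $\psi$ and $\psi'$ are continuous and open exactly as in the proof of Lemma~\ref{lem:interior_regular}, so the only pairs $\by \ne \by'$ with $\Psi_\bx(\by) = \Psi_\bx(\by')$ are the identified lower extremes $\hf^{-r}(\be(y_{0,u}))$ and $\hf^{-r}(\be(\hat{y}_{0,u}))$ on opposite sides of $\{y=0\}$, matched by identification~(EI) of Lemma~\ref{lem:irrat-equiv}. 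Because $K, K' \subset Y$ exclude $\overline{\PC}$ and $z$, and $\infty \notin N_\bx$, no further identifications from Lemma~\ref{lem:irrat-equiv} interfere, so $\Phi_\bx$ is a bijection and hence a homeomorphism.

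For the ambient-measure preservation I would split any Borel set $A \subset N_\bx$ as $(A \cap g(U)) \sqcup (A \cap g(U'))$ and apply the Lemma~\ref{lem:interior_regular}(a) computation --- which invokes Theorem~\ref{thm:disintegrate} --- to each piece separately; the two half-rectangles $K \times (0, \psi_\ast)$ and $K \times (-\psi'_\ast, 0)$ reassemble to $R_\bx$ up to the identification line, which has two-dimensional Lebesgue measure zero, so $\mu(A) = M(\Phi_\bx(A))$.

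For the leaf-segment conditions, most leaves $\ell$ meeting $N_\bx$ are handled on a single side exactly as in Lemma~\ref{lem:interior_regular}(b), (c): each such unstable leaf maps onto a horizontal segment $K \times \{t\}$ with $t \ne 0$, and each such stable leaf onto a one-sided vertical segment. Two new cases arise. First, the unstable leaf through $g(\bx)$ is the spike $\cS_{-r} = g(S_{-r})$, whose preimage in $U_\bx$ consists of a pair of $0$-flat arcs, one over $K$ in $U$ and one over $K'$ in $U'$, that fold together onto the single segment $K \times \{0\}$ in $R_\bx$; the factor $\tfrac{1}{2}$ in the spike measure of Definition~\ref{defn:unstab_fol_irrat} is precisely calibrated so that $\Phi_\bx$ pushes this stream measure forward to Lebesgue measure on the segment. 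Second, each stable leaf $\ell = \bigcup_{w \in \ec{y}} \cI_w$ meeting both halves of $N_\bx$ consists of a fiber arc over $y_0 \in K$ joined via $\sim$ to the paired fiber arc over $2z - y_0 \in K'$; $\Phi_\bx$ sends their union onto the full vertical segment $\{y_0\} \times (-\psi'_\ast, \psi_\ast)$, and because the stable stream measure sums $\alpha$-measures across the $\sim$-class, the two half-contributions combine to Lebesgue measure on the full segment.

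The main obstacle is verifying the $x$-coordinate pairing $y_0 \leftrightarrow 2z - y_0$ implicit above: for each $y_0 \in K$, the lower extreme of $\fib{y_0}$ must be identified under $g$ with the lower extreme of the fiber over $2z - y_0 \in K'$. For $r = 0$ this is immediate from $\hat{y}_0 = 2c - y_0$ (with $z = c$) together with identification~(EI) applied to $\be(y_{0,u})$ and $\be(\hat{y}_{0,u})$. For $r > 0$ it follows by applying $\hf^{-r}$ to the $r = 0$ pairing via Lemma~\ref{lem:tB-facts}(b); the piecewise linearity of $f$ with constant slopes $\pm\lambda$ ensures that on a sufficiently small choice of $\epsilon$ the induced pairing on $K$ is an affine map, and the mirror symmetry of $f$ about $c$ forces it to be an affine reflection with fixed point $z$. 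Once this pairing is in place, the remaining checks are bookkeeping identical to the corresponding steps in Lemma~\ref{lem:interior_regular}.
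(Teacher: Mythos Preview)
Your proposal is correct and follows essentially the same approach as the paper, whose entire proof is the sentence ``The proof of the following is essentially identical to that of Lemma~\ref{lem:interior_regular}.'' You have in fact supplied considerably more detail than the paper does: you correctly isolate the two genuinely new ingredients (the spike $\cS_{-r}$ mapping onto $K\times\{0\}$ with the factor $\tfrac12$ absorbing the double cover, and the stable leaves that cross both halves via the $\sim$-pairing of fibers), and you correctly identify the affine reflection $y_0\mapsto 2z-y_0$ as the point that makes the gluing work, deriving it from the constant-slope branches of $\tB^{-1}$.
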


To complete the analysis of charts around~$\bx$ where~$r(\bx)=0$, it remains to
consider the case where $\bx = \hf^{-r}(\be(c_u))$ for some~$r\ge 0$: in this case
we will see that there is a $1$-pronged singularity at~$\bx$. Choose~$\epsilon>0$
so that $K:=(x_0-\epsilon, x_0+\epsilon) \subset Y$, and suppose that $\bx$ is a
lower element of its $\pi_0$-fiber; only minor modifications are needed if it is an
upper element. Define $\psi\colon\pi_0^{-1}(K)\to\R$ by~\eqref{eq:psi}, let $U_\bx
=
\psi^{-1}([0, \psi_\ast))$, where $\psi_\ast$ is the maximum value
of~$\psi$, set $R_\bx = (-\epsilon, \epsilon)\times[0, \psi_\ast)$, and
define $\Psi_\bx\colon U_\bx\to R_\bx$ by
\[
    \Psi_\bx(\by) = (y_0 - x_0, \psi(\by)).
\]
Then, for $\by, \by'\in U_\bx$, we have $g(\by) = g(\by')$ if and only if
either $\Psi_\bx(\by) = \Psi_\bx(\by')$, or $\Psi_\bx(\by) = (u, 0)$ and
$\Psi_\bx(\by) = (-u, 0)$ for some $u\in (-\epsilon, \epsilon)$. Therefore,
setting $Q_\bx$ to be the $1$-pronged model $Q_\bx = R_\bx \,/\, (u, 0)
\sim (-u, 0)$, and writing $N_\bx = g(U_\bx)$, $\Psi_\bx$ induces a
homeomorphism $\Phi_\bx\colon N_\bx\to Q_\bx$, which sends $g(\bx)$ to the
singular point of~$Q_\bx$. 

\begin{lem} \label{lem:crit_chart}
    Suppose that $r(\bx)=0$, and that $\bx = \hf^{-r}(\be(c_u))$ for some $r\ge 0$.
    Then $\Phi_\bx\colon N_\bx \to Q_\bx$  is a $1$-pronged chart for the invariant
    foliations of~$F$ at $g(\bx)$.
\end{lem}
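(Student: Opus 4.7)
The plan is to verify the conditions of Definitions~\ref{defn:charts} for a $1$-pronged chart, following the three-part structure of the proof of Lemma~\ref{lem:interior_regular}, with two new ingredients that need special attention: the $1$-pronged identification $(u,0)\sim(-u,0)$ must be shown to realize the $g$-identifications at $\psi=0$ correctly, and the factor $1/2$ in the spike case of~\eqref{eq:spike_or_not} must be shown to compensate for the $2$-to-$1$ identification along the $\psi=0$ boundary.

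First I will verify that $\Phi_\bx$ is measure-preserving. Because the identifications $(u,0)\sim(-u,0)$ lie on a $1$-dimensional subset of $R_\bx$ of $M$-measure zero, it suffices to show that $\mu(A) = M(\Psi_\bx(g^{-1}(A)))$ for Borel $A\subset N_\bx$, viewing $\Psi_\bx$ as a map into $R_\bx$. Applying Theorem~\ref{thm:disintegrate} to $g^{-1}(A)$ over the interval $K$, and using that the map $\by\mapsto\psi(\by)$ pushes the measure $\alpha_{y_0}$ forward to Lebesgue measure restricted to $[0,\varphi(y_0))$, the computation proceeds exactly as in part~(a) of the proof of Lemma~\ref{lem:interior_regular}.

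Next, for unstable leaf segments: by Lemma~\ref{lem:make_0_box}, $\pi_0^{-1}(K)\cap U_\bx$ decomposes into $0$-flat arcs $\gamma\colon K\to\hI$, each carrying a single constant itinerary (Remark~\ref{rem:0-flat-itin}) and hence a constant value of $\psi$ by Theorem~\ref{thm:holonomy_alpha}. Each such arc is contained in a path component of $\hI\setminus\cC$ and therefore in an unstable leaf. For a constant value $C>0$, the image $\Phi_\bx(g(\gamma(K)))$ is a horizontal segment $(-\epsilon,\epsilon)\times\{C\}$, a regular unstable leaf segment of $Q_\bx$. For $C=0$, the arc consists of the lower extremes $\be(y_{0,\ell})$ of fibers $\fib{y_0}$ for $y_0\in K$; applying $\hf^r$ to this arc lands it (up to a one-point overlap at $\bx$) inside $S_0$ (by Lemma~\ref{lem:action_on_extremes}), so the arc itself lies in the spike $S_{-r}$. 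Under $g$, the characterization of identifications preceding the lemma collapses $u$ with $-u$ and produces the $1$-od unstable leaf segment through the singular point of $Q_\bx$. Measure preservation for $C>0$ arcs is as in part~(b) of Lemma~\ref{lem:interior_regular}; for the $C=0$ arc, the factor $1/2$ in~\eqref{eq:spike_or_not} is exactly what is needed to offset the double contribution from $u>0$ and $u<0$, which after the quotient both correspond to the same points of $Q_\bx$.

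Finally, for stable leaf segments: since $K\subset Y$, the singleton leaf $\{\infty\}$ and the leaves $\cJ_{f^r(a)}$ are disjoint from $N_\bx$, so each path component $L$ of $\ell\cap N_\bx$ has the form $\cA_{y_0}\cap N_\bx$ for some $y_0\in K$, i.e.\ the $g$-image of $\fib{y_0}\cap U_\bx$. For $y_0\neq x_0$ this maps under $\Phi_\bx$ to the regular vertical segment $\{y_0-x_0\}\times(0,\psi_\ast)$, and for $y_0=x_0$ it maps to $\{0\}\times(0,\psi_\ast)$, which is precisely the stable leaf segment passing through the singular point of $Q_\bx$. Measure preservation is immediate from the definition of $\psi$ in~\eqref{eq:psi}, exactly as in part~(c) of Lemma~\ref{lem:interior_regular}. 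The hardest part of the proof is the unstable analysis at $C=0$: identifying the arc of lower extremes as a (segment of a) spike and reconciling the $1/2$ factor in~\eqref{eq:spike_or_not} with the $2$-to-$1$ identification built into $Q_\bx$; everything else is a direct transcription of the regular-chart argument.
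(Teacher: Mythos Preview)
Your overall approach matches the paper's: the same three-part verification (ambient measure, unstable segments, stable segments) following Lemma~\ref{lem:interior_regular}, and your treatment of parts~(a) and~(b) is correct and in places more explanatory than the paper's (which simply asserts that the $C=0$ arc lies in a spike).

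There is, however, a genuine error in your stable analysis. You claim that each path component $L$ of $\ell\cap N_\bx$ has the form $\cA_{y_0}\cap N_\bx$ for a single $y_0\in K$, and that for $y_0\neq x_0$ its image under~$\Phi_\bx$ is the single vertical $\{y_0-x_0\}\times(0,\psi_\ast)$. But this is not a stable leaf segment of the $1$-pronged model: by Definition~\ref{defn:pronged_model}, for $p=1$ and $\alpha\neq 0$ a stable leaf segment of~$Q_\bx$ is the union of $\{\alpha\}\times[0,\psi_\ast)$ and $\{-\alpha\}\times[0,\psi_\ast)$, joined at their $\psi=0$ endpoints. Correspondingly, the very identification $g(\be((x_0-\delta)_\ell)) = g(\be((x_0+\delta)_\ell))$ asserted before the lemma (and which you invoke in part~(b)) shows that $\cA_{x_0-\delta}$ and $\cA_{x_0+\delta}$ share an endpoint in~$N_\bx$; hence $x_0-\delta\sim x_0+\delta$ in the equivalence of Definition~\ref{defn:stabfol_irrat}, and the two arcs belong to the \emph{same} stable leaf. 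The path component of $\ell\cap N_\bx$ is therefore $(\cA_{x_0-\delta}\cup\cA_{x_0+\delta})\cap N_\bx$, exactly as the paper's proof records. Once this is corrected, measure-preservation follows as you say; but as written, your description of the stable segments is that of a regular chart, not a $1$-pronged one.
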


\begin{proof}
    \begin{enumerate}[(a)]
        \item If~$A$ is a Borel subset of~$N_\bx$ then $\mu(A) =
        \hmu(g^{-1}(A)) = \hmu(g^{-1}(A)\cap \psi^{-1}(0, \psi_\ast))$. That this
        is equal to $M(\Phi_\bx(A))$ follows exactly as in (a) from the proof of
        Lemma~\ref{lem:interior_regular}.

        \item As in (b) from the proof of Lemma~\ref{lem:interior_regular}, if
        $\ell$ is a non-trivial leaf of~$\cF^u$ then each path component~$L$ of
        $\ell\cap N_\bx$ is of the form $L=g(\gamma(K))$, where $\gamma\colon
        K\to\hI$ is one of the $0$-flat arcs of Lemma~\ref{lem:make_0_box}, so that
        there is some constant~$C\in [0, \psi_\ast)$ such that
        $\Phi_\bx(g(\gamma(x)))$ is the equivalence class of $(x, C)$ in~$Q_\bx$.
        If~$C>0$ then this equivalence class is trivial, and the proof that
        $\Phi_\bx|_L$ is a measure preserving homeomorphism onto the corresponding
        stable leaf segment of~$Q_\bx$ is identical to that of (b) from the proof
        of Lemma~\ref{lem:interior_regular}.

        If $C=0$ then the leaf~$\ell$ is a spike. If~$A$ is a Borel subset
        of~$L$ then $A=g(\gamma(B)) = g(\gamma(B'))$, where $B\subset
        (x_0-\epsilon, x_0]$ and $B'\subset [x_0, x_0+\epsilon)$; and hence
        $\nu_l^u(A) = m(\Phi_\bx|_L(A)) = m(B)$ as required.

        \item As in (c) from the proof of Lemma~\ref{lem:interior_regular}, if
        $\ell$ is a non-trivial leaf of~$\cF^s$, then $\ell =
        \bigcup_{z\in\ec{y}}\cI_z$ for some $y\in I$, a countable union of sets of
        the form~$\cI_z$. Since any such set which intersects $N_\bx$ must have
        $\cI_z=\cA_z$, each component~$L$ of $\ell\cap N_\bx$ is either
        $\cA_{x_0}\cap N_\bx$, or $\left(\cA_{x_0-\delta} \cup
        \cA_{x_0+\delta}\right) \cap N_\bx$ for some $\delta\in(0,\epsilon)$.
        Therefore $\Phi_\bx|_L$ is a homeomorphism onto a stable leaf segment
        of~$Q_\bx$, which is measure-preserving as in the proof of
        Lemma~\ref{lem:interior_regular}.
    \end{enumerate}
\end{proof}

\begin{thm}
    \label{thm:irrat-gpa}
    Let $F\colon\Sigma\to\Sigma$ be the sphere homeomorphism obtained from a
    tent map~$f$ of irrational height. Then~$F$ is a generalized pseudo-Anosov
    map, whose invariant foliations $(\cF^u, \nu^u)$ and $(\cF^s, \nu^s)$ have
    $1$-pronged singularities along the bi-infinite orbit of $g(\be(c_u))$, and
    are regular at every other point of $\Sigma\setminus\{\infty\}$.
\end{thm}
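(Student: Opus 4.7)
The plan is to verify the conditions of Definitions~\ref{defn:pa-gpa-fols} and~\ref{defn:pAgpA} by exhibiting appropriate charts at every point of~$\Sigma\setminus\{\infty\}$. The foliations $(\cF^s,\nu^s)$ and $(\cF^u,\nu^u)$ have already been defined and shown to satisfy $F(\cF^s,\nu^s)=(\cF^s,\lambda\nu^s)$ and $F(\cF^u,\nu^u)=(\cF^u,\lambda^{-1}\nu^u)$, with $\infty$ built in as a singleton leaf of both. I take $Z=\{\infty\}$ (finite) and $Y=\orb(g(\be(c_u)),F)$ (countable, and disjoint from~$\{\infty\}$: since $c\in(a,\ha)$ we have $c_u\in\ingam$, hence $c_u\notin\cC_S$, and Theorem~\ref{thm:C-is-Cantor} then places $\be(c_u)$ outside the $\hf$-invariant set~$\cC$; the orbit points are distinct in~$\Sigma$ by Remark~\ref{rem:id_summary}, since the identifications of type~(EI) exclude~$x=c$ and type~(EII) identifications avoid the orbit).

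For points $g(\bx)$ with $r(\bx)=0$, Lemmas~\ref{lem:interior_regular},~\ref{lem:extreme_chart}, and~\ref{lem:crit_chart} supply the three possible chart types: a regular chart when~$\bx$ is a non-extreme element of its fiber, a regular chart when $\bx=\hf^{-r}(\be(x_u))$ with $x\in(a,\ha)\setminus\{c\}$, and a $1$-pronged chart when $\bx=\hf^{-r}(\be(c_u))$. For a general $g(\bx)\in\Sigma\setminus\{\infty\}$ we have $\bx\not\in\cC$, so $r(\bx)=n<\infty$, and $\tilde\bx:=\hf^{-n}(\bx)$ satisfies $r(\tilde\bx)=0$; this yields a chart $\Phi_{\tilde\bx}\colon N_{\tilde\bx}\to M$ (with~$M$ a regular or $1$-pronged model) from one of the three lemmas. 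To push this forward to~$g(\bx)$ I set
\[
    \Phi \;=\; L_n\circ \Phi_{\tilde\bx}\circ F^{-n}\colon F^n(N_{\tilde\bx})\to L_n(M),
\]
where $L_n(u,v)=(\lambda^n u,\lambda^{-n}v)$ is the area-preserving linear map on~$\R^2$, which descends to the quotient in the $1$-pronged case since it respects the identification $(u,0)\sim(-u,0)$.

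That~$\Phi$ preserves the ambient measure is immediate from $L_n$ having unit Jacobian together with the $F$-invariance of~$\mu$. A path component of an unstable leaf through~$g(\bx)$ is sent by $\Phi_{\tilde\bx}\circ F^{-n}$ to a horizontal $I\times\{\beta\}$, then stretched by~$L_n$ to $\lambda^n I\times\{\lambda^{-n}\beta\}$; using the iterated identities
\[
    \nu^u_{F^n(\ell)}(F^n(A))=\lambda^n\,\nu^u_\ell(A), \qquad \nu^s_{F^n(\ell)}(F^n(A))=\lambda^{-n}\,\nu^s_\ell(A),
\]
which follow from the $F$-equivariance of the foliations, the horizontal stretching by~$\lambda^n$ exactly offsets the $\lambda^{-n}$ factor from pulling $\nu^u_\ell$ back along $F^{-n}$, so that $\Phi|_L$ preserves the unstable stream measure; the analogous computation handles the stable direction. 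Thus~$\Phi$ is a chart of the same type as~$\Phi_{\tilde\bx}$: it is $1$-pronged precisely when $\bx\in\orb(\be(c_u),\hf)$, that is, when $g(\bx)\in Y$.

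The main obstacle is bookkeeping rather than conceptual: one must carefully verify that the asymmetric linear rescaling~$L_n$ simultaneously compensates for the opposite scalings that $F^n$ induces on the stable and unstable stream measures, and that the push-forward of a $1$-pronged chart remains $1$-pronged after composition with~$L_n$ (which it does, since $L_n$ respects the model's identification $(u,0)\sim(-u,0)$).
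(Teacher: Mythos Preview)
Your proposal is correct and follows essentially the same approach as the paper: both arguments invoke Lemmas~\ref{lem:interior_regular}, \ref{lem:extreme_chart}, and~\ref{lem:crit_chart} to obtain charts at points with $r(\bx)=0$, and then transport these charts along the dynamics via $\Phi = L_n\circ\Phi_{\tilde\bx}\circ F^{-n}$ (the paper writes $\xi$ for your $L_n$ and $\by$ for your $\tilde\bx$), checking that the anisotropic linear rescaling exactly compensates for the action of~$F^n$ on the stream measures. Your explicit verification that $Y$ and $Z=\{\infty\}$ are disjoint is a small but welcome addition.
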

\begin{proof}
    We have defined stable and unstable foliations satisfying $F(\cF^s, \nu^s)
    = (\cF^s, \lambda\nu^s)$ and $F(\cF^u, \nu^u) = (\cF^u, \lambda^{-1}
    \nu^u)$. It remains to show that these foliations have $1$-pronged charts
    along the orbit of $g(\be(c_u))$, and regular charts at every other point
    of $\Sigma\setminus\{\infty\}$.

    For the former, let $x=F^s(g(\be(c_u)))$ for $s\in\Z$, so that $x = g(\bx)$
    for a unique~$\bx\in\hI$, namely $\bx = \hf^s(\be(c_u))$, and write
    $r=r(\bx)$. Then if $\by = \hf^{-r}(\bx)$ we have $r(\by) = 0$, so that
    by~\ref{lem:crit_chart} there is a $1$-pronged chart $\Phi_{\by}\colon
    N_\by \to Q_\by$ for the invariant foliations at~$g(\by) = F^{-r}(x)$.

    Recall that $Q_\by = (-\epsilon, \epsilon) \times [0, \phi_\ast) \,/\, (u,
    0)\!\sim\! (-u, 0)$ for some $\epsilon$ and $\phi_\ast$, write $Q =
    (-\lambda^r\epsilon, \lambda^r\epsilon) \times [0, \lambda^{-r}\phi_\ast)
    \,/\, (u, 0) \!\sim\! (-u, 0)$, a $1$-pronged model, and define $\xi \colon
    Q_\by \to Q$ by $[(x,y)]\mapsto [(\lambda^r x, \lambda^{-r} y)]$. Let~$N =
    F^r(N_\by)$, and $\Phi = \xi \circ \Phi_\by \circ F^{-r}\colon N\to Q$.
    Then 
    \begin{itemize}
        \item $\Phi(x) = \xi \circ\Phi_\by(g(\by)) = \xi([(0,0)]) = [(0,0)]$ is
            the singular point of~$Q$.

        \item Since $F$, $\Phi_\by$, and $\xi$ are all measure-preserving
        homeomorphisms (with respect to the measure~$\mu$ on~$\Sigma$ and
        two-dimensional Lebesgue measure~$M$ on $Q_\by$ and $Q$), $\Phi$ is a
        measure-preserving homeomorphism.

        \item If $\ell\in\cF^s$ and $L$ is a path component of $\ell\cap N$,
        then $F^{-r}(L)$ is a path component of $F^{-r}(\ell)\cap N_\by$, so
        that $\Phi_\by|_{F^{-r}(L)}$ is a measure-preserving homeomorphism onto
        a stable leaf segment of~$Q_\by$. Since $F^{-r}$ scales the measure
        on~$L$ by a factor $\lambda^r$, and $\xi$ scales the measure on stable
        leaf segments by a factor $\lambda^{-r}$, $\Phi|_L$ is a
        measure-preserving homeomorphism onto a stable leaf segment of $Q$.
        Analogously, for each $\ell\in\cF^u$ and each path component~$L$ of
        $\ell\cap N$, $\Phi|_L$ is a measure-preserving homeomorphism onto an
        unstable leaf segment of~$Q$.
    \end{itemize}
    Therefore $\Phi\colon N\to Q$ is a $1$-pronged chart for the invariant
    foliations at~$x$, as required.

    The argument for other points~$x\in\Sigma\setminus\{\infty\}$ is analogous.
    Let $\bx\in g^{-1}(x)$, $r=r(\bx)$, and $\by = \hf^{-r}(\bx)$, which
    satisfies $r(\by)=0$. By~\ref{lem:interior_regular}
    and~\ref{lem:extreme_chart}, there is a regular chart $\Phi_\by\colon
    N_\by\to R_\by$ for the invariant foliations at $y=F^{-r}(x)$. Setting
    $N=F^r(N_\by)$ and $R=\xi(R_\by)$, where $\xi(x,y) = (\lambda^rx,
    \lambda^{-r}y)$, it follows that $\Phi = \xi\circ \Phi_\by\circ
    F^{-r}\colon N\to R$ is a regular chart for the invariant foliations
    at~$x$.

\end{proof}

\subsection{The rational post-critically finite case}
\label{sec:rat-PCF}

In the case where~$f$ is post-critically finite, and therefore has rational height
$q(f)=m/n$, analogous arguments can be used to show that the sphere
homeomorphism~$F$ is generalized pseudo-Anosov. We omit the details of these
because the result in this case was proved in~\cite{prime} (using quite different
methods), and only sketch some of the main features.

Let~$Q$ be the periodic orbit, of period~$k$, say, contained in $\PC$, and $R =
\PC\setminus Q$ be the pre-periodic part of the post-critical orbit.
Therefore~$Q$ and~$R$ are finite subsets of~$I$, with~$R$ being empty in the
case where~$b$ is itself a periodic point. The natural extension~$\hf$ has a
corresponding period~$k$ orbit~$\bQ$ lying in the fibers above~$Q$.

If $x\not\in\PC$, then $\cA_x := g(\fib{x})$ is an arc in~$\Sigma$, by an argument
similar to that of Section~\ref{sec:semi-conj}. On the other hand, $\cA_x$ is a
triod for $x\in R$; while for $x\in Q$, because $F^k(\cA_x)
\subset \cA_x$, it is a tree with infinitely many valence~1 and valence~3
vertices --- corresponding to 1-pronged and 3-pronged singularities of the
invariant foliations --- except in the~NBT case mentioned below.

Recall from Theorem~\ref{thm:height-intervals-rational} and
Definition~\ref{defn:tent_type} that~$f$ can be of three types:
\begin{itemize}
    \item \emph{Endpoint type}, when $\tB^n(a) = a$ or $\tB^n(a) = \ha_u$;
    \item \emph{NBT type}, when $\tB^n(a) = c_u$; and
    \item \emph{General type}, in all other cases.
\end{itemize}

The endpoint case is most similar to the irrational case. Suppose that $\tB^n(a) =
a$, so that, by Lemma~\ref{lem:tB-facts}(c), $\be(a) = \thr{a, b =
\tau(\tB^{-1}(a)), \dots, \tau(\tB^{-(n-1)}(a)), a, \dots}$ is a period~$n$
point of~$\hf$, whose orbit lies in the fibers above~$Q$. Then $\cC =
\{\hf^r(\be(a))\,:\, 0\le r < n\} \cup
\{\hf^r(\be(\ha_u))\,:\, r\in\Z\}$ is a countable set which plays the
r\^ole of the Cantor set~$\cC$ in the irrational case. Specifically, in close
analogy with Lemma~\ref{lem:irrat-equiv}, Remark~5.17(c) of~\cite{prime} states
that~$\cC$ is a non-trivial fiber of $g\colon \hI\to\Sigma$, and that the only
other non-trivial fibers are $\{\hf^r(\be(x_u)), \hf^r(\be(\hx_u))\}$ for each
$x\in(a,c)$ and $r\in\Z$. Writing $\infty = h(\cC)\in\Sigma$, it can be shown
that~$F$ is a generalized pseudo-Anosov map with $1$-pronged singularities along
the bi-infinite orbit of~$g(\be(c_u))$ and regular points elsewhere in
$\Sigma\setminus\{\infty\}$. The orbit of $g(\be(c_u))$ is homoclinic to~$\infty$.
$F^r(g(\be(c_u)))$ lies in $\cA_x$ for some $x\in Q$ when $r\ge 0$, and is the
$g$-image of an extreme point when $r \le 0$. When $\tB^n(a) =
\ha_u$, the situation is the same with the r\^oles of $\be(a)$ and $\be(\ha_u)$
exchanged (in this case the post-critical orbit is strictly pre-periodic, with
$|R|=2$).

In the general case, the identifications are given by (EI), (EII), and (EIII) of
Lemma~\ref{lem:rat-equiv-class}. The period~$n$ orbit~$\bP$ of (EIII) is collapsed
to a fixed point of~$F$; while the identifications~(EII) and the
singletons~$\hf^r(\be(c_u))$ give rise to bi-infinite heteroclinic orbits of
connected $3$-pronged and $1$-pronged singularities: these orbits converge to the
fixed point $g(\bP)$ under iteration of $F^{-1}$, and to the periodic
orbit~$g(\bQ)$ under iteration of~$F$. There are therefore regular or pronged
charts for the invariant foliations at every point of~$\Sigma$ except for the~$k+1$
points of $g(\bP) \cup g(\bQ)$.

In the NBT case, the element $\hf^r(\be(\hz_u))$ of~(EII) coincides with
$\be(c_u)$, so that the sequences of $1$-pronged and $3$-pronged singularities
cancel out. In this case $F\colon\Sigma\to\Sigma$ is pseudo-Anosov, with an
$n$-pronged singularity at~$g(\bP)$, and $1$-pronged singularities at the points
of~$g(\bQ)$ (which is a period~$n+2$ orbit, since $f^n(a)=c$ in this case).

\bibliographystyle{amsplain}
\bibliography{mparefs}

\end{document}